\newtheorem{theorem}{Theorem}
\newtheorem{lemma}[theorem]{Lemma}
\newtheorem{corollary}[theorem]{Corollary}
\newtheorem{proposition}[theorem]{Proposition}
\theoremstyle{definition}
\newtheorem{example}[theorem]{Example}
\newtheorem{remark}[theorem]{Remark}
\newtheorem{question}[theorem]{Question}
\newtheorem{definition}[theorem]{Definition}
\newcommand{\cC}{\mathscr{C}}
\newcommand{\cK}{\mathscr{K}}
\newcommand{\cS}{\mathscr{S}}
\newcommand{\cD}{\mathscr{D}}
\newcommand{\cB}{\mathscr{B}}
\newcommand{\cZ}{\mathscr{Z}}
\newcommand{\bsfF}{\text{\textbf{\textsf{F}}}}
\newcommand{\bsfR}{\text{\textbf{\textsf{R}}}}
\newcommand{\bsfP}{\text{\textbf{\textsf{P}}}}
\newcommand{\csf}{\text{-}\mathrm{csf}}
\newcommand{\comp}{\mathrm{cp}}
\newcommand{\plmod}{\text{-}\mathrm{mod}^{\mathrm{dg}}}
\newcommand{\prmod}[1]{\mathrm{mod}^{\mathrm{dg}}_{#1}\text{-}}
\newcommand{\csfcat}{\mathfrak{M}^{\mathrm{dg}}}
\newcommand{\gcsfcat}{\mathfrak{M}^{\mathrm{dg}}_{\mathrm{g}}}
\newcommand{\triancat}{\mathfrak{T}^{\mathrm{dg}}}
\newcommand{\tworep}{\text{-}\mathfrak{mod}^{\mathrm{pre}}}
\newcommand{\gtworep}{\text{-}\mathfrak{mod}^{\mathrm{pre}}_{\mathrm{g}}}
\newcommand{\ctworep}{\text{-}\mathfrak{mod}^{\mathrm{pre}}_{\mathrm{c}}}
\newcommand{\cgtworep}{\text{-}\mathfrak{mod}^{\mathrm{pre}}_{\mathrm{cg}}}
\newcommand{\intworep}{\text{-}\mathfrak{mod}^{\mathrm{pre}}_{\mathrm{int}}}
\newcommand{\dgmod}{\text{-}\mathrm{mod}^{\mathrm{dg}}}
\newcommand{\cdgmod}{\text{-}\mathrm{mod}^{\mathrm{dg,cp}}}
\newcommand*\cocolon{%
        \nobreak
        \mskip6mu plus1mu
        \mathpunct{}%
        \nonscript
        \mkern-\thinmuskip
        {:}%
        \mskip2mu
        \relax
}
\newcommand{\adj}[4]{#1\colon #2\rightleftarrows #3\cocolon #4}
\newcommand{\shift}[1]{\langle #1\rangle}
\newcommand{\coker}{\operatorname{coker}}
\newcommand{\diag}{\operatorname{diag}}
\newcommand{\Hom}{\mathrm{Hom}}
\newcommand{\End}{\mathrm{End}}
\newcommand{\op}{\mathrm{op}} 
\newcommand{\im}{\operatorname{Im}}
\newcommand{\rA}{\mathrm{A}}
\newcommand{\rB}{\mathrm{B}}
\newcommand{\rC}{\mathrm{C}}
\newcommand{\rF}{\mathrm{F}}
\newcommand{\rG}{\mathrm{G}}
\newcommand{\rH}{\mathrm{H}}
\newcommand{\rI}{\mathrm{I}}
\newcommand{\rK}{\mathrm{K}}
\newcommand{\rM}{\mathrm{M}}
\newcommand{\rN}{\mathrm{N}}
\newcommand{\rX}{\mathrm{X}}
\newcommand{\rY}{\mathrm{Y}}
\newcommand{\A}{\mathcal{A}}
\newcommand{\C}{\mathcal{C}}
\newcommand{\DD}{\mathcal{D}}
\newcommand{\E}{\mathcal{E}}
\newcommand{\I}{\mathcal{I}}
\newcommand{\J}{\mathcal{J}}
\newcommand{\K}{\mathcal{K}}
\newcommand{\T}{\mathcal{T}}
\newcommand{\Z}{\mathcal{Z}}
\newcommand{\del}{\partial}
\newcommand{\id}{\mathrm{id}}
\newcommand{\Id}{\mathrm{Id}}
\newcommand{\one}{\mathbbm{1}}
\newcommand{\eval}{\mathsf{ev}}
\newcommand{\Eval}{\mathsf{Ev}}
\newcommand{\bfM}{\mathbf{M}}
\newcommand{\bfK}{\mathbf{K}}
\newcommand{\bfG}{\mathbf{G}}
\newcommand{\bfP}{\mathbf{P}}
\newcommand{\bfN}{\mathbf{N}}
\newcommand{\bfI}{\mathbf{I}}
\newcommand{\tth}{\mathtt{h}}
\newcommand{\tone}{\mathtt{1}}
\newcommand{\ti}{\mathtt{i}}
\newcommand{\tj}{\mathtt{j}}
\newcommand{\tk}{\mathtt{k}}
\newcommand{\tl}{\mathtt{l}}
\newcommand{\tn}{\mathtt{n}}
\newcommand{\tX}{\mathtt{X}}
\newcommand{\ov}[1]{\overline{#1}}
\newcommand{\mat}[1]{\left(\begin{smallmatrix}#1\end{smallmatrix}\right)}
\newcommand{\colim@}[2]{%
  \vtop{\m@th\ialign{##\cr
    \hfil$#1\operator@font colim$\hfil\cr
    \noalign{\nointerlineskip\kern1.5\ex@}#2\cr
    \noalign{\nointerlineskip\kern-\ex@}\cr}}%
}
\newcommand{\colim}{%
  \mathop{\mathpalette\colim@{\rightarrowfill@\scriptscriptstyle}}\nmlimits@
}
\renewcommand{\varprojlim}{%
  \mathop{\mathpalette\varlim@{\leftarrowfill@\scriptscriptstyle}}\nmlimits@
}
\renewcommand{\varinjlim}{%
  \mathop{\mathpalette\varlim@{\rightarrowfill@\scriptscriptstyle}}\nmlimits@
}
\numberwithin{equation}{section}
\numberwithin{theorem}{section}
\begin{document}

\title[pretriangulated 2-representations]{pretriangulated 2-representations via dg algebra 1-morphisms}
\author{Robert Laugwitz}
\address{School of Mathematical Sciences,
University of Nottingham, University Park, Nottingham, NG7 2RD, UK}
\email{robert.laugwitz@nottingham.ac.uk}

\author{Vanessa Miemietz}
\address{
School of Mathematics, University of East Anglia, Norwich NR4 7TJ, UK}
\email{v.miemietz@uea.ac.uk}
\urladdr{https://www.uea.ac.uk/~byr09xgu/}

\date{April 22, 2025}

\begin{abstract}
This paper develops a theory of pretriangulated $2$-representations of dg $2$-categories. We characterize cyclic pretriangulated $2$-representations, under certain compactness assumptions, in terms of dg modules over dg algebra $1$-morphisms internal to associated dg $2$-categories of compact objects. Further, we investigate the Morita theory and quasi-equivalences for such dg $2$-representations. We relate this theory to various classes of examples of dg categorifications from the literature. 
\end{abstract}

\subjclass[2020]{18N10, 18N25, 16E45}
\keywords{2-category, 2-representation, differential graded category, pretriangulated category, homotopy category}

\maketitle

\tableofcontents

\section{Introduction}\label{s0}

Categorification aims to lift important algebraic structures to a higher categorical level, where elements are replaced by functors (or $1$-morphisms) and equations are upgraded to natural isomorphisms (or invertible $2$-morphisms). The underlying classical structure can then be recovered through the process of passing to Grothendieck groups. Categorifications of Hecke algebras, through Soergel bimodules, or quantum groups have been among the most celebrated achievements of the field and featured in solutions to long-standing conjectures in representation theory, see e.g. \cite{CR,EW,W}. A natural framework for categorification is that of $2$-categories, which contain monoidal or tensor categories as the one-object case. 

Studying categorical representations was proposed in \cite{Ro}. A systematic theory for categorical representations of tensor categories (such as fusion categories) was developed by Etingof--Ostrik and others \cite{EGNO}. These categories are usually abelian while several categorifications are based on non-abelian additive categories. 
A systematic theory of $2$-representations tailored to the latter, categorifying in some sense the theory of representations of finite-dimensional algebras, has been developed in a programme starting with \cite{MM1}, see also \cite{MM2,MM3,MM5,MMMT,MMMZ} among others.

Many of the important categorifications, such as categorified braid groups or categorical braid group actions \cite{ST,KS,Ro0,Ro1}, only emerge when passing from additive categories to triangulated categories. This is manifest, for example, in the fact that braid group relations only become isomorphisms on the level of homotopy or derived categories. This observation motivates the need for a theory of $2$-representations that allows working with homotopy categories. In fact, the literature contains several constructions of categorifications of $2$-representations of specific categorified algebras on the level of triangulated categories, including, e.g., \cite{BFK,RZ,Ca,CL, Sa,JY} to list a few examples.

Working directly with triangulated categories involves technical obstacles to a formal treatment of $2$-representations. For a formal theory of $2$-representations, it is important to account for coherence conditions of the structural isomorphisms involved. The definition of triangulated categories, in addition to being very involved, lacks fundamental properties such as functoriality of cones. A solution to these technical issues was provided by Bondal--Kapranov \cite{BK} by working with \emph{pretriangulated} categories. Pretriangulated categories are \emph{differentially graded (dg)} categories closed under taking cones and shifts, thus ensuring that the associated homotopy or derived categories are triangulated. In this case, the dg category is an enhancement of the triangulated category \cite{LO}. This approach solves key technical issues associated with triangulated categories such as functoriality of cones, taking duals and tensor products \cite{LO,BLL}. Dg enhancements have been used in numerous constructions in categorification, for example, in the theory of spherical twists underlying categorical braid group actions~\cite{AL}. 

The present paper proposes a theory of $2$-representations suitable for working with pretriangulated categories. We extend the setup used in \cite{MM1,MM2,MMMT} to pretriangulated categories. In particular, this setup allows to consider pretrianguated hulls of finitary $2$-categories, $\mathbb{Z}$-gradings, and relaxes the strict finiteness requirements on spaces of $2$-morphisms for some of the constructions of finitary $2$-representation theory. This first paper in this direction focuses on representing cyclic $2$-representations through modules over dg algebra $1$-morphisms generalizing results of \cite{EGNO} and \cite{MMMT}.

The theory of pretriangulated $2$-representations proposed in this paper applies to several classes of $2$-categories, some of which we start to explore here.
\begin{itemize}
\item Pretriangulated hulls of finitary $2$-categories. This allows us to consider $2$-representa\-tions acting on (bounded) complexes of (projective) modules over finite-dimensional $\Bbbk$-al\-ge\-bras. These $2$-representations descend to triangulated categories by acting on the associated homotopy categories.
\item Categorifications involving dg $2$-categories such as \cite{Ti,Ti2,Ti3,KT}. As a first example, we explore the categorification of $\mathbb{Z}[\sqrt{-1}]$ and its natural action on $\mathbb{Z}^{\oplus 2}$ of \cite{Ti} in detail in Section~\ref{sec:Zi}.
\item Dg $2$-categories $\cC_A$ built from projective bimodules over finite-dimensional dg $\Bbbk$-al\-gebras. In finitary $2$-representation theory, these $2$-categories can be used to classify simple transitive $2$-representation of categorified finite-dimensional Lie algebras, see \cite[Section~7.2]{MM5}. The first example of such dg $2$-categories is given by $\cC_\Bbbk$, the $2$-category of bounded complexes of finite-dimensional $\Bbbk$-vector spaces. Using results of Orlov \cite{Or2}, we prove in Proposition~\ref{lem:Ck-simpletrans} that $\cC_\Bbbk$ has a unique non-acyclic quotient-simple pretriangulated $2$-representation.
\item Finally, we explain how the theory developed in this paper can be applied to categorical braid group actions of \cite{KS,Ro0,Ro1} in Section \ref{sec:braid}.  We relate $2$-representations of categorified braid groups in the approach of \cite{KS} to dg $2$-representations of categories $\cC_A$, where $A$ is a zigzag algebra with trivial differential. More generally, categorified braid group representations of \cite{Ro0} are related to pretriangulated $2$-representations of pretriangulated hulls of $2$-categories of Soergel bimodules.
\end{itemize}

Given a dg $2$-category $\cC$, we define the dg $2$-category of \emph{pretriangulated $2$-rep\-re\-sentations} $\cC\tworep$ in Section \ref{dg2reps}. Given a pretriangulated $2$-representation  $\bfM$, one ob\-tains the \emph{homotopy $2$-representation} $\bfK\bfM$ by passing to the associated homotopy categories $\bfK\bfM(\ti)$ at each object $\ti$ of $\cC$, see Section~\ref{triang2rep-sec}. 

The main results of this paper involve an assignment of internal dg algebra $1$-morphisms to pretriangulated $2$-representations which admit a generator under the action of the dg $2$-category $\cC$ and taking thick closures. We obtain the following results:
\begin{itemize}
\item We define the \emph{internal} dg $2$-representation $\bfM_\rA$ consisting of certain modules over an internal dg algebra $1$-morphism $\rA$ which lives in the dg $2$-category $\vv{\cC}$ introduced in this paper. The dg $2$-category $\vv{\cC}$ provides a generalization and dg enhancement of the completion of a finitary additive $2$-category under cokernels.
\item In Corollary~\ref{cor:stronggenMAequiv} we prove that if $X$ is a $\cC$-generator of a \emph{compact} pretriangulated $2$-representation $\bfM$, then the dg idempotent completion $\bfM^\circ$ is dg equivalent to the internal dg $2$-representation $\bfM_{\rA_X}$, where $\rA_X=[X,X]$ is the internal endomorphism dg algebra $1$-morphism of $X$.

A version of this result, where $X$ is only a $\cC$-\emph{quasi}-generator, i.e., $X$ generates the associated homotopy $2$-representation $\bfK\bfM$, is given in  Corollary \ref{cor:quasi-gen}.
\item We define a possible notion of ``simple'' dg $2$-representations of dg $2$-categories called \emph{quotient-simple pretriangulated $2$-representations}. We show in Corollary \ref{cor4.10} that a cyclic pretriangulated $2$-representation $\bfM$ is quotient-simple if and only if $\rA_X$ is simple 
as a dg algebra  $1$-morphism.
\item Using results by Keller \cite{Ke1}, we characterize equivalences (and quasi-equivalences) between internal dg $2$-representations $\bfM_\rA$ and $\bfM_\rB$ over $\cC$ in terms of Morita equivalence, see Sections \ref{sec:Morita} and \ref{sec:quasi-Morita}.
\item In Section \ref{sec:local-quasi} we explain how a dg $2$-functor $\bsfF\colon \cC\to \cD$ which is a local quasi-equivalence and essentially surjective on objects induces a correspondence between internal dg $2$-representations of $\cC$ and $\cD$ up to quasi-equivalence, see Proposition \ref{prop-unit-quasi}, \ref{prop-counit-quasi}, and \ref{prop:preserve-quasi-eq} as well as Corollary \ref{cor:quasi-2-equiv} for precise statements.
\end{itemize}
The definition of quotient-simple pretriangulated $2$-representations was inspired by an extension of the theory of cell $2$-representations of \cite{MM1} to the pretriangulated setup that will appear in a forthcoming paper in preparation. 

The results of this paper refine and extend some of the constructions given in the setup of $p$-dg $2$-representations from \cite{LM}. It would be possible to adapt the constructions of algebra $1$-morphisms of the present paper to the $p$-dg setup (for $\operatorname{char} \Bbbk=p$) in order to be applied to the categorifications of quantum groups at $p$-th roots of unity of \cite{EQ,EQ2}. Further, the setup presented here could be adapted to incorporate $\mathbb{Z}/2\mathbb{Z}$-gradings as used, e.g., in \cite{ElQ,EL}.

The paper is organized as follows. Section~\ref{sec:dggen} introduces the technical constructions for dg categories required in this paper, including a description of the dg category of compact objects as a dg enriched generalization of the projective abelianization of a finitary category of \cite{Fr} by adding cokernels. Next, Section~\ref{dg2} introduces dg $2$-categories and pretriangulated $2$-representations, including the concepts of \emph{cyclic} and \emph{quotient-simple} pretriangulated $2$-representations and explains how pretriangulated $2$-representations induce triangulated homotopy $2$-representations. The main results of the paper are found in Section~\ref{alg1morsec} revolving around associating dg algebra $1$-morphisms to pretriangulated $2$-representations. Finally, Section~\ref{sec:examples} discusses various classes of examples.

\subsection*{Acknowledgements}

The authors would like to thank Ben Elias, Gustavo Jasso, and Marco Mackaay for helpful conversations related to this work. We further thank the anonymous referees for their helpful comments.
V.~M. is supported by EPSRC grant EP/S017216/1 and R.~L. is supported by a Nottingham Research Fellowship.

\section{Generalities}
\label{sec:dggen}

\subsection{Dg categories}\label{dgsect}

A \textbf{dg category} $\C$ is a category enriched over the symmetric monoidal category of cochain complexes of $\Bbbk$-modules. If $C$ is a cochain complex of $\Bbbk$-modules, we denote by $Z(C)$ the subspace of cocycles in $C$ of degree zero.

We denote by $\Bbbk\dgmod$ the category of \textbf{dg $\Bbbk$-modules}. Its objects are $\mathbb{Z}$-graded $\Bbbk$-vector spaces equipped with a differential $\del$ of degree $+1$ such that $\del^2=0$. A morphism $f\colon V\to W$ in $\Bbbk\dgmod$ of degree $n$ is a $\Bbbk$-linear map satisfying $f(V^k)\subset W^{k+n}$ on the $k$-th graded piece. The differential of such a morphism is given by 
$\del(f)=\del_W\circ f-(-1)^n f\circ \del_V.$ Hence, the category $\Bbbk\dgmod$ is enriched over  the symmetric monoidal category of cochain complexes of $\Bbbk$-modules (see e.g. \cite[Section~2.1]{Ke}). It is equipped with a \textbf{shift functor} defined by $(V\shift{1})^k=V^{k+1}$ on the $k$-th graded piece with differential given by $-\del_V$. For a homogeneous morphism $f\colon V\to W$ in $\Bbbk\dgmod$ of degree $n$, the morphism $f\shift{1}\colon V\shift{1}\to W\shift{1}$ is given by $(-1)^n f$. By abuse of notation, for morphisms of degree zero, we often omit the shifts in the notation to simplify the exposition.

Given a dg category, we denote by $\Z(\C)$ the $\Bbbk$-linear category given by the same objects as $\C$ with morphisms  given by the $\Bbbk$-vector spaces
\begin{align*}\Hom_{\Z(\C)}(X,Y)=Z(\Hom_{\C}(X,Y))=\left\{f\in \Hom_{\C}(X,Y)\vert \deg f=0\text{ and }\del f=0 \right\}.\end{align*}
We call the morphisms in $\Z(\C)$ \textbf{dg morphisms}. We use the terminology of dg isomorphism, dg idempotent, dg direct summand, dg indecomposable, dg equivalent etc. for properties referring to or defined by morphisms in $\Z(\C)$.

Let $\C$ be a small dg category. We define the dg category $\C\dgmod$ of \textbf{dg modules over $\C$} to be the dg category of dg functors $\C^\op\to \Bbbk\dgmod$ (cf. \cite[Section 1.2]{Ke}, \cite[Section 2.2]{Or1}). 
Throughout this paper, we will only consider dg modules over small dg categories without further mention. 
Given a dg category $\C$, the Yoneda lemma gives a fully faithful dg functor 
\begin{align*}\C\to \C\dgmod, \qquad X\mapsto X^\vee:=\Hom_\C(-,X).\end{align*}
We refer to $X^\vee$ as a \textbf{free} dg $\C$-module.
It has the property that given any dg module $M$ over $\C$, there is a dg isomorphism 
\begin{align*}\Hom_{\C\dgmod}(X^\vee,M)\cong M(X).\end{align*}

We require the following notion of idempotent completion to obtain dg categories closed under taking dg direct summands. Given a dg category $\C$, the \textbf{dg idempotent completion} $\C^\circ$ is defined as having objects $X_e$ for any dg idempotent $e=e^2 \in \End_\C(X)$, i.e., $\deg(e)=0$ and $\del(e)=0$, and morphism spaces
\begin{align*}\Hom_{\C^\circ}(X_e,Y_f)=f\Hom_{\C}(X,Y)e.\end{align*}
Then $\C^\circ$ is a dg category and the embedding $\C\hookrightarrow \C^\circ$ is a dg functor. Furthermore, $(\C^\circ)^\circ$ is dg equivalent to $\C^\circ$.

Given a dg functor $F\colon \C\to \DD$, we obtain an induced dg functor 
\begin{align}
F^\circ \colon \C^\circ \to \DD^\circ, \qquad F(X_e)=F(X)_{F(e)}.\label{functorcirc}
\end{align}
Further, given a natural transformation of dg functors $\phi\colon F\to G$, 
\begin{align}
\phi^\circ_{X_e}:=G(e)\circ\phi_X\circ F(e)\label{natcirc}
\end{align}
defines a natural transformation $\phi^\circ\colon F^\circ\to G^\circ$.

\subsection{Pretriangulated categories}\label{section:pretriang}

A dg category $\C$ is \textbf{pretriangulated} if its Yoneda embedding into $\C\dgmod$ via $X\mapsto X^\vee=\Hom_{\C}(-,X)$ is closed under shifts and cones, cf. \cite[Section 4.5]{Ke}.
Note that we do not require $\C$ to be closed under dg direct summands. To alleviate notation, we refer to a pretriangulated dg category simply as a \textbf{pretriangulated category.} We say that a full dg subcategory $\DD\subseteq \C$ is a \textbf{pretriangulated subcategory} if it is closed under shifts, cones and dg isomorphisms, and say that $\DD$ is \textbf{thick} if it is, in addition, closed under all direct summands which exist in $\C$.

Let $\C$ be a pretriangulated category. 
Given a pretriangulated category $\C$, and a set $\tX$ of objects in $\C$, we define the \textbf{thick closure} of $\tX$ in $\C$ to be the full subcategory $\widehat{\tX}$ of $\C$ generated by all objects in $\tX$ under shifts, cones, direct sums, dg direct summands, 
and closed under dg isomorphisms. That is, $\widehat{\tX}$ is the smallest thick subcategory of $\C$ containing $\tX$. An object $X$ in $\C$ is a \textbf{(classical) generator} for $\C$ if the thick closure $\widehat{\lbrace X \rbrace}$ of $X$ is all of $\C$. Note that these constructions are taken inside the pretriangulated category, rather than at the homotopy level and do thus not coincide with the notions of thick closure and classical generator for the associated triangulated category. This is desirable since most constructions in this paper will take place on the pretriangulated level. For constructions involving passing to homotopy, we will usually use the prefix quasi, see e.g. Section \ref{triang2rep-sec}.

Given a dg category $\C$, we denote by $\ov{\C}$ the dg category of (one-sided) twisted complexes in $\C$. Explicitly, we define $\overline{\C}$ as the dg category whose
\begin{itemize}
\item objects are given by pairs $(X=\bigoplus_{m=1}^s X_m, \alpha=(\alpha_{k,l})_{k,l})$ where the $F_m$ are shifts of objects in $\C$ and $\alpha_{k,l}\in \Hom_\C(X_l,X_k)$, $\alpha_{k,l} = 0$ for all $k\geq l$ such that the matrix $\del \cdot \mathtt{I}_s +\left((\alpha_{kl})_*\right)_{kl}$ acts as a differential on $\bigoplus_{m=1}^s X_m^\vee$ in  $\C\dgmod$ (here $\mathtt{I}_s$ is the identity matrix), or, equivalently, $\del(\alpha)+\alpha^2=0$;
\item morphisms are matrices of morphisms between the corresponding objects, with the differential of a homogeneous morphism
\begin{align*}\gamma=(\gamma_{n,m})_{n,m} \colon \bigg(\bigoplus_{m=1}^s X_m, \alpha=(\alpha_{k,l})_{k,l}\bigg)\longrightarrow \bigg(\bigoplus_{n=1}^t Y_n, \beta=(\beta_{k,l})_{k,l}\bigg)\end{align*}
defined as 
\begin{align*} \partial\left((\gamma_{n,m})_{n,m} \right):= (\partial \gamma_{n,m}+(\beta\gamma)_{n,m}-(-1)^{\deg \gamma}(\gamma\alpha)_{n,m})_{n,m}.\end{align*}
\end{itemize}

The notation $\bigoplus_{i=m}^s X_m$ denotes an ordered list of objects rather than a direct sum internal to $\C$. We give $\ov{\C}$ the explicit additive structure
\begin{align*}(X,\alpha)\oplus (Y,\beta):=\left(X\oplus Y,\begin{pmatrix}
\alpha&0\\0&\beta
\end{pmatrix}\right),\end{align*}
where $X\oplus Y$ is the concatenation of ordered lists of objects. This additive structure on $\ov{\C}$ is strict, in the sense that $(X\oplus Y)\oplus Z=X\oplus (Y\oplus Z)$.

Note that $\ov{\C}$ is a pretriangulated category, and the smallest pretriangulated category containing $\C$, cf. \cite[Section~1]{LO}, \cite[Section 3.2]{AL}. Note that, in particular, $\ov{\C}\dgmod$ is dg equivalent to $\C\dgmod$, see e.g.\ \cite[Section 4.5]{Ke}.

Let $f\colon X\to Y$  be a dg morphism in $\ov{\C}$, where $X=(\bigoplus_{i=1}^tX_i,\alpha)$, $Y=(\bigoplus_{i=1}^tY_i,\beta)$. The \textbf{cone} $C_f$ of $f$ is the object 
\begin{align*}C_f=\Big(Y\oplus X\shift{1}, \mat{\beta& -f\\ 0&\alpha}\Big).\end{align*}
The cone $C_f$ comes equipped with the dg morphisms
\begin{align*}C_f\shift{-1}\to X, \qquad Y\to C_f,\end{align*}
such that pre- (respectively, post-) composition with $f$ yields a null-homotopic morphism.

Given a dg functor $F\colon \C\to \DD$, we obtain an induced dg functor 
\begin{align}\label{functorov}
\ov{F}\colon \ov{\C}\to \ov{D}, \quad \ov{F}\bigg(\bigoplus_{i=1}^t X_i,\alpha\bigg)=\bigg(\bigoplus_{i=1}^t F(X_i),F(\alpha)\bigg),
\end{align}
by applying $F$ component-wise to $\alpha$ and to morphisms in $\ov{\C}$. If follows directly that $\ov{G\circ F}=\ov{G}\circ\ov{F}$ for compatible dg functors $F\colon \C\to \DD$, $G\colon \DD\to \E$. Moreover, a natural transformation $\tau\colon F\to G$ induces a natural transformation $\ov{\tau}\colon \ov{F}\to \ov{G}$ using the diagonal matrix
\begin{align}
\tau_{(\oplus_{i=1}^t X_i,\alpha)}=\diag(\tau_{X_1},\ldots, \tau_{X_t}).\end{align}
This way, we obtain a dg $2$-functor $\ov{(-)}$ that sends dg categories $\C$ to pretriangulated categories $\ov{\C}$.

Recall that, for a dg category $\C$, a \textbf{finitely-generated semifree dg $\C$-module} is a dg functor from $\C^\op$ to $\Bbbk\plmod$ which has a finite filtration by shifts of free dg $\C$-modules, cf. \cite[Section~2.2]{Or1}.

\begin{lemma}[{\cite[Section 2.3]{Or2}}] \label{csf-lemma}
The dg category $\ov{\C}$ is dg equivalent to the dg category of finitely-generated semi-free dg $\C$-modules.
\end{lemma}

We say that an ideal $\I$ in a dg category $\C$ is a \textbf{dg ideal} provided that, for any morphism $f$ in $\I$, we also have $\del(f)$ in $\I$. For future reference, we record the following lemmas.

\begin{lemma}\label{idealtrivial}
Let $\C$ be a full subcategory of $\C'$, and $\I$ a dg ideal in $\C$. Then the restriction to $\C$ of the dg ideal generated by $\I$ in $\C'$ is equal to $\I$.
\end{lemma}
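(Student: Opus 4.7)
The plan is to give an explicit description of the dg ideal $\cJ$ generated by $\I$ in $\C'$, and then read off the restriction from that description, with fullness of $\C \hookrightarrow \C'$ doing the real work.

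First, I would verify that for objects $X,Y\in\C'$, the space $\cJ(X,Y)$ is simply the $\Bbbk$-linear span of morphisms of the form $g\circ f\circ h$, where $A,B$ are objects of $\C$, $f\in\I(A,B)$, $h\in\Hom_{\C'}(X,A)$, and $g\in\Hom_{\C'}(B,Y)$. Closure under pre- and post-composition with arbitrary $\C'$-morphisms is immediate from associativity. Closure under the differential follows from the graded Leibniz rule
\[
\partial(g\circ f\circ h) \;=\; \partial(g)\circ f\circ h \;+\; (-1)^{|g|} g\circ \partial(f)\circ h \;+\; (-1)^{|g|+|f|} g\circ f\circ \partial(h),
\]
together with the fact that $\I$ itself is a dg ideal in $\C$, so $\partial(f)\in\I(A,B)$; each summand is therefore again of the required form. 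Hence this span is already a dg ideal in $\C'$ containing $\I$, and since any dg ideal containing $\I$ must contain all such triple compositions, this span equals $\cJ$.

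The inclusion $\I(X,Y)\subseteq\cJ(X,Y)$ for $X,Y\in\C$ is trivial via $g=\id_Y$, $h=\id_X$. For the reverse inclusion, I would take an arbitrary element $\sum_i g_i\circ f_i\circ h_i$ of $\cJ(X,Y)$ with $X,Y\in\C$, where $f_i\in\I(A_i,B_i)$, $h_i\in\Hom_{\C'}(X,A_i)$, $g_i\in\Hom_{\C'}(B_i,Y)$. Because $A_i,B_i,X,Y$ all lie in $\C$ and $\C$ is a \emph{full} dg subcategory of $\C'$, the morphisms $h_i$ and $g_i$ are in fact morphisms in $\C$. Applying that $\I$ is a dg ideal in $\C$, each composite $g_i\circ f_i\circ h_i$ lies in $\I(X,Y)$, and hence so does the sum.

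I do not anticipate any real obstacle here: once the dg ideal generated by $\I$ is described as the set of such triple compositions (which is the only slightly non-trivial step, as it requires checking $\partial$-closure), the conclusion is forced by fullness. The lemma would fail without fullness, since then the ambient $\C'$-morphisms pre- and post-composing $f\in\I$ need not themselves belong to $\C$, and one could produce morphisms in $\C(X,Y)$ factoring through $\I$ in $\C'$ but not in $\C$.
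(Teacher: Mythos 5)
Your proof is correct. The paper in fact states Lemma~\ref{idealtrivial} without proof (the proof environment that follows in the source belongs to Lemma~\ref{idealonoverline}), and your argument — describing the dg ideal generated by $\I$ in $\C'$ as the span of triple composites $g\circ f\circ h$ with $f\in\I$, checking closure under $\partial$ via the Leibniz rule and the fact that $\I$ is itself closed under $\partial$, and then invoking fullness to see that the outer factors $g,h$ already lie in $\C$ — is exactly the standard argument the authors evidently had in mind.
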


\begin{lemma}\label{idealonoverline}
Suppose $\C$ is a dg category equivalent to $\widehat{\{X\}}$ for some $X\in \C$ and let $\I$ be a dg ideal in $\C$. Then $\I$ is completely determined on $\overline{\{X\}}$.  Thus, the subset $\I\cap\End_{\C}(X)$ generates $\I$.
\end{lemma}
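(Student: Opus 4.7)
The plan is to proceed in two steps: first, to show that the dg ideal $\I$ is fully determined by its intersection with the morphism spaces of $\overline{\{X\}}$, exploiting the description of $\C\simeq\widehat{\{X\}}$ as the dg-summand closure of $\overline{\{X\}}$; second, to show that the restriction of $\I$ to $\overline{\{X\}}$ is in turn generated by $\I\cap\End_\C(X)$, using the matrix description of morphisms between twisted complexes together with the shift isomorphisms identifying $\Hom_\C(X\shift{k},X\shift{l})$ with $\End_\C(X)$ up to degree shift.

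For the first step, any $Y\in\C$ is, by definition of the thick closure, a dg direct summand of some $\tilde Y\in\overline{\{X\}}$, so there exist dg morphisms $i_Y\colon Y\to\tilde Y$ and $p_Y\colon\tilde Y\to Y$ with $p_Y\circ i_Y=\id_Y$, and similarly for $Z$. I would then write any $f\colon Y\to Z$ as $f=p_Z\circ(i_Z\circ f\circ p_Y)\circ i_Y$, so that $i_Z\circ f\circ p_Y\colon\tilde Y\to\tilde Z$ is a morphism in $\overline{\{X\}}$. Since $\I$ is closed under left and right composition, $f\in\I$ if and only if $i_Z\circ f\circ p_Y\in\I$, and $\I$ is recovered from its restriction to $\overline{\{X\}}$.

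For the second step, an object of $\overline{\{X\}}$ has the form $Y=(\bigoplus_{i=1}^s X\shift{k_i},\alpha)$, and a morphism $f\colon Y\to Z$ into a similar object is a matrix $(f_{ji})$ with $f_{ji}\colon X\shift{k_i}\to X\shift{l_j}$. Writing $f=\sum_{j,i}\iota_j^Z\circ f_{ji}\circ\pi_i^Y$, where $\iota_i^Y\colon X\shift{k_i}\to Y$ and $\pi_j^Z\colon Z\to X\shift{l_j}$ are the standard matrix inclusions and projections, the ideal property then yields $f\in\I$ if and only if each $f_{ji}\in\I$. Composing further with the canonical shift morphisms between $X$ and its shifts transports each $f_{ji}$ to an element of $\End_\C(X)$ lying in $\I$, exhibiting $f$ as a linear combination of compositions of arbitrary morphisms in $\C$ with elements of $\I\cap\End_\C(X)$, that is, as a member of the dg ideal generated by $\I\cap\End_\C(X)$.

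The principal subtlety I anticipate is that the matrix inclusions $\iota_i^Y$ and projections $\pi_j^Z$ on twisted complexes are generally \emph{not} dg morphisms, since differentiating them pulls out components of $\alpha$ or $\beta$. This might seem to obstruct the matrix decomposition argument, but the obstruction is only apparent: a dg ideal is closed under composition with \emph{all} morphisms, not only dg ones, while the $\del$-closure condition enters only when generating an ideal from a set closed under $\del$, which is automatic for $\I\cap\End_\C(X)$.
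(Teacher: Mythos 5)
Your proof is correct and follows essentially the same route as the paper: the paper likewise reduces from $\widehat{\{X\}}$ to $\overline{\{X\}}$ by writing hom-spaces as $e_Z\Hom(Y',Z')e_Y$ for dg idempotents (your splittings $i,p$ are just the split form of those idempotents) and using that an ideal absorbs composition with arbitrary, not necessarily dg, morphisms. The only difference is that the paper stops there and leaves the further reduction to $\I\cap\End_\C(X)$ implicit, whereas you spell it out via the matrix components and shift isomorphisms — correctly, including the observation that the non-dg nature of the inclusions and projections is harmless.
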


\begin{proof}
Let $Y, Z\in \widehat{\{X\}}$. Then there exist objects $Y', Z'\in \overline{\{X\}}$ and dg idempotents $e_Y\in \End_{\overline{\{X\}}}(Y')$ and $e_Z\in \End_{\overline{\{X\}}}(Z')$ such that $ \Hom_{\widehat{\{X\}}}(Y,Z)=  e_Z\Hom_{\widehat{\{X\}}}(Y',Z')e_{Y}$.
We claim that 
$\Hom_{\I}(Y, Z) \cong e_Z\Hom_{\I}(Y', Z')e_Y$.

Indeed, the inclusion $\End_{\I}(Y, Z) \supseteq e_Z\End_{\I}(Y', Z')e_Y$ is trivial from $\I$ being an ideal. An element of $\Hom_{\I}(Y, Z)$ is of the form $e_Z\circ f\circ e_Y = e_Z\circ (e_Z\circ f\circ e_Y) \circ e_Y$ and  $f=e_Z\circ f\circ e_Y \in \Hom_{\I}(Y',Z')$, so the inclusion $\End_{\I}(Y, Z) \subseteq e_Z\End_{\I}(Y', Z')e_Y$ also holds. This proves the lemma.
\end{proof}

Given a dg ideal $\I$ in $\C$ we denote by $\ov{\I}$ the dg ideal of morphisms in $\ov{\C}$ with components contained in $\I$.

\begin{lemma}\label{quotientpretri}
Let $\C$ be a dg category and $\I$ a dg ideal in $\C$. 
\begin{enumerate}[(a)]
\item \label{quotientpretri1}
If $\J$ is a dg ideal in $\ov{\C}$, then $\J=\ov{\left.\J\right|_{\C}}$, where $\left.\J\right|_{\C}$ is the restriction of $\J$ to $\C$.
\item \label{quotientpretri2}
There is a fully faithful dg functor $\ov{\C}/\ov{\I}\hookrightarrow \ov{\big(\C/\I\big)}$.
\item \label{quotientpretri3}
Assume that $\I$ has the property that if $\del(f)$ is in $\I$, then $f$ itself is in $\I$. Then the dg functor from \eqref{quotientpretri2} becomes a dg equivalence.  
\end{enumerate}
\end{lemma}
\begin{proof}
\eqref{quotientpretri1} Denote $\I=\ov{\left.\J\right|_{\C}}$. Let $f=(f_{ij})\in \J$, then using the injection $\iota_i$ and projection $\pi_j$, $f_{ij}=\pi_jf\iota_i\in \J$. Thus, $f\in \ov{\I}$. Conversely, $\I\subseteq \J$ implies $\ov{\I}\subseteq \J.$

\eqref{quotientpretri2} Note that the quotient dg functor $P\colon \C\to \C/\I$ induces a dg functor $\ov{P}\colon \ov{\C}\to \ov{\big(\C/\I\big)}$. On morphisms in $\ov{\C}$, this functor is given by component-wise application of $\pi$. Thus, the kernel of $\ov{P}$ is precisely the ideal $\ov{\I}$.

\eqref{quotientpretri3} It remains to show that the quotient $\ov{\C}/\ov{\I}$ is closed under cones. Given a dg morphism $f$ in $\ov{\C}$ maps to the cone of the image of this morphism in $\ov{\C}/\ov{\I}$. Under the assumptions, any dg morphism in $\ov{\C}/\ov{\I}$ is the image of a dg morphism in $\ov{\C}$, proving the claim. 
\end{proof}

\begin{remark}
Under additional conditions, see, e.g., \cite[Lemma~3.4]{CW}, the dg category $\ov{\C}/\ov{\I}$ is itself pretriangulated and hence dg equivalent to  $\ov{\C/\I}$.
\end{remark}

It follows that if $\C$ has a generator $X$, then $\ov{\C/\I}$ has a generator, which is the image of $X$ under the projection functor.

\begin{lemma}\label{Ccirctri}
If $\C$ is a pretriangulated category, then the dg idempotent completion $\C^\circ$ is also pretriangulated and $\iota\colon \C\to\C^\circ$ displays $\C$ as a full pretriangulated subcategory. In particular, this implies $\ov{\C}^\circ\simeq\ov{\ov{\C}^\circ}$ for any dg category $\C$.
\end{lemma}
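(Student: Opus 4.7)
The plan is to show that $\C^\circ$ is pretriangulated by checking directly that the Yoneda image of $\C^\circ$ in $\C^\circ\dgmod$ is closed under shifts and cones, leveraging the same closure in $\C$ together with the fact that $\C^\circ$ is dg idempotent complete by construction. A preliminary observation I will use is that a pretriangulated $\C$ automatically has biproducts (obtained as cones of zero morphisms $X\shift{-1}\to Y$), and that these biproducts pass to $\C^\circ$ via diagonal dg idempotents.

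For shifts, given $X_e\in\C^\circ$, pretriangulatedness of $\C$ produces $Y\in\C$ with $Y^\vee\cong X^\vee\shift{1}$ in $\C\dgmod$; transporting $e$ through this isomorphism yields a dg idempotent $e'\in\End_\C(Y)$ such that $Y_{e'}^\vee\cong X_e^\vee\shift{1}$ in $\C^\circ\dgmod$, i.e.\ $X_e\shift{1}\cong Y_{e'}$. For cones, the key point is that a closed degree-zero morphism $f\colon X_e\to Y_g$ in $\C^\circ$ is by definition an element $f=gfe\in\Hom_\C(X,Y)$, so it may be viewed as a morphism $X\to Y$ in $\C$, with cone $C_f\in\C$ by pretriangulatedness. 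In $\C^\circ$ the dg idempotents $e,1-e$ and $g,1-g$ split $X\cong X_e\oplus X_{1-e}$ and $Y\cong Y_g\oplus Y_{1-g}$, and under these decompositions $f$ becomes the block matrix $\mat{f & 0\\ 0 & 0}$. Consequently
\[
C_f\cong \mathrm{cone}(X_e\xrightarrow{f}Y_g)\oplus Y_{1-g}\oplus X_{1-e}\shift{1}
\]
in $\C^\circ\dgmod$. The last two summands are representable in $\C^\circ$ by the shift step, so $\mathrm{cone}(X_e\xrightarrow{f}Y_g)$ is a dg direct summand of $C_f\in\C\subseteq\C^\circ$ and hence is itself representable by dg idempotent completeness of $\C^\circ$.

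The full pretriangulated subcategory statement then falls out: $\iota$ is fully faithful by construction, and its essential image is closed under shifts and cones because both operations are computed exactly as in $\C$. The corollary follows by combining this with the standard observation that the twisted-complex construction is idempotent on pretriangulated categories, i.e.\ $\ov{\D}\simeq\D$ for any pretriangulated $\D$, since every one-sided twisted complex can be realized iteratively via cones and shifts that already live in $\D$. Applied to $\D=\ov{\C}^\circ$ (pretriangulated by the first part, since $\ov{\C}$ is pretriangulated) this gives $\ov{\ov{\C}^\circ}\simeq\ov{\C}^\circ$. The main obstacle in this plan is the cone step: one has to first establish that biproducts in $\C^\circ$ behave well, and then identify the correct block-matrix decomposition to extract $\mathrm{cone}(X_e\to Y_g)$ as an idempotent summand of $C_f$; after that, everything is bookkeeping.
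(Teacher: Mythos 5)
Your proposal is correct and follows essentially the same route as the paper: the paper also realizes the cone of $f\colon X_e\to Y_g$ as the dg idempotent summand of the cone of the underlying morphism $X\to Y$ in $\C$, writing down the explicit idempotent $\mat{g&0\\0&e}$ on $\big(Y\oplus X\shift{1},\mat{0&f\\0&0}\big)$ rather than phrasing it via the four-block decomposition $X\cong X_e\oplus X_{1-e}$, $Y\cong Y_g\oplus Y_{1-g}$. Your treatment of the shift and of the final claim $\ov{\C}^\circ\simeq\ov{\ov{\C}^\circ}$ matches what the paper states or leaves implicit.
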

\begin{proof}
It is clear that $X_e\shift{1}=(X\shift{1})_e$. To show that $\C^\circ$ is closed under cones, we observe that for a dg morphism $g=fge\colon X_e\to Y_f$ in $\C^\circ$,
the object $Z_{\mat{f&0\\0&e}}$, for
\begin{align*}Z=\bigg(Y\oplus X\shift{1}, \mat{0&-g\\0&0}\bigg)\end{align*}
is the cone of $g$ in $\C^\circ$. This shows $\C^\circ$ is pretriangulated. 
\end{proof}

\subsection{Compact objects}\label{compactsect}

In this section, we give a diagrammatic description of the dg category of compact objects in $\C\dgmod$, for a dg category $\C$.

 Let $\vv{\C}$ denote the category
\begin{itemize}
\item  whose objects are diagrams of the form $X_1\stackrel{x}{\to} X_0$ for $X_0,X_1\in \overline{\C}$ and $x$ a dg morphism in $\ov{\C}$;
\item  whose morphism are pairs $(\phi_0,\phi_1)$ of morphisms in $\overline{\C}$ producing solid commutative diagrams of the form
\begin{equation}\label{vvmorph}
\vcenter{\hbox{
\xymatrix{ X_1 \ar[rr]^{x} \ar[d]^{\phi_1}&&\ar@{-->}[dll]_{\eta} X_0 \ar[d]^{\phi_0}  \\ 
Y_1\ar[rr]^{y} &&Y_0,
}}}
\end{equation}
modulo the dg submodule generated by diagrams where there exists a morphism $\eta$, as indicated by the dashed arrow, such that $\phi_0=y\eta$;
\item the differential of such a pair $(\phi_0,\phi_1)$ is simply the component-wise differential in $\ov{\C}$.
\end{itemize}

Alternatively, one could define $\vv{\C}$ as the quotient of the category of dg functors from the path category of the quiver $1\to 0$ to $\Bbbk\dgmod$ by the ideal defined in the second bullet point.

We note that $\ov{\C}$ is a full dg subcategory of $\vv{\C}$ by mapping $X$ to $0\to X$. We denote this inclusion functor by $\Theta$, but will usually just write $X$ instead of $\Theta(X)$ or $0\to X$. We denote by $\Theta^*\colon \vv{\C}\dgmod \to \ov{\C}\dgmod\simeq \C\dgmod$ the restriction functor sending $M$ to $M\circ \Theta$.

Observe that, for $X_0,X_1, Z\in \ov{\C}$, 
\begin{equation}\label{homfromovC}
\Hom_{\vv{\C}}(Z, X_1 \xrightarrow{x} X_0)  \cong \coker(\Hom_{\ov{\C}}(Z, X_1) \xrightarrow{x\circ (-)}\Hom_{\ov{\C}}(Z, X_0) )
\end{equation}
and for $X_0,X_1, Z_0, Z_1\in \ov{\C}$, 
\begin{equation}\begin{split}\label{hominvvC}
\Hom_{\vv{\C}}&(Z_1\xrightarrow{z} Z_0, X_1 \xrightarrow{x} X_0) \\
&\cong \ker( \Hom_{\vv{\C}}(Z_0, X_1 \xrightarrow{x} X_0) \xrightarrow{ (-)\circ z} \Hom_{\vv{\C}}(Z_1, X_1 \xrightarrow{x} X_0))
.\end{split}\end{equation}
Note that the components $\phi_0$ and $\phi_1$ of a morphism as in \eqref{vvmorph} are sums of homogeneous morphisms of necessarily the same degrees, and $\eta$ is necessarily also a sum of homogeneous morphisms of those degrees.

Any dg functor $F\colon \overline{\C}\to\overline{\DD}$ extends to a dg functor 
$\vv{F}\colon\vv{\C}\to \vv{\DD}$ component-wise, that is,
\begin{align*}\vv{F}(X_1\xrightarrow{x} X_0)=F(X_1)\xrightarrow{F(x)} F(X_0).\end{align*}

Consider the two Yoneda embeddings
\begin{align*}\Upsilon_{{\C}}\colon \ov{\C}\to \C\dgmod, \qquad X\mapsto X^\vee=\Hom_{\ov{\C}}(-,X)\end{align*}
and 
\begin{align*}\Upsilon_{\vv{\C}}\colon\vv{\C}\to \vv{\C}\dgmod \qquad (X_1\stackrel{x}{\to} X_0)\mapsto \Hom_{\vv{\C}}(-, (X_1\stackrel{x}{\to} X_0)).\end{align*}
Let $\Gamma \colon {\C}\dgmod \to \vv{\C}\dgmod $ be defined by 
\begin{align*}\Gamma(M) (Z_1\xrightarrow{z} Z_0) = \ker (M(Z_0)\xrightarrow{M(z)} M(Z_1)).\end{align*} This is a dg functor and $\Theta^*\Gamma \cong \Id_{\ov{\C}\dgmod }$, which implies that $\Gamma$ is faithful.

\begin{lemma}\label{Yonedafactor}
There is a fully faithful dg functor \begin{align*}\Upsilon \colon \vv{\C}\to \C\dgmod, \;\; (X_1\stackrel{x}{\to} X_0)\mapsto \coker(\Upsilon_{{\C}} (x))\end{align*}
inducing a commutative diagram (up to isomorphism)
\begin{align*}\xymatrix{
\vv{\C}\ar^{\Upsilon_{\vv{\C}}}[rr]\ar^{\Upsilon}[dr]&& \vv{\C}\dgmod\\
&\C\dgmod\ar^{\Gamma}[ur]&.
}\end{align*}
\end{lemma}

\begin{proof}
It is clear that $\Upsilon$ defines a dg functor. Assume that the diagram commutes. Then, since $\Upsilon_{\vv{\C}}$ is full and faithful, we see that $\Upsilon$ is faithful and $\Gamma$ is full. Since $\Gamma$ is also faithful, this then implies that $\Upsilon$ is also full. So it remains to show that the diagram commutes.

First, let $X\in \ov{\C}$ and identify $X$ with its image in $\vv{\C}$. Then by construction $\Upsilon(X) = X^\vee$, so 
\begin{equation*}\begin{split}
\Gamma \Upsilon (X^\vee) (Z_1\xrightarrow{z}Z_0) &= \ker (X^\vee(Z_0) \xrightarrow{X^\vee(z)} X^\vee(Z_1)) \\
&= \ker (\Hom_{\ov{\C}} (Z_0, X) \xrightarrow {-\circ z}\Hom_{\ov{\C}} (Z_1, X) ) \\
&\cong \Hom_{\vv{\C}}(Z_1\xrightarrow{z}Z_0, X) \\
&= \Upsilon_{\vv{\C}}(X) (Z_1\xrightarrow{z}Z_0).
\end{split}\end{equation*}

Next, consider an object $X_1 \xrightarrow{x} X_0$ in $\vv{\C}$. Then, by definition,
\begin{align*}\Gamma \Upsilon (X_1 \xrightarrow{x} X_0) (Z_1\xrightarrow{z} Z_0)\cong \ker (\coker(\Upsilon_{{\C}} (x))(Z_0) \to \coker(\Upsilon_{{\C}} (x))(Z_1))  \end{align*}
Now $\coker(\Upsilon_{{\C}} (x))(Z_i) \cong\Hom_{\vv{\C}}(Z_i, X_1\xrightarrow{x}X_0)$ by \eqref{homfromovC}, so 
\begin{equation*}\begin{split}
\Gamma \Upsilon (X_1 \xrightarrow{x} X_0) (Z_1\xrightarrow{z} Z_0) &\cong \ker( \Hom_{\vv{\C}}(Z_0, X_1 \xrightarrow{x} X_0) \xrightarrow{ -\circ z} \Hom_{\vv{\C}}(Z_1, X_1 \xrightarrow{x} X_0))\\
&\overset{\eqref{hominvvC}}{\cong} \Hom_{\vv{\C}}(Z_1\xrightarrow{z} Z_0, X_1 \xrightarrow{x} X_0)\\
&=\Upsilon_{\vv{\C}} (X_1 \xrightarrow{x} X_0) (Z_1\xrightarrow{z} Z_0)
\end{split}\end{equation*}
proving $\Upsilon_{\vv{\C}} (X_1 \xrightarrow{x} X_0) \cong \Gamma \Upsilon (X_1 \xrightarrow{x} X_0) $, as required.
\end{proof}

\begin{lemma}
The category $\vv{\C}$ is pretriangulated.
\end{lemma}
\begin{proof}
Direct sums and shifts can be taken component-wise, so it suffices to check that $\vv{\C}$ is closed under cones in the sense that for any dg morphism $\phi$ in $\vv{\C}$, there exists an object in $\vv{\C}$ which represents the cone of $\Upsilon_{\vv{\C}}(\phi)$ in $\vv{\C}\dgmod$. In light of Lemma \ref{Yonedafactor} and since any dg functor preserves cones, it suffices to find an object $C_\phi$ in $\vv{\C}$ such that $\Upsilon(C_\phi)$ is the cone of the morphism $\Upsilon (\phi)$ in $\C\dgmod$.

For this, consider a dg morphism 
\begin{equation}\label{dgmor}
\phi = \vcenter{\hbox{
\xymatrix{ X_1 \ar[rr]^{x} \ar[d]^{\phi_1}&&
 X_0 \ar[d]^{\phi_0}  \\ 
Y_1\ar[rr]^{y} &&Y_0.
}}}
\end{equation}
In particular, there exists $\eta\colon X_0\to Y_1$ such that $\del \phi_0+y\eta=0.$ We observe that, by replacing $Y_1\xrightarrow{y}Y_0$ by the dg isomorphic object 
\begin{align*}X_0\shift{-1}\oplus Y_1\xrightarrow{(-\del(\phi_0),~y)} Y_0,\end{align*}
we can assume that $\del \eta=0$.

Thus, we now consider a dg morphism $\phi$ as in \eqref{dgmor} with $\del \eta=0$.
We claim that the cone $C_\phi = (C_{\phi,1} \xrightarrow{c_\phi} C_{\phi,0})$ is then given by 
\begin{align*}
\xymatrix@R=30pt{
C_{\phi,1}=
\left(
 Y_1\oplus Y_1\shift{1}\oplus X_0\oplus X_1\oplus X_1 \shift{1}, {\left(\begin{smallmatrix}
0&-\id&-\eta&\del\phi_1+\eta x&-\phi_1\\
0&0&0&0&0\\
0&0&0&0&-x\\
0&0&0&0&-\id\\
0&0&0&0&0
\end{smallmatrix}\right)}
\right)
\ar[d]_{c_{\phi}=\left(\begin{smallmatrix}
y&0&0&0&0\\
0&\id&\eta&0&0\\
0&0&0&0&x
\end{smallmatrix}\right)}
\\
C_{\phi,0}=\left(Y_0\oplus Y_1\shift{1}\oplus X_0\shift{1}, {\left(\begin{smallmatrix}
0&-y&-\phi_0\\
0&0&-\eta\\
0&0&0
\end{smallmatrix}\right)}
\right)
.}
\end{align*}

To prove this, we need to compare the two following objects. On the one hand, we have $\Upsilon(C_\phi)$, which is the cokernel of $c_\phi \circ (-) \colon C_{\phi,1}^\vee \to C_{\phi,0}^\vee$ in $\C\dgmod$. On the other hand, $\phi$ induces a morphism $\Upsilon(\phi)$ between the cokernels of $x\circ (-)$ and $y\circ (-)$ in $\C\dgmod$, and we can consider the cone of $\Upsilon(\phi)$. We need to verify that these two objects are isomorphic in  $\C\dgmod$. 

For any $Z$ in $\C$, consider 
\begin{align*}\Upsilon(C_\phi)(Z)\cong\coker \left(  \Hom_{\ov{\C}}(Z, C_{\phi,1})\xrightarrow{c_\phi \circ (-)}  \Hom_{\ov{\C}}(Z, C_{\phi,0})\right) \in \Bbbk\dgmod.\end{align*}
Direct computation shows that the upper commutative square in the diagram
\begin{align*}
\xymatrix@R=8pt{
\Hom_\C(Z,Y_1\oplus Y_1\shift{1}\oplus X_0\oplus X_1\oplus X_1 \shift{1})\ar[rr]^-{\left(\begin{smallmatrix}
\id&0&0&0&0\\
0&0&0&0&\id
\end{smallmatrix}\right)\circ(-)}
\ar[dddd]_{\left(\begin{smallmatrix}
y&0&0&0&0\\
0&\id&\eta&0&0\\
0&0&0&0&x
\end{smallmatrix}\right)\circ(-)}
\ar[rrdddd]|-{\left(\begin{smallmatrix}
y&0&0&0&0\\
0&0&0&0&x
\end{smallmatrix}\right)\circ(-)}
&&\Hom_\C(Z,Y_1\oplus X_1 \shift{1})
\ar[dddd]_{\left(\begin{smallmatrix}
y&0\\
0&x
\end{smallmatrix}\right)\circ(-)}
\\\\\\\\
\Hom_\C(Z,Y_0\oplus Y_1\shift{1}\oplus X_0\shift{1})\ar[rr]_-{\left(\begin{smallmatrix}
\id&0&0\\
0&0&\id
\end{smallmatrix}\right)\circ(-)}
\ar@{->>}[ddd]&&
\Hom_\C(Z,Y_0\oplus X_0\shift{1})
\ar@{->>}[ddd]\\\\
\\
\Upsilon(C_\phi)(Z)
\ar[rr]^-{\sim} &&\Upsilon(Y)(Z)\oplus \Upsilon(X\shift{1})(Z)
}
\end{align*}
induces an isomorphism of $\Bbbk$-modules on cokernels as indicated. It remains to show that this isomorphism on cokernels is compatible with the differentials. Every element in  $\Upsilon(C_\phi)(Z)$ can be represented as $\left(\begin{smallmatrix}
\alpha\\
0\\
\beta
\end{smallmatrix}\right)$, for morphisms  $\alpha\colon Z\to Y_0$ and $\beta\colon Z\to X_0\shift{1}$ in $\C$ and we have 
\begin{align*}\del_{\Upsilon(C_\phi)}\begin{pmatrix}
\alpha\\
0\\
\beta
\end{pmatrix}
= \begin{pmatrix}
\del\alpha\\
0\\
\del\beta
\end{pmatrix}
+
\begin{pmatrix}
0&-y&-\phi_0\\
0&0&-\eta\\
0&0&0
\end{pmatrix}
\begin{pmatrix}
\alpha\\
0\\
\beta
\end{pmatrix}=
\begin{pmatrix}
\del\alpha - \phi_0\beta\\
-\eta\beta\\
\del\beta
\end{pmatrix}.
\end{align*}
By the induced map on cokernels, this element is mapped to 
\begin{equation*}\begin{split}
\begin{pmatrix}
\id&0&0\\
0&0&\id
\end{pmatrix}
\left(\del_{\Upsilon(C_\phi)}\begin{pmatrix}
\alpha\\
0\\
\beta
\end{pmatrix}\right)
&=
\begin{pmatrix}
\id&0&0\\
0&0&\id
\end{pmatrix}
\begin{pmatrix}
\del\alpha - \phi_0\beta\\
-\eta\beta\\
\del\beta
\end{pmatrix}
\\
&= \begin{pmatrix}
\del\alpha - \phi_0\beta\\
\del\beta
\end{pmatrix}\\
&= \begin{pmatrix}
\del\alpha\\
\del\beta
\end{pmatrix}
+ \begin{pmatrix}
0&-\phi_0\\
0&0
\end{pmatrix}
\begin{pmatrix}
\alpha\\
\beta
\end{pmatrix}\\
& = \del_{C_{\Upsilon(\phi)}}\begin{pmatrix}
\alpha\\
\beta
\end{pmatrix}\\
&= \del_{C_{\Upsilon(\phi)}}\left(\begin{pmatrix}
\id&0&0\\
0&0&\id
\end{pmatrix}
\begin{pmatrix}
\alpha\\
0\\
\beta
\end{pmatrix}\right),
\end{split}\end{equation*}
so the induced map on the cokernels indeed commutes with the differential.
\end{proof}

We call an object $X$ in a dg category $\DD$ \textbf{compact} in $\DD$ if the \emph{corepresentable} functor
\begin{align*}\Hom_{\DD}(X,-)\colon \DD\to \Bbbk\dgmod\end{align*}
commutes with filtered conical colimits, see e.g. \cite[Section~7.5]{Ri}. More precisely, for a directed set $I$ and a diagram $(X_i)_{i\in I}$ of objects in $\DD$ with dg morphisms between them, there is a canonical dg isomorphism
\begin{align*}\Hom_{\DD}(X,\colim_{i\in I} X_i)\cong \colim_{i\in I} \Hom_{\DD}(X,X_i),\end{align*}
provided that the conical colimit $\colim_{i\in I} X_i$ exists in $\DD$. 
Setting $\DD=\C\dgmod$, we denote the full dg subcategory of $\C\dgmod$ on compact objects by $\C\cdgmod$.

We note that $\Bbbk\dgmod$ is complete and cocomplete with respect to small weighted (and hence, in particular, conical) limits and colimits. This follows from \cite[Corollary~7.6.4]{Ri} using \cite[Example~3.7.5]{Ri} and that $\Bbbk\dgmod$ is the category of cochain complexes of $\Bbbk$-vector spaces enriched over itself. Note that this implies that $\C\dgmod$ is also (co)complete under weighted (co)limits by \cite[Section~3.3]{K} with (co)limits being computed object-wise.

\begin{lemma}\label{Yoncompact}
The image of $\ov{\C}$ under the Yoneda embedding is contained in $\C\cdgmod$.
\end{lemma}
\begin{proof}
We first show that for $X$ in $\C$, $X^\vee$ is compact in $\C\dgmod$. Indeed, for a directed set $I$ and a  diagram $(M_i)_{i\in I}$ in $\C\dgmod$ with dg morphisms, we have a chain of dg isomorphisms
\begin{gather*}
\Hom_{\C\dgmod}(X^\vee,\colim_{i\in I} M_i)\cong \big(\colim_{i\in I} M_i\big)(X) \\
\cong\colim_{i\in I}M_i(X)
\cong \colim_{i\in I}\Hom_{\C\dgmod}(X^\vee,M_i),
\end{gather*}
where the first and last isomorphisms use the dg Yoneda Lemma of \cite[Lemma~7.3.5]{Ri}, the second isomorphism follows from the discussion preceding this lemma.

Note that the inclusion of $\Bbbk\dgmod$ into $\ov{\Bbbk\dgmod}$ is a dg equivalence. Thus, sending a dg $\C$-module $M$ to the dg $\ov{\C}$-module $\ov{M}$ gives a dg equivalence of $\C\dgmod$ and $\ov{\C}\dgmod$. This proves the claim.
\end{proof}

Note that this implies that the category of finitely-generated semifree dg $\C$-modules consists of compact objects.

\begin{lemma}\label{filtered}
Every object in $\C\dgmod$ is isomorphic to a filtered conical colimit of objects in the image of $\Upsilon\colon \vv{\C}\to \C\dgmod$.
\end{lemma}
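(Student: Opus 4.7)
The plan is to realize $M$ explicitly as a filtered colimit indexed by pairs $(V, \phi)$ with $V \in \vv{\C}$ and $\phi\colon Y(V) \to M$ a closed degree-zero morphism in $\C\dgmod$. This mimics the classical presentation of a module as the filtered colimit of maps into it from finitely presented modules; here the image of $\vv{\C}$ under $Y$ plays the role of the ``finitely presented'' dg modules, since its objects are cokernels of morphisms between finite iterated cones of shifted representables.

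I would first define an index category $\I_M$ whose objects are such pairs $(V, \phi)$ and whose morphisms $(V, \phi) \to (V', \phi')$ are closed degree-zero morphisms $f \colon V \to V'$ in $\vv{\C}$ satisfying $\phi' \circ Y(f) = \phi$. Checking filteredness proceeds in three steps. Non-emptiness is trivial ($(0,0) \in \I_M$), and binary amalgamation uses the strict direct sum $V_1 \oplus V_2$ in $\vv{\C}$ with morphism $(\phi_1, \phi_2)$. The delicate part is the coequalizer-style step: given parallel $f, g \colon (V, \phi) \to (V', \phi')$, the difference satisfies $\phi' \circ Y(f - g) = 0$ in $\C\dgmod$, so $\phi'$ factors through the cokernel of $Y(f - g)$. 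To realize this cokernel as $Y(V'')$ for some $V'' \in \vv{\C}$, I propose the following construction: writing $V = (X_1 \xrightarrow{x} X_0)$, $V' = (X_1' \xrightarrow{x'} X_0')$, and choosing a representative of $f - g$ with $X_0$-component $\xi \colon X_0 \to X_0'$, take $V'' = (X_1' \oplus X_0 \xrightarrow{(x', \xi)} X_0')$ in $\vv{\C}$. The natural inclusion furnishes $h \colon V' \to V''$, and since $Y$ is fully faithful on morphism spaces, the identity $Y(h)Y(f) = Y(h)Y(g)$ in $\C\dgmod$ lifts to $hf = hg$ in $\vv{\C}$; the closed degree-zero factorization $\phi'' \colon Y(V'') \to M$ of $\phi'$ then provides the required cocone.

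Next, I would assemble the family $\{\phi\}$ into a canonical dg morphism $\alpha \colon \colim_{\I_M} Y(V) \to M$ and show it is a dg isomorphism componentwise. For surjectivity, I would treat homogeneous elements $m \in M(X)^n$ in two cases. A closed $m$ corresponds via Yoneda to a closed degree-zero morphism $\phi_m \colon X^\vee\shift{-n} \to M$, and the object $(0 \to X\shift{-n}, \phi_m) \in \I_M$ visibly hits $m$ as the image of $\id_X$. For a non-closed $m$, I would use the contractible twisted complex $T = C_{\id_{X\shift{-n-1}}} \in \ov{\C}$: a closed degree-zero morphism $Y(T) \to M$ is equivalent to specifying a pair $(m, \del m)$ with $m \in M(X)^n$ arbitrary, so any such $m$ lies in the image of some $(0 \to T, \phi) \in \I_M$. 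Injectivity of $\alpha$ is the standard bookkeeping for filtered colimits, using that any relation between elements of two summands already holds after further composition with some morphism in $\I_M$, which is ensured by the same cofiber construction employed in the coequalizer step.

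The main obstacle will be the cofiber construction in $\vv{\C}$ used to verify filteredness of $\I_M$. I need to work explicitly with the equivalence relation in the definition of morphisms in $\vv{\C}$, check that the proposed $V''$ is independent of the chosen representative of $f - g$ up to isomorphism in $\vv{\C}$, and confirm that its Yoneda image really computes the cokernel of $Y(f-g)$ in $\C\dgmod$. Full faithfulness of $Y$ on morphism spaces is essential for promoting identities in $\C\dgmod$ to identities in $\vv{\C}$. Once this bookkeeping is in place, the remaining steps are a routine translation to the dg setting of the classical presentation of a module as a filtered colimit of finitely presented modules, with iterated cones in $\ov{\C}$ replacing finite direct sums.
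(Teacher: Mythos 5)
Your argument is correct in outline, but it takes a genuinely different route from the paper. The paper chooses a semi-free resolution $\pi\colon N\twoheadrightarrow M$ (via Keller), uses the filtration $F^1N\subseteq F^2N\subseteq\cdots$ of $N$ by free dg modules, and exhibits $M$ as the filtered colimit of the subquotients $F^iN/U$ over the directed set of pairs $(i,U)$ with $U$ a dg submodule of $\ker\pi\cap F^iN$ admitting a surjection from a compact semi-free module; each such subquotient is visibly a cokernel of a map in $\ov{\C}$, hence in the image of $Y$. You instead take the canonical (comma-category) diagram of all closed degree-zero maps $Y(V)\to M$ with $V\in\vv{\C}$ and prove it is filtered and has colimit $M$. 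Your approach is choice-free and arguably cleaner as a statement (it is exactly the assertion that the compact modules are dense in $\C\dgmod$), and it only relies on ingredients the paper has already established: full faithfulness of $Y$ on morphism spaces and the existence of cokernels in $\Z(\vv{\C})$ (Corollary~\ref{cokerlemma}), whose explicit formula is precisely your proposed $V''=(X_1'\oplus X_0\to X_0')$, so that $h\circ(f-g)=0$ holds by the universal property without any lifting argument. The price is the extra bookkeeping you identify: verifying filteredness (in particular the coequalizer step) and the cone-on-the-identity trick to capture non-closed elements of $M(X)$ in both the surjectivity and injectivity arguments; note that for injectivity the relevant observation is that $\phi_X(v)=0$ forces $\phi_X(\del v)=0$ as well, so the pair $(v,\del v)$ defines a closed map from a contractible twisted complex whose composite with $\phi$ vanishes and whose cokernel kills $v$. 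The paper's route avoids this case analysis by hiding it inside the existence of semi-free resolutions, at the cost of invoking that external result and of the slightly delicate claim that every element of the kernel lies in a submodule receiving a surjection from a compact semi-free module. Both proofs are valid; yours is self-contained relative to Section~2.3 of the paper, while the paper's is shorter on the page.
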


\begin{proof}Let $M$ be a dg module over $\C$. 
By \cite[Section 3.2]{Ke} we find a semi-free dg module $N$ (with a possibly infinite filtration by free dg modules) together with a surjective dg morphism $N\stackrel{\pi}{\to}M$. Denote $K:=\ker \pi$. As $N$ is semi-free, there exists a filtration $0\subseteq F^1N\leq F^2N\subseteq ...$ of $N$ with free subquotients. We denote by $\mathcal{S}$ the set of pairs $(i,U)$ where $i\geq 0$ and $U$ is a dg submodule of $K\cap F^iN$ such that there exists a compact semifree dg $\C$-module  with a dg surjection onto $U$. It follows that $\mathcal{S}$ is a directed set, where $(i,U)\leq (j,V)$ if and only if $i\leq j$ and $U\subseteq V$. The meet operation is given by the sum of dg submodules. A standard argument now shows that $M$ is dg isomorphic to the filtered conical colimit over quotients $F^iN/U$, where $(i,U)$ is an index from $\mathcal{S}$. We note that all quotients $F^iN/U$ are in the image of the functor $\Upsilon$ since $F^iN$ is in $\ov{\C}$.

Alternatively, the claim also follows from the implication (i) implies (iv) of \cite[(6.11) Theorem]{K2}, with $\vv{\C}^\op$ for $\T$, and the dg equivalence of $\C\dgmod$ and the full dg subcategory of left exact functors in $\vv{\C}\dgmod$ from \cite[Theorem~5.35]{K}.
\end{proof}

\begin{lemma}\label{vv=compact}
The full subcategory $\C\cdgmod$ of $\C\dgmod$ given by compact objects is dg equivalent to the full subcategory on the image of $\vv{\C}$ under $\Upsilon$.
\end{lemma}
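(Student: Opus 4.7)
My plan is to show that $\C\cdgmod$ and the essential image of $Y\colon \vv{\C}\to \C\dgmod$ coincide as full subcategories of $\C\dgmod$, proceeding in two steps: first, every object in the image of $Y$ is compact; second, every compact dg module is isomorphic to $Y(x)$ for some $x\in\vv{\C}$. Since $Y$ is already known to be fully faithful on $\vv{\C}$, the resulting equality of essential images yields the desired dg equivalence.

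For the first step I would begin by observing that $Y(X)$ is compact for each $X\in \ov{\C}$: by Yoneda $\Hom_{\C\dgmod}(Y(X),-)\cong (-)(X)$, which commutes with filtered colimits since these are computed pointwise in $\Bbbk\dgmod$. An arbitrary object in the image of $Y$ is of the form $M'=\coker Y(x)$ for some $x\colon X_1\to X_0$ in $\vv{\C}$, hence $\Hom_{\C\dgmod}(M',-)$ is the kernel of the morphism $\Hom_{\C\dgmod}(Y(X_0),-)\to \Hom_{\C\dgmod}(Y(X_1),-)$ given by precomposition with $Y(x)$. Because filtered colimits of $\Bbbk$-modules are exact, kernels commute with filtered colimits, so compactness of $M'$ follows.

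Conversely, given a compact dg module $M$, Lemma~\ref{filtered} yields $M\cong \colim_j N_j$ with each $N_j$ in the image of $Y$. Compactness implies that $\id_M$ factors through some $N_j=Y(x\colon X_1\to X_0)$, exhibiting $M$ as a dg retract of $N_j$; let $\bar e\in \End_{\C\dgmod}(N_j)$ be the associated dg idempotent with $M\cong \bar e N_j$. Full faithfulness of $Y$ on $\vv{\C}$ lifts $\bar e$ to an equivalence class in $\End_{\vv{\C}}(x)$, and I would fix a representative $(e_0,e_1)$ consisting of dg morphisms $e_0\colon X_0\to X_0$, $e_1\colon X_1\to X_1$ in $\ov{\C}$ satisfying $e_0 x=x e_1$. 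The key construction is then to form
$$x':=(1-e_0,x)\colon X_0\oplus X_1\longrightarrow X_0$$
in $\ov{\C}$ and verify that
\begin{equation*}
\coker Y(x') \;=\; Y(X_0)\big/\big((1-e_0)Y(X_0)+\im Y(x)\big) \;\cong\; N_j\big/(1-\bar e)N_j \;=\; \bar e N_j \;\cong\; M,
\end{equation*}
where the middle isomorphism uses that $e_0$ descends to the idempotent $\bar e$ on $N_j=\coker Y(x)$.

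The hard part is this last construction. The lift $(e_0,e_1)$ of $\bar e$ is only well-defined up to the defining equivalence in $\vv{\C}$, so $e_0$ need not be a strict idempotent in $\ov{\C}$; nevertheless, it is precisely the strict idempotency of $\bar e$ on $N_j$ that is needed for $N_j/(1-\bar e)N_j=\bar e N_j$. Independence of the chosen representative should follow because any other lift differs from $e_0$ by a term of the form $x\eta$, which is absorbed into $\im Y(x)$ and therefore does not alter $\im Y(x')$. Once this is verified, $M\cong \coker Y(x')$ lies in the essential image of $Y$, completing the proof.
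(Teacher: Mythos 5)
Your proof follows the paper's own argument essentially step for step: representables and their twisted sums are compact by Yoneda plus pointwise filtered colimits, cokernels of maps between compacts are compact because filtered colimits are exact (the paper phrases this via the five lemma applied to the presentation $Y(X_1)\to Y(X_0)\to Y(X)\to 0$), and conversely Lemma~\ref{filtered} together with compactness exhibits $M$ as a dg retract of some object in the image of $Y$. The one place you go beyond the paper is in spelling out why the image of $Y$ is closed under dg direct summands via $\coker Y(1-e_0,x)$ --- a worthwhile addition, with the small caveat that a representative $(e_0,e_1)$ of the lifted idempotent is only guaranteed to satisfy $\del e_0\in\im(x\circ -)$ rather than $\del e_0=0$, so the source of your $x'$ may need to carry a twist by the witnessing homotopy $\eta$ in order to be a legitimate object of $\vv{\C}$.
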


\begin{proof}

Assume given an object $X:=(X_1\stackrel{x}{\to} X_0)$ in $\vv\C$. By construction, its image under $\Upsilon$ is the cokernel of the dg morphism $\Upsilon(X_1)\xrightarrow{\Upsilon(x)} \Upsilon(X_0)$ in $\C\dgmod$. Thus, we have an exact sequence 
\begin{align*}\Upsilon(X_1)\xrightarrow{\Upsilon(x)} \Upsilon(X_0)\to \Upsilon(X)\to 0\end{align*}
of dg morphisms in $\C\dgmod$.
Thus, by \cite[(4.14)]{K2}, using that $\Upsilon(X_1)$ and $\Upsilon(X_0)$ are compact by Lemma \ref{Yoncompact}, $\Upsilon(X)$ is also compact.

Conversely, let $M$ be a compact object in $\C\dgmod$. By Lemma \ref{filtered}, $M$ is dg isomorphic to a filtered conical colimit $\colim_{i\in I}M_i$, with $M_i$ in the image of $\Upsilon$ extended to $\vv{\C}$.
Since $M$ is compact, $\Hom_{\C\dgmod}(M,M)\cong \colim_{i\in I}\Hom_{\C\dgmod}(M,M_i).$
Thus, $\id_M$ factors through $M_i\to M$ for some $i$ via some dg morphism. Therefore, $M$ is a dg direct summand of $M_i$ and hence contained in the image of $\Upsilon$ itself.
\end{proof}

\begin{corollary}\label{cokerlemma}
Let $\C$ be a dg category. Then the dg category $\vv{\C}$ has conical cokernels. In particular, $\Z(\vv{\C})$ has cokernels. 
\end{corollary}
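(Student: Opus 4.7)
The plan is to use the Yoneda embedding $Y\colon \vv{\C}\to \C\dgmod$, which by Lemma~\ref{vv=compact} is a dg equivalence of $\vv{\C}$ with the full dg subcategory $\C\cdgmod$ of compact dg modules. Restricting to closed degree-$0$ morphisms yields an equivalence of additive categories $\Z(\vv{\C})\simeq \Z(\C\cdgmod)$, sitting as a full subcategory of the ambient category $\Z(\C\dgmod)$. The latter is abelian, since kernels and cokernels of dg natural transformations $\C^{\op}\to \Bbbk\dgmod$ are computed componentwise in $\Z(\Bbbk\dgmod)$. In particular, cokernels always exist in $\Z(\C\dgmod)$, so it suffices to show that the cokernel of a morphism between compact objects remains compact; the universal property will then restrict to $\Z(\vv{\C})$ by fullness.

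For this compactness step, I would use the universal property of $C=\coker(f\colon M\to N)$ to identify, for any dg module $Z$,
\[
\Hom_{\C\dgmod}(C,Z)\;\cong\;\ker\Bigl(\Hom_{\C\dgmod}(N,Z)\xrightarrow{(-)\circ f}\Hom_{\C\dgmod}(M,Z)\Bigr)
\]
as dg $\Bbbk$-modules. Filtered colimits in $\C\dgmod$ are computed componentwise and hence commute with kernels, while $\Hom_{\C\dgmod}(M,-)$ and $\Hom_{\C\dgmod}(N,-)$ commute with filtered colimits by compactness of $M$ and $N$; it follows that $\Hom_{\C\dgmod}(C,-)$ does as well, so $C$ is compact. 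Lemma~\ref{vv=compact} then identifies $C$ with an object of $\vv{\C}$ that serves as the desired cokernel.

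The main subtlety, and the reason for taking this abstract route rather than an explicit construction, is that a tempting candidate such as $(X_0\oplus Y_1\xrightarrow{(\phi_0,y)}Y_0)$ need not be a valid object of $\vv{\C}$: a closed degree-$0$ morphism $[(\phi_0,\phi_1)]$ in $\Z(\vv{\C})$ only satisfies $\del\phi_0 = y\eta'$ for some $\eta'$ of degree $1$, and the equivalence class typically does not contain a representative with $\phi_0$ strictly closed in $\overline{\C}$. Passing through the Yoneda embedding avoids the need to choose any such preferred presentation of $C$.
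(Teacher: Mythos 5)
Your argument follows the same route as the paper's: both proofs invoke Lemma~\ref{vv=compact} to identify $\Z(\vv{\C})$ with the full subcategory of compact objects of the abelian category $\Z(\C\dgmod)$ and then conclude from closure of compact objects under cokernels. You add two things. First, you actually verify that closure property, via the dg isomorphism $\Hom_{\C\dgmod}(C,Z)\cong\ker\bigl(\Hom_{\C\dgmod}(N,Z)\to\Hom_{\C\dgmod}(M,Z)\bigr)$ together with the observation that filtered colimits are computed componentwise and hence commute with kernels; the paper simply asserts that compact objects are closed under cokernels. Second, your cautionary remark about the explicit presentation is well taken and in fact applies to the paper's own proof, which ends by exhibiting the cokernel of $(f_1,f_0)$ as $(Y_1\oplus X_0\xrightarrow{(y,f_0)}Y_0)$: as you note, this is only a legitimate object of $\vv{\C}$ when the chosen representative $f_0$ is itself a dg morphism in $\ov{\C}$, whereas an arbitrary class in $\Z(\vv{\C})$ only guarantees $\del f_0=y\eta'$ for some $\eta'$. (In the places where the corollary is applied, e.g.\ the presentations occurring in Lemma~\ref{overlineMAprop-new}, the components are genuinely closed, so the explicit formula is available there.) Your abstract detour through $\C\dgmod$ cleanly sidesteps the issue of choosing a preferred presentation, at the cost of not producing the explicit cokernel diagram that the paper records for later use.
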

\begin{proof}
One easily checks that for a dg morphism $(f_1,f_0)$ from  $X_1\xrightarrow{x} X_0$ to $Y_1\xrightarrow{y} Y_0$, the morphism 
\begin{equation*}
\vcenter{\hbox{
\xymatrix{ Y_1 \ar[rr]^{y} \ar[d]^{\left(\begin{smallmatrix}\id\\ 0\end{smallmatrix}\right)}&&Y_0 \ar[d]^{\id}  \\ 
Y_1\oplus X_0\ar[rr]^{(y,f_0)} &&Y_0
}}}
\end{equation*}
in $\vv{\C}$ satisfies the universal property of a conical cokernel.
\end{proof}

\subsection{Compactness and adjunctions}\label{sec:compadj}

Let $\C$ and $\DD$ be pretriangulated categories. Assume given a dg functor $F\colon \C\to \DD$. We now establish compactness conditions to ensure that $\vv{F}\colon \vv{\C}\to \vv{\DD}$, obtained by extending $F$ as in Section \ref{compactsect}, has a right adjoint $G\colon \vv{\DD}\to \vv{\C}$.

Recall the Yoneda embedding
$\Upsilon=\Upsilon_\C\colon \C\to \C\dgmod$, $X\mapsto X^\vee=\Hom_{\C}(-,X)$.
Then any dg functor $F\colon \C\to \DD$ induces a dg functor 
\begin{align*}F^*\colon \DD\dgmod\to \C\dgmod, \qquad M\mapsto M\circ F.\end{align*}
Given a $\DD$-module $M$, we refer to $F^*M$ as the \textbf{pullback} of $M$ along $F$. By \cite[Section 6.1]{Ke1}, the dg functor $F^*$ is right adjoint to the dg functor $F_*\colon \C\dgmod\to \DD\dgmod$ defined by $M\mapsto F_*M$, where for $D\in\DD$, $F_*M(D)$ is the cokernel of the dg morphism
\begin{gather*}
\bigoplus_{C_1,C_2\in \C}M(C_2)\otimes \Hom_\C(C_1,C_2)\otimes\Hom_\DD(D,FC_1)\xrightarrow{\phi} \bigoplus_{C}M(C)\otimes \Hom_\DD(D,FC),\\
\phi(m\otimes f\otimes g)=M(f)(m)\otimes g-m\otimes F(f)\circ g.
\end{gather*}
The dg functor $F_*$ extends $F$ under the Yoneda embedding, i.e. $F_*\circ \Upsilon_\C=\Upsilon_\DD\circ F$. 

Note that, in particular, since $F_*$ is a left adjoint and hence commutes with cokernels, it restricts to the dg functor $\vv{F}\colon \vv{\C}\to \vv{\DD}$. The following lemma characterizes when this restriction has a right adjoint.

\begin{lemma}\label{compactadj}
The functor $F^*$ restricts to a dg functor from $\vv{\DD}$ to $\vv{\C}$ if and only if for any object $D$ of $\DD$, the dg $\C$-module $F^*(D^\vee)=\Hom_\DD(F(-), D)$ is compact. 
\end{lemma}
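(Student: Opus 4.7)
The plan is to prove the two directions separately.

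For the forward direction, observe that every representable $D^\vee$ (for $D\in \D$) lies in $\vv{\D}$ via the object $0\to D$, and is compact by Lemma~\ref{vv=compact}. If $F^*$ restricts to a dg functor $\vv{\D}\to \vv{\C}$, then $F^*(D^\vee)=\Hom_\D(F(-),D)$ lies in $\vv{\C}$ and is therefore compact.

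For the reverse direction, assume that $F^*(D^\vee)$ is compact for every $D\in \D$. I need to show that $F^*$ sends every object in the (essential) image of $Y\colon \vv{\D}\to \D\dgmod$ to a compact dg $\C$-module. The key formal fact to exploit is that $F^*$ is precomposition by $F$, so $F^*(M)(C)=M(F(C))$; consequently $F^*$ preserves shifts, cones, and pointwise-computed cokernels in $\Z(\D\dgmod)$.

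I would then proceed in two steps. First, for any $X\in \ov{\D}$, the module $Y(X)$ is, by construction of $\ov{\D}$, built from the representables $Y(D)=D^\vee$ (for the entries of the underlying twisted complex) via iterated shifts and cones determined by the twisted differential. Since $F^*$ commutes with shifts and cones, $F^*(Y(X))$ is built in the same way from the modules $F^*(D^\vee)$. As each $F^*(D^\vee)$ is compact by hypothesis, and compact dg $\C$-modules are closed under shifts and cones (since, via Lemma~\ref{vv=compact}, they constitute the pretriangulated category $\vv{\C}$), an induction on the number of terms in the twisted complex yields compactness of $F^*(Y(X))$.

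Second, any object of $\vv{\D}$ has the form $X_1\xrightarrow{x}X_0$ with $X_i\in \ov{\D}$, and its image under $Y$ is, by definition, the cokernel in $\Z(\D\dgmod)$ of $Y(x)\colon Y(X_1)\to Y(X_0)$. Applying the cokernel-preservation of $F^*$, the image $F^*(Y(X_1\xrightarrow{x}X_0))$ is the cokernel in $\Z(\C\dgmod)$ of a dg morphism between two compact dg $\C$-modules. By Corollary~\ref{cokerlemma} (combined with Lemma~\ref{vv=compact}), cokernels of such morphisms are compact, so $F^*(Y(X_1\xrightarrow{x}X_0))\in \vv{\C}$. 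This completes the argument.

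The only non-bookkeeping point is verifying closure of compact modules under the relevant operations, which is already packaged in Lemma~\ref{vv=compact} and Corollary~\ref{cokerlemma}; the rest is a formal consequence of $F^*$ being precomposition and hence both a left and right adjoint, so it preserves all (co)limits that arise.
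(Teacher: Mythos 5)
Your proof is correct and follows essentially the same route as the paper's: both directions reduce to Lemma~\ref{vv=compact}, with the converse obtained by writing $F^*\big(Y(D_1\to D_0)\big)$ as the cokernel of a dg morphism between compact dg $\C$-modules (Corollary~\ref{cokerlemma}), using that $F^*$ is precomposition and hence computed pointwise. The only cosmetic difference is that you handle the intermediate case of objects of $\ov{\D}$ by an explicit induction on shifts and cones, where the paper appeals to the direct-sum computation already carried out in the proof of Lemma~\ref{vv=compact}.
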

\begin{proof}
Using Lemma \ref{vv=compact}, the right adjoint dg functor $F^*$ restricts to a dg functor $R\colon \vv{\DD}\to \vv{\C}$ if and only if $F^*(\Upsilon(D))$ is a compact object in $\C\dgmod$, for any object $D=(D_1\stackrel{d}{\to}D_0)$ in $\vv{\DD}$. If the latter condition is satisfied for all $D\in \vv{\DD}$, then it holds, in particular, for all $D^\vee=\Upsilon(D)$ for $D\in \DD\subseteq \vv{\DD}$. Conversely, assume that $F^*(D^\vee)$ is a compact object in $\C\dgmod$, for any $D\in \DD$. Using a similar argument as in the proof of Lemma \ref{vv=compact}, the dg $\C$-module 
\begin{align*}F^*\Big(\Upsilon\big(D_1\stackrel{d}{\to} D_0\big)\Big)=\coker\big( F^*D_1^\vee\xrightarrow{F^*\Upsilon(d)} F^*D_0^\vee \big)\end{align*}
is a compact object, as the cokernel of a dg morphism between two compact objects is a compact object.
\end{proof}

If a dg functor $F$ satisfies the equivalent conditions of Lemma \ref{compactadj}, we say that $F$ \textbf{has a compact right adjoint}. In the presence of generators for $\C$ and $\DD$, we have alternative characterizations of the existence of a right adjoint for $\vv{F}$, i.e., for $F$ having a compact right adjoint.

If $\C$ has a compact generator $X$, then by Lemma \ref{csf-lemma}, the category $\ov{\C}$ is dg equivalent to that of compact objects in the dg category of right semi-free modules over the dg algebra $A:=\End_\C(X)$. Here, the product in $A$ is $fg:=f\circ g$.

\begin{lemma}\label{compactadjgen} Let $F\colon \C\to \DD$ be a dg functor.
\begin{enumerate}[(a)]
\item\label{compactadjgen1} Assume $\DD$ has a generator $D$. Then $\vv{F}$ has a right adjoint if and only if $F^*(D^\vee)=\Hom_\C(F(-), D)$ is a compact object in $\C\dgmod$.
\item\label{compactadjgen2} Assume $\DD$ has a compact generator $D$, $\C$ has a generator $C$, and denote by $A$ the dg algebra $\End_\C(C)$. Then $\vv{F}$ has a right adjoint if and only if $M_F:=\Hom_\C(F(C), D)$ is a compact object in $A\dgmod$. The action of $A$ on $M_F$ is given by $m\otimes a:=m\circ F(a)$, for $m\in M_F$ and $a\in A$.
\end{enumerate}
\end{lemma}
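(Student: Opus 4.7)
The plan is to apply Lemma \ref{compactadj} in both parts, and for (b) to combine it with a Morita-type dg equivalence between $\C\cdgmod$ and compact right dg $A$-modules.

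For part \eqref{compactadjgen1}, Lemma \ref{compactadj} reduces the claim to showing $F^*(D'^\vee)$ is compact in $\C\dgmod$ for every $D' \in \D$. The ``only if'' direction is immediate by taking $D'=D$. For the converse, I will argue that the class $\cS := \{D' \in \D : F^*(D'^\vee) \in \C\cdgmod\}$ contains $D$ and is closed under the operations defining the thick closure in $\D$: shifts, cones, finite direct sums, dg direct summands, and dg isomorphisms. These closure properties follow because each such operation is preserved by the dg Yoneda embedding $\D \to \D\dgmod$, by the dg functor $F^*$, and because compact dg $\C$-modules are themselves closed under the same operations (as follows from Lemma \ref{vv=compact}). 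Since $D$ generates $\D$, we conclude $\cS = \D$.

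For part \eqref{compactadjgen2}, by part \eqref{compactadjgen1} it suffices to show that $F^*(D^\vee)$ is compact in $\C\dgmod$ if and only if $M_F = F^*(D^\vee)(C) = \Hom_\D(F(C), D)$ is compact as a right dg $A$-module. The bridge is evaluation at $C$, which defines a dg functor $E: \C\dgmod \to A\dgmod$, where the right $A$-action on $E(N) = N(C)$ comes from functoriality of $N$ on $\End_\C(C)^{\op}$. Under $E$, the module $F^*(D^\vee) = \Hom_\D(F(-),D)$ is sent precisely to $M_F$, and the $A$-action unwinds to $m \cdot a = m \circ F(a)$, as stated. It therefore suffices to show $E$ restricts to a dg equivalence $\C\cdgmod \simeq A\cdgmod$. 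To this end, I will verify that $C^\vee$ is a compact generator of $\C\cdgmod$: compactness is immediate from dg Yoneda, while $\widehat{\{C^\vee\}} = \C\cdgmod$ follows from the hypothesis $\widehat{\{C\}} = \C$ by applying Yoneda together with Lemma \ref{vv=compact}. Keller's dg Morita theorem then supplies the required equivalence.

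The main obstacle is establishing the Morita equivalence rigorously, particularly the verification that $C^\vee$ generates all of $\C\cdgmod$ thickly, not merely the smaller subcategory of compact semi-free dg modules. This requires combining the generation hypothesis on $C \in \C$ with the identification $\C\cdgmod \simeq \vv{\C}$ from Lemma \ref{vv=compact}, and tracking how the cokernel construction in $\vv{\C}$ corresponds to cones and summands at the level of dg modules. Once the dg equivalence is in place, identifying $F^*(D^\vee) \mapsto M_F$ with the stated right $A$-action is a direct computation from the definitions.
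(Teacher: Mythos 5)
Part \eqref{compactadjgen1} of your proposal is correct and is essentially the paper's argument: reduce to Lemma \ref{compactadj} and propagate compactness of $F^*(D^\vee)$ through the thick closure of $D$, using that Yoneda, $F^*$, and the class of compact dg $\C$-modules are all stable under shifts, cones, finite sums, summands and dg isomorphisms.

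Part \eqref{compactadjgen2} has a genuine gap. Your bridge is the right one (evaluation at $C$, which the paper also uses, and which does send $F^*(D^\vee)$ to $M_F$ with the stated action), but the justification of the key equivalence fails at the step ``$\widehat{\{C^\vee\}}=\C\cdgmod$''. This is false in general: by Lemma \ref{vv=compact}, $\C\cdgmod$ is the image of $\vv{\C}$, i.e.\ it contains \emph{cokernels} of dg morphisms between objects of $\ov{\C}$, and such cokernels typically lie outside the thick closure of $C^\vee$, which only consists of (summands of) twisted complexes over $C^\vee$. Concretely, for $A=\Bbbk[x]/(x^2)$ with zero differential and $\C=\widehat{\{A\}}$, the module $\Bbbk=\coker(A\xrightarrow{\,x\,}A)$ is compact in the sense of this paper but is not dg isomorphic to any object of $\widehat{\{A^\vee\}}$ (cones and summands of perfect modules are perfect). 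Consequently Keller's Morita theorem cannot be invoked the way you propose; moreover that theorem produces quasi-equivalences of derived categories, whereas the statement here requires matching compactness in the strict, non-derived sense. A further (smaller) issue: even granting an equivalence $\C\cdgmod\simeq A\cdgmod$, the ``only if'' direction of the biconditional needs evaluation at $C$ to \emph{reflect} compactness, which an equivalence of the compact subcategories alone does not provide. The repair, and what the paper actually asserts, is to show that evaluation at $C$ is a dg equivalence of the \emph{full} module categories $\C\dgmod\to A\dgmod$: since every object of $\C$ lies in $\widehat{\{C\}}$ and any dg module automatically converts shifts, cones, sums and summands of representables into the corresponding operations in $\Bbbk\dgmod$, a dg $\C$-module is determined by its value at $C$ together with the $A$-action. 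An equivalence of the ambient module categories then identifies the compact objects on both sides in one stroke, yielding both implications.
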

\begin{proof}
Part \eqref{compactadjgen1} follows from the fact that if $F^*(D^\vee)$ is a compact object, then for any object $X$ in the thick closure of $D$ inside of $\ov{\DD}$, $F^*(X^\vee)$ is also compact. 

Part \eqref{compactadjgen2} follows from Part \eqref{compactadjgen1} under use of the dg equivalence between $\C\dgmod$ and the category of right dg $A$-modules, which is given by sending $X^\vee$ to $X^\vee(C)=\Hom_\C(C,X)$, for $X\in \C$.
\end{proof}

\subsection{The homotopy category of compact objects in semi-free dg modules}\label{homotopy-sec}

In this section, we discuss the passage from dg categories to triangulated categories in the setup used for this paper. 

Let $\C$ be a dg category. We denote the \textbf{homotopy category} of $\C$ by $\K(\C)$ and recall that $\K(\C)$ has the same objects as $\C$ but morphism spaces are given by $0$-th cohomology, i.e.
\begin{align*}\Hom_{\K(\C)}(X,Y)=H^0(\Hom_{\C}(X,Y)).\end{align*}
It follows that if $\C$ is pretriangulated, then $\K(\C)$ is a triangulated category,
see \cite[\S 1, Proposition~2]{BK}, \cite[Section~2.2]{Ke1}. A dg functor $F\colon\C\to \DD$ induces a functor $\K(F)\colon \K(\C)\to \K(\DD)$. The functor $\K(\ov{F})$ is a \emph{triangle functor} (also called exact functor of triangulated categories) \cite[\S 3]{BK}.
Moreover, a dg natural transformation $\tau\colon F\to G$ between two dg functors descends to a natural transformation $\K(\tau)\colon \K(F)\to \K(G)$.

We say that a morphism $f\colon X\to Y$ is a \textbf{homotopy isomorphism} if it descends to an isomorphism in $\K(\C)$.

A dg functor $F\colon \C\to \DD$ is (part of) a \textbf{quasi-equivalence} if, for any pair of objects $X,Y$ in $\C$, it induces quasi-isomor\-phisms (i.e., isomorphisms in all cohomological degrees)
between $\Hom_\C(X,Y)$ and $\Hom_{\DD}(FX,FY)$
 \emph{and} $\K(F)$ is an equivalence of categories \cite[Section~2.3]{Ke}. Note that if $\C$ and $\DD$ are both pretriangulated, closure under shifts implies that $\Hom_\C(X,Y)\to \Hom_{\DD}(FX,FY)$ being a quasi-isomorphism for all $X,Y\in\C$ is equivalent to $\K(F)$ being fully faithful.

Given a dg category $\C$, a morphism $f\colon M\to N$ of dg $\C$-modules is a \textbf{quasi-isomor\-phism} if $\del(f)=0$ and $f$ induces a quasi-isomorphism $f(X)\colon N(X)\to M(X)$ for each object $X$ of $\C$.
The \textbf{derived category} $\DD(\C\dgmod)$ is the localization of the category $\Z(\C\dgmod)$ of dg morphisms (or, equivalently, of $\K(\C\dgmod)$), see Section \ref{dgsect}, by the all quasi-isomorphisms \cite[Section~3.2]{Ke}. The derived category can also be defined as the Verdier quotient of $\K(\C\dgmod)$ by the full subcategory of acyclic modules. Here, a $\C$-module $M$ is \textbf{acyclic} if the complex $M(X)$ is acyclic for any object $X$ of $\C$ \cite[Section~2.2]{Or1}.

The full subcategory of \textbf{perfect objects} of $\C\dgmod$ is given by those dg modules whose images in $\DD(\C\dgmod)$ are in the idempotent completion of compact semi-free modules. The full subcategory of perfect objects yields the subcategory $\DD(\C\dgmod)^{\comp}$ of compact objects in $\DD(\C\dgmod)$ \cite[Corollary~3.7]{Ke}.
 
The following well-known result justifies that we will mostly be working with $\K(\ov{\C})$ as a model for the compact derived category of dg modules over $\C$.

\begin{proposition}\label{compact-triang}
The functor
$\K(\ov{\C})^\circ\hookrightarrow \DD(\C\dgmod)^{\comp}$
forms part of an equivalence of triangulated categories. 
\end{proposition}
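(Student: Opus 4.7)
The plan is to reduce the statement to the standard fact that compact semi-free dg modules are h-projective and that their homotopy category identifies, after idempotent completion, with the perfect derived category, which by definition coincides with the compact objects in $\D(\C\dgmod)$.

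First I would invoke Lemma \ref{csf-lemma} to identify $\ov{\C}$ with the dg category $\C\csf$ of compact semi-free dg $\C$-modules. Thus it suffices to prove that the natural functor $\K(\C\csf)^\circ \to \D(\C\dgmod)^{\comp}$ is an equivalence of triangulated categories. This functor is the composite of the localization functor $\K(\C\csf)\to \D(\C\csf):=\K(\C\csf)[\text{q.i.}^{-1}]$ with the natural triangle functor into $\D(\C\dgmod)^{\comp}$, followed by idempotent completion.

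The main step is full faithfulness, which reduces to the claim that every compact semi-free dg module $P$ is \emph{h-projective}, i.e.\ $\Hom_{\K(\C\dgmod)}(P,N)=0$ for every acyclic $N\in\C\dgmod$. This is where I expect the central technical work to lie. One proves it by induction along the finite filtration $0=F^0P\subseteq F^1P\subseteq\cdots\subseteq F^sP=P$ by free dg $\C$-modules appearing in the definition of compact semi-free module: each subquotient $F^iP/F^{i-1}P$ is (a shift of) a representable $X^\vee$, for which h-projectivity is immediate from the dg Yoneda isomorphism $\Hom_{\C\dgmod}(X^\vee,N)\cong N(X)$, and then one extends step-by-step using the long exact sequence obtained from the distinguished triangles $F^{i-1}P\to F^iP\to F^iP/F^{i-1}P\to$ in $\K(\C\dgmod)$. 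Consequently, the canonical map
\[
\Hom_{\K(\C\csf)}(P,Q)\longrightarrow \Hom_{\D(\C\dgmod)}(P,Q)
\]
is an isomorphism for all $P,Q\in\C\csf$, and this extends to the idempotent completion $\K(\C\csf)^\circ$ by the very definition of morphisms in a dg idempotent completion.

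For essential surjectivity, I would argue as follows. By definition, the subcategory of perfect objects in $\C\dgmod$ consists of those dg modules whose image in $\D(\C\dgmod)$ lies in the idempotent completion of the image of $\C\csf$; in combination with the cited \cite[Corollary~3.7]{Ke} identifying perfect objects with compact objects in $\D(\C\dgmod)$, every object of $\D(\C\dgmod)^{\comp}$ is a direct summand, in the derived category, of some $Q\in\C\csf$. Since $Q$ is h-projective, such a direct summand is represented by a dg idempotent $e\in\End_{\K(\C\csf)}(Q)$, and hence lies in the essential image of $\K(\C\csf)^\circ$. Finally, the functor is triangulated because $\C\csf\simeq\ov{\C}$ is pretriangulated with shifts and cones computed as in $\C\dgmod$, and idempotent completion of a pretriangulated category remains pretriangulated by Lemma \ref{Ccirctri}, so the induced triangulated structure on $\K(\ov{\C})^\circ$ matches that of $\D(\C\dgmod)^{\comp}$ on the nose.
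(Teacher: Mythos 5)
Your proposal is correct and follows essentially the same route as the paper: the paper's proof simply cites \cite[Corollary 3.7, Section 3.2]{Ke} for essential surjectivity up to summands and full faithfulness, together with \cite{BN} for idempotent completeness of the derived category and \cite{BS} for the triangulated idempotent completion, whereas you unfold those citations (in particular proving h-projectivity of compact semi-free modules by induction on the filtration). One small terminological point: the idempotent $e$ splitting off a compact object only needs to be an idempotent in $\End_{\K(\C\csf)}(Q)\cong\End_{\D(\C\dgmod)}(Q)$, i.e.\ in the homotopy category in the sense of Balmer--Schlichting, not a ``dg idempotent'' in the strict sense used for $(-)^\circ$ of dg categories elsewhere in the paper.
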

\begin{proof}
By \cite[Corollary 3.7]{Ke}, every compact object in $\DD(\C\dgmod)$ is a direct summand of an object in the image of $\K(\ov{\C})$ in the derived category. As the derived category is idempotent complete \cite[Proposition~3.2]{BN}, this implies that the idempotent completion $\K(\ov{\C})^\circ$ (see \cite{BS}) admits a triangle functor into $\DD(\C\dgmod)^\comp$. This functor is fully faithful by \cite[Section~3.2]{Ke} and hence gives an equivalence of categories.
\end{proof}

We remark that the image of $\K(\vv{\C})$ in $\DD(\C\dgmod)$ does not usually consist of compact objects in the latter. In particular, this already fails for finite-dimensional algebras with zero differential and infinite global dimension. Nevertheless, one can consider $\K(\vv{\C})$, as the bounded homotopy category of compact objects in $\C\dgmod$, which motivates the following lemma.

Let $F\colon \C\to \DD$ be a dg functor and recall the adjoint dg functors
\begin{align*}\adj{F_*}{\C\dgmod}{\DD\dgmod}{F^*}\end{align*}
from Section \ref{sec:compadj}.

\begin{lemma}\label{vvFquasieq}
If $F\colon \C\to \DD$ is a quasi-equivalence, then so is $\vv{F}\colon \vv{\C}\to \vv{\DD}$.
\end{lemma}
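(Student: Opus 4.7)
The claim has two parts: $\vv{F}$ induces quasi-isomorphisms on all Hom complexes, and $\K(\vv{F})$ is an equivalence of categories. My plan is to first prove the analogous statement that $\ov{F}\colon \ov{\C}\to \ov{\D}$ is a quasi-equivalence, and then leverage this to handle $\vv{F}$.

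For $\ov{F}$ being a quasi-equivalence: the morphism complex $\Hom_{\ov{\C}}((X,\alpha),(Y,\beta))$ is, as a graded $\Bbbk$-module, the finite direct sum $\bigoplus_{k,l}\Hom_{\C}(X_l,Y_k)$ with a differential that combines the componentwise $\C$-differentials with corrections coming from $\alpha,\beta$. The strict upper-triangular shape of the twisting matrices provides a finite filtration whose associated graded is the direct sum of $\C$-Hom complexes with only their internal differentials. Since $F$ induces quasi-isomorphisms on $\C$-Homs, $\ov{F}$ does so on the associated graded, and a standard spectral sequence argument then gives a quasi-isomorphism on the total complex. For essential surjectivity of $\K(\ov{F})$, I would induct on the number of summands of $(Y,\beta)\in\ov{\D}$: lift each $Y_i$ to some $X_i$ in an appropriately shifted $\C$ using essential surjectivity of $\K(F)$, then lift the twisting entries $\beta_{k,l}$ to $\alpha_{k,l}$ using the just-established quasi-isomorphism on Homs, correcting previous choices via null-homotopies at each stage to ensure the twisted complex condition $\del\alpha+\alpha^2=0$ is preserved exactly.

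For $\vv{F}$: the defining description realizes $\Hom_{\vv{\C}}(X,Y)$, for $X=(X_1\xrightarrow{x}X_0)$ and $Y=(Y_1\xrightarrow{y}Y_0)$, as the middle cohomology of the three-term chain complex of dg $\Bbbk$-modules
$$\Hom_{\ov{\C}}(X_0,Y_1)\xrightarrow{\eta\mapsto(y\eta,\eta x)}\Hom_{\ov{\C}}(X_0,Y_0)\oplus\Hom_{\ov{\C}}(X_1,Y_1)\xrightarrow{(\phi_0,\phi_1)\mapsto y\phi_1-\phi_0 x}\Hom_{\ov{\C}}(X_1,Y_0),$$
with the quotient-subcomplex structure giving $\Hom_{\vv{\C}}(X,Y)=\ker g/\im f$. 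Applying $\vv{F}$ induces a morphism to the analogous complex over $\ov{\D}$, and the quasi-equivalence of $\ov{F}$ from the previous step supplies termwise quasi-isomorphisms. A sequence of five-lemma arguments applied to the long exact cohomology sequences of the short exact sequences $0\to\ker f\to \Hom_{\ov{\C}}(X_0,Y_1)\to\im f\to 0$, $0\to\ker g\to B\to\im g\to 0$ (with $B$ the middle term), and $0\to\im f\to\ker g\to\Hom_{\vv{\C}}(X,Y)\to 0$ yields the desired quasi-isomorphism on $\Hom_{\vv{\C}}$. For essential surjectivity of $\K(\vv{F})$, given any $(Y_1\xrightarrow{y}Y_0)\in\vv{\D}$, use essential surjectivity of $\K(\ov{F})$ to produce $X_0,X_1\in\ov{\C}$ with homotopy equivalences $\ov{F}(X_i)\simeq Y_i$, then use fully faithfulness of $\K(\ov{F})$ to lift $y$ to a dg morphism $x\colon X_1\to X_0$. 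The resulting object $(X_1\xrightarrow{x}X_0)\in\vv{\C}$ maps under $\vv{F}$ to an object isomorphic to $(Y_1\xrightarrow{y}Y_0)$ in $\K(\vv{\D})$.

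The main technical difficulty lies in the inductive lifting argument for essential surjectivity of $\K(\ov{F})$: a naive lift of $\beta_{k,l}$ may fail the twisted complex condition by a $\del$-boundary, which must be absorbed by an obstruction-theoretic correction via homotopies at each stage of the induction. A secondary concern is the cascade of five-lemma applications in Step 2 — while routine, each step requires verifying that the relevant kernels and images are themselves quasi-isomorphically preserved by $\vv{F}$, which in turn relies on the quasi-equivalence of $\ov{F}$ applied to the auxiliary Hom complexes that appear in the three-term computation.
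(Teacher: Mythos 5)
Your Step 1 (that $\ov{F}\colon\ov{\C}\to\ov{\D}$ is a quasi-equivalence) is correct and standard, but the passage to $\vv{F}$ is where the lemma's actual content lies, and that is where your argument breaks. A preliminary issue: your identification $\Hom_{\vv{\C}}(X,Y)=\ker g/\operatorname{im} f$ does not match the definition in Section 2.3. There the quotient is by \emph{all} commuting pairs $(\phi_0,\phi_1)$ admitting some $\eta$ with $\phi_0=y\eta$ (no requirement that $\phi_1=\eta x$); equivalently $\Hom_{\vv{\C}}(X,Y)\cong\{\phi_0: \phi_0 x\in y\circ\Hom_{\ov{\C}}(X_1,Y_1)\}/\bigl(y\circ\Hom_{\ov{\C}}(X_0,Y_1)\bigr)$, which is what makes $\vv{\C}$ agree with cokernels of maps of representables under Yoneda. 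Your $\ker g/\operatorname{im} f$ retains a spurious extra piece coming from pairs $(0,\psi)$ with $y\psi=0$ that are not of the form $(y\eta,\eta x)$.

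The fatal gap is the five-lemma cascade itself. Kernels, images and cokernels of morphisms of complexes are \emph{not} quasi-isomorphism invariants: a commuting square whose vertical arrows are quasi-isomorphisms need not induce a quasi-isomorphism on cokernels (only the cone is homotopy-invariant, and $\coker(y\circ -)$ differs from the cone whenever $y\circ -$ fails to be injective). So from termwise quasi-isomorphisms of your three-term complex you can conclude nothing about the subquotient; the caveat you flag at the end --- that ``the relevant kernels and images are themselves quasi-isomorphically preserved'' --- is precisely the unproved, and in general false, assertion on which the step rests. The same problem infects your essential-surjectivity argument: an object of $\vv{\D}$ is a genuine (non-derived) cokernel, so replacing $(Y_1\xrightarrow{y}Y_0)$ by $(\ov{F}X_1\xrightarrow{\ov{F}x}\ov{F}X_0)$, where $\ov{F}x$ agrees with a conjugate of $y$ only up to homotopy, does not obviously give an isomorphic object of $\K(\vv{\D})$. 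The paper circumvents all of this by identifying $\vv{F}$ with the restriction of $F_*$ in the adjunction $F_*\colon\C\dgmod\rightleftarrows\D\dgmod\cocolon F^*$, proving that the unit and counit are quasi-isomorphisms first on representables (the counit computation is an explicit lifting argument using fullness of $\K(F)$ on $H^0$ of morphism spaces), and only then extending to the specific presentations of compact modules using right-exactness of $F_*$ and exactness of $F^*$. Some input of this kind --- exploiting the adjunction or the projectivity of the representables in the presentation --- is unavoidable, and your proposal does not supply it.
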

\begin{proof}
Recall that if $F$ is a dg functor that has a compact right adjoint (cf. Section~\ref{sec:compadj}), then both dg functors $F^*,F_*$ restrict to compact objects to a pair of adjoint dg functors
\begin{align*}\adj{F_*}{\C\cdgmod}{\DD\cdgmod}{F^*}.\end{align*}

We first consider the unit $\eta$ of this adjunction. For a representable dg $\C$-module $X^\vee$, we have $F^*F_*(X^\vee) = \Hom_{\DD}(F(-),FX)$. Then $\eta_{X^\vee} \colon \Hom_{\C}(-,X)\to \Hom_{\DD}(F(-),FX)$ is simply given by applying $F$ to morphism spaces, and is a quasi-isomorphism by assumption. By Lemma \ref{vv=compact}, any compact object $M$ in $\C\dgmod$  is dg isomorphic to a cokernel of a morphism between dg modules in the pretriangulated closure of representable dg $\C$-modules. By right exactness of $F_*$ and exactness of $F^*$, we have a commutative diagram 
\begin{align*}\xymatrix{
X_1^\vee \ar[rr] \ar_{\eta_{X_1^{\vee}}}[d]&& X_0^\vee \ar^{\eta_{X_0^{\vee}}}[d]\ar@{->>}[rr] && M\ar^{\eta_M}[d]\\
F^*F_*(X_1^{\vee}) \ar[rr] && F^*F_*(X_0^{\vee}) \ar@{->>}[rr] && F^*F_*M 
}\end{align*}
of dg morphisms in $\C\cdgmod$.
Since $\eta_{X_1^{\vee}},\eta_{X_0^{\vee}}$ are quasi-isomorphisms, so is the induced morphism $\eta_M$ on the cokernel. Thus $\eta$ is a quasi-isomorphism on all compact objects in $\C\dgmod$.

Now consider the counit of the adjunction. Let $X\in \DD$ and consider the dg $\DD$-module $F_*F^*(X^\vee)$. This is given as the cokernel of 
\begin{gather*}
\xymatrix@C=-20pt{
\bigoplus\limits_{C_1,C_2\in \C}\Hom_{\DD}(FC_2, X)\otimes \Hom_\C(C_1,C_2)\otimes\Hom_\DD(-,FC_1)\ar[d]^{\phi}&h\otimes f\otimes g\ar@{|->}[d]\\
 \bigoplus\limits_{C\in \C}\Hom_{\DD}(FC, X)\otimes \Hom_\DD(-,FC),&hF(f)\otimes g-h\otimes F(f)g.
}
\end{gather*}
There is a natural map $\psi\colon \bigoplus_{C\in \C}\Hom_{\DD}(FC, X)\otimes \Hom_\DD(-,FC) \to \Hom_\DD(-, X)$ induced by composition, which annihilates the image of $\phi$ and hence factors over the cokernel, producing the component $\epsilon_{X^\vee}\colon F_*F^*(X^\vee) \to X^\vee$ of the counit. 

Next, we show that in the special case $X=FY$, for $Y\in \C$, $\epsilon_{(FY)^\vee}$ is a quasi-isomorphism. In fact, we can compute the cokernel of $\phi$ by evaluating the maps $\phi_Z$ and $c_Z$ for any object $Z$ in $\DD$. Note that $\psi_Z$ is surjective, with $\id_Z\otimes f$ being a preimage of $f\in \Hom_{\DD}(Z,FY)$.
Moreover, if $h\in \Hom_\DD(FC,FY)$, $g\in \Hom_\DD(Z,FC)$ are dg morphisms such that $\psi_Z(h\otimes g)=hg=0$, there exists a morphism $f\colon C\to Y$ such that $F(f)=h$ in $H^0(\Hom_D(FC,FY))$, i.e. $F(f)=h+\del(a)$ for some morphism $a$. Thus, we compute
\begin{align*}
\phi(\id\otimes f\otimes g)&= F(f)\otimes g-\id \otimes F(f)g\\
&=(h+\del(a))\otimes g-\id \otimes (h+\del(a))g\\
&=h\otimes g+\del(a)\otimes g-\id \otimes hg-\id\otimes \del(a)g\\
&=h\otimes g+\del(a)\otimes g-\id\otimes \del(a)g\\
&=h\otimes g+\del(a\otimes g-\id\otimes ag),
\end{align*}
where in the last step we use that $\del(g)=0$.
This shows that, after passing to cohomology, $\ker \phi_Z=\im \psi_Z$ and hence the cokernel of $\phi$ is given by the representable functor associated to $FY$. In other words, $\epsilon_{(FY)^\vee}$ gives an isomorphism in $\K(\DD\cdgmod)$.

Now consider a general compact object $X_1^\vee\xrightarrow{x}X_0^\vee\twoheadrightarrow M$, with $X_1,X_0$ in $\ov{\DD}$. Then by right exactness of $F_*$ and exactness of $F^*$, we have a commutative diagram 
\begin{align*}\xymatrix{
F_*F^*(X_1^{\vee}) \ar[rr] \ar_{\epsilon_{X_1^{\vee}}}[d]&& F_*F^*(X_0^{\vee}) \ar^{\epsilon_{X_0^{\vee}}}[d]\ar@{->>}[rr] && F_*F^*M\ar^{\epsilon_M}[d]\\
X_1^{\vee} \ar[rr] && X_0^{\vee} \ar@{->>}[rr] && M
}\end{align*}
of dg morphisms in $\DD\cdgmod$. Arguing similarly to before, the morphism $\epsilon_M$ is a quasi-isomorphism as both $\epsilon_{X_1^{\vee}}$ and $\epsilon_{X_0^{\vee}}$ are quasi-isomorphisms. Thus $\epsilon$ is a quasi-isomorphism on all compact objects in $\DD\dgmod$ which shows that $\K(F^*)$ and $\K(F_*)$ are equivalences. Thus, as categories of compact objects are closed under shift, $F^*$, $F_*$ induce quasi-equivalences on compact objects as claimed. Since $\vv{F}$ corresponds to $F_*$ under the dg equivalence of Lemma \ref{vv=compact}, it is also a quasi-equivalence. 
\end{proof}

\section{Dg \texorpdfstring{$2$}{2}-categories and \texorpdfstring{$2$}{2}-representations}\label{dg2}

In this section, we collect generalities on pretriangulated $2$-categories and define different classes of pretriangulated $2$-representations.

\subsection{Dg \texorpdfstring{$2$}{2}-categories}\label{dg2cats}

We call a $2$-category $\cC$ a \textbf{dg $2$-category} if the categories $\cC(\ti,\tj)$ are   dg categories for any pair of objects $\ti,\tj\in \cC$, and horizontal composition is a dg functor.

A \textbf{dg pseudofunctor} is a pseudofunctor  whose component functors are dg functors and whose coherers are dg isomorphisms. A \textbf{dg $2$-functor} is a dg pseudofunctor whose coherers are identities. A dg pseudofunctor $\Phi\colon \cC\to \cD$ is part of a \textbf{dg biequivalence} if it is surjective on objects up to dg equivalence and each component functor is a dg equivalence.

Given a dg $2$-category $\cC$, we can associate a new dg $2$-category of \textbf{(one-sided) twisted complexes}, denoted by $\ov{\cC}$. It consists of 
\begin{itemize}
\item the same objects as $\cC$;
\item $1$-morphism categories $\overline{\cC(\ti,\tj)}$;
\item  with horizontal composition of two $1$-morphisms $\rX=(\bigoplus_{m=1}^s\rF_m,\alpha)\in \overline{\cC(\tk,\tl)}$ and $X'=(\bigoplus_{n=1}^t \rF'_n,\alpha')\in\overline{\cC(\tj,\tk)}$ given by
\begin{align}\label{orderingconvention}
\rX\circ \rX'=\bigg( \bigoplus_{(m,n)}\rF_m\rF_n', (\delta_{k',l'}\alpha_{k,l}\circ_0\id_{\rF'_{k'}}+\delta_{k,l}\id_{\rF_k}\circ_0\alpha'_{k',l'})_{(k,k'),(l,l')} \bigg),
\end{align}
where pairs $(m,n)$ are ordered lexicographically; 
\end{itemize}
The above composition of $1$-morphism gives a strict operation (for details, see \cite[Proposition 3.5]{LM}).
We say that a dg $2$-category is \textbf{pretriangulated} if the embedding $\cC\hookrightarrow \ov{\cC}$ is part of a dg biequivalence, i.e. if.

We can further extend the dg $2$-category $\ov{\cC}$ by formally adding dg quotients to obtain a dg $2$-category $\vv{\cC}$.

\begin{lemma}\label{vvcC}
There exists a dg $2$-category $\vv{\cC}$ containing $\cC$ as a dg $2$-subcategory. A given dg $2$-functor $\Phi\colon \cC\to \cD$ extends naturally to a dg $2$-functor $\vv{\Phi}\colon \vv{\cC}\to \vv{\cD}$.
\end{lemma}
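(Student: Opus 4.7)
The plan is to define $\vv{\cC}$ by mimicking the construction of $\ov{\cC}$ from Section~\ref{dg2cats}: take the same objects as $\cC$, and set the hom dg categories to be $\vv{\cC(\ti,\tj)}$, the dg category of Section~\ref{compactsect} applied to the dg $1$-category $\cC(\ti,\tj)$. The inclusion $\cC \hookrightarrow \vv{\cC}$ is then immediate from the chain of dg-functorial embeddings $\cC(\ti,\tj)\hookrightarrow\ov{\cC(\ti,\tj)}\hookrightarrow\vv{\cC(\ti,\tj)}$, the last one sending $X$ to the diagram $0\to X$.

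The main task is to extend horizontal composition from $\ov{\cC}$ to $\vv{\cC}$. Motivated by the cokernel interpretation of Lemma~\ref{vv=compact} (where $X_1\xrightarrow{x}X_0$ corresponds to the cokernel of $x$ and horizontal composition behaves like a tensor product), I would set
\[
\rX\circ_0 \rY := \bigl(X_1Y_0\oplus X_0Y_1 \xrightarrow{\,(x\circ_0 \id_{Y_0},\ \id_{X_0}\circ_0 y)\,} X_0 Y_0\bigr)
\]
for $\rX=(X_1\xrightarrow{x}X_0)\in\vv{\cC(\tk,\tl)}$ and $\rY=(Y_1\xrightarrow{y}Y_0)\in\vv{\cC(\tj,\tk)}$, using the horizontal composition and direct sum of $\ov{\cC}$ on the right-hand side. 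On $2$-morphisms, horizontal composition is defined by applying the horizontal composition of $\ov{\cC}$ componentwise to representatives.

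The main obstacle is verifying that horizontal composition of $2$-morphisms descends to equivalence classes modulo the subgroup of morphisms factoring through a dashed arrow as in diagram~\eqref{vvmorph}. Concretely, if $(\phi_0,\phi_1)\colon \rX\to \rX'$ is null because $\phi_0 = x'\eta$ for some $\eta$, I would exhibit an explicit factorization of the horizontal composite through a dashed arrow into $\rX'\circ_0 \rY'$: the required map is built from $\eta\circ_0 \id_{Y_0}$ landing in the summand $X_1'Y_0$, combined with $\id_{X_0}\circ_0 \psi_\bullet$ on the second summand. The symmetric case, where the factorization is on the side of $\rY$, is analogous. Closure of the equivalence relation under the differential is automatic because $\del$ is a derivation with respect to horizontal composition in $\ov{\cC}$ and commutes with $x$, $y$, $x'$, $y'$.

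Once well-definedness is established, the $2$-category axioms for $\vv{\cC}$ reduce to those for $\ov{\cC}$. Strict associativity of horizontal composition of $1$-morphisms is ensured by using the same ordered direct-sum convention~\eqref{orderingconvention}, and the identity $1$-morphisms of $\cC$ continue to act as strict units when viewed as $0\to \id_\ti$ in $\vv{\cC(\ti,\ti)}$. For the extension of a dg $2$-functor $\Phi\colon \cC\to\cD$, I would apply $\ov{\Phi}$ componentwise, setting $\vv{\Phi}(X_1\xrightarrow{x}X_0) := \ov{\Phi}(X_1)\xrightarrow{\ov{\Phi}(x)} \ov{\Phi}(X_0)$, and analogously on $2$-morphism representatives. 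This is well-defined on equivalence classes because $\ov{\Phi}$ preserves composition and hence sends dashed-arrow factorizations to dashed-arrow factorizations; compatibility with horizontal composition and with $\del$ is inherited directly from the corresponding properties of $\ov{\Phi}$.
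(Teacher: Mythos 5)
Your construction coincides with the paper's: same hom categories $\vv{\cC(\ti,\tj)}$, the same formula $\rG_1\rH_0\oplus\rG_0\rH_1\to\rG_0\rH_0$ for horizontal composition of $1$-morphisms, componentwise composition of $2$-morphisms, strict associativity via the ordered-direct-sum convention, and componentwise extension of $\Phi$. The only cosmetic difference is in how you describe the dashed-arrow factorization of a composite: the paper records it cleanly as the single homotopy $\mat{\eta\circ_0\psi_0\\ 0}\colon X_0Y_0\to X_1'Y_0'\oplus X_0'Y_1'$ (and symmetrically $\mat{0\\ \phi_0\circ_0\epsilon}$), which is what your "$\eta\circ_0\id_{Y_0}$ composed with $\id\circ_0\psi_0$" amounts to.
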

\begin{proof}
We define $\vv{\cC}(\ti,\tj):=\vv{\cC(\ti,\tj)}$. The composition in $\vv{\cC}$ is defined as 
\begin{align*}
\Big(\rG_1\xrightarrow{g}\rG_0\Big)\Big(\rH_1\xrightarrow{h}\rH_0\Big):=\Big(\rG_1\rH_0\oplus \rG_0\rH_1\xrightarrow{\mat{g\circ_0\id_{\rH_0}&\id_{\rG_0}\circ_0 h}}\rG_0\rH_0\Big),
\end{align*}
for objects $(\rG_1\xrightarrow{g}\rG_0)$ in $\vv{\cC}(\tj,\tk)$ and $(\rH_1\xrightarrow{h}\rH_0)$ in $\vv{\cC}(\ti,\tj)$.
Here, we crucially use the explicit additive structure on $\overline{\C}$ making composition of $1$-morphisms in $\vv{\cC}$ strictly associative.

Given morphisms
\begin{gather*}
(\phi_0,\phi_1)\colon \Big(\rG_1\xrightarrow{g}\rG_0\Big)\to \Big(\rG'_1\xrightarrow{g'}\rG'_0\Big),\\ \quad\text{ and }\quad
(\psi_0,\psi_1)\colon \Big(\rH_1\xrightarrow{h}\rH_0\Big)\to \Big(\rH'_1\xrightarrow{h'}\rH'_0\Big)
\end{gather*}
in $\vv{\cC}$, we define
\begin{align*}(\phi_0,\phi_1)\circ_0 (\psi_0,\psi_1):=\left(\phi_0\circ_0\psi_0, \mat{\phi_1\circ_0\psi_0&0\\0&\phi_0\circ_0\psi_1} \right).
\end{align*}
Note that given a homotopy $\eta\colon \rG_0\to \rG_1'$ rendering the morphism $(\phi_0,\phi_1)$ zero in $\vv{\cC}(\tj,\tk)$, the homotopy $\mat{\eta\circ_0 \psi_0\\0}$ shows that the morphism $(\phi_0,\phi_1)\circ_0 (\psi_0,\psi_1)$ is zero in $\vv{\cC}(\ti,\tk)$. Similarly, if $\epsilon\colon \rH_0\to \rH_1'$ is a homotopy rendering the morphism $(\psi_0,\psi_1)$ zero, use the homotopy $\mat{0\\ \phi_0\circ_0\epsilon}$. Thus, horizontal composition in $\vv{\cC}$ is well-defined.

Next, we define vertical composition $\circ_1$ and the  differential on morphism spaces compo\-nent-wise. One directly verifies that in this way, $\vv{\cC}$ obtains the structure of dg $2$-category.

For a dg $2$-functor $\Phi\colon \cC\to \cD$, the underlying dg functors $\Phi_{\ti,\tj}\colon \cC(\ti,\tj)\to \cD(\Phi\ti,\Phi\tj)$ extend by component-wise application to give dg functors
\begin{align*}\vv{\Phi_{\ti,\tj}}\colon \vv{\cC}(\ti,\tj)\to \vv{\cD}(\Phi\ti,\Phi\tj),\end{align*}
which assemble into a dg $2$-functor as required.
\end{proof}

\begin{corollary}
If the dg $2$-functor $\Phi$ in Lemma \ref{vvcC} is a local quasi-equivalence, meaning that every component functor is a quasi-equivalence, then so is $\vv{\Phi}$.
\end{corollary}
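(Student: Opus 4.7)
The plan is to reduce this directly to Lemma \ref{vvFquasieq}. By definition, $\Phi$ being a local quasi-equivalence means that for every pair of objects $\ti,\tj$ of $\cC$, the underlying dg functor $\Phi_{\ti,\tj}\colon \cC(\ti,\tj)\to \cD(\Phi\ti,\Phi\tj)$ is a quasi-equivalence of dg categories. Similarly, to check that $\vv{\Phi}$ is a local quasi-equivalence, it suffices to check that the hom-level dg functor $\vv{\Phi_{\ti,\tj}}\colon \vv{\cC(\ti,\tj)}\to \vv{\cD(\Phi\ti,\Phi\tj)}$ is a quasi-equivalence for each pair $\ti,\tj$.

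First, I would verify that the dg functor induced on hom-categories by $\vv{\Phi}$ is precisely the component-wise extension $\vv{\Phi_{\ti,\tj}}$ of $\Phi_{\ti,\tj}$ in the sense of Section \ref{compactsect}. This is immediate from the construction of $\vv{\cC}$ in the proof of Lemma \ref{vvcC}, where $\vv{\cC}(\ti,\tj)$ is defined as $\vv{\cC(\ti,\tj)}$ and $\vv{\Phi}$ is built by applying $\vv{(-)}$ to each hom-dg-functor.

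Once this identification is in place, Lemma \ref{vvFquasieq} applied to the dg functor $\Phi_{\ti,\tj}$ immediately yields that $\vv{\Phi_{\ti,\tj}}$ is a quasi-equivalence of dg categories. Since this holds for every pair $(\ti,\tj)$, we conclude that $\vv{\Phi}$ is a local quasi-equivalence.

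There is no serious obstacle here; the only point requiring care is the bookkeeping check that the hom-level dg functor of $\vv{\Phi}$ really coincides with $\vv{(\Phi_{\ti,\tj})}$, which follows directly from how composition and the action on morphisms in $\vv{\cC}$ were defined component-wise in Lemma \ref{vvcC}.
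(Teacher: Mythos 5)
Your proposal is correct and matches the paper's own argument, which simply observes that the claim follows directly from Lemma \ref{vvFquasieq} applied to each hom-level dg functor $\Phi_{\ti,\tj}$. The extra bookkeeping you mention (that the hom-components of $\vv{\Phi}$ are exactly $\vv{\Phi_{\ti,\tj}}$) is built into the construction in Lemma \ref{vvcC}, so nothing further is needed.
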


\begin{proof}
This follows directly from Lemma \ref{vvFquasieq}.
\end{proof}

\begin{definition}\label{Cgenadj}Let $\cC$ be a dg $2$-category.
\begin{enumerate}[(a)]
\item\label{Cgenadj1} We say that a dg $2$-category $\cC$ \textbf{has generators} if each category $\ov{\cC(\ti,\tj)}$ has a generator. 
\item\label{Cgenadj2} 
We say that a dg $2$-category $\cC$ \textbf{has compact left adjoints} if for any $\rF$ in $\ov{\cC(\ti,\tj)}$, there exists a morphism ${}^*\rF\in \vv{\cC}(\tj,\ti)$ together with dg morphisms 
\begin{align*}u_\rF\colon \one_\ti\to \rF({}^*\rF)\;\in \;\vv{\cC}(\ti,\ti),\qquad c_\rF\colon ({}^*\rF)\rF\to \one_\tj\;\in\; \vv{\cC}(\tj,\tj)\end{align*}
satisfying the usual identities of an adjunction internal to $\vv{\cC}$.
\end{enumerate}
\end{definition}

Given a dg $2$-category $\cC$, the \textbf{dg idempotent completion} $\cC^\circ$ is the dg $2$-category with the same objects as $\cC$ and $\cC^{\circ}(\ti,\tj)=\cC(\ti,\tj)^{\circ}$, the closure under dg idempotents, for two objects $\ti,\tj$.  We note that by Lemma \ref{Ccirctri}, if $\cC$ is pretriangulated, then $\cC^\circ$ is pretriangulated. 

To a dg $2$-category $\cC$, we can associate the \textbf{$2$-category $\cZ\cC$ of dg $2$-morphisms} which has the same objects and $1$-morphisms as $\cC$ but only those $2$-morphisms which are of degree zero and annihilated by the differential.

The \textbf{homotopy $2$-category} $\cK\cC$ associated to $\cC$ is defined by having the same objects as $\cC$ and categories of $1$-morphisms given by the homotopy categories
\begin{align*}\cK\cC(\ti,\tj):= \K(\cC(\ti,\tj)).\end{align*}
If $\cC$ is pretrianguated, each $\cK\cC(\ti,\tj)$ is triangulated, and we will hence call $\cK\cC$ a triangulated $2$-category. Note that horizontal composition with $1$-morphisms is, in particular, a triangle functor.

\subsection{Dg \texorpdfstring{$2$}{2}-representations}\label{dg2reps}

For the purpose of giving targets for dg $2$-representations in this section, we define the dg $2$-category $\csfcat$ as the $2$-category whose
\begin{itemize}
\item objects are pretriangulated categories $\C$;
\item $1$-morphisms are dg functors between such categories;
\item $2$-morphisms are all morphisms of such dg functors.
\end{itemize}
The dg $2$-subcategory $\csfcat_{\mathrm{g}}$ of $\csfcat$ consists of those dg categories which have a generator.
 
A \textbf{pretriangulated $2$-representation} of a dg $2$-category $\cC$ is a $2$-functor $\bfM\colon \cC \to \csfcat$ such that locally the functors from $\cC(\ti,\tj)$ to $\csfcat(\bfM(\ti),\bfM(\tj))$ are dg functors.
Explicitly, $\bfM$ sends 
\begin{itemize}
\item an object $\ti\in \cC$ to a pretriangulated category $\bfM(\ti)$,
\item a $1$-morphism $\rG \in \cC(\ti,\tj)$ to a dg functor $\bfM(\rG)\colon \bfM(\ti)\to\bfM(\tj)$,
\item a $2$-morphism $\alpha\colon  \rG\to\rH \in \cC(\ti,\tj)$ to a morphism of dg functors
$\bfM(\alpha)\colon \bfM(\rG)\to\bfM(\rH)$.
\end{itemize}

A \textbf{morphism of dg $2$-representations } $\Phi\colon \bfM\to \bfN$ consists of 
\begin{itemize}
\item  dg functors $\Phi_\ti\colon \bfM(\ti)\to \bfN(\ti)$ for each $\ti\in\cC$ and
\item natural dg isomorphisms $\eta_\rF\colon \Phi_\tj\circ \bfM(\rF)\longrightarrow \bfN(\rF)\circ \Phi_\ti$ for each  $\rF\in\cC(\ti,\tj)$ such that for composable $\rF,\rG$
\begin{equation*}
\eta_{\mathrm{F}\mathrm{G}}=(\mathrm{id}_{\mathbf{N}(\mathrm{F})}\circ_0\eta_{\mathrm{G}})\circ_1
(\eta_{\mathrm{F}}\circ_0\mathrm{id}_{\mathbf{M}(\mathrm{G})}).
\end{equation*}
\end{itemize}

The collection of pretriangulated $2$-representations of $\cC$, together with morphisms of dg $2$-representations and modifications satisfying the same condition as in  \cite[Section 2.3]{MM3}, form a dg $2$-category, which we denote by $\cC\tworep$. 

We say a pretriangulated $2$-representation  
\textbf{has generators} if its target is
$\csfcat_{\mathrm{g}}$, that is, if $\bfM(\ti)$ has a generator for any object $\ti$. The $1$-full and $2$-full dg $2$-subcategory of $\cC\tworep$ consisting of pretriangulated $2$-representations with generators is denoted by $\cC\gtworep$. 

A pretriangulated $2$-representation $\bfM$ is \textbf{acyclic} if, for any object $\ti$, any $X\in \bfM(\ti)$ is acyclic, i.e. there exists a morphism $f$ such that $\del(f)=\id_X$.


Note that  a pretriangulated $2$-representation $\bfM$ of a dg $2$-category $\cC$ extends to a pretriangulated $2$-representation $\bfM$ of $\ov{\cC}$.

\begin{lemma}
Extending from $\cC$ to $\ov{\cC}$ and restricting from $\ov{\cC}$ to $\cC$ define mutually inverse dg biequivalences between $\cC\tworep$ and $\ov{\cC}\tworep$.
\end{lemma}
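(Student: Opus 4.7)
The plan is to exhibit two dg $2$-functors: restriction $\iota^*\colon \ov{\cC}\tworep\to \cC\tworep$ along the inclusion $\iota\colon\cC\hookrightarrow\ov{\cC}$, and extension $E\colon \cC\tworep\to\ov{\cC}\tworep$ built from the pretriangulated hull construction $\ov{(-)}$ discussed in Section~\ref{section:pretriang}; then verify these are mutually inverse strictly in one direction and up to dg natural isomorphism in the other.

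First, I define $\iota^*$ by precomposition. For $\bfN\in\ov{\cC}\tworep$, set $(\iota^*\bfN)(\ti):=\bfN(\ti)$ and $(\iota^*\bfN)(\rF):=\bfN(\rF)$ for $\rF\in\cC(\ti,\tj)$, and analogously on $2$-morphisms, morphisms of $2$-representations (keeping the same component dg functors and restricting the coherence data to $\cC$), and modifications. This is manifestly a dg $2$-functor.

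For $E$, given $\bfM\in\cC\tworep$ I set $E\bfM(\ti):=\bfM(\ti)$ on objects and, for a $1$-morphism $\rF=(\bigoplus_m \rF_m,\alpha)\in\ov{\cC(\ti,\tj)}$, define $E\bfM(\rF)$ to be the dg functor sending $X\in\bfM(\ti)$ to the twisted complex $(\bigoplus_m \bfM(\rF_m)(X),\bfM(\alpha)(X))$, interpreted as an object of $\bfM(\tj)$ via the equivalence $\bfM(\tj)\simeq\ov{\bfM(\tj)}$ coming from pretriangulatedness of $\bfM(\tj)$. This amounts to applying the componentwise construction $\ov{(-)}$ of Section~\ref{section:pretriang} to the local dg functors $\bfM_{\ti,\tj}\colon\cC(\ti,\tj)\to \csfcat(\bfM(\ti),\bfM(\tj))$ and then identifying targets. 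Coherence data for composition is inherited from $\bfM$ componentwise: the strict composition formula \eqref{orderingconvention} in $\ov{\cC}$ is matched by the strict twisted-complex composition in the codomain dg functor categories, so $E\bfM$ is a strict dg $2$-functor. Morphisms and modifications of $2$-representations are extended by applying $\ov{(-)}$ componentwise to their natural transformation data, using \eqref{natcirc} on the coherence isomorphisms; functoriality of $\ov{(-)}$ guarantees that $E$ itself is a dg $2$-functor.

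For the equivalence, $\iota^*\circ E$ is the identity on $\cC\tworep$ on the nose, since restricting $E\bfM$ to $1$-morphisms from $\cC$ (which correspond to trivial length-one twisted complexes) recovers $\bfM$ unchanged. For $E\circ\iota^*\cong\Id$, let $\bfN\in\ov{\cC}\tworep$ and $\rF=(\bigoplus_m \rF_m,\alpha)$ in $\ov{\cC(\ti,\tj)}$; both $\bfN(\rF)$ and $E(\iota^*\bfN)(\rF)$ agree on objects and on $1$-morphisms from $\cC$, and on evaluation at $X$ they are canonically dg isomorphic because $\bfN$, being a dg $2$-functor on $\ov{\cC}$ landing in the pretriangulated category $\bfM(\tj)$, must send the twisted-complex presentation of $\rF$ to the corresponding twisted complex of $\bfN(\rF_m)(X)$ with differential $\bfN(\alpha)(X)$. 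Assembling these isomorphisms over $\rF$ gives the required $2$-natural dg isomorphism $E\circ\iota^*\Rightarrow\Id$.

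The main obstacle will be the bookkeeping in step two: verifying that $E\bfM$ is \emph{strictly} a dg $2$-functor and not merely a pseudofunctor. This reduces to checking that the lexicographic ordering in \eqref{orderingconvention} for horizontal composition of twisted complexes in $\ov{\cC}$ is preserved after applying $\bfM$ componentwise and then forming the twisted complex in the target, which is a direct but slightly lengthy matrix computation using that each $\bfM_{\ti,\tj}$ is already a strict dg functor. Everything else, including the comparison isomorphism in $E\circ\iota^*\cong\Id$ and the compatibility with modifications, follows from the formal properties of the $\ov{(-)}$ construction established earlier.
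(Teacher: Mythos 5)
The paper states this lemma without proof, so there is no argument of the authors to compare against; your proof supplies the evident one and is essentially correct --- it follows the same pattern the paper carries out explicitly for the extension to $\vv{\cC}$ in Lemma~\ref{2extend}. Two points are worth flagging. First, the place where strictness can genuinely fail is not the lexicographic bookkeeping of \eqref{orderingconvention} but your re-identification of the twisted complex $\big(\bigoplus_m\bfM(\rF_m)X,\bfM(\alpha)_X\big)$ with an object of $\bfM(\tj)$: this requires choosing a quasi-inverse to the embedding $\bfM(\tj)\hookrightarrow\ov{\bfM(\tj)}$, and no such choice will in general commute on the nose with horizontal composition, so $E\bfM$ as you define it is a $2$-functor only up to coherent isomorphism. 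The clean fix is to let $E\bfM$ act on the categories $\ov{\bfM(\ti)}$ themselves, where the paper's strict additive structure makes the composition formulas match exactly, and to accept $\iota^*E\bfM\simeq\bfM$ rather than $\iota^*E\bfM=\bfM$; since the lemma only claims a biequivalence, nothing is lost. Second, your key step for $E\circ\iota^*\cong\Id$ --- that a dg $2$-functor on $\ov{\cC}$ must send $(\bigoplus_m\rF_m,\alpha)$ to the corresponding twisted complex on the $\bfN(\rF_m)$ --- is correct but deserves a word of justification: the decomposition of a one-sided twisted complex into its (shifted) components is witnessed by non-closed inclusion and projection $2$-morphisms inside $\ov{\cC}(\ti,\tj)$, and this absolute data, together with the way the differential acts on it, is preserved by any dg functor; that is what produces the canonical dg isomorphism you invoke.
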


The following lemma describes how to extend pretriangulated $2$-representations from $\cC$ to $\vv{\cC}$.

\begin{lemma}\label{2extend}Let $\cC$ be a dg $2$-category
\begin{enumerate}[(a)]
\item \label{2extend1}
Given a pretriangulated $2$-representation $\bfM$ of $\cC$, we can define a pretriangulated $2$-representation $\vv{\bfM}$ by $\vv{\bfM}(\ti)=\vv{\bfM(\ti)}$. Moreover, $\vv{\bfM}$ extends to a pretriangulated $2$-representation of $\vv{\cC}$.
\item \label{2extend2}
Given a morphism of $2$-representations $\Phi\colon \bfM\to \bfN$ of $\cC$, we obtain a morphism of dg $2$-representations $\vv{\Phi}\colon \vv{\bfM}\to \vv{\bfN}$ of $\vv{\cC}$.
\item \label{2extend3}
Given morphisms of dg $2$-representations $\Phi,\Psi\colon  \bfM\to \bfN$, a modification $\theta\colon \Phi\to \Psi$ extends to a modification $\vv{\theta}\colon \vv{\Phi}\to \vv{\Psi}$.
This assignment yields a faithful dg morphism
\begin{align*}\Hom_{\Hom_{\cC}(\bfM,\bfN)}(\Phi,\Psi)\to \Hom_{\Hom_{\vv{\cC}}(\vv{\bfM},\vv{\bfN})}(\vv{\Phi},\vv{\Psi}).\end{align*}
\end{enumerate}
\end{lemma}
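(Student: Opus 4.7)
The plan is to apply the $\vv{(-)}$ construction componentwise throughout, leveraging the functoriality already recorded in Section~\ref{compactsect}. The key observation is that $\vv{(-)}$ on dg functors (defined just after Lemma~\ref{vv=compact}) and on dg natural transformations preserves composition and identities on the nose, so the $2$-functor axioms for $\bfM$, the coherence axioms for a morphism of $2$-representations $\Phi$, and the naturality conditions for a modification $\theta$ will automatically transport to their $\vv{(-)}$-images. The one place where genuinely new data is required is the extension of the action from $\cC$ to $\vv{\cC}$ in part~\eqref{2extend1}.

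For part~\eqref{2extend1}, I would set $\vv{\bfM}(\ti) := \vv{\bfM(\ti)}$, $\vv{\bfM}(\rF) := \vv{\bfM(\rF)}$ for $\rF\in\cC(\ti,\tj)$, and $\vv{\bfM}(\alpha) := \vv{\bfM(\alpha)}$ for a $2$-morphism $\alpha$, each by componentwise extension. To extend the action to $\vv{\cC}$, given $(\rH_1\xrightarrow{h}\rH_0)\in\vv{\cC}(\ti,\tj)$, I would define the dg functor $\vv{\bfM(\ti)}\to\vv{\bfM(\tj)}$ that sends an object $(X_1\xrightarrow{x}X_0)$ to
\[
\bfM(\rH_1)(X_0)\oplus\bfM(\rH_0)(X_1)\xrightarrow{\;\left[\bfM(h)_{X_0},\;\bfM(\rH_0)(x)\right]\;}\bfM(\rH_0)(X_0),
\]
mirroring the composition formula of Lemma~\ref{vvcC}. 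On morphisms, the action is defined by applying $\bfM$ componentwise; the verification that null-homotopic maps are sent to null-homotopic maps uses the images under $\bfM$ of the explicit homotopies constructed in the proof of Lemma~\ref{vvcC}.

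For part~\eqref{2extend2}, set $\vv{\Phi}_\ti := \vv{\Phi_\ti}$ and, for $\rF\in\cC(\ti,\tj)$, define $\vv{\eta}_\rF$ by componentwise application of $\eta_\rF$; for $\rF\in\vv{\cC}(\ti,\tj)$, the components of $\vv{\eta}_\rF$ are determined by the explicit action formula from part~\eqref{2extend1} together with naturality of the original $\eta$ on each component of $\rF$. For part~\eqref{2extend3}, set $\vv{\theta}_\ti := \vv{\theta_\ti}$, again componentwise. Faithfulness of $\theta\mapsto\vv{\theta}$ then follows because restricting $\vv{\theta_\ti}$ along the full dg embedding $\bfM(\ti)\hookrightarrow\vv{\bfM(\ti)}$ recovers $\theta_\ti$, so $\vv{\theta}=0$ forces $\theta=0$.

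The main obstacle is the bookkeeping for part~\eqref{2extend1}: one must check that the extended action is strictly compatible with the composition of $1$-morphisms in $\vv{\cC}$ given in Lemma~\ref{vvcC}, and that it descends through the quotient by null-homotopic $2$-morphisms at every stage. Both are routine but somewhat lengthy verifications, parallel in form to those in the proof of Lemma~\ref{vvcC}, transported through the dg $2$-functor $\bfM$ and relying crucially on the strict additive structure of $\ov{\bfM(\ti)}$.
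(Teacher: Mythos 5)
Your proposal is correct and takes essentially the same route as the paper's proof: the same componentwise extension formulas for the action of $\vv{\cC}$ mirroring Lemma~\ref{vvcC}, the same definition of $\vv{\eta}$ via the diagonal matrix of components of $\eta$ checked by naturality, and the same faithfulness argument via restriction along the full embedding $\bfM(\ti)\hookrightarrow\vv{\bfM(\ti)}$ (the paper leaves part~(c) to the reader, so your sketch there is if anything slightly more explicit).
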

\begin{proof}
We first prove \eqref{2extend1}. The fact that $\vv{\bfM}(\ti):=\vv{\bfM(\ti)}$ defines a dg $2$-representation $\vv{\bfM}$ of $\cC$ follows from the observation that for $\rG\in \cC(\ti,\tj)$ and $\rH$ in $\cC(\tj,\tk)$, the induced dg functor satisfy the equality 
$\vv{\bfM(\rH\rG)}=\vv{\bfM(\rH)}\vv{\bfM(\rG)},$
since these dg functors are defined by component-wise application of $\bfM$.

Given an object $(\rG_1\xrightarrow{g} \rG_0)$ in $\vv{\cC}(\ti,\tj)$, we define
\begin{align*}
\resizebox{\linewidth}{!}{$
\vv{\bfM}(\rG_1\xrightarrow{g} \rG_0)(X_1\xrightarrow{x}X_0)=\Big( \vv{\bfM}(\rG_1)X_0\oplus\vv{\bfM}(\rG_0)X_1\xrightarrow{\mat{\vv{\bfM}(g)(X_0)&\vv{\bfM}(\rG_0)(x)}} \vv{\bfM}(\rG_0)X_0 \Big),$
}\end{align*}
for any $(X_1\xrightarrow{x}X_0)$ in $\vv{\bfM}(\ti)$, and extend to morphisms accordingly. It is readily verified that this extends $\vv{\bfM}$ to a dg $2$-representation of $\vv{\cC}$ proceeding similarly to the proof of Lemma \ref{vvcC}. We note that the dg $2$-representation is pretriangulated as $\vv{\bfM(\ti)}$ is pretriangulated for any object $\ti$, cf. Section \ref{compactsect}.

To prove \eqref{2extend2}, we denote by $\eta_\rG\colon \Phi_{\tj}\circ\bfM(\rG)\to \bfN(\rG)\circ\Phi_{\ti}$ the dg natural isomorphism given as part of the data of a morphism of dg $2$-representation, which is also natural in $\rG\in \cC(\ti,\tj)$. Given an object $(\rG_1\xrightarrow{g}\rG_0)$ in $\vv{\cC}(\ti,\tj)$ and $(X_1\xrightarrow{x}X_0)$ in $\vv{\bfM}(\ti)$, we define the dg morphism 
\begin{align*}
\resizebox{\textwidth}{!}{$
\left(\vv{\eta}_{(\rG_1\xrightarrow{g}\rG_0)}\right)_{(X_1\xrightarrow{x}X_0)}
\colon \vv{\Phi_{\tj}}\circ\vv{\bfM}(\rG_1\xrightarrow{g}\rG_0)(X_1\xrightarrow{x}X_0)\to \vv{\bfN}(\rG_1\xrightarrow{g}\rG_0)\circ\vv{\Phi_{\ti}}(X_1\xrightarrow{x}X_0)
$}
\end{align*} 
by the diagram
\begin{align*}
\xymatrix{
\Phi_\tj\bfM(\rG_1)(X_0)\oplus \Phi_\tj\bfM(\rG_0)(X_1)\ar[rrrr]^-{\mat{\Phi_\tj\bfM(g)_{X_0}&\Phi_\tj\bfM(\rG_0)(x)}}\ar[dd]^{\mat{(\eta_{\rG_1})_{X_0}&0\\0&(\eta_{\rG_0})_{X_1}}}&&&&\Phi_\tj\bfM(\rG_0)(X_0)\ar[dd]^{(\eta_{\rG_0})_{X_0}}\\\\
\bfN(\rG_1)\Phi_\ti(X_0)\oplus \bfN(\rG_0)\Phi_\ti(X_1)\ar[rrrr]^-{\mat{\bfN(g)_{\Phi_\ti(X_0)}&\bfN(\rG_0)\Phi_\ti(x)}}&&&&\bfN(\rG_0)\Phi_\ti(X_0).
}
\end{align*}
This diagram commutes since
\begin{align*}\bfN(g)_{\Phi_\ti(X_0)}\circ\big(\eta_{\rG_1}\big)_{X_0}=\big(\eta_{\rG_0}\big)_{X_0}\circ\Phi_\tj\bfM(g)_{X_0}\end{align*}
by naturality of $\big(\eta_{\rG}\big)_{X_0}$ in $\rG$ applied to the morphism $g$, and 
\begin{align*}\bfN(\rG_0)\Phi_\ti(x)\circ\big(\eta_{\rG_0}\big)_{X_1}=\big(\eta_{\rG_0}\big)_{X_0}\circ\Phi_\tj\bfM(\rG_0)(x)\end{align*}
by naturality of $\eta_{\rG_0}$ applied to the morphism $x$. 

The morphisms $\big(\vv{\eta}_{(\rG_1\xrightarrow{g}\rG_0)}\big)_{(X_1\xrightarrow{x}X_0)}$ thus defined are natural in $(\rG_1\xrightarrow{g}\rG_0)$ and $(X_1\xrightarrow{x}X_0)$ because all their components are instances of morphisms $\big(\eta_\rG\big)_X$ which are natural in $\rG$ and $X$. As the differential in $\vv{\bfN(\tj)}$ is defined component-wise, we see that $\vv{\eta}$ consists of dg morphisms. The equation 
\begin{align*}\eta_{\rH\rG}=(\id_{\bfN(\rH)}\circ_0 \eta_\rG)\circ_1(\eta_\rH\circ_0 \id_{\bfM(\rH)})\end{align*}
implies the corresponding equation for $\vv{\eta}$ since it holds component-wise.

We leave the verification of \eqref{2extend3} to the reader.
\end{proof}

For any dg $2$-category $\cC$  and $\ti$ one of its objects, we define the \textbf{$\ti$-th principal dg $2$-representation} $\bfP_\ti$, which sends 
\begin{itemize}
\item an object $\tj$ to $\overline{\cC(\ti, \tj)}$,
\item a $1$-morphism $\rG$ in $\cC(\tj, \tk)$ to the dg functor $\overline{\cC(\ti, \tj)}\to \overline{\cC(\ti, \tk)}$ induced by composition with $\rG$, 
\item a $2$-morphism to the induced dg morphism of functors.
\end{itemize}

Pretriangulated $2$-representations can also be extended to the dg idempotent completion $\cC^\circ$ of $\cC$. For this, given $\bfM\in \cC\tworep$, consider $\bfM^\circ \in \cC\tworep$ given by $\bfM^\circ(\ti)=\bfM(\ti)^\circ$ and extending the dg functors $\bfM(\rF)$ to $\bfM^{\circ}(\rF)$ via \eqref{functorcirc}. The dg $2$-representation $\bfM^\circ$ extends to a pretriangulated $2$-representation of $\cC^\circ$ where $\rF_e$, for $e\colon \rF\to \rF$ an idempotent in $\cC(\ti,\tj)$, is sent to the dg functor $\bfM^\circ(\rF_e)\colon \bfM^\circ(\ti)\to \bfM^\circ(\tj)$ defined by
\begin{align*}X\mapsto \bfM^\circ(\rF_e)(X)=(\bfM(\rF)(X))_{\bfM(e)_X}, \qquad f\mapsto \bfM(e)_Y\circ \bfM(\rF)(f)\circ \bfM(e)_X,\end{align*}
where $X,Y$ are objects and $f\colon X\to Y$ is a morphism in $\bfM^\circ(\ti)$. Functoriality of $\bfM(\rF_e)$ follows using naturality of $\bfM(e)$.

\begin{lemma}\label{extendtocirc}
There is a dg $2$-functor 
\begin{align*}(-)^\circ\colon \cC\tworep \to \cC^\circ \tworep,\end{align*}
where the assignments on dg $2$-representations, morphisms, and modifications extend the given structures from $\cC$ to $\cC^\circ$.
\end{lemma}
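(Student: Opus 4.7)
The construction of $\bfM^\circ$ on 2-representations is already specified in the paragraph preceding the lemma, so the plan is to (i) verify $\bfM^\circ$ is a strict dg 2-representation of $\cC^\circ$; (ii) extend the assignment to morphisms and modifications using \eqref{functorcirc} and \eqref{natcirc}; and (iii) check dg 2-functoriality.

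For (i), the only substantive check is strict preservation of horizontal composition of 1-morphisms. Given $\rF_e\in\cC^\circ(\tj,\tk)$ and $\rG_f\in\cC^\circ(\ti,\tj)$ with product $(\rF\rG)_{e\circ_0 f}$, one unwinds $\bfM^\circ(\rF_e)\bfM^\circ(\rG_f)(X)$ for $X\in\bfM^\circ(\ti)$ via \eqref{functorcirc} and \eqref{natcirc} to obtain the object $\bfM(\rF)\bfM(\rG)(X)$ equipped with the nested idempotent
\[
e' \;=\; \bfM(\rF)(\bfM(f)_X)\circ\bfM(e)_{\bfM(\rG)X}\circ\bfM(\rF)(\bfM(f)_X).
\]
Using naturality of $\bfM(e)$ at the morphism $\bfM(f)_X$ to pull the rightmost factor past $\bfM(e)_{\bfM(\rG)X}$, together with idempotency of $\bfM(\rF)(\bfM(f)_X)$ (image under a dg functor of the dg idempotent $\bfM(f)_X$), this collapses to $\bfM(\rF)(\bfM(f)_X)\circ\bfM(e)_{\bfM(\rG)X}$, which equals $\bfM(e\circ_0 f)_X$ by the interchange law for horizontal composition. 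Hence $\bfM^\circ(\rF_e)\bfM^\circ(\rG_f)=\bfM^\circ((\rF\rG)_{e\circ_0 f})$ strictly, and the analogous computation on morphisms of $\bfM^\circ(\ti)$ goes through verbatim. Strict preservation of vertical and horizontal composition of 2-morphisms in $\cC^\circ$ then follows directly from functoriality of \eqref{natcirc}.

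For (ii), given $\Phi\colon\bfM\to\bfN$ with dg natural isomorphisms $\eta_\rF$, set $\Phi^\circ_\ti := (\Phi_\ti)^\circ$ and $\eta^\circ_{\rF_e}:=(\eta_\rF)^\circ$ using \eqref{functorcirc} and \eqref{natcirc}, interpreting $\eta_\rF$ as a natural transformation of dg functors $\bfM^\circ(\ti)\to\bfN^\circ(\tj)$ via its extensions. Invertibility of $\eta^\circ_{\rF_e}$ follows from that of $\eta_\rF$, and the coherence axiom $\eta^\circ_{\rF_e\rG_f}=(\id\circ_0\eta^\circ_{\rG_f})\circ_1(\eta^\circ_{\rF_e}\circ_0\id)$ is obtained by applying \eqref{natcirc} component-wise to the coherence axiom for $\eta$. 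For a modification $\theta\colon\Phi\to\Psi$, set $\theta^\circ_\ti := (\theta_\ti)^\circ$; the modification axiom transfers analogously from that for $\theta$.

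For (iii), all compositions and differentials on morphism spaces in $\cC^\circ\tworep$ are defined component-wise, and \eqref{natcirc} is $\Bbbk$-linear and commutes with the differential because the idempotents involved are dg (i.e., annihilated by $\partial$). Hence strict 2-functoriality and dg compatibility on modification spaces are inherited directly from $\cC\tworep$. The main obstacle is the bookkeeping in (i): verifying the collapse $e' = \bfM(e\circ_0 f)_X$ using naturality of $\bfM(e)$ and idempotency of $\bfM(\rF)(\bfM(f)_X)$. Once this is established, all remaining verifications reduce to routine term-by-term applications of \eqref{functorcirc} and \eqref{natcirc} and the coherence axioms already imposed in $\cC\tworep$.
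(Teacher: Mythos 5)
Your proposal is correct and follows essentially the same route as the paper: the morphism and modification assignments are extended via \eqref{functorcirc} and \eqref{natcirc}, with $\eta_{\rF_e}$ obtained by sandwiching $\eta_\rF$ between $\bfN(e)_{\Phi^\circ(X)}$ and $\Phi^\circ\bfM(e)_X$, exactly as in the paper. Your part (i) additionally spells out the collapse of the nested idempotent $e'$ to $\bfM(e\circ_0 f)_X$ via naturality of $\bfM(e)$ and the interchange law, a verification the paper leaves implicit in the paragraph preceding the lemma; this computation is correct and is a welcome addition rather than a deviation.
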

\begin{proof}
A morphism of dg $2$-representations $\Phi\colon \bfM\to \bfN$ naturally extends to a morphism of dg $2$-representations $\Phi^\circ\colon \bfM^\circ\to \bfN^\circ$ of $\cC$ using \eqref{functorcirc}, which is a functorial assignment. For a $1$-morphism $\rF$ and a dg idempotent $e\colon \rF\to \rF$, one defines 
\begin{align*}(\eta_{\rF_e})_X:= \bfN(e)_{\Phi^\circ(X)}\circ (\eta_{\rF})_X\circ \Phi^\circ\bfM(e)_X\end{align*}
and extends $\eta_{\rF_e}$ via \eqref{natcirc}.
This way, it follows that $\Phi^\circ$ indeed commutes with the action of $\cC^\circ$ defined above. Modifications of dg morphisms of dg $2$-representations are naturally extended using \eqref{natcirc}.
\end{proof}

The dg $2$-functor $(-)^\circ\colon \cC\tworep \to \cC^\circ \tworep$ is, in a certain sense, left dg $2$-adjoint to restriction $\bfN\mapsto \left.\bfN\right|_{\cC}$ along the inclusion of $\cC$ into $\cC^\circ$.
\begin{lemma}
There are dg equivalences 
\begin{align*}\adj{E}{\Hom_{\cC}(\bfM,\left.\bfN\right|_{\cC})}{\Hom_{\cC^\circ}(\bfM^\circ,\bfN)}{R},\end{align*}
where $\bfN$ is a dg idempotent complete pretriangulated $2$-representation of $\cC^\circ$ and $\bfM\in \cC\tworep$. 

In particular, the dg $2$-functor $(-)^\circ$ restricts to a dg biequivalence on the full $2$-sub\-cat\-e\-go\-ries of dg idempotent complete pretriangulated $2$-representations.
\end{lemma}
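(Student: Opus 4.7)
The plan is to construct $E$ by applying the dg $2$-functor $(-)^\circ$ from Lemma~\ref{extendtocirc} and then composing with the dg equivalence supplied by dg idempotent completeness of $\bfN$, and to construct $R$ by restriction to $\cC$ precomposed with the canonical embedding $\iota_\bfM\colon \bfM\to\bfM^\circ|_\cC$, $X\mapsto X_{\id_X}$. Note that $\iota_\bfM$ is a strict morphism of $\cC$-representations with identity structural natural isomorphisms, because the $\cC^\circ$-action on $\bfM^\circ$ restricts on $\cC\subset\cC^\circ$ exactly to the $\cC$-action on $\bfM$ via the formula in Lemma~\ref{extendtocirc}.

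More concretely, for $\Phi\colon\bfM\to\bfN|_\cC$ Lemma~\ref{extendtocirc} gives $\Phi^\circ\colon\bfM^\circ\to(\bfN|_\cC)^\circ$ in $\cC^\circ\tworep$. Since $\bfN$ is dg idempotent complete, the inclusion $\iota_\bfN\colon\bfN\to(\bfN|_\cC)^\circ$ is a dg equivalence of $\cC^\circ$-representations with a quasi-inverse $s$ obtained by choosing splittings $X_e\mapsto \mathrm{im}(e)$ and transporting the structural isomorphisms accordingly; one checks the coherence data $(\eta_{\rF_e})_X=\bfN(e)_{\Phi^\circ(X)}\circ (\eta_\rF)_X\circ \Phi^\circ\bfM(e)_X$ from Lemma~\ref{extendtocirc} intertwines the $\cC^\circ$-actions on either side. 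Set $E(\Phi):=s\circ\Phi^\circ$ and extend to modifications via the functoriality of $(-)^\circ$ and whiskering with $s$. For $\Psi\colon\bfM^\circ\to\bfN$ set $R(\Psi):=(\Psi|_\cC)\circ\iota_\bfM$, extended on modifications analogously. Both $E$ and $R$ are visibly dg functors, as all constructions are componentwise in the dg structure.

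To conclude that $E$ and $R$ are mutually quasi-inverse dg equivalences, I would verify natural dg isomorphisms $R\circ E\simeq \id$ and $E\circ R\simeq \id$. The first is direct: $R(E(\Phi))(X)=s\bigl(\Phi(X)_{\id_{\Phi(X)}}\bigr)$ is canonically dg isomorphic to $\Phi(X)$ via $s\dashv \iota_\bfN$, and one traces the natural isomorphisms $\eta_\rF$ for $\rF\in\cC$ through this identification. The second uses the universal property of the dg idempotent completion at each $\ti$: a dg functor $\bfM(\ti)^\circ\to\bfN(\ti)$ into a dg idempotent complete category is determined up to canonical natural dg isomorphism by its restriction to $\bfM(\ti)$; the coherence isomorphisms $\eta_{\rF_e}$ at $\cC^\circ$-cells are then forced by naturality in $\rF_e$, matching the formula from Lemma~\ref{extendtocirc}. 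The ``in particular'' statement follows immediately: when $\bfM$ itself is dg idempotent complete, $\iota_\bfM$ is a dg equivalence, so $R$ (and hence $E$) realises the dg biequivalence of the corresponding full $2$-subcategories.

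The main obstacle is the bookkeeping of the structural $2$-isomorphisms: one must check that $s$ is a genuine morphism of $\cC^\circ$-representations (not merely a fibrewise dg equivalence), that $E$ and $R$ send modifications to modifications compatibly with vertical composition and the differential, and that the unit/counit isomorphisms of the adjunction satisfy the cocycle identities for composable $1$-morphisms in $\cC^\circ$. These are routine componentwise verifications in each $\bfN(\ti)$, but they require careful use of the explicit formulas from Lemma~\ref{extendtocirc} throughout.
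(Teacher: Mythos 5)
Your construction is essentially the paper's own: $E$ is the extension $(-)^{\circ}$ of Lemma~\ref{extendtocirc} followed by the splitting equivalence $\bfN(\ti)^{\circ}\simeq\bfN(\ti)$ supplied by dg idempotent completeness, $R$ is restriction along $\bfM\hookrightarrow\bfM^{\circ}$, and both composites are identified with the identities via the uniqueness of idempotent splittings (the paper verifies $E\circ R\simeq\id$ by the explicit computation $(X,e)\mapsto(\Psi_{\ti}(X),\Psi_{\ti}(e))$ rather than by the universal property, but this is the same argument). The proposal is correct and only differs in being more explicit about the coherence bookkeeping.
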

\begin{proof}
The dg functor $E$ is defined by extending a morphism of $2$-representations $\Phi\colon \bfM\to \left.\bfN\right|_{\cC}$ of $\cC$ by setting 
\begin{align*}E(\Phi)_\ti={\Phi_\ti}^\circ.\end{align*}
The target of this dg functor is $\bfN(\ti)\simeq \bfN(\ti)^\circ$ since $\bfN$ is dg idempotent complete. The dg functor $R$ is simply given by restricting to $\bfN(\ti)\subset \bfN(\ti)^\circ$. Given a dg morphism $\Psi\colon \bfM^\circ\to \bfN$ and an object $(X,e)$ in $\bfN(\ti)$, it is clear that $E(\Psi)_\ti(X,e)=(\Psi_\ti(X), \Psi_\ti(e))$ gives a splitting for the idempotent $\Psi_\ti(e)$ and is hence isomorphic to $\Psi_\ti(X,e)$.
\end{proof}

\subsection{Compact pretriangulated \texorpdfstring{$2$}{2}-representations}
\label{sec:compact2rep}

Given a pretriangulated $2$-rep\-re\-sentation $\bfM$ and an object $X$ of $\bfM(\ti)$, we define, for any object $\tj$ of $\cC$, \textbf{evaluation at $X$} to be the dg functor 
\begin{align*}\Eval_X\colon \ov{\cC}(\ti,\tj)\to \bfM(\tj),\qquad \rF\mapsto \bfM(\rF)X.\end{align*}

We say that a pretriangulated $2$-representation $\bfM$ is \textbf{compact} if for any objects $\ti,\tj$ of $\cC$ and any $X\in \bfM(\ti)$, the functors $\Eval_X$ satisfy the equivalent conditions of Lemma \ref{compactadj} (see also Lemma \ref{compactadjgen}), i.e., the extended dg functors $\vv{\Eval}_X$ have right adjoints. In this situation, we will also say that $\Eval_X$ has a compact right adjoint.

\begin{lemma}\label{adjEvonoverline}
Let $\bfM$ be a pretriangulated $2$-representation of a dg $2$-category $\cC$ and $X=(X'\oplus X'', \left(\begin{smallmatrix}0& \alpha \\ 0&0\end{smallmatrix}\right))
\in \bfM(\ti)$. Assume that $\Eval_{X'}, \Eval_{X''}$ have compact right adjoints.  Then $\Eval_X$ has a compact right adjoint.
\end{lemma}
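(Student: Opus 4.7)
The plan is to verify the criterion of Lemma \ref{compactadj} for $\Eval_X$: namely, that for each object $\tj$ of $\cC$ and each $Y \in \bfM(\tj)$, the dg $\ov{\cC}(\ti,\tj)$-module $\Eval_X^*(Y^\vee) = \Hom_{\bfM(\tj)}(\bfM(-)X, Y)$ is compact.

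The key step is to exhibit $\Eval_X^*(Y^\vee)$ as a cone built from $\Eval_{X'}^*(Y^\vee)$ and $\Eval_{X''}^*(Y^\vee)$ in the dg module category. Since each $\bfM(\rF)$ is a dg functor on the pretriangulated category $\bfM(\ti)$, applying it componentwise to the twisted complex $X = \big(X'\oplus X'', \mat{0 & \alpha \\ 0 & 0}\big)$ yields
$$\bfM(\rF)X = \Big(\bfM(\rF)X' \oplus \bfM(\rF)X'', \mat{0 & \bfM(\rF)\alpha \\ 0 & 0}\Big),$$
which realizes $\bfM(\rF)X$ as a cone involving $\bfM(\rF)X'$ and $\bfM(\rF)X''$ with twisting $\bfM(\rF)\alpha$. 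Since $\bfM(-)\alpha$ defines a dg natural transformation between the dg functors $\Eval_{X''}$ and $\Eval_{X'}$, this cone description is natural in $\rF$. Applying the contravariant functor $\Hom_{\bfM(\tj)}(-,Y)$ thus identifies $\Eval_X^*(Y^\vee)$ with a cone built from a dg morphism between $\Eval_{X'}^*(Y^\vee)$ and a shift of $\Eval_{X''}^*(Y^\vee)$ in the category $\ov{\cC}(\ti,\tj)\dgmod$.

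By hypothesis combined with Lemma \ref{compactadj}, both $\Eval_{X'}^*(Y^\vee)$ and $\Eval_{X''}^*(Y^\vee)$ are compact dg $\ov{\cC}(\ti,\tj)$-modules. By Lemma \ref{vv=compact}, the full subcategory of compact dg modules coincides with the image of $\vv{\ov{\cC}(\ti,\tj)}$ under the Yoneda embedding and is in particular pretriangulated, hence closed under shifts and cones. Therefore $\Eval_X^*(Y^\vee)$ is compact, and applying the converse direction of Lemma \ref{compactadj} finishes the argument. The only non-routine step will be verifying that the cone description holds at the level of dg $\ov{\cC}(\ti,\tj)$-modules and not merely objectwise at each $\rF$; this will follow by matching the off-diagonal component $\bfM(-)\alpha$ of the twisted differential on $\bfM(-)X$ with the corresponding off-diagonal component of the induced dg module differential on $\Hom_{\bfM(\tj)}(\bfM(-)X, Y)$.
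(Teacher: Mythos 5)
Your proposal is correct and follows essentially the same route as the paper: both arguments rest on the observation that $\bfM(\rF)X$ is the twisted complex $\big(\bfM(\rF)X'\oplus\bfM(\rF)X'',\mat{0&\bfM(\rF)\alpha\\0&0}\big)$ naturally in $\rF$, and then transfer this through $\Hom_{\bfM(\tj)}(-,Y)$. The only cosmetic difference is that the paper carries the adjunction isomorphisms all the way through to exhibit the right adjoint explicitly as the twisted complex $\big(\Eval^*_{X''}\oplus\Eval^*_{X'},\mat{0&\Eval^*_\alpha\\0&0}\big)$ in $\vv{\cC}(\ti,\tj)$, whereas you stop at the equivalent criterion of Lemma \ref{compactadj} by invoking closure of compact dg modules under shifts and cones.
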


\begin{proof}
For $\rG\in \cC(\ti,\tj), Y\in \bfM(\tj)$, we compute
\begin{align*}
\Hom_{\bfM(\tj)}& (\Eval_X(\rG), Y) \cong \Hom_{\bfM(\tj)} ((\rG X'\oplus\rG X'', \left(\begin{smallmatrix}0& \rG\alpha \\ 0&0\end{smallmatrix}\right)), Y)\\
&\cong \left(\Hom_{\bfM(\tj)} (\rG X'',  Y)\oplus\Hom_{\bfM(\tj)} (\rG X', Y), \left(\begin{smallmatrix}0& -\circ\rG\alpha \\ 0&0\end{smallmatrix}\right) \right)\\
&\cong \resizebox{0.85\textwidth}{!}{$\left(\Hom_{\vv{\cC}(\ti,\tj)} (\rG , \Eval^*_{X''} Y)\oplus\Hom_{\bfM(\ti)} (\rG, \Eval^*_{X'} Y), \left(\begin{smallmatrix}0&\Hom_{\vv{\cC}(\ti,\tj)}(\rG,(\Eval^*_\alpha)_Y) \\ 0&0\end{smallmatrix}\right) \right)$}\\
&\cong \Hom_{\vv{\cC}(\ti,\tj)} \left(\rG , \left(\Eval^*_{X''}\oplus \Eval^*_{X'},  \left(\begin{smallmatrix}0& \Eval^*_\alpha \\ 0&0\end{smallmatrix}\right)\right)Y\right),
\end{align*}
where in the third isomorphism we have used that $\Eval$ is, in fact, a bifunctor by definition of a $2$-representation.
This computation shows that $\Eval_X$ has the compact right adjoint $(\Eval^*_{X''}\oplus \Eval^*_{X'},  \left(\begin{smallmatrix}0& \Eval^*_\alpha \\ 0&0\end{smallmatrix}\right))$. 
\end{proof}

We have the following dg $2$-subcategories of $\cC\tworep$.
\begin{itemize}
\item
The $2$-subcategory of \emph{compact} pretriangulated $2$-representations denoted by $\cC\ctworep$. 
\item
The $2$-subcategory of \emph{compact} pretriangulated $2$-representations \emph{with generators} denot\-ed by $\cC\cgtworep$. 
\end{itemize}

We observe that $\bfP_\ti$ is in $\cC\tworep$. The following lemma establishes when $\bfP_\ti$ belongs to $\cC\ctworep$.

\begin{lemma}\label{Picompact}
Let $\cC$ be a dg $2$-category. Then all principal $2$-representations $\bfP_\ti$ are compact if and only if $\cC$ has compact left adjoints.
\end{lemma}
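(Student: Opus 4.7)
The plan is to recognise this as an instance of the standard $2$-categorical characterisation of one-sided adjunctions via postcomposition, applied in $\vv{\cC}$: a $1$-morphism $\rF\colon \ti\to\tj$ has a left adjoint ${}^*\rF\in \vv{\cC}(\tj,\ti)$ if and only if, for every object $\tk$, the postcomposition functor $(-)\cdot \rF\colon \vv{\cC}(\tj,\tk)\to \vv{\cC}(\ti,\tk)$ admits a right adjoint, in which case that adjoint is given by $(-)\cdot {}^*\rF$. The key observation is that, for $X\in \bfP_\ti(\tk)=\ov{\cC(\ti,\tk)}$, the extended evaluation $\vv{\Eval_X}\colon \vv{\cC}(\tk,\tj)\to \vv{\cC}(\ti,\tj)$ sends $\rH\mapsto \rH\cdot X$, i.e.\ is exactly postcomposition with $X$ in $\vv{\cC}$. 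Hence the conditions ``all $\bfP_\ti$ compact'' and ``$\cC$ has compact left adjoints'' translate verbatim into the two sides of this characterisation.

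For the direction ``compact left adjoints $\Rightarrow$ every $\bfP_\ti$ compact'', I would take $X\in \ov{\cC(\ti,\tk)}$ with left adjoint ${}^*X\in \vv{\cC}(\tk,\ti)$, unit $u_X\colon \one_\tk\to X\cdot {}^*X$ and counit $c_X\colon {}^*X\cdot X\to \one_\ti$. Define $R\colon \vv{\cC}(\ti,\tj)\to \vv{\cC}(\tk,\tj)$ by $Y\mapsto Y\cdot {}^*X$. The Hom-bijection $\Hom(\rH\cdot X, Y)\to \Hom(\rH, Y\cdot {}^*X)$ sending $\phi$ to $(\phi\circ_0{}^*X)\circ_1(\rH\circ_0 u_X)$, with inverse $\psi\mapsto (Y\circ_0 c_X)\circ_1(\psi\circ_0 X)$, is forced to be a pair of mutual inverses by the interchange law and the triangle identities for ${}^*X\dashv X$. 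Thus $\vv{\Eval_X}$ acquires a right adjoint and $\bfP_\ti$ is compact.

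For the converse, fix $\rF\in \ov{\cC(\ti,\tj)}$; compactness of all $\bfP_\ti$ furnishes right adjoints $R^\tk$ of $\vv{\Eval_\rF}$ for every $\tk$. Set ${}^*\rF := R^\ti(\one_\ti)$ with counit $c_\rF\colon {}^*\rF\cdot \rF\to \one_\ti$ arising from $\id_{{}^*\rF}$ under the adjunction Hom-isomorphism at $\one_\ti$. To build the unit, I use that from $\id_\rF\in \Hom(\rF,\rF)=\Hom(\one_\tj\cdot \rF,\rF)$ the Hom-isomorphism at $\tk=\tj$ yields $\eta\colon \one_\tj\to R^\tj(\rF)$, while $\rF\circ_0 c_\rF\colon \rF\cdot {}^*\rF\cdot\rF\to \rF$ corresponds under the same Hom-isomorphism to a canonical comparison map $\mu\colon \rF\cdot {}^*\rF\to R^\tj(\rF)$; one then sets $u_\rF := \mu^{-1}\circ_1\eta$. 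The triangle identities for $({}^*\rF, u_\rF, c_\rF)$ follow from the uniqueness of adjunction data together with the defining properties of $c_\rF$ and $\mu$.

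The main technical obstacle is showing that $\mu$ is an isomorphism -- an instance of a projection formula for the parametric family $\{R^\tk\}_\tk$. One verifies this by checking that both $\rF\cdot {}^*\rF$ and $R^\tj(\rF)$ represent the functor $\rH\mapsto \Hom(\rH\cdot \rF,\rF)$ on $\vv{\cC}(\tj,\tj)$ -- the latter by the defining adjunction, the former via $c_\rF$ and the universal property of ${}^*\rF = R^\ti(\one_\ti)$ -- and then that $\mu$ implements this common representation. Alternatively one invokes the standard representability-of-adjoints theorem in bicategories directly.
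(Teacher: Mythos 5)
Your proposal is correct and takes essentially the same route as the paper: the forward direction uses the identical internal-adjunction isomorphism $\Hom(\rH\rF,\rG)\cong\Hom(\rH,\rG\,{}^*\rF)$ to exhibit the right adjoint of $\vv{\Eval}_\rF$ (the paper packages this as showing the pullback of a representable is representable via Lemma \ref{compactadj}), and the converse defines ${}^*\rF$ as the value of the right adjoint at the identity $1$-morphism, extracts the counit from $\id_{{}^*\rF}$ and the unit from $\id_\rF$, and deduces the adjunction axioms from naturality following Mac Lane. The only divergence is one of emphasis: you isolate as an explicit step the identification $R^\tj(\rF)\cong \rF\,{}^*\rF$ (your comparison map $\mu$), which the paper asserts implicitly when it writes $\Hom_{\vv{\cC}(\ti,\tj)}(\one_{\ti}\rF,\rF)\cong\Hom_{\vv{\cC}(\ti,\ti)}(\one_\ti,\rF\,{}^*\rF)$ and again when it invokes naturality of $\Hom(\rG\rF,\rH)\cong\Hom(\rG,\rH\,{}^*\rF)$ in both variables.
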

\begin{proof}
First, assume that $\cC$ has compact left adjoints. Let $\rF\in \cC(\ti,\tj)$ be a $1$-morphism. Using Lemma \ref{compactadj}, we need to show that the $\cC(\tj,\tk)$-module \begin{align*}\Eval_\rF^*(\rG^\vee)=\Hom_{\vv{\cC}(\ti,\tk)}((-)\rF,\rG)\end{align*} is a compact object for any $1$-morphism $\rG$ in $\cC(\ti,\tk)$. 

For the left adjoint ${}^*\rF\in \vv{\cC}(\tj,\ti)$ of $\rF$ , there is a natural dg isomorphism
\begin{align*}\Hom_{\vv{\cC}(\ti,\tk)}(\rH\rF,\rG)\xrightarrow{\sim}\Hom_{\vv{\cC}(\tj,\tk)}(\rH,\rG{}^*\rF), \quad \alpha \mapsto (\alpha\circ_0\id_{{}^*\rF})\circ_1(\id_{\rH}\circ_0 u_\rF),\end{align*}
for any $\rH\in \cC(\tj,\tk)$. This provides a dg isomorphism of dg $\cC(\tj,\tk)$-modules between $\Hom_{\vv{\cC}(\ti,\tk)}((-)\rF,\rG)$ and the image under $\Upsilon$ of $\rG{}^*\rF$, which is, by Lemma \ref{vv=compact}, a compact object in $\cC(\tj,\tk)\dgmod$.

Conversely, assume $\bfP_\ti$ is compact for any object $\ti$ of $\cC$. Then, in particular, for any $\rF\in \cC(\ti,\tj)$ the functor 
$\Hom_{\vv{\cC}(\tj,\tj)}((-)\rF,\one_\tj)$ is compact. Again using Lemma \ref{vv=compact}, there exists a $1$-morphism ${}^*\rF$ in $\vv{\cC}(\tj,\ti)$ such that $\Hom_{\vv{\cC}(\tj,\tj)}((-)\rF,\one_\tj)$ is dg isomorphic to $\Upsilon({}^*\rF)$ as a dg $\cC(\tj,\ti)$-module. Thus, there are dg isomorphisms
\begin{align*}\Hom_{\vv{\cC}(\tj,\tj)}(\rG\rF,\one_\tj)\cong \Hom_{\vv{\cC}(\tj,\ti)}(\rG,{}^*\rF),\end{align*}
natural in $\rG\in \cC(\tj,\ti)$. These dg functors and natural dg isomorphisms extend to inputs $\rG$ from $\vv{\cC}(\tj,\ti)$. In particular, there are natural dg isomorphisms
\begin{align*}\Hom_{\vv{\cC}(\tj,\tj)}(({}^*\rF)\rF,\one_\tj)\cong \Hom_{\vv{\cC}(\tj,\tj)}({}^*\rF,{}^*\rF).\end{align*}
We define $c_\rF$ to be the element corresponding to $\id_{{}^*\rF}$ under this isomorphism. Similarly, $\Hom_{\vv{\cC}(\ti,\tj)}((-)\rF,\rF)$ is compact. Applying this to the identity $1$-morphism $\one_\ti$, there exists a natural dg isomorphism 
\begin{align*}\Hom_{\vv{\cC}(\ti,\tj)}(\one_{\ti}\rF,\rF)\cong \Hom_{\vv{\cC}(\ti,\ti)}(\one_\ti,(\rF{}^*)\rF).\end{align*}
The image of the identity $\id_{\rF}$ gives the element $u_\rF$.
The adjunction axioms for $u_{\rF}$, $c_{\rF}$ now follow from the fact that the  isomorphisms 
$\Hom_{\vv{\cC}(\ti,\tk)}(\rG\rF,\rH)\cong \Hom_{\vv{\cC}(\tj,\tk)}(\rG,\rH{}^*\rF)$ are natural in $\rG,\rH$ adapting the argument of \cite[Section IV.1]{McL}.
\end{proof}

Thus, $\bfP_\ti$ is in $\cC\cgtworep$ provided that $\cC$ has compact left adjoints and generators (see Section \ref{dg2cats}).

\begin{example}
Let $R=\Bbbk[x]$, considered as a dg algebra concentrated in degree zero. We define the dg $2$-category $\cC_R$ to have one object $\bullet$ and morphism category $\cC_R(\bullet, \bullet)$ given by a strictification of $\widehat{\{R\oplus R\otimes_\Bbbk R\}}$ with horizontal composition induced by the tensor product over $R$, and $\one$ corresponding to $R$.

The dg $2$-category $\cC_R$ has its natural dg $2$-representation $\bfN$ on $\bfN(\bullet) = \ov{\{R\}}$ induced by viewing the dg bimodules in $\cC_R(\bullet, \bullet)$ as acting by tensor functors. This pretriangulated $2$-representation is not compact. Indeed, since 
\begin{align*}\Hom_{\bfN(\bullet)} (R\otimes_\Bbbk R\otimes_R R, R)\cong \Hom_{\Bbbk\plmod}(R, R)\end{align*}
is not a compact object in the dg category of dg $R$-$R$-bimodules,
the requirement
\begin{align*}\Hom_{\bfN(\bullet)} (R\otimes_\Bbbk R\otimes_R R, R) \cong \Hom_{\vv{\cC}(\bullet, \bullet)} (R\otimes_\Bbbk R, \Eval^*_R(R))\end{align*}
cannot be satisfied.

On the other hand, we have the trivial dg $2$-representation $\bfM$ with $\bfM(\ti) = \Bbbk\plmod$, on which $R\otimes_\Bbbk R$ acts by $0$, $R$ acts as the identity functor, with the endomorphism $x$ of $R$ acting by zero. This is a compact dg $2$-representation, since 
\begin{align*}\Hom_{\bfM(\bullet)} (\bfM(\rF)\Bbbk, \Bbbk)\cong 
\Hom_{\vv{\cC}(\bullet, \bullet)}(\rF, \Eval^*_\Bbbk(\Bbbk))\end{align*}
is satisfied for any $\rF$ when taking $\Eval^*_\Bbbk(\Bbbk)$ to be the diagram 
$\xymatrix{\one \ar[r]^{x}&\one}$ in $\vv{\cC}(\bullet, \bullet)$.
\end{example}

\subsection{Dg ideals of  \texorpdfstring{$2$}{2}-representations and dg \texorpdfstring{$2$}{2}-subrepresentations}\label{idealsect}
In this section, we collect facts about dg ideals used later in the paper.

A \textbf{dg ideal} of a pretriangulated $2$-representation $\bfM$ of a dg $2$-category $\cC$ is a collection of dg ideals $\bfI(\ti)\subset \bfM(\ti)$ which is closed under the $\cC$-action.

\begin{definition}\label{idealgen}
Let $\bfM$ be a pretriangulated $2$-representation of $\cC$, $\ti\in \cC$, and $f$ a morphism in $\bfM(\ti)$. Define the \textbf{dg ideal $\bfI_{\bfM}(f)$ generated} by $f$ to be the smallest dg ideal of $\bfM$ containing $f$. 
\end{definition}

If $\bfI$ is a dg ideal in a pretriangulated $2$-representation $\bfM$, we can form the quotient $\bfM/\bfI$ acting on 
\begin{align*}\left(\bfM/\bfI\right)(\ti)=\ov{\bfM(\ti)/\bfI(\ti)}.\end{align*}
In particular, given a dg $2$-subrepresentation $\bfN$ of  $\bfM$, we define the \textbf{quotient} $\bfM/\bfN$ to be the quotient of $\bfM$ by the dg ideal generated by $\bfN$.

Let $\bfM$ be a pretriangulated $2$-representation of $\cC$. A pretriangulated \textbf{$2$-sub\-rep\-re\-senta\-tion} $\bfN$ of $\bfM$ is a collection of thick subcategories $\bfN(\ti)\subseteq \bfM(\ti)$ for all objects $\ti$ of $\cC$ such that for all $1$-morphisms $\rG$ and any $2$-morphism $\alpha$ in $\cC(\ti,\tj)$, we have that 
\begin{align*}\bfN\rG(N)=\bfM\rG(N) &\in \bfN(\tj), & \bfN \rG(f)&=\bfM\rG(f)\in \bfN(\tj),&\bfN(\alpha)_N &=\bfM(\alpha)_N, \end{align*}
for all objects $N$ and morphisms $f$ in $\bfN(\ti)$. In particular, the thick subcategories $\bfN(\ti)$ are closed under the $\cC$-action, which is the restriction of the $\cC$-action given by the dg $2$-functor $\bfM$. Note that we require that $\bfN(\ti)$ is closed under forming biproducts and taking dg direct summands  that exist in $\bfM(\ti)$.

Given a collection of pretriangulated $2$-subrepresentations $\lbrace \bfN^\nu\rbrace_{\nu\in I}$ of a fixed pretriangulated $2$-representation $\bfM$, we define the \textbf{sum} $\sum_{\nu\in I} \bfN^\nu$, as the smallest pretriangulated $2$-subrepresentation of $\bfM$ containing all $\bfN^\nu$.

\subsection{Cyclic and quotient-simple dg \texorpdfstring{$2$}{2}-representations}

Recall that by the thick closure of a collection of objects in a dg category, we mean the smallest dg subcategory which is generated by the given objects under shifts, cones, direct sums, dg direct summands, and is closed under dg isomorphisms.

\begin{definition}\label{2repgen}
Given a pretriangulated $2$-representation $\bfM$ of $\cC$ and an object $X\in \bfM(\ti)$, for some $\ti\in \cC$, we denote by $\bfG_\bfM(X)$ the $2$-representa\-tion on the thick closure of $\{\bfM(\rG) X | \rG \in \cC(\ti,\tj), \tj\in \cC\}$ inside $\coprod_{\tj\in\cC}\bfM(\tj)$. We call $\bfG_\bfM(X)$ the dg $2$-subrepresenta\-tion \textbf{$\cC$-generated} by $X$. If $\bfG_\bfM(X)=\bfM$, we say $X$ $\cC$-generates $\bfM$. We call $\bfM$ \textbf{cyclic} if there exists an $X\in \bfM(\ti)$, for some $\ti\in \cC$, which $\cC$-generates~$\bfM$.
\end{definition}

\begin{lemma}\label{cyclicgen}
Let $\cC$ be a dg $2$-category and $\bfM$ a cyclic dg $2$-representation $\cC$-generated by $X\in \bfM(\ti)$. 
\begin{enumerate}[(a)]
\item\label{cyclicgen1} If $\cC$ has generators, then $\bfM\in \cC\gtworep$.
\item\label{cyclicgen2} If $\cC$ has compact left adjoints, and $\Eval_X$ has a compact right adjoint, then $\bfM\in \cC\ctworep$.
\end{enumerate}
\end{lemma}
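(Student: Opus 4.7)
The plan is to handle parts (a) and (b) separately, exploiting in both cases that $\bfM(\tj)$ equals the thick closure of the orbit $\{\bfM(\rG)X \mid \rG \in \cC(\ti,\tj)\}$ for each object $\tj$ of $\cC$.

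For (a), I will show that if $\rG_0$ is a generator of $\ov{\cC(\ti,\tj)}$, then $\bfM(\rG_0)X$ is a generator of $\bfM(\tj)$. Using the extension of $\bfM$ to $\ov{\cC}$, the evaluation dg functor $\Eval_X\colon \ov{\cC(\ti,\tj)} \to \bfM(\tj)$ preserves shifts, cones, and direct sums, and respects direct summands whenever they exist in the target; hence it maps the thick closure $\widehat{\{\rG_0\}} = \ov{\cC(\ti,\tj)}$ into the thick closure of $\bfM(\rG_0)X$ inside $\bfM(\tj)$. Combined with the cyclicity assumption this forces $\widehat{\{\bfM(\rG_0)X\}} = \bfM(\tj)$, placing $\bfM$ in $\cC\gtworep$.

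For (b), let $\mathcal{S}(\tj)$ denote the class of $Y \in \bfM(\tj)$ for which $\Eval_Y$ has a compact right adjoint. The strategy is to verify that $\mathcal{S}(\tj)$ contains all $\bfM(\rG)X$ and is thick in $\bfM(\tj)$. For the first point, fix $Y = \bfM(\rG)X$, an object $Z \in \bfM(\tk)$, and $\rH \in \cC(\tj,\tk)$. I will chain the adjunction coming from compactness of $\Eval_X$ with the internal adjunction provided by the compact left adjoint ${}^*\rG \in \vv{\cC}(\tj,\ti)$ (whose existence is granted by the assumption that $\cC$ has compact left adjoints), obtaining a natural dg isomorphism
\[
\Hom_{\bfM(\tk)}\bigl(\bfM(\rH\rG)X,\,Z\bigr) \;\cong\; \Hom_{\vv{\cC}(\ti,\tk)}\bigl(\rH\rG,\,\Eval_X^* Z\bigr) \;\cong\; \Hom_{\vv{\cC}(\tj,\tk)}\bigl(\rH,\,(\Eval_X^* Z)\, {}^*\rG\bigr),
\]
so that $\Eval_{\bfM(\rG)X}^*(Z)$ is represented by the object $(\Eval_X^* Z)\,{}^*\rG \in \vv{\cC}(\tj,\tk)$, which by Lemma \ref{vv=compact} corresponds to a compact dg $\cC(\tj,\tk)$-module.

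Thickness of $\mathcal{S}(\tj)$ then reduces to four closure properties. Closure under shifts is immediate from $\Eval_{Y\shift{n}} = \shift{n}\circ\Eval_Y$ together with $\vv{\cC}$ being closed under shifts. Closure under cones and direct sums is a direct application of Lemma \ref{adjEvonoverline}. The step I expect to require the most care is closure under direct summands: for $Y = Y_1 \oplus Y_2 \in \mathcal{S}(\tj)$, the dg $\cC(\tj,\tk)$-module $\Hom_{\bfM(\tk)}(\Eval_Y(-),Z)$ splits as the direct sum of the analogous modules for $Y_1$ and $Y_2$; since compact dg $\cC(\tj,\tk)$-modules are closed under direct summands, each summand is compact, and Lemma \ref{vv=compact} then identifies each summand with an object of $\vv{\cC}(\tj,\tk)$, giving $Y_i \in \mathcal{S}(\tj)$. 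Combining membership and thickness with the cyclicity hypothesis yields $\mathcal{S}(\tj) = \bfM(\tj)$, so $\bfM \in \cC\ctworep$.
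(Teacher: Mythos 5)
Your proposal is correct and follows essentially the same route as the paper: part (a) by pushing the generator of $\ov{\cC(\ti,\tj)}$ through $\Eval_X$, and part (b) by first establishing the adjunction chain $\Hom(\bfM(\rH\rG)X,Z)\cong\Hom(\rH,(\Eval_X^*Z)\,{}^*\rG)$ for orbit objects and then extending to the thick closure via Lemma~\ref{adjEvonoverline}. Your explicit treatment of closure under direct summands in (b) spells out what the paper compresses into the phrase ``and additivity,'' but it is the same argument.
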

\begin{proof}
To prove \eqref{cyclicgen1}, we first note that by assumption that $X$ $\cC$-generates $\bfM$, the thick closure of $\{\bfM(\rF) X\}$, where $\rF$ ranges over the $1$-morphisms in $\cC(\ti,\tj)$, is $\bfM(\tj)$. If $\cC(\ti,\tj)$ is the thick closure of $\rF_{\ti,\tj}$, then every $1$-morphism is a dg direct summand of an object in $\ov{\{\rF_{\ti,\tj}\}}$. Hence, every object in $\bfM(\tj)$ is a dg direct summand of an object in $\ov{\{\bfM(\rF_{\ti,\tj})X\}}$ since the dg $2$-functor $\bfM$ preserves shifts and cones, so $\bfM\in \cC\gtworep$.

To prove \eqref{cyclicgen2}, we first argue that, given the assumptions, for any $\rF\in \cC(\ti,\tj)$, $\Eval_{\bfM(\rF) X}$ also has a compact right adjoint. For this, we show that, for any object $Y\in \bfM(\tk)$, the object $\Eval_{\bfM(\rF) X}^*(Y)$ is dg isomorphic to $\Eval_{X}^*(Y){}^*\rF$ and hence compact. This follows from the following computation \begin{align*}
\Hom_{\bfM(\tk)}(\Eval_{\bfM(\rF) X}(\rG),Y)&=\Hom_{\bfM(\tk)}(\Eval_{X}(\rG\rF ),Y)\\
&\cong \Hom_{\vv{\cC}(\ti,\tk)}(\rG\rF,\Eval_X^*(Y))\\
&\cong \Hom_{\vv{\cC}(\tj,\tk)}(\rG,\Eval_X^*(Y){}^*\rF)
\end{align*}
of the right adjoint of $\Eval_{\bfM(\rF) X}$, for $\rG\in \cC(\tj,\tk)$.
Here, the first equality follows using $\bfM(\rG\rF)=\bfM(\rG)\bfM(\rF)$, the second dg isomorphism uses the right adjoint of $\Eval_X$, and the last dg isomorphism uses Lemma \ref{Picompact}.
Thus, by Lemma \ref{compactadj}, $\Eval_{\bfM(\rF) X}$ has a compact right adjoint. For $X'$ in the thick closure of the $\{\bfM(\rF) X\}$, the right adjoint of $\Eval_{X'}$ has compact right adjoint by Lemma \ref{adjEvonoverline} and additivity.
Therefore $\Eval_{X'}$ has a compact right adjoint for any $X'\in \bfM(\ti)$ and $\bfM$ is compact.
\end{proof}

Given a dg $2$-subrepresentation $\bfN$ of a pretriangulated $2$-representation $\bfM$ we define the \textbf{weak closure} $\bfN^{\Diamond}$ as the dg $2$-subrepresentation defined on the full dg subcategories $\bfN^\Diamond(\ti)\subseteq \bfM(\ti)$ on objects whose identities are contained in the ideal generated by $\bfN$. A dg $2$-subrepresentation $\bfN$ is \textbf{weakly closed} in $\bfM$ if $\bfN=\bfN^\Diamond$. 

We say that $X\in\bfM(\ti)$, for some $\ti$ in $\cC$, \textbf{ weakly $\cC$-generates $\bfM$} if $\bfG_\bfM(X)^\Diamond=\bfM$. 

We say $\bfM$ is \textbf{weakly transitive} if the weak closure of any dg $2$-subrepresentation of $\bfM$ equals $\bfM$. Equivalently, $\bfM$ is weakly transitive if it is $\cC$-generated by any non-zero object $X$ in any $\bfM(\ti)$.

We call a pretriangulated $2$-representation $\bfM$ \textbf{quotient-simple} provided it has no proper nonzero dg ideals. Note that, by definition, if $\bfM$ is quotient simple, it is also weakly transitive.

\subsection{Homotopy \texorpdfstring{$2$}{2}-representations}
\label{triang2rep-sec}

Let $\cC$ be a dg $2$-category.
In this section, we describe how a pretriangulated $2$-representation $\bfM$ descends to a $2$-representations $\bfK \bfM$ on the corresponding triangulated categories. For this, we define $\bfK\bfM(\ti):=\K(\bfM(\ti))$ for any object $\ti$ in $\cC$ and 
\begin{align*}\bfK\bfM(\rF):= \K(\bfM(\rF))\colon \bfK\bfM(\ti)\to \bfK\bfM(\tj),\end{align*}
for any $1$-morphism in $\cC(\ti,\tj)$. If $\alpha\colon \rF\to \rG$ is a dg $2$-morphism, then $\alpha$ induces a natural transformation $\bfK\bfM(\alpha)\colon \bfK\bfM(\rF)\to \bfK\bfM(\rG)$. By construction, $\bfK\bfM$ is an additive $2$-representation of $\cZ\cC$, cf. Section \ref{dg2cats}.

\begin{lemma}
The $2$-representation $\bfK\bfM$ is a $2$-representation of the homotopy $2$-category $\cK\cC$.
\end{lemma}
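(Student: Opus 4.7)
The plan is to check that the assignment $\bfK\bfM$ already shown to be a $2$-representation of $\cZ\cC$ factors through the quotient $2$-functor $\cZ\cC \to \cK\cC$. Recall that passing from $\cZ\cC(\ti,\tj) = \Z(\cC(\ti,\tj))$ to $\cK\cC(\ti,\tj) = \K(\cC(\ti,\tj))$ amounts to identifying two dg $2$-morphisms $\alpha,\alpha'\colon \rF\to \rG$ whenever $\alpha - \alpha' = \del(\beta)$ for some $\beta \in \Hom_{\cC(\ti,\tj)}(\rF,\rG)$ of degree $-1$. So the only new content to verify is that any such null-homotopic dg $2$-morphism $\alpha = \del(\beta)$ descends to the zero natural transformation $\bfK\bfM(\alpha)\colon \bfK\bfM(\rF)\to \bfK\bfM(\rG)$.

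For this, I would use that $\bfM$ is a dg $2$-functor, so locally each functor $\bfM\colon \cC(\ti,\tj)\to \csfcat(\bfM(\ti),\bfM(\tj))$ is a dg functor and, in particular, commutes with $\del$. Hence for every object $X\in \bfM(\ti)$ the component
\[ \bfM(\alpha)_X = \bfM(\del(\beta))_X = \del\bigl(\bfM(\beta)_X\bigr) \]
is a null-homotopic dg morphism in $\bfM(\tj)$, and therefore vanishes upon passing to $\K(\bfM(\tj))$. Thus $\bfK\bfM(\alpha) = \K(\bfM(\alpha))$ is zero as a natural transformation between the induced functors on homotopy categories. Consequently the dg functor $\cC(\ti,\tj)\to \csfcat(\bfM(\ti),\bfM(\tj))$ descends to a well-defined additive functor $\cK\cC(\ti,\tj)\to \cK\csfcat(\bfK\bfM(\ti),\bfK\bfM(\tj))$.

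It remains to note that the $2$-functoriality axioms for $\bfK\bfM$ over $\cK\cC$, namely compatibility with horizontal and vertical composition of $1$- and $2$-morphisms and strict unitality, are inherited from those of $\bfM$: horizontal composition in $\cK\cC$ is defined by applying $\K$ to the horizontal composition in $\cC$, and similarly for $\cK\csfcat$; since $\K$ is itself a $2$-functor on dg $2$-categories (as recalled in Section~\ref{homotopy-sec}) and since the equalities $\bfM(\rF\rG) = \bfM(\rF)\bfM(\rG)$ and $\bfM(\alpha\circ_0\gamma) = \bfM(\alpha)\circ_0\bfM(\gamma)$ hold already before taking homotopy, the corresponding equalities hold for $\bfK\bfM$. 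The main (and essentially only) point requiring care is the null-homotopy step above; everything else is formal transport along $\K$ of the dg $2$-functoriality of $\bfM$.
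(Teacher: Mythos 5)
Your proposal is correct and takes essentially the same approach as the paper: the key point in both is that a null-homotopic $2$-morphism $\alpha=\del(\beta)$ satisfies $\bfM(\alpha)=\del(\bfM(\beta))$ since $\bfM$ is locally a dg functor, hence vanishes on homotopy categories, so $\bfK\bfM$ descends from $\cZ\cC$ to the quotient $\cK\cC$. Your additional remarks on the formal transport of the $2$-functoriality axioms are implicit in the paper's (terser) argument.
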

\begin{proof}
If $\alpha\colon \rF\to \rG$ is null-homotopic, i.e., $\del(\beta)=\alpha$ for some morphism $\beta\colon \rF\to\rG$, then
\begin{align*}\bfK\bfM(\alpha)=\bfK\bfM(\del \beta)=\del(\bfK\bfM(\beta)),\end{align*}
and hence $\bfK\bfM(\alpha)=0$ as a natural transformation from $\bfK\bfM(\rF)$ to $\bfK\bfM(\rG)$. Thus, $\bfK\bfM$ descends to the quotient $\cK\cC$ of $\cZ\cC$.
\end{proof}

We call $\bfK\bfM$ the \textbf{homotopy $2$-representation} derived from $\bfM$.
We refrain from giving a general definition of a triangulated $2$-representation of $\cK\cC$ but note that the homotopy $2$-representations $\bfK\bfM$ defined above are triangulated in the sense that they define $2$-functors to the target $2$-category $\triancat$ whose 
\begin{itemize}
\item objects are triangulated categories of the form $\K(\ov{\C})$;
\item $1$-morphisms are functors between such categories that are induced from dg functors and hence are functors of triangulated categories;
\item $2$-morphisms are natural transformations of such functors coming from dg natural transformations of the corresponding dg categories.
\end{itemize}
We will restrict ourselves to triangulated $2$-representations that have dg enhancements as described above. 
We note that the idempotent completion $(\bfK\bfM)^\circ$ is again triangulated by \cite{BS}.

The dg $2$-subcategory $\triancat_{\mathrm{g}}$ of $\triancat$ consists of such triangulated categories which have a generator (also called a classical generator, see e.g. \cite[Definition~1.10]{LO}), i.e., are equal to the thick closure of a single object. We say a $2$-representation of the form $\bfK\bfM$
\textbf{has generators} if its target is
$\triancat_{\mathrm{g}}$, that is, if $\bfK\bfM(\ti)$ has a generator for any object $\ti$. We note that if $\bfM$ has generators as a pretriangulated $2$-representation, then, in particular, $\bfK\bfM$ has generators as a triangulated $2$-representation.

We will also use the following weaker notion of equivalence of dg $2$-representations. 
Let $\bfM$ and $\bfN$ be pretriangulated $2$-representations. 
A morphism $\Phi\colon \bfM\to \bfN$ 
of dg $2$-representations is a 
\textbf{quasi-equivalence} 
if the dg functors 
$\Phi_{\ti}\colon \bfM(\ti)\to \bfN(\ti)$
are quasi-equivalences (see Section \ref{homotopy-sec}). 
In this case, we say that $\bfM$ and $\bfN$ are \textbf{ quasi-equivalent}. In particular, given that our $2$-representations are pretriangulated, the dg morphism $\Phi$ induces an equivalence $\bfK\Phi$ of triangulated $2$-representations if and only if $\Phi$ is a quasi-equivalence.

We say that $X$ is a \textbf{$\cC$-quasi-generator} for a pretriangulated $2$-representation $\bfM$ if the inclusion of $\bfG_\bfM(X)$ into $\bfM$ is a quasi-equivalence (and hence $\bfK\bfG_\bfM(X)$ and $\bfK\bfM$ are equivalent).

For instance, if $\bfM$ is acyclic, the zero object is a $\cC$-quasi-generator for $\bfM$.

\section{Dg algebra \texorpdfstring{$1$}{1}-morphisms}\label{alg1morsec}

Throughout this section, assume that $\cC$ is a  dg $2$-category 
and $\bfM$ a 
pretriangulated $2$-representation in $\cC\tworep$.

\subsection{Compact modules over dg algebra \texorpdfstring{$1$}{1}-morphisms}

We say that $\rA$ is a \textbf{dg algebra $1$-morphism} in $\cC(\ti,\ti)$ if it comes with dg morphisms $u_{\rA}\colon \one_\ti\to \rA$, $m_{\rA}\colon \rA\rA\to \rA$ satisfying the usual axioms of a unitary product internal to $\cC$. Similarly, a morphism of dg algebra $1$-morphisms is just a $2$-morphism commuting with the multiplication and unit morphisms. 
Given $\rA$, define $\prmod{\cC(\ti,\tj)}\rA$ to be the category of \textbf{right dg $\rA$-modules} in $\cC(\ti,\tj)$.

\begin{lemma}\label{modhoms}
For $1$-morphisms $\rG$ in $\cC(\ti,\tj)$ and $\rY\in \prmod{\cC(\ti,\tj)}\rA$, there is a dg isomorphism
\begin{align*}\Hom_{\prmod{\cC(\ti,\tj)}\rA}(\rG\rA,\rY)\cong \Hom_{\cC(\ti,\tj)}(\rG, \rY),\end{align*}
natural in $\rG$ and $\rY$.
\end{lemma}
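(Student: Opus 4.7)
The statement is the standard free-forgetful adjunction for modules over an algebra object, internalized to the dg $2$-category $\cC$. The plan is to write down the unit and counit explicitly and verify they are mutually inverse, then observe that the construction is manifestly dg and natural.

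First, I would define the two maps. Given a right $\rA$-module structure $a_\rY\colon \rY\rA\to \rY$ on $\rY$, for a morphism $g\colon \rG\to \rY$ in $\cC(\ti,\tj)$, set
\[
\Psi(g) := a_\rY \circ_1 (g \circ_0 \id_\rA)\colon \rG\rA\longrightarrow \rY.
\]
For a module morphism $f\colon \rG\rA\to \rY$, set
\[
\Phi(f) := f \circ_1 (\id_\rG \circ_0 u_\rA)\colon \rG\longrightarrow \rY,
\]
using the identification $\rG\cong \rG\one_\ti$. Both $\Phi$ and $\Psi$ are defined by horizontal and vertical composition with dg morphisms in $\cC$, hence are dg morphisms of $\Hom$-complexes; naturality in $\rG$ and $\rY$ is likewise immediate from interchange and functoriality of $\circ_0,\circ_1$.

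Next, I would check that $\Psi(g)$ is in fact a morphism of right $\rA$-modules. This amounts to the commutativity of the diagram expressing
\[
a_\rY \circ_1 (\Psi(g)\circ_0 \id_\rA) \;=\; \Psi(g)\circ_1 (\id_\rG \circ_0 m_\rA),
\]
which is a direct consequence of associativity of the action $a_\rY$ together with the interchange law. The verification is a standard diagram chase inside $\cC(\ti,\tj)$ using only the module axiom for $\rY$.

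Finally, I would verify $\Phi\Psi=\id$ and $\Psi\Phi=\id$. The first uses the right unit axiom for the $\rA$-module $\rY$:
\[
\Phi\Psi(g)=a_\rY\circ_1(g\circ_0 \id_\rA)\circ_1(\id_\rG\circ_0 u_\rA)
= a_\rY\circ_1(\id_\rY\circ_0 u_\rA)\circ_1 g = g,
\]
using interchange in the middle step and $a_\rY\circ_1(\id_\rY\circ_0 u_\rA)=\id_\rY$. The second uses the module property of $f$:
\[
\Psi\Phi(f) = a_\rY\circ_1(f\circ_0 \id_\rA)\circ_1(\id_\rG\circ_0 u_\rA\circ_0 \id_\rA)
= f\circ_1(\id_\rG\circ_0(m_\rA\circ_1(u_\rA\circ_0\id_\rA)))= f,
\]
by the left unit axiom $m_\rA\circ_1(u_\rA\circ_0\id_\rA)=\id_\rA$.

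There is no serious obstacle: the entire content is that the familiar one-object (i.e., bimodule-over-an-algebra) proof transplants verbatim into the $2$-categorical setting once one has the internal algebra axioms and the interchange law at hand. The only thing to be careful about is to check that $\Psi(g)$ lands in $\prmod{\cC(\ti,\tj)}\rA$, which is the module-morphism diagram mentioned above.
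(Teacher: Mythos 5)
Your proposal is correct and matches the paper's proof: the paper writes down exactly the same pair of mutually inverse maps $f\mapsto f\circ(\id_\rG\circ_0 u_\rA)$ and $g\mapsto \rho_\rY\circ(g\circ_0\id_\rA)$, citing \cite[7.8.12]{EGNO} for the formal verification that you carry out explicitly, and notes as you do that both maps are dg because $u_\rA$ and the action are.
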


\begin{proof}
The proof is an adaptation of the purely formal argument in \cite[7.8.12]{EGNO}, noting that the mutually inverse morphisms
\begin{align*}f\mapsto (f\circ (\id_{\rG}\circ_0 u_{\rA})) \qquad \text{ and } \qquad   (\rho_\rY\circ (g\circ_0 \id_{\rA} ))           \mapsfrom g\end{align*}
are dg morphisms since $u_{\rA}$ and $\rho_\rY$, the right $\rA$-action on $\rY$, are.
\end{proof}

We now construct pretriangulated $2$-representations of modules over a dg algebra $1$-morphism $\rA$ in $\vv{\cC}(\ti,\ti)$. 

\begin{definition}[$\vv{\bfM}_{\rA}$, $\bfM_{\rA}$] \label{MAdef}
Let $\rA$ be a dg algebra $1$-morphism in $\vv{\cC}(\ti,\ti)$.
\begin{enumerate}
\item Define $\vv{\bfM}_{\rA}(\tj)=\prmod{\vv{\cC}(\ti,\tj)}\rA$, for any object $\tj$ of $\cC$. Left multiplication by $1$- and $2$-morphisms in $\cC$ 
induces a natural structure of a pretriangulated $2$-representation of $\cC$ on the $\vv{\bfM}_{\rA}(\tj)$ which we denote by 
$\vv{\bfM}_{\rA}$. 
\item Define $\bfM_ {\rA}(\tj)$ to be the thick closure of the set $\{\rG\rA \vert \rG\in\cC(\ti,\tj)\}$ in $\prmod{\vv{\cC}(\ti,\tj)}\rA$. The collection of dg subcategories $\bfM_ {\rA}(\tj)$ forms a pretriangulated $2$-sub\-represen\-ta\-tion of $\vv{\bfM}_{\rA}$ of $\cC$, which we denote by $\bfM_ {\rA}$.
\end{enumerate}
\end{definition}

\begin{lemma}\label{overlineMAprop-new}
The dg $2$-representation $\vv{\bfM_ {\rA}}$ is dg equivalent to $\vv{\bfM}_ {\rA}$.
\end{lemma}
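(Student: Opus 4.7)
The plan is to establish the equivalence locally at each object $\tj$ of $\cC$ via a dg equivalence
\[
F_\tj \colon \vv{\bfM_\rA}(\tj) = \vv{\bfM_\rA(\tj)} \;\xrightarrow{\sim}\; \vv{\bfM}_\rA(\tj) = \prmod{\vv{\cC}(\ti,\tj)}\rA,
\]
and then check these assemble into a morphism of dg $2$-representations. First I would construct $F_\tj$ using the universal property of the cokernel completion $\vv{(-)}$: by Corollary~\ref{cokerlemma}, $\vv{\cC}(\ti,\tj)$ has cokernels, and these lift to $\prmod{\vv{\cC}(\ti,\tj)}\rA$ with the induced right $\rA$-action, so the fully faithful inclusion $\bfM_\rA(\tj) \hookrightarrow \prmod{\vv{\cC}(\ti,\tj)}\rA$ extends to the dg functor $F_\tj(X_1 \xrightarrow{x} X_0) := \coker(x)$. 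Full faithfulness of $F_\tj$ then follows formally: the equivalence relation on pairs $(\phi_0,\phi_1)$ defining morphisms in $\vv{\bfM_\rA(\tj)}$ via $\phi_0 = y\eta$ is precisely the relation identifying two lifts that induce the same map on cokernels in $\prmod{\vv{\cC}(\ti,\tj)}\rA$.

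The main obstacle will be essential surjectivity. Given $\rY \in \prmod{\vv{\cC}(\ti,\tj)}\rA$ with underlying $1$-morphism $(\rY_1 \xrightarrow{y} \rY_0)$, $\rY_i \in \ov{\cC}(\ti,\tj)$, and $\rA$-action $\rho \colon \rY \rA \to \rY$, the strategy is to present $\rY$ via a bar-type resolution. As a first step I would observe that for any $\rX \in \ov{\cC}(\ti,\tj)$, the free right $\rA$-module $\rX \rA$ is itself a twisted complex of free modules $\rG_i \rA$ with $\rG_i \in \cC(\ti,\tj)$, and hence lies in $\bfM_\rA(\tj)$. Applying this to $\rY_0, \rY_1$, I would argue that the free modules $\rY \rA$ and $\rY \rA \rA$ both lie in the essential image of $F_\tj$, being dg isomorphic to the cokernels of $y \rA \colon \rY_1 \rA \to \rY_0 \rA$ and $y \rA \rA \colon \rY_1 \rA \rA \to \rY_0 \rA \rA$, respectively, computed in $\prmod{\vv{\cC}(\ti,\tj)}\rA$. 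Next I would show that $\rY$ is itself the cokernel of the bar differential $\partial = \rho \circ_0 \id_\rA - \id_\rY \circ_0 m_\rA \colon \rY \rA \rA \to \rY \rA$ in $\prmod{\vv{\cC}(\ti,\tj)}\rA$: associativity gives $\rho \partial = 0$, and the unit supplies a section $s = \id_\rY \circ_0 u_\rA$ of $\rho$ in $\vv{\cC}(\ti,\tj)$, yielding the required universal property by a standard split-simplicial argument. The technical heart will be carefully verifying this cokernel statement against the explicit formula of Corollary~\ref{cokerlemma}, since the computed presentation of $\coker(\partial)$ in $\vv{\cC}(\ti,\tj)$ does not agree with $(\rY_1 \xrightarrow{y} \rY_0)$ on the nose and the identification of the two representatives has to be extracted from the splitting.

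By full faithfulness, $\partial$ then lifts uniquely to a morphism $\tilde\partial$ in $\vv{\bfM_\rA(\tj)}$; since $F_\tj$ preserves cokernels by construction and $\vv{\bfM_\rA(\tj)}$ itself has cokernels by Corollary~\ref{cokerlemma}, the cokernel of $\tilde\partial$ is a preimage of $\rY$ under $F_\tj$. Finally, the local equivalences $F_\tj$ assemble into an equivalence of pretriangulated $2$-representations because the $\cC$-action on both $\vv{\bfM_\rA}$ and $\vv{\bfM}_\rA$ is given by left horizontal composition, which commutes with cokernels directly from the composition formula in $\vv{\cC}$ and hence with each $F_\tj$.
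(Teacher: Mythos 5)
Your proposal is correct and follows essentially the same route as the paper: extend the fully faithful inclusion $\bfM_\rA(\tj)\hookrightarrow \vv{\bfM}_\rA(\tj)$ along cokernels, then obtain essential surjectivity by presenting any right dg $\rA$-module $\rY$ as the cokernel of the bar differential $\rho\circ_0\id_\rA-\id_\rY\circ_0 m_\rA\colon \rY\rA\rA\to\rY\rA$ and noting that $\rY\rA\rA$ and $\rY\rA$ lie in an image closed under cokernels of dg morphisms. The ``technical heart'' you flag (matching the split-coequalizer presentation with the explicit cokernel formula) is treated in the paper as the standard observation that $\rY\cong\rY\circ_\rA\rA$, so no further obstacle arises.
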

\begin{proof}
For any $\tj$, the dg category $\vv{\bfM}_ {\rA}(\tj)$ is closed under taking conical cokernels of dg morphisms. Thus, the embedding  
$I\colon \bfM_ {\rA}(\tj)\to \vv{\bfM}_ {\rA}(\tj)$ extends to a fully faithful dg functor $\vv{I}\colon \vv{\bfM_ {\rA}}(\tj)\to \vv{\bfM}_ {\rA}(\tj)$. 

Now, any object $\rX\in \vv{\bfM}_ {\rA}$ is dg isomorphic to the (conical) cokernel of the dg morphism
\begin{align*}\id_{\rX}\circ_0 m -a\circ_0\id_{\rA}\colon \rX\rA\rA\to \rX\rA.\end{align*}
where $a\colon \rX\rA\to \rA$ denotes the right $\rA$-action on $\rX$.
Both modules $\rX\rA\rA$, $\rX\rA$ are in the image of $\vv{I}$. But the image of $\vv{I}$ is closed under taking cokernels of dg morphisms. This follows as cokernels of dg morphisms exist in $\vv{\bfM_ {\rA}}(\tj)$ by Lemma \ref{cokerlemma} and these cokernels are preserved by $\vv{I}$. Thus, $\rX$ is in the essential image of $\vv{I}$. This proves the claim. 
\end{proof}

\begin{lemma}\label{MAcompact}
Let $\rA$ be a dg algebra $1$-morphism in $\vv{\cC}(\ti,\ti)$.
\begin{enumerate}[(a)]
\item\label{MAcompact1} If $\cC$ has generators, then $\bfM_{\rA}\in \cC\gtworep$.
\item\label{MAcompact2} If $\cC$ has compact left adjoints, 
then $\bfM_{\rA}\in \cC\ctworep$. 
\end{enumerate}
\end{lemma}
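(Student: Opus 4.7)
My plan is to treat (a) and (b) separately, each exploiting the definition that $\bfM_\rA(\tj)$ is the thick closure in $\prmod{\vv\cC(\ti,\tj)}\rA$ of the objects $\rG\rA$ with $\rG\in\cC(\ti,\tj)$, together with the fact that the composition $(-)\rA$ is a dg functor $\vv\cC(\ti,\tj)\to\prmod{\vv\cC(\ti,\tj)}\rA$ preserving shifts, direct sums, cones and dg direct summands.

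For (a), I would fix, for each pair $(\ti,\tj)$, a generator $\rF_{\ti,\tj}$ of $\ov{\cC(\ti,\tj)}$ (which exists by assumption that $\cC$ has generators). Since every $\rG\in\cC(\ti,\tj)\subseteq\ov{\cC(\ti,\tj)}$ lies in the thick closure $\widehat{\{\rF_{\ti,\tj}\}}$, applying the dg functor $(-)\rA$ shows that $\rG\rA$ lies in the thick closure of $\rF_{\ti,\tj}\rA$ inside $\prmod{\vv\cC(\ti,\tj)}\rA$. Thus $\bfM_\rA(\tj)=\widehat{\{\rF_{\ti,\tj}\rA\}}$, so $\rF_{\ti,\tj}\rA$ is a generator and $\bfM_\rA\in\cC\gtworep$.

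For (b), the core computation treats objects of the form $X=\rG\rA$ with $\rG\in\cC(\ti,\tk)$. For $\rF\in\cC(\tk,\tj)$ and $\rY\in\bfM_\rA(\tj)$, combining Lemma~\ref{modhoms} with the natural dg isomorphism provided by the compact left adjoint ${}^*\rG\in\vv\cC(\tk,\ti)$ (compare the proof of Lemma~\ref{Picompact}) yields
$$\Hom_{\bfM_\rA(\tj)}(\rF\rG\rA,\rY)\;\cong\;\Hom_{\vv\cC(\ti,\tj)}(\rF\rG,\rY)\;\cong\;\Hom_{\vv\cC(\tk,\tj)}(\rF,\rY{}^*\rG),$$
natural in $\rF\in\cC(\tk,\tj)$. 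This exhibits the pullback dg $\cC(\tk,\tj)$-module $\Eval_{\rG\rA}^*(\rY^\vee)$ as the representable dg module associated to $\rY{}^*\rG\in\vv\cC(\tk,\tj)$, which is compact by Lemma~\ref{vv=compact}. Hence, by Lemma~\ref{compactadj}, $\Eval_{\rG\rA}$ has a compact right adjoint for each $\tj$.

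To finish, I would argue that the class of $X\in\bfM_\rA(\tk)$ for which $\Eval_X$ has a compact right adjoint (for every $\tj$) is thick: closure under shifts is immediate from $\Eval_{X\shift{n}}^*\cong\Eval_X^*\shift{-n}$; closure under direct sums and cones follows from Lemma~\ref{adjEvonoverline}; and closure under dg direct summands follows from the characterization of compact dg modules in Lemma~\ref{vv=compact} together with the fact that dg direct summands of the representing object correspond to dg direct summands of the compact module. Since this thick class contains every $\rG\rA$, it contains the thick closure $\bfM_\rA(\tk)$. Thus $\bfM_\rA$ is compact, giving $\bfM_\rA\in\cC\ctworep$. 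The main subtlety I anticipate is ensuring that Lemma~\ref{adjEvonoverline} genuinely handles both the additive and the shift-cone closures needed for thickness, and verifying closure under dg direct summands; neither step looks difficult, but each requires careful bookkeeping of the induced maps on right adjoints.
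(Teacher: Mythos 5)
Your proposal is correct and follows essentially the same route as the paper: part (a) is identical, and for part (b) the paper simply invokes Lemma~\ref{cyclicgen}\eqref{cyclicgen2} (noting that $\rA$ $\cC$-generates $\bfM_\rA$) and then uses Lemma~\ref{modhoms} to identify the right adjoint of $\Eval_\rA$ with the forgetful functor to $\vv{\cC}$, which is compact by construction. Your argument inlines exactly the content of that cited lemma --- the computation $\Hom(\rF\rG\rA,\rY)\cong\Hom(\rF,\rY\,{}^*\rG)$ and the propagation to the thick closure via Lemma~\ref{adjEvonoverline} and closure of compactness under retracts --- so there is no substantive difference.
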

\begin{proof}
If $\cC(\ti,\tj)$ has a generator $\rF_{\ti,\tj}$, then, by definition, $\bfM_{\rA}(\tj)$ is the thick closure of $\rF_{\ti,\tj}\rA$ and thus $\bfM_{\rA}$ has generators, i.e. $\bfM_{\rA}\in \cC\gtworep$. This proves \eqref{MAcompact1}.

To prove \eqref{MAcompact2}, we use Lemma \ref{cyclicgen}\eqref{cyclicgen2} to see that it suffices to show that $\Eval_{\rA}$ has a compact right adjoint.  
By Lemmas \ref{modhoms} and \ref{overlineMAprop-new}, for any $\rG\in\vv{\cC}(\tj,\tk), \rY\in \vv{\bfM}_\rA(\tk)$, we obtain natural isomorphisms
\begin{equation*}\begin{split}
\Hom_{\vv{\bfM}_\rA(\tk)}(\vv{\Eval}_{\rA}(\rG), \rY)&=\Hom_{\vv{\bfM}_\rA(\tk)}(\rG\rA, \rY)\\
&\cong \Hom_{\vv{\cC}(\ti,\tk)}(\rG, \rY)\\
\end{split}\end{equation*}
thus $\Eval_{\rA}$ has the forgetful functor on $\vv{\bfM}_\rA\to\vv{\cC}$ as right adjoint, which is a compact right adjoint by construction.
\end{proof}

Similarly to dg modules, one also defines dg bimodules in a general dg $2$-category. Working with $\vv{\cC}$, the existence of cokernels ensures that these dg bimodules can be composed. Indeed, let $\rA$, $\rB,\rC$ be dg algebra $1$-morphisms in $\vv{\cC}$ and
let  $\rM$ be a dg $\rA$-$\rB$-bimodule $1$-morphism with (commuting) left and right actions $\lambda_\rM$ and $\rho_\rM$ and let $\rN$ be a dg $\rB$-$\rC$-bimodule $1$-morphism with (commuting) left and right actions $\lambda_\rN$ and $\rho_\rN$. We define the \textbf{relative composition} $\rM \circ_\rB \rN$ as the cokernel of the dg morphism
\begin{align*}\rM \rB \rN \xrightarrow{\rho_\rM\circ_0\id_\rN-\id_\rM\circ_0\lambda_\rN}  \rM \rN,\end{align*}
which exists in $\vv{\cC}$ by Lemma \ref{cokerlemma}.
Note that $\rM \circ_\rB \rN$ is now a dg $\rA$-$\rC$-bimodule.

\subsection{The algebra structure on internal homs \texorpdfstring{$[X,X]$}{[X,X]}}\label{inthomsec}

Let $X\in \bfM(\ti)$ be nonzero. We consider the evaluation dg functors
\begin{align*}\Eval_X\colon \cC(\ti,\tj)\to \bfM(\tj), \quad \rF\mapsto \bfM(\rF)X,\end{align*}
which we may extend to $\overline{\cC}(\ti,\tj)$ defined at the beginning of Section \ref{sec:compact2rep}.

If $\Eval_X$ has a compact right adjoint (i.e., satisfies the equivalent conditions of Lemma \ref{compactadj}), then we denote the right adjoint dg functor of $\vv{\Eval}_X$ by 
\begin{align*}[X,-]\colon\vv{\bfM}(\tj) \to   \vv{\cC}(\ti,\tj),\end{align*}
called the \textbf{internal hom}. We note that $[X,-]$ is natural in $X$. 
Furthermore, for the restriction of $[X,-]$ to $\ov{\bfM}(\tj)$, and an object $(\bigoplus_i Y_i, \alpha)$ in $\ov{\bfM}(\tj)$, we have 
\begin{align*}\Big[X, \Big(\bigoplus_i Y_i, \alpha\Big)\Big] \cong \Big(\bigoplus_i [X,Y_i], [X,\alpha]\Big).\end{align*}

For $X\in \bfM(\ti),Y\in \vv{\bfM}(\tj)$, we define 
\begin{align*}\eval_{X,Y} \colon \vv{\bfM}([X,Y])X\to Y\end{align*}
as the image of the identity morphism under the dg isomorphism
\begin{align*}\Hom_{\vv{\cC}(\ti,\tj)}([X,Y], [X,Y])\cong \Hom_{\vv{\bfM}(\tj)}( \vv{\bfM}([X,Y])X,Y).\end{align*}
Note that, as the image of a dg morphism under a dg isomorphism, $\eval_{X,Y}$ is also a dg morphism.

\begin{lemma}\label{algebralemma}
For nonzero $X\in \bfM(\ti)$ such that $\Eval_X$ has a compact right adjoint, the object $\rA_X:=[X,X]\in \vv{\cC}(\ti,\ti)$ carries the structure of a dg algebra $1$-morphism.
\end{lemma}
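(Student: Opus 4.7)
The plan is to use the adjunction between $\vv{\Eval}_X$ and $[X,-]$ to transport the obvious monoid structure on $X$ (coming from $\id_X$ and the identity $\bfM(\one_\ti)X = X$) into a dg algebra structure on $[X,X]$ living in $\vv{\cC}(\ti,\ti)$. This is the standard ``endomorphism object is an internal monoid'' construction, suitably adapted to the dg $2$-categorical setting.

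First I will define the unit. By Lemma \ref{2extend}, $\vv{\bfM}$ extends to a pretriangulated $2$-representation of $\vv{\cC}$, so all the relevant compositions involving $[X,X]\in\vv{\cC}(\ti,\ti)$ make sense. Using $\vv{\bfM}(\one_\ti)X = X$ and the adjunction dg isomorphism
\[
\Hom_{\vv{\bfM}(\ti)}(X,X) \;\cong\; \Hom_{\vv{\cC}(\ti,\ti)}(\one_\ti,[X,X]),
\]
I define $u_{\rA_X}$ as the image of $\id_X$. Since $\id_X$ is a dg morphism and the adjunction isomorphism is a dg isomorphism, $u_{\rA_X}$ is a dg morphism, by the same reasoning used to show that $\eval_{X,X}$ is a dg morphism.

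Next I will define the multiplication. Composing the two evaluations
\[
\vv{\bfM}([X,X][X,X])X \;=\; \vv{\bfM}([X,X])\vv{\bfM}([X,X])X
\xrightarrow{\vv{\bfM}([X,X])\,\eval_{X,X}} \vv{\bfM}([X,X])X
\xrightarrow{\eval_{X,X}} X
\]
yields a dg morphism, and I define $m_{\rA_X}$ to be its image under the adjunction dg isomorphism
\[
\Hom_{\vv{\bfM}(\ti)}(\vv{\bfM}([X,X][X,X])X,X) \;\cong\; \Hom_{\vv{\cC}(\ti,\ti)}([X,X][X,X],[X,X]).
\]

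Finally, I will verify the algebra axioms by translating them through the adjunction. For associativity, both $m_{\rA_X}\circ_1(m_{\rA_X}\circ_0\id_{[X,X]})$ and $m_{\rA_X}\circ_1(\id_{[X,X]}\circ_0 m_{\rA_X})$ correspond under the adjunction to the same triple-evaluation dg morphism $\vv{\bfM}([X,X])^3 X\to X$ obtained by applying $\eval_{X,X}$ three times; this is immediate from naturality of $\eval$ and the fact that $\vv{\bfM}$ is a (strict) dg $2$-functor on $\vv{\cC}$, so the two bracketings of the three evaluations agree strictly. For the two unit axioms, I use the triangle identities for the adjunction $\vv{\Eval}_X\dashv [X,-]$: the composite $\eval_{X,X}\circ \vv{\bfM}(u_{\rA_X})_X$ is $\id_X$ by the very definition of $u_{\rA_X}$ and the unit/counit description of an adjunction, and this translates, via the adjunction isomorphism, to $m_{\rA_X}\circ_1 (u_{\rA_X}\circ_0 \id_{[X,X]})=\id_{[X,X]}$; the symmetric identity on the other side is analogous. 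The main technical point to handle carefully will be making sure all the naturality diagrams are stated in $\vv{\cC}$ rather than $\cC$ and that horizontal composition in $\vv{\cC}$ (with the strict associativity from Lemma \ref{vvcC}) matches the strict associativity of functor composition in $\vv{\bfM}$, so that the two translated diagrams are literally equal rather than merely isomorphic.
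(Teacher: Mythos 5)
Your proposal matches the paper's proof: the unit is defined as the image of $\id_X$ under the adjunction isomorphism $\Hom_{\vv{\bfM}(\ti)}(X,X)\cong \Hom_{\vv{\cC}(\ti,\ti)}(\one_\ti,\rA_X)$, and the multiplication as the image of $\eval_{X,X}\circ\vv{\bfM}(\rA_X)(\eval_{X,X})$ under the corresponding adjunction isomorphism, exactly as in the paper. The verification of the axioms, which the paper leaves as a routine exercise, is correctly sketched via naturality of $\eval$ and the strictness of composition in $\vv{\cC}$ and $\vv{\bfM}$.
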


\begin{proof}
We define the unit morphism $u_{\rA_X}$ as the image of the identity on $X$ under the dg isomorphism
\begin{align*}\Hom_{\vv{\bfM}(\ti)}(X,X)\cong \Hom_{\vv{\cC}(\ti,\ti)}(\one_\ti, \rA_X).\end{align*}
We define the multiplication $m_{\rA_X}$ as the image of the dg morphism  \begin{align*}\eval_{X,X}\circ\vv{\bfM}(\rA_X)(\eval_{X,X})\colon \vv{\bfM}(\rA_X)\vv{\bfM}(\rA_X)X \to \vv{\bfM}(\rA_X)X \to X\end{align*}
under the dg isomorphism
\begin{align*}\Hom_{\vv{\bfM}(\ti)}(\vv{\bfM}(\rA_X\rA_X)X, X) \cong  \Hom_{\vv{\cC}(\ti,\ti)}(\rA_X\rA_X, \rA_X).\end{align*}
It is a routine exercise to check the algebra axioms.
\end{proof}

Note that if $\bfM$ is compact, i.e., $\bfM\in \cC\ctworep$, then we can associate an algebra $1$-morphism $\rA_X$ to \emph{any} nonzero object $X\in \bfM(\ti)$.

\begin{lemma}\label{rightmodule}
For any $Y\in \vv{\bfM}(\tj)$, the object $[X,Y]$ in $\vv{\cC}(\ti,\tj)$ has the structure of a right dg module over $\rA_X$. Moreover, for any morphism $f\colon Y_1\to Y_2 \in \vv{\bfM}(\tj)$, the morphism $[X,f]$ commutes with the right action of $\rA_X$ and satisfies $[X,\del f]=\del[X,f]$.
\end{lemma}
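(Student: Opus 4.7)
The plan is to follow the same pattern used in Lemma \ref{algebralemma}, defining all structural morphisms on $[X,Y]$ via the adjunction $\vv{\Eval}_X \dashv [X,-]$ and then verifying the axioms by transporting the required equalities across the adjunction isomorphism.

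First, define the right action $\rho_Y \colon [X,Y]\rA_X \to [X,Y]$ as the morphism corresponding under the natural dg isomorphism
\[
\Hom_{\vv{\bfM}(\tj)}\bigl(\vv{\bfM}([X,Y]\rA_X)X,\, Y\bigr) \;\cong\; \Hom_{\vv{\cC}(\ti,\tj)}\bigl([X,Y]\rA_X,\, [X,Y]\bigr)
\]
to the composition
\[
\vv{\bfM}([X,Y])\vv{\bfM}(\rA_X)X \xrightarrow{\vv{\bfM}([X,Y])(\eval_{X,X})} \vv{\bfM}([X,Y])X \xrightarrow{\eval_{X,Y}} Y.
\]
Being the image of a dg morphism under a dg isomorphism, $\rho_Y$ is itself a dg morphism. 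The unit axiom $\rho_Y \circ (\id_{[X,Y]} \circ_0 u_{\rA_X}) = \id_{[X,Y]}$ follows by transporting across the adjunction: both sides correspond to $\eval_{X,Y}$, using that $u_{\rA_X}$ was constructed in Lemma~\ref{algebralemma} as the mate of $\id_X$ and hence $\vv{\bfM}(u_{\rA_X})X$ composed with $\eval_{X,X}$ is $\id_X$.

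The associativity axiom $\rho_Y \circ (\id_{[X,Y]} \circ_0 m_{\rA_X}) = \rho_Y \circ (\rho_Y \circ_0 \id_{\rA_X})$ is proven similarly: both morphisms correspond under the adjunction
\[
\Hom_{\vv{\bfM}(\tj)}\bigl(\vv{\bfM}([X,Y]\rA_X\rA_X)X,\, Y\bigr) \cong \Hom_{\vv{\cC}(\ti,\tj)}\bigl([X,Y]\rA_X\rA_X,\, [X,Y]\bigr)
\]
to the same iterated evaluation $\eval_{X,Y} \circ \vv{\bfM}([X,Y])(\eval_{X,X}) \circ \vv{\bfM}([X,Y]\rA_X)(\eval_{X,X})$ once one unwinds the definition of $m_{\rA_X}$ from Lemma~\ref{algebralemma}, and then invokes injectivity of the adjunction bijection. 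This step is the main obstacle in the sense that it requires carefully tracking several instances of functoriality of $\vv{\bfM}$ together with the definition of $m_{\rA_X}$, but it is purely formal.

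For the second statement, the dg functor $[X,-]$ is the right adjoint of the dg functor $\vv{\Eval}_X$, hence is itself a dg functor; in particular $[X,\del f] = \del [X,f]$ holds automatically. To see that $[X,f]$ intertwines the right $\rA_X$-actions, use that the counit $\eval_{X,-}$ is natural in its second argument, so $f \circ \eval_{X,Y_1} = \eval_{X,Y_2} \circ \vv{\bfM}([X,f])X$. Transporting the equation $[X,f]\circ \rho_{Y_1} = \rho_{Y_2}\circ ([X,f]\circ_0 \id_{\rA_X})$ across the adjunction reduces it, via this naturality and functoriality of $\vv{\bfM}([X,-])$, to a tautology. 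By injectivity of the adjunction correspondence, the equation in $\Hom_{\vv{\cC}(\ti,\tj)}([X,Y_1]\rA_X, [X,Y_2])$ follows, completing the proof.
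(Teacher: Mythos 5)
Your proposal is correct and follows essentially the same route as the paper: the right action is defined as the mate of $\eval_{X,Y}\circ\vv{\bfM}([X,Y])(\eval_{X,X})$ under the adjunction isomorphism, with the module axioms and compatibility with $f$ and $\del$ checked by transporting across the adjunction. The only difference is that you spell out the unit, associativity, and naturality verifications which the paper dismisses as a routine exercise.
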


\begin{proof}
We define the right action $\rho_{[X,Y]}\colon [X,Y]\rA_X \to [X,Y]$ as the image of
 \begin{align*}\eval_{X,Y}\circ\vv{\bfM}([X,Y])(\eval_{X,X})\colon \vv{\bfM}([X,Y])\vv{\bfM}(\rA_X)X \to \vv{\bfM}([X,Y])X \to Y\end{align*}
under the dg isomorphism
\begin{align*}\Hom_{\vv{\bfM}(\tj)}(\vv{\bfM}([X,Y]\rA_X)X, Y) \cong  \Hom_{\vv{\cC}(\ti,\tj)}([X,Y]\rA_X, [X,Y]).\end{align*}
It is, again, a routine exercise to check that this morphism satisfies the axioms of a right action. Compatibility with differentials follows from $\eval_{X,Y}$ being a dg morphism.
\end{proof}

Note that Lemma \ref{rightmodule} can be reformulated as saying that $[X,-]$ is a dg functor from $\vv{\bfM}(\tj)$ to $\vv{\bfM}_{\rA_X}(\tj)$ for any $\tj\in \cC$.

\subsection{An equivalence of dg \texorpdfstring{$2$}{2}-representations}\label{mor2rep-sect}

Throughout this subsection, we will assume that $\cC$ has compact left adjoints, see Definition \ref{Cgenadj}(\ref{Cgenadj2}). We additionally assume that $\bfM$ is compact though, strictly speaking, we could fix $X$ and only require that $\Eval_X$ has a compact right adjoint.

\begin{lemma}\label{Fout}
For any $\rG\in \ov{\cC}(\ti,\tj)$, $X\in \bfM(\ti)$, and $Y\in \bfM(\tj)$, we have a dg isomorphism \begin{align*}[X,\bfM(\rG)Y]\cong \rG[X,Y]\end{align*} in $\vv{\cC}(\ti,\tj),$ natural in all variables.
\end{lemma}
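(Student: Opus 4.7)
The plan is to prove the claimed dg isomorphism by applying the dg Yoneda lemma in $\vv{\cC}$ and chaining together the defining adjunction of the internal hom with the internal adjunction data on $\rG$ guaranteed by the standing hypothesis that $\cC$ has compact left adjoints (Definition \ref{Cgenadj}(b)). Since both sides of the claim are objects of $\vv{\cC}$, it suffices to produce a dg isomorphism of the represented dg modules, natural in a test object $\rH$.

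Concretely, for $\rH \in \vv{\cC}$ of appropriate source and target, I would exhibit a chain of dg isomorphisms
\begin{align*}
\Hom_{\vv{\cC}}(\rH,[X,\bfM(\rG)Y])
&\stackrel{(1)}{\cong} \Hom_{\vv{\bfM}}(\vv{\bfM}(\rH)X,\bfM(\rG)Y) \\
&\stackrel{(2)}{\cong} \Hom_{\vv{\bfM}}(\vv{\bfM}({}^*\rG\,\rH)X, Y) \\
&\stackrel{(3)}{\cong} \Hom_{\vv{\cC}}({}^*\rG\,\rH,[X,Y]) \\
&\stackrel{(4)}{\cong} \Hom_{\vv{\cC}}(\rH,\rG[X,Y]),
\end{align*}
natural in $\rH$, $\rG$, $X$ and $Y$. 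Isomorphisms $(1)$ and $(3)$ are two instances of the defining adjunction $\vv{\Eval}_X \dashv [X,-]$ recalled in Section \ref{inthomsec}. Isomorphism $(4)$ is the internal adjunction $({}^*\rG,\rG)$ in $\vv{\cC}$, available by the standing assumption. Isomorphism $(2)$ is the key step: because $\bfM$ is a dg $2$-functor extended to $\vv{\cC}$ via Lemma \ref{2extend}, the unit $u_\rG\colon \one \to \rG\,{}^*\rG$ and counit $c_\rG\colon {}^*\rG\,\rG \to \one$ push forward under $\vv{\bfM}$ to witnesses of an adjunction $(\vv{\bfM}({}^*\rG),\vv{\bfM}(\rG))$ in $\vv{\bfM}$; coupled with strict compatibility of horizontal composition $\vv{\bfM}({}^*\rG)\vv{\bfM}(\rH) = \vv{\bfM}({}^*\rG\,\rH)$, this yields $(2)$.

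Each of the four isomorphisms is a dg isomorphism (everything in sight is of degree zero and compatible with the differentials, since the adjunction data are dg morphisms and $\bfM$ is a dg $2$-functor) and is natural in all arguments. Composing them and invoking dg Yoneda in $\vv{\cC}$ then yields the desired dg isomorphism $[X,\bfM(\rG)Y]\cong \rG[X,Y]$, with naturality in $X$, $Y$, $\rG$ inherited from naturality of each link of the chain.

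The main point requiring care is the verification that a strict dg $2$-functor transports the internal adjunction $({}^*\rG,\rG)$ of $\vv{\cC}$ to an internal adjunction of $\vv{\bfM}$; this is the standard fact that $2$-functors preserve adjoint data, adapted to the dg setting, and poses no cohomological obstacle. Apart from this, the proof is entirely formal adjunction yoga and involves no real difficulty.
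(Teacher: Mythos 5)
Your proof is correct and follows essentially the same route as the paper's: the same chain of dg isomorphisms on represented functors, natural in a test object $\rH$, using the adjunction $\vv{\Eval}_X\dashv[X,-]$ twice, the internal adjunction $({}^*\rG,\rG)$ in $\vv{\cC}$, and the fact that the dg $2$-functor $\vv{\bfM}$ transports this internal adjunction to an adjunction $(\vv{\bfM}({}^*\rG),\vv{\bfM}(\rG))$, followed by Yoneda. The only cosmetic difference is that you merge the paper's two middle isomorphisms into your step $(2)$.
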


\begin{proof}
For any $\rH\in \ov{\cC}(\ti,\tj)$, there is a sequence of dg isomorphisms, natural in all variables,
\begin{equation*}
\begin{split}
\Hom_{\vv{\cC}(\ti,\tj)}(\rH, [X,\bfM(\rG)Y])&\cong \Hom_{\bfM(\tj)}(\bfM(\rH) X,\bfM(\rG)Y)\\
&\cong \Hom_{\vv{\bfM}(\ti)}(({}^*\vv{\bfM}(\rG))\vv{\bfM}(\rH) X,Y)\\
&\cong \Hom_{\vv{\bfM}(\ti)}(\vv{\bfM}(({}^*\rG)\rH) X,Y)\\
&\cong \Hom_{\vv{\cC}(\ti,\tj)}(({}^*\rG)\rH ,[X,Y])\\
&\cong \Hom_{\vv{\cC}(\ti,\tj)}(\rH ,\rG[X,Y]),
\end{split}
\end{equation*}
where the second and third dg isomorphisms follow from $2$-functoriality of $\vv{\bfM}$, which, in particular, implies that $\vv{\bfM}({}^*\rG)$ is left adjoint to $\vv{\bfM}(\rG)$.
\end{proof}

\begin{proposition}\label{2repmor}
There is a morphism of dg $2$-representations from $\vv{\bfM}$ to $\vv{\bfM}_{\rA_X}$ induced by the functor $[X, -]$. 
\end{proposition}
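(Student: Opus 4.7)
The plan is to use $\Phi_\tj := [X,-] \colon \vv{\bfM}(\tj) \to \vv{\bfM}_{\rA_X}(\tj)$, which is already a dg functor by Lemma \ref{rightmodule} and the remark immediately following it. The coherence data $\eta_\rF$ will be supplied by the natural dg isomorphisms from Lemma \ref{Fout}.

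More precisely, for $\rF \in \cC(\tj,\tk)$ and $Y \in \vv{\bfM}(\tj)$, Lemma \ref{Fout} gives a dg isomorphism
$$\big(\eta_{\rF}\big)_Y \colon [X,\vv{\bfM}(\rF)Y] \xrightarrow{\sim} \rF\,[X,Y]$$
in $\vv{\cC}(\ti,\tk)$, which is natural in $Y$ (by naturality in that variable) and natural in $\rF$ (by naturality in $\rH$ in the adjunction computation proving Lemma \ref{Fout}). I would first extend this isomorphism from $\rF\in\cC(\tj,\tk)$ to all of $\vv{\cC}(\tj,\tk)$ by component-wise application as in Lemma \ref{2extend}, so that $\eta_\rF$ is defined on the whole $\vv{\cC}$-action. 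The key point needing a short verification is that $(\eta_{\rF})_Y$ is in fact a morphism of right $\rA_X$-modules: both sides inherit an $\rA_X$-action via $\Eval_X$-adjunction, and by construction both actions come from the same evaluation-composition morphism $\vv{\bfM}(\rF [X,Y] \rA_X)X \to \vv{\bfM}(\rF)Y$, so the identification is $\rA_X$-linear. This places $(\eta_\rF)_Y$ in $\vv{\bfM}_{\rA_X}(\tk) = \prmod{\vv{\cC}(\ti,\tk)}\rA_X$ as required.

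Next I would verify the coherence condition
$$\eta_{\rF\rG} = (\id_{\bfN(\rF)} \circ_0 \eta_\rG)\circ_1(\eta_\rF \circ_0 \id_{\bfM(\rG)})$$
for composable $1$-morphisms $\rG\in\cC(\ti,\tj)$, $\rF\in\cC(\tj,\tk)$, where $\bfN := \vv{\bfM}_{\rA_X}$ has $\bfN(\rF)$ acting by left multiplication with $\rF$. Under the adjunction isomorphism
$$\Hom_{\vv{\cC}(\ti,\tk)}(\rH,[X,\vv{\bfM}(\rF\rG)Y]) \cong \Hom_{\vv{\bfM}(\tk)}(\vv{\bfM}(\rH)X,\vv{\bfM}(\rF\rG)Y),$$
the map $\eta_{\rF\rG}$ corresponds to the chain of adjunction moves using the left adjoints ${}^*(\rF\rG)\cong ({}^*\rG)({}^*\rF)$, and both sides of the coherence equation produce the same factorisation. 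This is a purely formal verification once one unpacks the definitions in Lemma \ref{Fout}; the $2$-functoriality of $\vv{\bfM}$ ensures the associativity of the adjunction data, which is precisely what makes the cocycle work.

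The main obstacle I expect is bookkeeping rather than conceptual: verifying $\rA_X$-linearity of $\eta_\rF$ and the cocycle coherence both require carefully tracing the definitions of the $\rA_X$-action (from Lemma \ref{rightmodule}) and of the evaluation maps through the adjunctions of Lemma \ref{Fout}. A clean way to organise this is to work Yoneda-style: check equality of the two sides of the cocycle after applying $\Hom_{\vv{\cC}(\ti,\tk)}(\rH,-)$ and reducing to $\Hom_{\vv{\bfM}(\tk)}(\vv{\bfM}(\rH)X,\vv{\bfM}(\rF\rG)Y)$, where the identity becomes obvious from $2$-functoriality $\vv{\bfM}(\rF\rG)=\vv{\bfM}(\rF)\vv{\bfM}(\rG)$. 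No nontrivial obstructions beyond this should arise, since the compact right adjoint hypothesis on $\cC$ and $\bfM$ guarantees that all relevant internal homs exist in $\vv{\cC}$.
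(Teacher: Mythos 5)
Your proposal is correct and follows essentially the same route as the paper's proof: take $\Phi_\tj=[X,-]$ (landing in $\vv{\bfM}_{\rA_X}(\tj)$ by Lemma~\ref{rightmodule}), take $\eta_\rF$ from the natural dg isomorphism of Lemma~\ref{Fout}, and verify the cocycle condition by tracing the adjunction isomorphisms built from ${}^*(\rF\rG)\cong({}^*\rG)({}^*\rF)$ and the $2$-functoriality of $\vv{\bfM}$. Your explicit flagging of the $\rA_X$-linearity of $(\eta_\rF)_Y$ is a point the paper leaves implicit, and your justification of it is sound.
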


\begin{proof}
For notational simplicity, we denote the dg functor $[X, -]\colon \vv{\bfM}(\tj) \to \vv{\bfM}_{\rA_X}(\tj)$, which indeed has the latter target by Lemma \ref{rightmodule},  by $\Phi$.
By Lemma \ref{Fout}, for each $1$-morphism $\rG$, there is a dg isomorphism of functors $\eta_\rG\colon \Phi\circ \vv{\bfM}(\rG) \to \vv{\bfM}_{\rA_X}(\rG)\circ \Phi$, natural in $\rG$.

To prove that the dg functor $\Phi$ together with the natural dg isomorphism $\eta$ indeed induces a morphism of $2$-representations, it hence suffices to check that \begin{align*}\eta_{\rG\rH} = (\id_{\vv{\bfM}_{\rA_X}(\rG)}\circ_0 \eta_\rH)\circ_1(\eta_\rG \circ_0 \id_{\vv{\bfM}(\rH)}),\end{align*}
for any objects $\tk,\tl$ and $1$-morphism $\rH\in \ov{\cC}(\tj,\tk)$, $\rG\in \ov{\cC}(\tk,\tl)$, $\rK\in \ov{\cC}(\ti,\tl)$.

This follows from commutativity of 
\begin{align*}
\xymatrix{
\Hom_{\vv{\bfM}(\tl)}(\vv{\bfM}(\rK)X,\vv{\bfM}(\rG\rH)Y)\ar^{\sim}[rr]\ar^{\sim}[d]&&\Hom_{\vv{\bfM}(\tk)}(\vv{\bfM}(({}^*\rG)\rK)X,\vv{\bfM}(\rH)Y)\ar^{\sim}[d]\\
\Hom_{\vv{\bfM}(\tj)}(\vv{\bfM}({}^*(\rG\rH)\rK)X,Y)\ar^{\sim}[d] &&\Hom_{\vv{\bfM}(\tj)}(\vv{\bfM}(({}^*\rH)({}^*\rG)\rK)X,Y)\ar^{\sim}[d]\\
\Hom_{\vv{\cC}(\ti,\tj)}({}^*(\rG\rH)\rK,[X,Y]) \ar^{\sim}[d]&&\Hom_{\vv{\cC}(\ti,\tj)}(({}^*\rH)({}^*\rG)\rK,[X,Y])\ar^{\sim}[d]\\
\Hom_{\vv{\cC}(\ti,\tl)}(\rK,\rG\rH[X,Y]) &&\Hom_{\vv{\cC}(\ti,\tk)}(({}^*\rG)\rK,\rH[X,Y])\ar_{\sim}[ll]\\
}\end{align*}
in exactly the same vein as in \cite[Lemma 6]{MMMT}.
\end{proof}

\begin{proposition}\label{fullyfaithfulmor}
The morphism of dg $2$-representations constructed in Proposition \ref{2repmor} is fully faithful when restricted to $\bfG_\bfM(X)$, the pretriangulated $2$-subrepresentation $\cC$-generated by $X$.
\end{proposition}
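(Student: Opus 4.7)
The strategy is to prove that the dg functor $\Phi_\tj = [X, -]\colon \bfG_\bfM(X)(\tj) \to \vv{\bfM}_{\rA_X}(\tj)$ is fully faithful for each $\tj \in \cC$. First I would establish fully faithfulness on the generating objects $\bfM(\rG)X$ by combining the adjunction $\vv{\Eval}_X \dashv [X,-]$ with Lemmas \ref{Fout} and \ref{modhoms}. Then I would extend from these generators to the full thick closure $\bfG_\bfM(X)(\tj)$ by the usual argument that the subcategory on which a dg functor between pretriangulated categories is fully faithful is closed under shifts, cones, direct sums, and dg direct summands.

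For the computation on generators, fix $\rG\in\cC(\ti,\tj)$ and an arbitrary $Z\in\vv{\bfM}(\tj)$. Lemma \ref{Fout} gives a natural dg isomorphism $[X,\bfM(\rG)X]\cong \rG\rA_X$, so $\Phi(\bfM(\rG)X)\cong \rG\rA_X$ in $\bfM_{\rA_X}(\tj)$. The adjunction supplies the dg isomorphism
\[
\Hom_{\vv{\bfM}(\tj)}(\bfM(\rG)X,Z)\xrightarrow{\sim}\Hom_{\vv{\cC}(\ti,\tj)}(\rG,[X,Z]),\qquad f\mapsto [X,f]\circ u_\rG,
\]
where $u_\rG\colon \rG\to [X,\bfM(\rG)X]$ is the unit, while Lemma \ref{modhoms} supplies
\[
\Hom_{\vv{\bfM}_{\rA_X}(\tj)}(\rG\rA_X,[X,Z])\xrightarrow{\sim}\Hom_{\vv{\cC}(\ti,\tj)}(\rG,[X,Z]),\qquad g\mapsto g\circ(\id_\rG\circ_0 u_{\rA_X}).
\]
I would then verify that under the identification $[X,\bfM(\rG)X]\cong\rG\rA_X$ of Lemma \ref{Fout} the unit $u_\rG$ corresponds to $\id_\rG\circ_0 u_{\rA_X}$; this is essentially tautological from the construction of $u_{\rA_X}$ in Lemma \ref{algebralemma} (the special case $\rG=\one_\ti$) together with the naturality of the adjunction isomorphism in $\rG$. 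Consequently, composing the first isomorphism with the inverse of the second recovers the map $f\mapsto [X,f]$ induced by $\Phi$, showing $\Phi$ is a dg isomorphism on $\Hom(\bfM(\rG)X,Z)$ for any $Z\in\vv{\bfM}(\tj)$.

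To extend, observe that for fixed $\rG$ the full subcategory of $Z\in\vv{\bfM}(\tj)$ on which $\Phi$ induces a dg isomorphism $\Hom(\bfM(\rG)X,Z)\to \Hom(\rG\rA_X,\Phi(Z))$ is closed under shifts, cones, direct sums, dg direct summands, and dg isomorphisms (since $\Phi$ commutes with all these operations and fully faithfulness is preserved under them); the previous paragraph shows this subcategory is all of $\vv{\bfM}(\tj)$. Now fixing any $Z\in\bfG_\bfM(X)(\tj)$, the full subcategory of $Y\in\bfG_\bfM(X)(\tj)$ for which $\Phi$ gives a dg isomorphism on $\Hom(Y,Z)$ is likewise closed under all thick-closure operations and contains every $\bfM(\rG)X$, hence coincides with $\bfG_\bfM(X)(\tj)$. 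The main obstacle is the naturality bookkeeping identifying $u_\rG$ with $\id_\rG\circ_0 u_{\rA_X}$; this is formal adjunction machinery analogous to \cite[Lemma~6]{MMMT}, and the rest of the argument is routine.
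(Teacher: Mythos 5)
Your proof is correct and follows essentially the same route as the paper's: full faithfulness on the objects $\bfM(\rG)X$ via Lemma~\ref{Fout}, Lemma~\ref{modhoms} and the adjunction, followed by an extension to the whole thick closure $\bfG_\bfM(X)(\tj)$. You are in fact slightly more explicit than the paper in verifying that the composite of these isomorphisms agrees with the map $f\mapsto [X,f]$ induced by the functor (the compatibility of $u_\rG$ with $\id_\rG\circ_0 u_{\rA_X}$), a point the paper leaves implicit.
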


\begin{proof}
First, note that 
\begin{equation*}
\begin{split}
\Hom_{\vv{\bfM}_{\rA_X}(\tj)}([X,\bfM(\rG) X],[X,\bfM(\rH) X])&\cong \Hom_{\vv{\bfM}_{\rA_X}(\tj)}(\rG\rA_X,[X,\bfM(\rH) X])\\
&\cong\Hom_{\vv{\cC}(\ti,\tj)}(\rG, [X,\bfM(\rH) X])\\
&\cong\Hom_{\vv{\bfM}(\tj)}(\bfM(\rG)X,\bfM(\rH) X)\\
&\cong\Hom_{\bfM(\tj)}(\bfM(\rG)X,\bfM(\rH) X)
\end{split}
\end{equation*}
where the second isomorphism is given by Lemma \ref{modhoms}.
This shows that the functor $[X,-]$ is full and faithful for objects of the form $\bfM(\rG) X$.

From the definition of morphism spaces in $\bfM(\tj)\simeq \overline{\bfM}(\tj)$, it is immediate that 
\begin{align*}\Hom_{\vv{\bfM}_{\rA_X}(\tj)}([X,Y],[X,Z])\cong\Hom_{\bfM(\tj)}(Y,Z),\end{align*}
for all $Y,Z$ in the thick closure of $\{\bfM(\rG)X| \rG\in \cC(\ti,\tj)\}$, completing the proof.
\end{proof}

The dg $2$-representation $\bfG_\bfM(X)$ may not be dg equivalent to $\bfM_{\rA_X}$ as the latter is closed under dg idempotents but the former is not. However, when replacing $\bfM$ by the dg idempotent completion $\bfM^\circ$, we obtain an equivalence of dg $2$-representation between $\bfG_{\bfM^\circ}(X)$ and $\bfM_{\rA_X}$.

\begin{proposition}\label{prop:GMequivMA}
The morphism of dg $2$-representations from Proposition \ref{2repmor} induces an equivalence of dg $2$-representation $\bfG_{\bfM^\circ}(X)\to \bfM_{\rA_X}$.
\end{proposition}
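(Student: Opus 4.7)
The plan is to verify that the morphism of dg $2$-representations $\Phi := [X,-]$ constructed in Proposition~\ref{2repmor}, once extended to $\bfM^\circ$ via Lemma~\ref{extendtocirc}, restricts to a dg equivalence $\bfG_{\bfM^\circ}(X) \xrightarrow{\sim} \bfM_{\rA_X}$. Observe first that $\bfM_{\rA_X}$ is by definition a thick subcategory of $\vv{\bfM}_{\rA_X}$ and hence already closed under dg direct summands, so there is no need to take an idempotent completion on the target side.

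The first step is to check that $\Phi$ sends $\bfG_{\bfM^\circ}(X)$ into $\bfM_{\rA_X}$. By Lemma~\ref{Fout}, $\Phi$ sends the generating objects $\bfM(\rG)X$ to $[X,\bfM(\rG)X]\cong \rG[X,X]=\rG\rA_X$, which are precisely the generators of $\bfM_{\rA_X}$. Since $\Phi$ is a dg functor, it commutes with shifts, preserves cones and finite biproducts, and sends dg idempotents to dg idempotents, so it carries the thick closure on the source side into the thick closure on the target side.

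Second, full-faithfulness has essentially been done. Proposition~\ref{fullyfaithfulmor} gives full-faithfulness on $\bfG_\bfM(X)$, and this propagates to the dg idempotent completion: if $Y_e,Z_f \in \bfG_{\bfM^\circ}(X)$ are defined by dg idempotents $e,f$ on objects $Y,Z\in \bfG_\bfM(X)$, the Hom spaces on both sides are computed as $f\cdot \Hom(Y,Z)\cdot e$ respectively $\Phi(f)\cdot \Hom(\Phi(Y),\Phi(Z))\cdot \Phi(e)$, and the bijection given by $\Phi$ on $\bfG_\bfM(X)$ restricts to these idempotent-compressed subspaces.

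Finally, essential surjectivity follows from the first step together with the fact that every object of $\bfM_{\rA_X}(\tj)$ lies in the thick closure of $\{\rG\rA_X\}$: the essential image of $\Phi|_{\bfG_{\bfM^\circ}(X)}$ contains each $\rG\rA_X$ and is closed (as a dg subcategory of $\bfM_{\rA_X}$) under shifts, cones, direct sums, and dg direct summands because $\bfG_{\bfM^\circ}(X)$ is and $\Phi$ is a fully faithful dg functor. The main subtlety is the interaction between dg idempotents and the internal hom, but this is handled by Lemma~\ref{extendtocirc}, which makes the extension $\Phi^\circ$ a bona fide morphism of dg $2$-representations, so that the argument goes through without further complications.
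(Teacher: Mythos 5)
Your proof is correct and follows essentially the same route as the paper, which simply notes that $\rG\rA_X\cong[X,\bfM(\rG)X]$ and that $[X,-]$ commutes with taking dg direct summands, so that the full image is the thick closure of the $\rG\rA_X$; you have merely spelled out the propagation of full-faithfulness (Proposition~\ref{fullyfaithfulmor}) to the idempotent completion and the thickness of the essential image, both of which the paper leaves implicit.
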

\begin{proof}
Note that $\rF \rA\cong [X,\rF X]$ and $[X,-]$ commutes with taking dg direct summands. Hence, the full image of $[X,-]$ is the thick closure of the set $\{\rF\rA_X\,|\,\rF\}$.
\end{proof}

As $X$ $\cC$-generating $\bfM$ implies that $X$ also $\cC$-generates $\bfM^\circ$, we obtain the following result. 

\begin{corollary}\label{cor:stronggenMAequiv} If the object $X$ $\cC$-generates $\bfM$, then the dg $2$-representations $\bfM_ {\rA_X}$ and $\bfM^{\circ}$ are dg equivalent.
\end{corollary}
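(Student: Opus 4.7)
The plan is to derive the corollary almost immediately from Proposition \ref{prop:GMequivMA} by verifying the hypothesis that $X$ remains a $\cC$-generator after passing to the dg idempotent completion. Once we know $\bfG_{\bfM^\circ}(X)=\bfM^\circ$, the proposition gives the desired dg equivalence $\bfM^\circ \simeq \bfM_{\rA_X}$.

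So the main step is to show: if $\bfG_{\bfM}(X)=\bfM$, then $\bfG_{\bfM^\circ}(X)=\bfM^\circ$. To see this, fix $\tj\in\cC$ and let $(Y,e)$ be an arbitrary object of $\bfM^\circ(\tj)$, where $Y\in\bfM(\tj)$ and $e\in\End_{\bfM(\tj)}(Y)$ is a dg idempotent. By assumption, $Y$ lies in the thick closure of $\{\bfM(\rG)X \mid \rG\in\cC(\ti,\tj)\}$ inside $\bfM(\tj)$, and hence also inside $\bfM^\circ(\tj)$ via the embedding $\bfM(\tj)\hookrightarrow \bfM^\circ(\tj)$. Since $(Y,e)$ is a dg direct summand of $Y$ in $\bfM^\circ(\tj)$, and the thick closure computed in $\bfM^\circ$ is by definition closed under dg direct summands that exist in $\bfM^\circ$, we conclude that $(Y,e)\in\bfG_{\bfM^\circ}(X)(\tj)$. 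The $\cC$-action on $\bfM^\circ$ extends that on $\bfM$ (cf. the discussion preceding Lemma \ref{extendtocirc}), so the compatibility of $\bfG_{\bfM^\circ}(X)$ with the $\cC$-action is automatic.

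Applying Proposition \ref{prop:GMequivMA} with $\bfM^\circ$ in place of $\bfM$, and noting that $(\bfM^\circ)^\circ\simeq \bfM^\circ$, we obtain the dg equivalence $\bfM_{\rA_X}\simeq \bfG_{\bfM^\circ}(X)=\bfM^\circ$ of dg $2$-representations. I do not anticipate a serious obstacle here: the only subtle point is that the definition of thick closure inside a pretriangulated category involves closure under \emph{existing} dg direct summands, so one has to be careful that passing from $\bfM$ to $\bfM^\circ$ only enlarges the pool of available summands and cannot shrink the thick closure of $\{\bfM(\rG)X\}$.
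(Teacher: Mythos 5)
Your proposal is correct and follows the paper's own route: the paper derives the corollary from Proposition \ref{prop:GMequivMA} via the one-line observation that $X$ $\cC$-generating $\bfM$ implies $X$ $\cC$-generates $\bfM^\circ$, which is exactly the step you verify in detail. (The only cosmetic difference is that Proposition \ref{prop:GMequivMA} already produces the equivalence $\bfG_{\bfM^\circ}(X)\simeq\bfM_{\rA_X}$ when applied to $\bfM$ itself, so the detour through $(\bfM^\circ)^\circ\simeq\bfM^\circ$ is unnecessary, though harmless.)
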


Observe that, if $\bfM(\tj)=\widehat{\{X_\tj\}}$, then $\bfM_{\rA_X}(\tj)\simeq \widehat{\{[X,X_\tj]\}}$
 and $\bfM_ {\rA_X}(\tj)$ is hence in $\gcsfcat$. Hence, if such $X_\tj$ exists for any $\tj$, then $\bfM_ {\rA_X}\in \cC\gtworep$.

\begin{corollary}
Provided that $X$ $\cC$-generates $\bfM$, the morphism of dg $2$-representations $\vv{\bfM}\to\vv{\bfM}_{\rA_X}$ constructed in Proposition \ref{2repmor} is a dg equivalence. 
\end{corollary}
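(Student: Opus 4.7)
My plan is to lift the equivalence $\bfM^\circ \simeq \bfM_{\rA_X}$ of Corollary \ref{cor:stronggenMAequiv} to an equivalence $\vv{\bfM} \simeq \vv{\bfM}_{\rA_X}$, and then identify the resulting dg functor with the morphism $\Phi := [X,-]$ from Proposition \ref{2repmor}. I will factor $\Phi$ through $\vv{\bfM^\circ}$ and show each factor is a dg equivalence, so that the final identification reduces to a uniqueness argument for right adjoints.

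Write $\Psi\colon \bfM^\circ \to \bfM_{\rA_X}$ for the equivalence of Corollary \ref{cor:stronggenMAequiv}. I would first apply Lemma \ref{2extend} to produce a dg equivalence $\vv{\Psi}\colon \vv{\bfM^\circ} \to \vv{\bfM_{\rA_X}}$ of pretriangulated $2$-representations of $\cC$ (and in fact of $\vv{\cC}$). Combining this with the dg equivalence $\vv{\bfM_{\rA_X}} \simeq \vv{\bfM}_{\rA_X}$ of Lemma \ref{overlineMAprop-new} yields $\vv{\bfM^\circ} \simeq \vv{\bfM}_{\rA_X}$.

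The main step will then be to show that the inclusion $\bfM \hookrightarrow \bfM^\circ$ induces a dg equivalence $\iota\colon \vv{\bfM} \to \vv{\bfM^\circ}$. Via Lemma \ref{vv=compact}, it suffices to show, for each object $\ti$, that restriction along $\bfM(\ti) \hookrightarrow \bfM^\circ(\ti)$ gives a dg equivalence between the dg categories of compact dg modules. In fact, restriction should already give a dg equivalence of the full dg module categories: any dg $\bfM(\ti)$-module $M$ extends canonically to a dg $\bfM^\circ(\ti)$-module via $M'(X_e) := e \cdot M(X)$, using that dg idempotents split in $\Bbbk\dgmod$. This extension is inverse to restriction and both functors manifestly preserve compactness, so I obtain the desired dg equivalence. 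Compatibility with $\cC$-actions, which act by dg functors on both $\bfM(\ti)$ and $\bfM^\circ(\ti)$, lifts $\iota$ to an equivalence of pretriangulated $2$-representations.

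Finally, to identify the composite equivalence $\vv{\bfM} \xrightarrow{\iota} \vv{\bfM^\circ} \xrightarrow{\vv{\Psi}} \vv{\bfM_{\rA_X}} \simeq \vv{\bfM}_{\rA_X}$ with the morphism $\Phi$, I would invoke the defining property of $[X,-]$ as right adjoint to $\vv{\Eval}_X$: each of the three component equivalences is compatible with the respective $\vv{\Eval}_X$ dg functors (because $\iota$ does not change the action of $\cC$ on $X$, and the equivalence $\Psi$ was constructed from $[X,-]$ itself), so by uniqueness of adjoints the composite is dg isomorphic to $\Phi$. The main obstacle in this plan will be the third step, establishing $\vv{\bfM} \simeq \vv{\bfM^\circ}$; the remaining steps and the identification with $\Phi$ should then follow formally from results already established in the paper.
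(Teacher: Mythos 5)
Your proposal follows essentially the same route as the paper's proof, which simply chains the equivalences $\vv{\bfM}\simeq\vv{\bfM^\circ}\simeq\vv{\bfM_{\rA_X}}\simeq\vv{\bfM}_{\rA_X}$ obtained from Corollary~\ref{cor:stronggenMAequiv} and Lemma~\ref{overlineMAprop-new}. The two steps you elaborate --- that dg idempotent completion does not change the category of compact dg modules, and the identification of the resulting composite with $[X,-]$ via uniqueness of adjoints --- are precisely the details the paper's one-line argument leaves implicit, and both are correct.
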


\begin{proof}
We note that the equivalences from Corollary \ref{cor:stronggenMAequiv} and Lemma \ref{overlineMAprop-new} give an equivalence of $\vv{\bfM^\circ}\simeq \vv{\bfM}$ and $\vv{\bfM_{\rA_X}}\simeq\vv{\bfM}_{\rA_X}$.
\end{proof}

\begin{corollary}\label{corMisMA}
If $\bfM$ is cyclic, then there exists a dg algebra $1$-morphism $\rA$ in $\cC$, such that
$\bfM^\circ$ is dg equivalent to $\bfM_\rA$.
\end{corollary}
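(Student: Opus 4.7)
The plan is to unpack the definition of cyclic and invoke Corollary \ref{cor:stronggenMAequiv} directly. Since $\bfM$ is cyclic, by Definition \ref{2repgen} there exists an object $X \in \bfM(\ti)$, for some $\ti \in \cC$, which $\cC$-generates $\bfM$. Working under the standing assumptions of Section~\ref{mor2rep-sect} (namely that $\cC$ has compact left adjoints and $\bfM$ is compact), the evaluation functor $\Eval_X$ has a compact right adjoint, so by Lemma \ref{algebralemma} the internal endomorphism object $\rA := \rA_X = [X,X]$ is a dg algebra $1$-morphism in $\vv{\cC}(\ti,\ti)$.

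Next I would apply Corollary \ref{cor:stronggenMAequiv} to the $\cC$-generator $X$: since $X$ $\cC$-generates $\bfM$, this corollary yields a dg equivalence of dg $2$-representations $\bfM^\circ \simeq \bfM_{\rA_X}$. Setting $\rA := \rA_X$ gives the desired statement.

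There is essentially no obstacle here, since the work was already done in the preceding results; the corollary merely records that cyclicity is the hypothesis needed to produce a single generator $X$, and hence a single dg algebra $1$-morphism $\rA$, to which Corollary \ref{cor:stronggenMAequiv} applies. The one point that deserves a brief remark is that the statement only asserts $\rA$ lies in $\vv{\cC}$ rather than in $\cC$ itself (since $[X,X]$ is an object of $\vv{\cC}(\ti,\ti)$); this is consistent with the phrasing of Definition \ref{MAdef} and Lemma \ref{MAcompact}, and requires no additional argument.
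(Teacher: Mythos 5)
Your proof is correct and follows exactly the paper's own argument: choose a $\cC$-generator $X$ provided by cyclicity, set $\rA=\rA_X=[X,X]$, and invoke Corollary~\ref{cor:stronggenMAequiv}. Your side remark that $\rA$ actually lives in $\vv{\cC}(\ti,\ti)$ is also an accurate reading of the surrounding definitions.
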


\begin{proof} This is seen by choosing a $\cC$-generator $X$ of $\bfM$, and setting $\rA=\rA_X$.
\end{proof}

If $\bfM$ only has a weak $\cC$-generator, the following weaker statement still holds true.

\begin{proposition}
Let $X$ be a weak $\cC$-generator for $\bfM$. Then $[X,-]$  restricts to a full and faithful morphism of pretriangulated $2$-representations 
$\bfM^\Diamond\to \bfM_{\rA_X}^\Diamond.$ Here, the target is the weak closure taken in $\vv{\bfM}_{\rA_X}$.
\end{proposition}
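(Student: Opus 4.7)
The plan is to bootstrap from Proposition~\ref{fullyfaithfulmor}, which gives full faithfulness of the morphism $\Phi:=[X,-]\colon\vv{\bfM}\to \vv{\bfM}_{\rA_X}$ of Proposition~\ref{2repmor} on the pretriangulated $2$-subrepresentation $\bfG_\bfM(X)$, to the weak closures. The key device is the defining factorization property: for $Y\in \bfM^\Diamond(\tj)$, the identity $\id_Y$ lies in the ideal generated by $\bfG_\bfM(X)$, so it can be written as a finite sum $\id_Y=\sum_k g_k f_k$ with $f_k\colon Y\to Z_k$, $g_k\colon Z_k\to Y$, and $Z_k\in\bfG_\bfM(X)(\tj)$. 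By shifting each $Z_k$ to absorb the degree of $f_k$ and bundling via direct sum (both allowed, since $\bfG_\bfM(X)$ is closed under shifts and finite direct sums inside the pretriangulated $\bfM$), one obtains a single degree-zero factorization $\id_Y=g\circ f$ with $f\colon Y\to Z$, $g\colon Z\to Y$ for $Z:=\bigoplus_k Z_k[|f_k|]\in\bfG_\bfM(X)(\tj)$. Since $\Phi$ is a dg functor, applying it yields $\id_{[X,Y]}=[X,g]\circ [X,f]$ factoring through $[X,Z]\in\bfM_{\rA_X}(\tj)$, whence $[X,Y]\in \bfM_{\rA_X}^\Diamond(\tj)$. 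Compatibility with the $\cC$-action is inherited directly from Proposition~\ref{2repmor}.

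For full faithfulness on morphism spaces, fix $Y_1,Y_2\in\bfM^\Diamond(\tj)$ with factorizations $\id_{Y_i}=g_if_i$ through $Z_i\in\bfG_\bfM(X)(\tj)$ as above. These induce graded retractions
\[
\iota\colon\Hom(Y_1,Y_2)\to\Hom(Z_1,Z_2),\ h\mapsto f_2 h g_1, \qquad \pi\colon\Hom(Z_1,Z_2)\to\Hom(Y_1,Y_2),\ k\mapsto g_2 k f_1,
\]
with $\pi\circ\iota=\id$, and analogous retractions $\iota',\pi'$ between $\Hom([X,Y_1],[X,Y_2])$ and $\Hom([X,Z_1],[X,Z_2])$ built from $[X,f_i],[X,g_i]$. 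Functoriality of $\Phi$ yields the intertwining identities $\iota'\circ\Phi_{Y_1,Y_2}=\Phi_{Z_1,Z_2}\circ\iota$ and $\pi'\circ\Phi_{Z_1,Z_2}=\Phi_{Y_1,Y_2}\circ\pi$. Since $\Phi_{Z_1,Z_2}$ is a dg isomorphism by Proposition~\ref{fullyfaithfulmor}, a direct diagram chase shows $\Phi_{Y_1,Y_2}$ is bijective on underlying graded $\Bbbk$-modules: if $\Phi_{Y_1,Y_2}(h)=0$ then $\Phi_{Z_1,Z_2}(\iota h)=\iota'\Phi_{Y_1,Y_2}(h)=0$ forces $\iota h=0$ and thus $h=\pi\iota h=0$; conversely, given $\phi$, writing $\iota'(\phi)=\Phi_{Z_1,Z_2}(k)$ produces $h:=\pi(k)$ with $\Phi_{Y_1,Y_2}(h)=\pi'\Phi_{Z_1,Z_2}(k)=\pi'\iota'(\phi)=\phi$.

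The main subtlety is that $f_i,g_i$, coming from a graded factorization of $\id_{Y_i}$, need not be $\del$-closed, so the retractions $\iota,\pi$ are only graded $\Bbbk$-linear rather than dg. Nevertheless, $\Phi_{Y_1,Y_2}$ itself is a dg morphism because $\Phi$ is a dg functor, so being bijective at the level of underlying graded modules is enough to force it to be a dg isomorphism: its graded inverse is automatically of degree zero, and Leibniz together with $\del(\Phi_{Y_1,Y_2})=0$ gives $\del(\Phi_{Y_1,Y_2}^{-1})=-\Phi_{Y_1,Y_2}^{-1}\circ\del(\Phi_{Y_1,Y_2})\circ\Phi_{Y_1,Y_2}^{-1}=0$. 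Combined with the object-level assertion and the $\cC$-equivariance inherited from Proposition~\ref{2repmor}, this establishes the proposition.
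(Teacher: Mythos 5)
Your proposal is correct and follows essentially the same route as the paper's proof: factor $\id_Y$ (non-dg) through an object of $\bfG_\bfM(X)$, apply the dg functor $[X,-]$ to place $[X,Y]$ in the weak closure, and then use the induced retractions on hom-spaces to exhibit $\Hom(Y_1,Y_2)\to\Hom([X,Y_1],[X,Y_2])$ as a direct summand of the isomorphism from Proposition~\ref{fullyfaithfulmor}. Your explicit bundling of the finite sum $\sum_k g_kf_k$ into a single factorization and the remark that a bijective dg morphism is automatically a dg isomorphism merely make precise steps the paper leaves implicit.
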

\begin{proof}
Assume given an object $Y\in\bfM^\Diamond(\ti)$, we find an object $Y'\in \bfM(\ti)$ together with (non-dg) morphisms $\iota\colon Y\to Y', p\colon Y'\to Y$ such that $\id_Y=p\iota$. As $[X,-]$ is a functor, $[X,p][X,\iota]$ is the identity on $[X,Y]$ and hence the latter is contained in the closure $\bfM_{\rA_X}^\Diamond$. The arguments in the proof of Proposition \ref{fullyfaithfulmor} can again be used to prove that the functor $[X,-]$ to the closure $\bfM^\Diamond(\ti)$ is fully faithful. For this, observe that, for $1$-morphisms $\rG,\rG'\in \ov{\cC}(\ti,\tj),$ and objects $Y,Y'\in \bfM(\tj)$ together with morphisms 
$\id_Y=Y\xrightarrow{\iota} \bfM(\rG)X\xrightarrow{p}Y$ and $\id_{Y'}={Y'}\xrightarrow{\iota'} \bfM(\rG')X\xrightarrow{p'}{Y'}$, the diagram
\begin{align*}
\xymatrix{
\Hom(\bfM(\rG)X,\bfM(\rG')X)\ar@/^/[d]^{[X,p']\circ(-)\circ[X,\iota]}\ar[rr]^{[X,-]}&& \Hom([X,\bfM(\rG)X],[X,\bfM(\rG')X])\ar@/^/[d]^{p'\circ(-)\circ\iota}\\
\Hom(Y,\bfM(\rG')X)\ar@/^/[u]^{[X,\iota'](-)\circ [X,p]}\ar[rr]^{[X,-]}&& \Hom([X,Y],[X,\bfM(\rG')X])\ar@/^/[u]^{\iota'\circ(-)\circ p}.
}
\end{align*}
commutes. The horizontal maps are the application of a dg functor and hence commute with differentials. Although the vertical maps are not necessarily dg morphisms, they display the lower row as direct summands of the upper row. Hence, the lower horizontal map is the restriction of an isomorphism and hence a dg isomorphism.
\end{proof}
The morphism $[X,-]$ of dg $2$-representations, when extended to the weak closures, is not necessarily an equivalence. For example, take $\cC$ to be the trivial dg $2$-category with single object $\bullet$, $1$-morphism $\one=\one_\bullet$ and $\End_{\cC(\bullet,\bullet)}(\one)=\Bbbk$. Let $\bfM(\bullet)$ consist of acyclic objects in $\Bbbk\plmod$. Then any generator of $\bfM$ yields $\bfM_{\rA_X}^\Diamond(\bullet)\simeq \Bbbk\plmod$, that is, $\bfM_{\rA_X}^\Diamond$ is the natural pretriangulated $2$-representation, into which $\bfM=\bfM^\Diamond$ embeds as a proper pretriangulated $2$-subrepresentation.

\subsection{Morphisms of dg \texorpdfstring{$2$}{2}-representations and dg algebra \texorpdfstring{$1$}{1}-morphisms}

In this section, we again assume that the dg $2$-category $\cC$ has compact left adjoints, see Definition \ref{Cgenadj}(\ref{Cgenadj2}), and that $\bfM$, $\bfN$ are compact pretriangulated $2$-representations.

\begin{lemma}\label{bimodulelemma}
Let $\rA,\rB$ be dg algebra $1$-morphisms in $\vv{\cC}$ and assume given a morphism $\Phi\colon \bfM_\rA\to \bfM_\rB$ of dg $2$-representations. Then there exists a dg $\rA$-$\rB$-bimodule $\rX$ and a dg isomorphism $\Phi\cong (-)\circ_{\rA}\rX$.
\end{lemma}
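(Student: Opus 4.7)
My plan is to adapt the classical Eilenberg--Watts/Morita argument to the dg $2$-categorical setting. Write $\rA\in\vv{\cC}(\ti,\ti)$ and $\rB\in\vv{\cC}(\tj,\tj)$, and set $\rX:=\Phi_\ti(\rA)$, where $\rA$ is viewed as the free rank-one right $\rA$-module in $\bfM_\rA(\ti)$. By construction $\rX$ lies in $\bfM_\rB(\ti)\subseteq \prmod{\vv{\cC}(\tj,\ti)}\rB$, so it is automatically a right $\rB$-module $1$-morphism.

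To equip $\rX$ with a left $\rA$-action, I would first invoke Lemma \ref{2extend}\eqref{2extend2} to extend $\Phi$ to a morphism of $2$-representations of $\vv{\cC}$, and then apply $\Phi_\ti$ to the multiplication $m_\rA\colon \rA\rA\to \rA$, which is a morphism of right $\rA$-modules. Identifying $\bfM_\rA(\rA)(\rA) = \rA\rA$ and transporting along the component $\eta_\rA$ of $\Phi$ at $\rA$, the morphism $\Phi_\ti(m_\rA)$ becomes a dg morphism $\lambda_\rX\colon \rA\rX\to \rX$ in $\bfM_\rB(\ti)$. Unitality and associativity of $\lambda_\rX$ are then consequences of the algebra axioms for $\rA$, dg-functoriality of $\Phi_\ti$, and the coherence axiom relating $\eta_\rA$ and $\eta_{\rA\rA}$; commutation with the right $\rB$-action is automatic, because $\lambda_\rX$ a priori is a morphism in $\bfM_\rB(\ti)$.

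With $\rX$ constructed as an $\rA$-$\rB$-bimodule, I would introduce $\Psi:=(-)\circ_\rA\rX$ and exhibit a natural dg isomorphism $\alpha\colon \Psi\xrightarrow{\sim}\Phi$. On a free module $\rG\rA\in\bfM_\rA(\tk)$ with $\rG\in\vv{\cC}(\ti,\tk)$, the canonical coequalizer isomorphism $(\rG\rA)\circ_\rA\rX\cong \rG\rX$, composed with $\eta_\rG$ at $\rA$, yields $\alpha_{\rG\rA}\colon \Psi(\rG\rA)\xrightarrow{\sim}\Phi_\tk(\rG\rA)$, natural in $\rG$. Because $\bfM_\rA(\tk)$ is the thick closure of the free modules, and both $\Psi$ and $\Phi_\tk$ are dg functors preserving shifts, cones, and dg direct summands (for $\Psi$ this uses Lemma \ref{cokerlemma} together with the fact that horizontal composition is itself a dg functor), the natural isomorphism $\alpha$ extends uniquely from free modules to all of $\bfM_\rA(\tk)$. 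This argument also shows, a posteriori, that $\Psi$ actually lands in $\bfM_\rB$.

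Finally, I would verify that the collection $\{\alpha_\tk\}_\tk$ constitutes a modification of morphisms of $2$-representations, i.e., that $\alpha$ intertwines the natural isomorphisms coming from the $\cC$-action on either side. By generation it suffices to check this on free modules $\rG\rA$, where the required diagram unwinds, via $\eta_{\rF\rG}=(\id\circ_0\eta_\rG)\circ_1(\eta_\rF\circ_0\id)$, into the canonical associativity isomorphism $(\rF\rG\rA)\circ_\rA \rX\cong \rF(\rG\rX)$ combined with naturality of $\eta$. The main technical obstacle I anticipate is establishing the bimodule axioms for $\lambda_\rX$ rigorously and, relatedly, verifying that $(-)\circ_\rA\rX$ descends to a dg functor on $\bfM_\rA(\tk)$ that strictly commutes with the $\cC$-action; once these are in place everything else is essentially forced by naturality.
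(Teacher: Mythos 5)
Your construction of $\rX$ is exactly the paper's: both take $\rX=\Phi_\ti(\rA)$, extend $\Phi$ to $\vv{\bfM}_\rA\to\vv{\bfM}_\rB$ via Lemma \ref{2extend}\eqref{2extend2}, and obtain the left $\rA$-action as $\rA\rX\cong\Phi_\ti(\rA\rA)\xrightarrow{\Phi_\ti(m_\rA)}\rX$ using the structure isomorphism $\eta$; your observation that compatibility with the right $\rB$-action is automatic because everything happens inside $\bfM_\rB(\ti)$ is also how the paper implicitly handles it. Where you diverge is the identification $\Phi\cong(-)\circ_\rA\rX$. You establish it on the free modules $\rG\rA$ and then propagate to the thick closure using that both functors preserve shifts, cones, and dg direct summands; as you note, this requires a (routine but nontrivial) extension of a natural isomorphism from a generating full subcategory to its thick closure. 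The paper avoids that step entirely by a bar-resolution argument valid for an \emph{arbitrary} object $\rY\in\bfM_\rA(\tk)$: one writes
$$\rY\circ_\rA\rX=\coker\bigl(\rY\rA\rX\xrightarrow{\rho_\rY\circ_0\id_\rX-\id_\rY\circ_0\lambda_\rX}\rY\rX\bigr),$$
identifies this diagram via $\eta$ with the image under $\Phi_\tk$ of the analogous diagram presenting $\rY\cong\rY\circ_\rA\rA$, and uses that $\Phi_\tk$ preserves cokernels of dg morphisms (Lemma \ref{cokerlemma}). This gives the natural dg isomorphism in one stroke, with naturality in $\rY$ built in, and also dispenses with your separate verification that $(-)\circ_\rA\rX$ lands in $\bfM_\rB$ and commutes with the $\cC$-action on free modules. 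Your route is correct, but if you want to close the gap you flagged as the ``main technical obstacle,'' the cleanest fix is to replace the generators-plus-extension step by this cokernel computation.
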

\begin{proof} Given $\rA,\rB$, let $\ti,\tj$ denote the objects such that $\rA\in \vv{\cC}(\ti,\ti)$, $\rB\in \vv{\cC}(\tj,\tj)$.
Assume that we have a morphism of dg $2$-representations $\Phi\colon \bfM_\rA\to\bfM_\rB$. We note that $\Phi$ induces a morphism $\vv{\bfM}_{\rA}\to \vv{\bfM}_{\rB}$ of dg $2$-representations of $\vv{\cC}$ by Lemma \ref{2extend}\eqref{2extend2}, which we will also simply denote by $\Phi$. Let $\Phi_\tk$ denote the dg functor $\vv{\bfM}_\rA(\tk)\to\vv{\bfM}_\rB(\tk)$ underlying this morphism of dg $2$-representations. Define $\rX:=\Phi_\ti(\rA)\in \vv{\cC}(\tj,\ti)$. As $\rA\in \bfM_\rA(\ti)$, $\rX$ is a right $\rB$-module in $\bfM_\rB(\ti)$. Furthermore, $\rX$ carries a left $\rA$-module structure via the composition
\begin{align*}\rA\Phi_{\ti}(\rA) = \vv{\bfM}_\rB(\rA)\Phi_{\ti}(\rA) \cong  \Phi_{\ti}(\vv{\bfM}_\rA(\rA)\rA)=\Phi_{\ti}(\rA\rA) \xrightarrow{\Phi_{\ti}(\mu_\rA)} \Phi_{\ti}(\rA),\end{align*}
where the dg isomorphism is given by the natural transformation $\eta$ included in the data of a morphism of $2$-representations. Thus, $\rX$ is an $\rA$-$\rB$-bimodule in $\vv{\cC}(\tj,\ti)$, with $\rX_\rB\in \bfM_\rB(\ti)$.

Next, we show that there exists a dg isomorphism $\Phi\cong (-)\circ_{\rA}\rM$. Indeed, for any object $\rY$ in $\bfM_{\rA}(\tk)$, we have
\begin{align*}
\rY\circ_{\rA}\rX&=\coker \left(\rY\rA \rX\xrightarrow{\rho_{\rY}\circ_0\id_{\rX}-\id_{\rY}\circ_0 \lambda_{\rX}} \rY\rX\right)\\
&\cong \coker \left(\Phi_{\tk}(\rY\rA \rA)\xrightarrow{\Phi_{\tk}(\rho_{\rY})\circ_0\id_{\rA}-\id_{\rY}\circ_0 \Phi_{\tk}(\mu_{\rA})} \Phi_{\tk}(\rY\rA)\right)\\
&\cong \Phi_{\tk}\left(\coker \left(\rY\rA \rA\xrightarrow{\rho_{\rY}\circ_0\id_{\rA}-\id_{\rY}\circ_0 \mu_{\rA}} \rY\rA\right)\right)\\
&\cong \Phi_{\tk}(\rY).
\end{align*}
Here, the second step uses that $\Phi$ commutes with the $\vv{\cC}$-action and the definition of the left action on $\rX=\Phi_\ti(\rA)$, the third step uses that the dg functor $\Phi_\tk$ preserves cokernels of dg morphisms (as it is induced by a dg functor from $\bfM_\rA(\tk)$ to $\bfM_\rB(\tk)$), and the final step uses that $\rY$ and $\rY\circ_\rA \rA$ are dg isomorphic. The dg isomorphisms used above are all natural with respect to morphisms of right $\rA$-modules in $\vv{\cC}$. In fact, a morphism of such modules induces morphisms of the diagrams of the displayed cokernels, and the dg isomorphism $\eta$ used in the third step is natural in $\rY$.
\end{proof}

Denoting by $\cC\intworep$ the dg $2$-category of internal dg $2$-representations, it follows from Lemma \ref{corMisMA} and Lemma \ref{bimodulelemma} that $\cC\intworep$ is dg biequivalent to the $2$-full dg $2$-subcategory of $\cC\cgtworep$ consisting of pretriangulated $2$-representations which are dg idempotent complete and cyclic.

\begin{proposition}\label{repmorvsalgmor}
Let $\Phi\colon \bfM\to \bfN$ be a morphism of dg $2$-representations and $\alpha\colon \rA\to\rB$ be a dg morphism of dg algebra $1$-morphisms in $\vv{\cC}(\ti,\ti)$. 
\begin{enumerate}[(a)]
\item\label{repmorvsalgmor1} For any object $X$ in $\bfM(\ti)$,  $\Phi$ induces a dg morphism of dg algebra $1$-morphisms
$\alpha_\Phi\colon \rA_X\to \rA_{\Phi_\ti X}.$ 
\item\label{repmorvsalgmor2} The morphism $\alpha$ induces an essentially surjective morphism of dg $2$-representa\-tions $\Phi^\alpha\colon \bfM_\rA\to\bfM_\rB$.
\item\label{repmorvsalgmor3} We have $\alpha=\alpha_{\Phi^\alpha}$.
\item\label{repmorvsalgmor4} $\Phi$ is full if and only if $\alpha_\Phi$ is an epimorphism in $\vv{\cC}(\ti,\ti)$.
\item\label{repmorvsalgmor5} $\Phi^\alpha$ is full if and only if $\alpha$ is an epimorphism in $\vv{\cC}(\ti,\ti)$.
\end{enumerate}
\end{proposition}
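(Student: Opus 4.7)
The approach is to use the adjunction $\vv{\Eval}_X\dashv [X,-]$ throughout to translate between diagrams in the pretriangulated $2$-representations and diagrams in $\vv{\cC}(\ti,\ti)$, combined with the natural isomorphisms $\eta_\rG$ built into the definition of a morphism of $2$-representations.

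For part \eqref{repmorvsalgmor1}, I produce $\alpha_\Phi$ by adjunction. Starting from $\eval_{X,X}\colon \vv{\bfM}(\rA_X)X\to X$, I apply $\Phi_\ti$ and precompose with $(\eta_{\rA_X})^{-1}_X\colon \vv{\bfN}(\rA_X)\Phi_\ti X\to \Phi_\ti\vv{\bfM}(\rA_X)X$ to obtain a dg morphism $\vv{\bfN}(\rA_X)\Phi_\ti X\to \Phi_\ti X$, whose mate under $\vv{\Eval}_{\Phi_\ti X}\dashv [\Phi_\ti X,-]$ is $\alpha_\Phi\colon \rA_X\to \rA_{\Phi_\ti X}$. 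Unit preservation follows because $u_{\rA_X}$ is defined as the mate of $\id_X$ and $\Phi_\ti$ preserves identities. Multiplicativity requires comparing $\alpha_\Phi\circ m_{\rA_X}$ and $m_{\rA_{\Phi_\ti X}}\circ(\alpha_\Phi\circ_0\alpha_\Phi)$ via their common adjoint in $\Hom_{\vv{\bfN}(\ti)}(\vv{\bfN}(\rA_X\rA_X)\Phi_\ti X,\Phi_\ti X)$; the coherence equation $\eta_{\rG\rH}=(\id_{\vv{\bfN}(\rG)}\circ_0\eta_\rH)\circ_1(\eta_\rG\circ_0\id_{\vv{\bfM}(\rH)})$ is exactly what makes these two adjoints agree.

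For \eqref{repmorvsalgmor2}, I define $\Phi^\alpha_\tj\colon \bfM_\rA(\tj)\to\bfM_\rB(\tj)$ by relative composition $\rY\mapsto \rY\circ_\rA\rB$, where $\rB$ carries the left $\rA$-action induced by $\alpha$. The $2$-representation structure comes from the natural isomorphism $\rG\rY\circ_\rA\rB\cong \rG(\rY\circ_\rA\rB)$, which is automatic from strict associativity of composition in $\vv{\cC}$ and the description of relative composition as a cokernel (Lemma \ref{cokerlemma}). Essential surjectivity is witnessed by $\rG\rA\circ_\rA\rB\cong\rG\rB$ on the generating set of $\bfM_\rB(\tj)$, combined with preservation of shifts, cones, and dg direct summands by $\Phi^\alpha_\tj$. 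For \eqref{repmorvsalgmor3}, I apply \eqref{repmorvsalgmor1} to $\Phi^\alpha$ with the distinguished choice $X=\rA\in\bfM_\rA(\ti)$. Lemma \ref{modhoms} identifies $\rA_\rA=[\rA,\rA]_{\bfM_\rA}\cong\rA$ and $\rA_{\Phi^\alpha\rA}=[\rB,\rB]_{\bfM_\rB}\cong\rB$, and tracing $\id_\rA$ through the chain of adjunctions defining $\alpha_{\Phi^\alpha}$ recovers exactly $\alpha$.

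For \eqref{repmorvsalgmor4} and \eqref{repmorvsalgmor5}, Lemma \ref{Fout} combined with the adjunction of Lemma \ref{modhoms} rewrites
\begin{equation*}
\Hom_{\bfM(\tj)}(\bfM(\rF)X,\bfM(\rG)X)\cong \Hom_{\vv{\cC}(\ti,\tj)}(\rF,\rG\rA_X),
\end{equation*}
and analogously in $\bfN$; under these identifications the map induced by $\Phi$ becomes post-composition with $\id_\rG\circ_0\alpha_\Phi\colon \rG\rA_X\to\rG\rA_{\Phi_\ti X}$. Surjectivity of this map for all $\rF,\rG,\tj$ is equivalent, by the enriched Yoneda lemma, to $\id_\rG\circ_0\alpha_\Phi$ being an epimorphism for every $\rG$, which reduces to $\alpha_\Phi$ itself being an epimorphism in $\vv{\cC}(\ti,\ti)$. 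Part \eqref{repmorvsalgmor5} is the strictly parallel argument with distinguished object $\rA\in\bfM_\rA(\ti)$ applied to the morphism $\Phi^\alpha$. The most delicate step is the multiplicativity verification in \eqref{repmorvsalgmor1}, where the two competing adjoint expressions involve nested applications of $\vv{\bfM}$, $\vv{\bfN}$ and $\eta$; a secondary subtlety in \eqref{repmorvsalgmor4} is that reducing fullness from all of $\bfM$ to the orbit of $X$ implicitly uses that $X$ $\cC$-generates $\bfM$ (and $\Phi_\ti X$ generates $\bfN$), mirroring the hypothesis of Proposition \ref{fullyfaithfulmor}.
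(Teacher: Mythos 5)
Your proposal is correct and follows essentially the same route as the paper: part (a) via the adjunction $\vv{\Eval}_X\dashv[X,-]$ and naturality in $\rG$ (the paper phrases this as a Yoneda argument on $\Hom_{\vv{\cC}(\ti,\ti)}(\rG,-)$, which is the same as your mate construction), part (b) via $-\circ_\rA\rB$, part (c) by tracing identities through the adjunction isomorphisms, and parts (d)--(e) via representability together with exactness of $\Hom_{\vv{\cC}(\ti,\ti)}(\rG,-)$. Your closing observation that (d) implicitly uses that $X$ $\cC$-generates $\bfM$ is a fair reading of the intended (cyclic) context in which the proposition is applied.
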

\begin{proof}
\begin{enumerate}[(a)]
\item 
For simplicity of notation, we set $Y=\Phi_\ti X$. Then, for any $\rG\in \cC(\ti,\ti)$, the functor $\Phi_\ti$ induces a morphism
\begin{align*}\Hom_{\bfM(\ti)}(\bfM(\rG)X,X)\to \Hom_{\bfN(\ti)}(\bfN(\rG)Y,Y),\end{align*} natural in $\rG$.
Transferring this via the adjunction isomorphism, we obtain a morphism 
\begin{align*}\Hom_{\vv{\cC}(\ti,\ti)}(\rG,[X,X])\to \Hom_{\vv{\cC}(\ti,\ti)}(\rG,[Y,Y]),\end{align*} again natural in $\rG$.
This implies that there is a dg morphism $\alpha_\Phi\colon [X,X]\to [Y,Y]$, such that the second morphism of morphism spaces is given by post-composition with $\alpha_\Phi$. Thanks to naturality of all constructions, $\alpha_\Phi$ preserves the algebra structure.
\item Let $\rM\in \bfM_\rA(\ti)$. We define $\Phi^\alpha(\rM)=\rM\circ_{\rA} \rB$, noting the natural structure on $\rB$ as a left dg $\rA$-module. It is immediate that this assignment induces a morphism of dg $2$-representations. Since $\bfM_\rB$ is thick closure of the $\rG\rB$, for all $1$-morphisms $\rG$ in $\cC$, essential surjectivity follows from $\rG\rB\cong \rG\rA\circ_{\rA}\rB$.
\item To prove $\alpha=\alpha_{\Phi^\alpha}$, consider the diagram
\begin{align*}\xymatrix@C=0pt{ 
\Hom_{\vv{\cC}(\ti,\ti)}(\rG,\rA) \ar[dd]_{\alpha_{\Phi^\alpha}\circ -}\ar@{=}[r]^{\sim}&\Hom_{\bfM_\rA(\ti)}(\rG\rA,\rA)\ar[d]^-{\Phi^\alpha}
\\
&\Hom_{\bfM_\rB(\ti)}(\rG\rA\circ_\rA\rB,\rA\circ_\rA\rB)\ar[d]^-{\sim}
\\
\Hom_{\vv{\cC}(\ti,\ti)}(\rG,\rB)\ar@{=}[r]^{\sim}
&\Hom_{\bfM_\rB(\ti)}(\rG\rB,\rB) 
}\end{align*}
and trace $g\in \Hom_{\vv{\cC}(\ti,\ti)}(\rG,\rA)$ along the path going down, to the right, and then up again. The image of $g$ going down is $\mu_\rA\circ(g\circ_0\id_\rA)$, which is mapped to $(\mu_\rA\circ_\rA\id_\rB)\circ(g\circ_0\id_\rA\circ_\rA\id_\rB)$ by $\Phi^\alpha$, and then in the natural isomorphism is identified with $\mu_\rB\circ(\alpha\circ_0\id_\rB)\circ(g\circ_0\id_\rB) = \mu_\rB\circ((\alpha\circ_1 g)\circ_0\id_\rB)$. The latter is then sent to $\alpha\circ_1 g$ going back up, and hence $\alpha_{\Phi^\alpha}=\alpha$, as claimed.

\item Assume $\Phi$ is full. Then for any $\rG$ in $\cC(\ti,\ti)$, the morphism \begin{align*}\Phi_\ti\colon \Hom_{\bfM(\ti)}(\bfM(\rG)X,X)\to \Hom_{\bfN(\ti)}(\bfN(\rG)Y,Y)\end{align*} is an epimorphism, implying that the morphism \begin{align*}\Hom_{\vv{\cC}(\ti,\ti)}(\rG,[X,X])\to \Hom_{\vv{\cC}(\ti,\ti)}(\rG,[Y,Y]),\end{align*} is also an epimorphism. This implies that $\alpha_\Phi$ is an epimorphism.
Conversely, notice that for $\rG$ in $\cC(\ti,\ti)$, under the Yoneda embedding, $\Hom_{\vv{\cC}(\ti,\ti)}(\rG,-)$ is sent to $\Hom_{{\cC}(\ti,\ti)\plmod}(\rG^\vee,-)$, which is exact and thus $\Hom_{\vv{\cC}(\ti,\ti)}(\rG,-)$ is exact. Thus, if $\alpha_\Phi$ is an epimorphism,  so is $\Hom_{\vv{\cC}(\ti,\ti)}(\rG,\alpha_\Phi)$ as well as the induced morphism 
\begin{align*}\Hom_{\bfM(\ti)}(\bfM(\rG)X,X)\to \Hom_{\bfN(\ti)}(\bfN(\rG)Y,Y).\end{align*}
\item This follows from $\alpha=\alpha_{\Phi^\alpha}$ and \eqref{repmorvsalgmor4}.\qedhere
\end{enumerate}
\end{proof}

We call a dg algebra $1$-morphism $\rA$ \textbf{simple} if any dg morphisms of algebra $1$-morphisms $\rA\to\rB$ in $\vv{\cC}$ which is an epimorphism in $\vv{\cC}$ is necessarily an isomorphism.

The following is an analogue of \cite[Corollary~12]{MMMZ}.
\begin{lemma}\label{simplelemma}
Let $\rA$ be a dg algebra $1$-morphism in $\vv{\cC}$. Then $\rA$ is simple if and only if $\bfM_{\rA}$ is quotient-simple. 
\end{lemma}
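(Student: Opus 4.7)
The plan is to leverage the dictionary between morphisms of dg 2-representations and dg morphisms of algebra 1-morphisms packaged in Proposition \ref{repmorvsalgmor}, together with the reconstruction theorem Corollary \ref{corMisMA}. The argument is bidirectional and routes through the identifications $\rA_\rA \cong \rA$ (so that when we pick the canonical generator $\rA \in \bfM_\rA(\ti)$, its endomorphism algebra 1-morphism recovers $\rA$) and $\Phi^{\alpha_\Phi} \cong \Phi$ coming from the biequivalence of $\cC\intworep$ with the relevant subcategory of $\cC\cgtworep$ noted after Lemma \ref{bimodulelemma}.

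For the direction $\rA$ simple $\Rightarrow$ $\bfM_\rA$ quotient-simple, I would take a proper dg ideal $\bfI$ of $\bfM_\rA$ and consider the quotient projection $\pi\colon \bfM_\rA \to \bfM_\rA/\bfI$. The target is cyclic, $\cC$-generated by $X:=\pi_\ti(\rA)$, so after idempotent completion Corollary \ref{corMisMA} identifies it with $\bfM_{\rA_X}$. By Proposition \ref{repmorvsalgmor}(a), $\pi$ induces a dg morphism of algebra 1-morphisms $\alpha_\pi\colon \rA = \rA_\rA \to \rA_X$. Since $\pi$ is full (it is a quotient functor), Proposition \ref{repmorvsalgmor}(d) forces $\alpha_\pi$ to be an epimorphism in $\vv{\cC}$, and simplicity of $\rA$ promotes it to an isomorphism. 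Via the biequivalence between algebra 1-morphisms and cyclic dg idempotent complete 2-representations, $\pi$ is then equivalent to $\Phi^{\alpha_\pi}$, which is an equivalence; hence $\bfI = 0$.

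For the converse, suppose $\bfM_\rA$ is quotient-simple and let $\alpha\colon \rA \to \rB$ be a dg algebra morphism that is an epimorphism in $\vv{\cC}$. Proposition \ref{repmorvsalgmor}(b) produces an essentially surjective $\Phi^\alpha\colon \bfM_\rA \to \bfM_\rB$, and part (e) upgrades this to a full morphism. The kernel $\bfI$ of $\Phi^\alpha$ is a dg ideal of $\bfM_\rA$, so by quotient-simplicity it equals $0$ or the whole representation; the latter would force all identities in $\bfM_\rB$ to vanish, i.e.\ $\rB = 0$, which we may discard as a degenerate case (and in which $\alpha$ is an isomorphism iff $\rA=0$). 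In the remaining case $\bfI=0$, the functor $\Phi^\alpha$ is fully faithful and essentially surjective, hence a dg equivalence, so once again the biequivalence of $\cC\intworep$ with cyclic dg idempotent complete 2-representations together with Proposition \ref{repmorvsalgmor}(c) (giving $\alpha_{\Phi^\alpha} = \alpha$) implies $\alpha$ is an isomorphism.

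The main subtlety will be the clean identification of $\pi$ with $\Phi^{\alpha_\pi}$ in the forward direction: one needs to know that \emph{any} morphism of cyclic dg 2-representations between $\bfM_\rA$ and $\bfM_{\rA_X}$ that induces $\alpha_\pi$ at the algebra level is dg isomorphic to $\Phi^{\alpha_\pi}$. This is precisely the content of the biequivalence noted after Lemma \ref{bimodulelemma} (via Lemma \ref{bimodulelemma}'s reconstruction of $\Phi$ from a bimodule). The edge case with $\rB=0$ in the converse is a minor nuisance to state cleanly but causes no real trouble.
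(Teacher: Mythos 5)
Your proposal is correct and follows essentially the same route as the paper: both directions run through Proposition \ref{repmorvsalgmor} (parts (a), (b), (d), (e)), the identification of a quotient by a dg ideal with a full, essentially surjective morphism $\bfM_\rA\to\bfM_{\rA_X}$ via Corollary \ref{corMisMA}, and the correspondence $\alpha\leftrightarrow\Phi^\alpha$. Your explicit handling of the $\rB=0$ edge case and of the identification $\pi\cong\Phi^{\alpha_\pi}$ only spells out points the paper's proof leaves implicit.
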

\begin{proof}
Assume $\rA$ is simple and assume there is a full and essentially surjective morphism of dg $2$-representations $\Phi\colon \bfM_\rA \to \bfN$. Let $X=\Phi(\rA)$, then $\bfN\cong \bfM_\rB$ for $\rB=[X,X]$. By Lemma \ref{repmorvsalgmor}\eqref{repmorvsalgmor1}, this induces a morphism of dg algebra $1$-morphisms $\rA\to\rB$, which is an epimorphism in $\vv{\cC}$ by Lemma \ref{repmorvsalgmor}\eqref{repmorvsalgmor4}, since $\Phi$ is full. By assumption, this implies that this epimorphism is an isomorphism, and $\Phi$ is an equivalence.

Conversely, assume that $\bfM_{\rA}$ is quotient-simple and we have a morphisms of dg algebra $1$-morphisms $\alpha\colon\rA\to\rB$, which is an epimorphism in $\vv{\cC}$. Then we obtain an essentially surjective morphism of dg $2$-representations $\Phi^\alpha\colon\bfM_{\rA}\to \bfM_{\rB}$ by Lemma \ref{repmorvsalgmor}\eqref{repmorvsalgmor2}, which is full by Lemma \ref{repmorvsalgmor}\eqref{repmorvsalgmor5}, since $\alpha$ is an epimorphism. By quotient-simplicity of $\bfM_{\rA}$, we see that $\Phi^\alpha$ is also faithful and hence a dg equivalence, which implies that $\alpha=\alpha_{\Phi^\alpha}$ is a dg isomorphism.
\end{proof}

Denote by $\cC_\ti$ the $2$-category on the single object $\ti$ with morphism category $\cC(\ti,\ti)$.
We have the following corollary, cf.~\cite[Corollary 4.10]{MMMT}.

\begin{corollary}\label{cor4.10}
There is a bijection between equivalence classes of cyclic dg $2$-representa\-tions $\bfM\in \cC\ctworep$ which are $\cC$-generated by some object $X\in \bfM(\ti)$ and 
equivalence classes of cyclic dg $2$-representations in $\cC_\ti\ctworep$. 

This bijection preserves the class of quotient-simple dg $2$-representations.
\end{corollary}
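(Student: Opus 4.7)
The plan is to establish the bijection through the internal endomorphism dg algebra 1-morphism construction, exploiting the fact that cyclic dg $2$-representations are classified (up to dg idempotent completion) by their internal endomorphism algebras, and that $\vv{\cC}(\ti,\ti) = \vv{\cC_\ti}(\ti,\ti)$ is literally the same dg category on both sides.

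In the forward direction, I would take a cyclic $\bfM \in \cC\ctworep$ with $\cC$-generator $X \in \bfM(\ti)$ and assign to it the dg algebra $1$-morphism $\rA_X = [X,X]$ of Lemma \ref{algebralemma}, which lives in $\vv{\cC}(\ti,\ti) = \vv{\cC_\ti}(\ti,\ti)$. One then takes the cyclic dg $2$-representation $\bfM_{\rA_X}$ of $\cC_\ti$; Lemma \ref{MAcompact} ensures this lands in $\cC_\ti\ctworep$. In the backward direction, starting from a cyclic $\bfN \in \cC_\ti\ctworep$ generated by $Y \in \bfN(\ti)$, I would form $\rA_Y \in \vv{\cC_\ti}(\ti,\ti) = \vv{\cC}(\ti,\ti)$ and take $\bfM_{\rA_Y}$ as a dg $2$-representation of $\cC$, which is again compact and cyclic.

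To show these assignments are mutually inverse on equivalence classes, I would apply Corollary \ref{cor:stronggenMAequiv} on each side: composing the two constructions and replacing the original representation by its dg idempotent completion gives $\bfM^\circ \simeq \bfM_{\rA_X}$ (respectively $\bfN^\circ \simeq \bfM_{\rA_Y}$). The equivalence classes should therefore be interpreted as equivalence classes of dg idempotent complete cyclic $2$-representations; well-definedness of the forward map on equivalence classes then follows from Proposition \ref{repmorvsalgmor}\eqref{repmorvsalgmor1}, which shows that an equivalence $\bfM \to \bfN$ sending $X$ to $\Phi_\ti X$ induces an isomorphism $\rA_X \cong \rA_{\Phi_\ti X}$ of dg algebra $1$-morphisms.

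The preservation of quotient-simplicity will then be immediate from Lemma \ref{simplelemma}: on either side, $\bfM_\rA$ is quotient-simple if and only if $\rA$ is simple as a dg algebra $1$-morphism, and the definition of simplicity only involves dg morphisms of algebra $1$-morphisms in $\vv{\cC}(\ti,\ti) = \vv{\cC_\ti}(\ti,\ti)$, so it is intrinsic to $\rA$ and independent of the ambient $2$-category. The only real obstacle is bookkeeping around dg idempotent completions when formulating what ``equivalence class'' means; once one fixes the convention that $\bfM$ is identified with $\bfM^\circ$, the rest follows directly from the results already established in Sections \ref{mor2rep-sect} and the subsection containing Lemma \ref{simplelemma}.
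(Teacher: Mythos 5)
Your proposal is correct and follows essentially the same route as the paper: both directions go through the assignment $\bfM\mapsto\rA_X$ and $\rA\mapsto\bfM_\rA$, using that $\vv{\cC}(\ti,\ti)=\vv{\cC_\ti}(\ti,\ti)$, with Corollary \ref{cor:stronggenMAequiv} providing the mutual inverse up to dg idempotent completion and Lemma \ref{simplelemma} handling quotient-simplicity via the intrinsic nature of simplicity of $\rA_X$. The paper's own proof is terser and leaves the idempotent-completion bookkeeping implicit, so your extra care there is consistent with, not a departure from, its argument.
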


\begin{proof}
Let $\bfM\in \cC\ctworep$ be $\cC$-generated by $X\in \bfM(\ti)$. Then $\rA_X\in \vv{\cC}(\ti,\ti)$ and hence $\rA_X$ gives rise to a dg $2$-representation of $\cC_\ti$.
Similarly, any dg algebra $1$-morphism in $\vv{\cC_\ti}$ is also a dg algebra $1$-morphism in $\vv{\cC}$, giving the converse map. This yields the first statement. 

Since simplicity of $\rA_X$ is a property formulated inside of the dg category $\vv{\cC}(\ti,\ti)$, $\rA_X$ is a simple dg algebra $1$-morphism in $\cC_\ti$ if and only if it is a simple dg algebra $1$-morphism in $\cC$. Thus, the second statement follows from Lemma \ref{simplelemma}.
\end{proof}

\subsection{Morita equivalence}\label{sec:Morita}

Two dg algebra $1$-morphisms will be called \textbf{dg Morita equivalent} provided that $\bfM_\rA$ and $\bfM_\rB$ are dg equivalent as pretriangulated $2$-representations of $\cC$.  We emphasize that this notion is distinct from the usual notion of dg Morita equivalence for dg algebras, which considers quasi-equivalences of derived categories of dg modules.

\begin{proposition}\label{dgMoreq}
Let $\rA\in \vv{\cC}(\ti,\ti)$ and $\rB\in \vv{\cC}(\tj,\tj)$ be two dg algebra $1$-morphisms. The following are equivalent:
\begin{enumerate}[(a)]
\item\label{dgMoreq1} $\rA$ and $\rB$ are dg Morita equivalent;
\item\label{dgMoreq2} There exist a dg $\rA$-$\rB$-bimodule $\rX\in \vv{\cC}(\tj,\ti)$ and a dg $\rB$-$\rA$-bimodule $\rY\in \vv{\cC}(\ti,\tj)$ with $\rY_\rA\in \bfM_\rA(\tj)$ and $\rX_\rB\in \bfM_\rB(\ti)$ such that $\rX \circ_\rB \rY\cong \rA$ and $\rY \circ_\rA \rX\cong \rB$.
\end{enumerate}
\end{proposition}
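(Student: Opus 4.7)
The plan is to prove the two implications separately, with Lemma \ref{bimodulelemma} doing the heavy lifting in one direction and a direct construction handling the other.

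For (a) $\Rightarrow$ (b), I fix mutually inverse dg equivalences $\Phi\colon \bfM_\rA \to \bfM_\rB$ and $\Psi\colon \bfM_\rB \to \bfM_\rA$. Applying Lemma \ref{bimodulelemma} to each produces a dg $\rA$-$\rB$-bimodule $\rX := \Phi_\ti(\rA)$ with $\Phi \cong (-)\circ_\rA \rX$, and a dg $\rB$-$\rA$-bimodule $\rY := \Psi_\tj(\rB)$ with $\Psi \cong (-)\circ_\rB \rY$. The conditions $\rX_\rB \in \bfM_\rB(\ti)$ and $\rY_\rA \in \bfM_\rA(\tj)$ are automatic since $\Phi$ and $\Psi$ take values in those subcategories. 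Evaluating $\Psi\Phi \cong \id_{\bfM_\rA}$ at the object $\rA$ yields the chain $\rA \cong \Psi_\ti\Phi_\ti(\rA) \cong \Psi_\ti(\rA\circ_\rA \rX) \cong \Psi_\ti(\rX) \cong \rX \circ_\rB \rY$, as $\rA$-$\rA$-bimodules; symmetrically, $\rB \cong \rY \circ_\rA \rX$ follows from $\Phi\Psi \cong \id_{\bfM_\rB}$ applied to $\rB$.

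For (b) $\Rightarrow$ (a), I define candidate morphisms of dg $2$-representations $\Phi := (-)\circ_\rA \rX\colon \bfM_\rA \to \bfM_\rB$ and $\Psi := (-)\circ_\rB \rY\colon \bfM_\rB \to \bfM_\rA$ using the bimodule structures of $\rX$ and $\rY$. For well-definedness on targets, a generator $\rG\rA \in \bfM_\rA(\tk)$ is sent to $\rG\rA \circ_\rA \rX \cong \rG\rX$, and since $\rX_\rB \in \bfM_\rB(\ti)$ by hypothesis, closure of $\bfM_\rB$ under the $\cC$-action forces $\rG\rX \in \bfM_\rB(\tk)$; this extends to the thick closure because $(-)\circ_\rA \rX$ preserves shifts, cones and dg direct summands. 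Compatibility with the $\cC$-action follows from the strict associativity of horizontal composition in $\vv{\cC}$. Using the hypothesis $\rX \circ_\rB \rY \cong \rA$ together with associativity of the relative compositions and the right-unit property of $\rA$ on its modules, the composite $\Psi\Phi$ sends $\rM$ to $(\rM\circ_\rA \rX)\circ_\rB \rY \cong \rM\circ_\rA (\rX\circ_\rB \rY) \cong \rM\circ_\rA \rA \cong \rM$, with all isomorphisms natural in $\rM$; symmetrically $\Phi\Psi \cong \id_{\bfM_\rB}$.

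I expect the main technical obstacle to be verifying the associativity isomorphism $(\rM\circ_\rA \rX)\circ_\rB \rY \cong \rM\circ_\rA (\rX\circ_\rB \rY)$ together with the naturality in $\rM$ of the chain of isomorphisms appearing above. Both statements reduce to the explicit description of the relative composition as a cokernel (Corollary \ref{cokerlemma}) combined with the fact that horizontal composition with a fixed $1$-morphism commutes with cokernels in $\vv{\cC}$, but spelling this out requires some diagram chases. A secondary point is matching the bimodule structures on $\rX\circ_\rB \rY$ and $\rY\circ_\rA \rX$ with those on $\rA$ and $\rB$ under the given isomorphisms, which is built into the constructions coming out of Lemma \ref{bimodulelemma}.
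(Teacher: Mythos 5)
Your proposal is correct and follows essentially the same route as the paper: for (a) $\Rightarrow$ (b) both invoke Lemma \ref{bimodulelemma} to extract $\rX=\Phi_\ti(\rA)$ and $\rY=\Psi_\tj(\rB)$ and then evaluate the unit/counit isomorphisms at $\rA$ and $\rB$, while for (b) $\Rightarrow$ (a) the paper simply declares that $-\circ_\rA\rX$ and $-\circ_\rB\rY$ give mutually inverse dg equivalences, which is the argument you spell out in more detail.
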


\begin{proof}
The implication \ref{dgMoreq2} $\Rightarrow$ \ref{dgMoreq1} is obvious, since $-\circ_\rA \rX$ and $-\circ_\rB \rY$ provide mutually inverse dg equivalences between $\bfM_\rA$ and $\bfM_\rB$.

For the implication \ref{dgMoreq1} $\Rightarrow$ \ref{dgMoreq2} assume that we have a dg equivalence $\Phi\colon \bfM_\rA\to\bfM_\rB$. Let $\Phi_\tk$ be the dg functor $\bfM_\rA(\tk)\to\bfM_\rB(\tk)$ underlying this morphism of dg $2$-representa\-tions. Define $\rX:=\Phi_\ti(\rA)$ be the dg $\rA$-$\rB$-bimodule inducing $\Phi$ by Lemma \ref{bimodulelemma}. 
We similarly obtain a dg $\rB$-$\rA$-bimodule $\rY$  from the dg functor $\Psi_\tj\colon \bfM_{\rB}(\tj)\to \bfM_{\rA}(\tj)$ which is part of the given dg equivalence. The given dg isomorphisms $\Psi_\tk\circ\Phi_\tk\cong \Id_{\bfM_\rA(\tk)}$ and $\Phi_\tk\circ\Psi_\tk\cong \Id_{\bfM_\rB(\tk)}$ provide dg isomorphisms
\begin{gather*}
A\cong \Psi_\ti\circ\Phi_\ti(\rA)\cong\Psi_\ti(\rX)\cong \rX\circ_\rB \rY, \quad \text{and} \quad 
\rB\cong \Phi_\tj\circ\Psi_\tj(\rB)\cong\Phi_\tj(\rY)\cong \rY\circ_\rA \rX
\end{gather*}
as required.
\end{proof}

\begin{corollary}\label{eq2reps}
Let $\cC$ be a  pretriangulated $2$-category with compact left adjoints. Further, let $\bfM$, $\bfN$ be two compact pretriangulated $2$-representations $\cC$-generated by $X$ and $Y$, respectively. Then the dg $2$-representations $\bfM^\circ$ and $\bfN^\circ$ are dg equivalent if and only if the dg algebra $1$-morphisms $\rA_X$ and $\rA_Y$ are dg Morita equivalent.
\end{corollary}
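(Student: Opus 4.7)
The proof will be an essentially immediate consequence of the two main tools already established just above: Corollary \ref{cor:stronggenMAequiv}, which promotes a $\cC$-generator of a compact pretriangulated $2$-representation to a dg equivalence with the internal dg $2$-representation over its internal endomorphism dg algebra, and Proposition \ref{dgMoreq}, which is precisely the definition of dg Morita equivalence packaged in terms of bimodules. So the plan is simply to chain these two results.

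First, I would extract the two dg algebra $1$-morphisms $\rA_X=[X,X]\in\vv{\cC}(\ti,\ti)$ and $\rA_Y=[Y,Y]\in\vv{\cC}(\tj,\tj)$ from Lemma \ref{algebralemma}, noting that these exist because $\cC$ has compact left adjoints and $\bfM,\bfN$ are compact, so that $\Eval_X$ and $\Eval_Y$ both have compact right adjoints. Then applying Corollary \ref{cor:stronggenMAequiv} to $X\in\bfM(\ti)$ and $Y\in\bfN(\tj)$ yields dg equivalences of pretriangulated $2$-representations
\[
\bfM^\circ \;\simeq\; \bfM_{\rA_X}, \qquad \bfN^\circ \;\simeq\; \bfM_{\rA_Y}.
\]

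Now the equivalence in both directions is a formal consequence. If $\rA_X$ and $\rA_Y$ are dg Morita equivalent, then by definition $\bfM_{\rA_X}$ and $\bfM_{\rA_Y}$ are dg equivalent as pretriangulated $2$-representations of $\cC$, and composing with the two displayed dg equivalences produces a dg equivalence $\bfM^\circ\simeq\bfN^\circ$. Conversely, any dg equivalence $\bfM^\circ\simeq\bfN^\circ$ composed with the inverses of the displayed equivalences yields a dg equivalence $\bfM_{\rA_X}\simeq\bfM_{\rA_Y}$, which is exactly the definition of $\rA_X$ and $\rA_Y$ being dg Morita equivalent.

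There is effectively no obstacle here: both implications are formal transitivity once Corollary \ref{cor:stronggenMAequiv} is in place, and the only thing worth spelling out is that all equivalences involved are indeed equivalences of dg $2$-representations over $\cC$ (i.e., they intertwine the $\cC$-action), which is already built into the statement of Corollary \ref{cor:stronggenMAequiv} and into the definition of dg Morita equivalence used in Proposition \ref{dgMoreq}. If desired, one could additionally invoke Proposition \ref{dgMoreq} to translate the Morita equivalence into the existence of the bimodules $\rX$ and $\rY$ with $\rX\circ_\rB \rY\cong\rA$ and $\rY\circ_\rA\rX\cong\rB$, but this is not required for the statement as given.
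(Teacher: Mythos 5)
Your argument is correct and matches the paper's intended (and omitted) proof: the corollary is stated in the paper without proof precisely because it is the formal composition of the equivalences $\bfM^\circ\simeq\bfM_{\rA_X}$ and $\bfN^\circ\simeq\bfM_{\rA_Y}$ from Corollary~\ref{cor:stronggenMAequiv} with the definition of dg Morita equivalence. Your observation that Proposition~\ref{dgMoreq} is not actually needed (since the definition of dg Morita equivalence is already phrased as a dg equivalence of the internal $2$-representations) is also accurate.
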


\subsection{Morita quasi-equivalence}\label{sec:quasi-Morita}

Let $\rA\in\vv{\cC}(\ti,\ti)$ and $\rB\in \vv{\cC}(\tj,\tj)$ be dg algebra $1$-morphisms and $\Phi\colon \bfM_{\rA}\to \bfM_{\rB}$ a morphism of dg $2$-representations. Let $\rX\in\vv{\cC}(\tj,\ti)$ be the dg $\rA$-$\rB$ bimodule inducing $\Phi$ by Lemma \ref{bimodulelemma}. 

\begin{proposition}\label{triangequiv}
The morphism of dg $2$-representations $\Phi\colon \bfM_{\rA}\to \bfM_{\rB}$ is a quasi-equi\-va\-lence if and only if the following conditions hold:
\begin{enumerate}[(a)]
\item\label{triangequiv1} the natural maps $\Hom_{\bfM_\rA(\tk)}(\rF\rA,\rG\rA)\to \End_{\bfM_\rB(\tk)}(\rF\rX,\rG\rX)$ are quasi-iso\-mor\-phisms for all $1$-morphisms $\rF,\rG \in \cC( \ti, \tk)$ for any $\tk\in \cC$;
\item\label{triangequiv2} $\rX$ cyclically generates the homotopy $2$-representation $\bfK\bfM_{\rB}$.
\end{enumerate}
\end{proposition}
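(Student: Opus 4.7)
My plan is to work from the identification $\Phi_\tk(\rM) = \rM \circ_\rA \rX$ given by Lemma~\ref{bimodulelemma}, which sends $\rF\rA \mapsto \rF\rX$, together with the fact that $\Phi$ is a quasi-equivalence exactly when each $\Phi_\tk$ is quasi-fully-faithful on hom-complexes and $\K(\Phi_\tk)$ is essentially surjective. The forward implication is essentially tautological: condition (a) just specializes the hom-complex quasi-isomorphism to the specific pairs $(\rF\rA, \rG\rA)$, while for (b) I would observe that $\rA$ cyclically $\cC$-generates $\bfM_\rA$ by Definition~\ref{MAdef}, hence also $\bfK\bfM_\rA$, so the equivalence $\K(\Phi)$ together with $\K(\Phi_\ti)(\rA) \cong \rX$ transports this cyclic generation to $\rX$ in $\bfK\bfM_\rB$.

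For the reverse direction I would first upgrade (a) from the generating pairs to arbitrary pairs by a standard d\'evissage. Consider the full dg subcategory $\D_\tk \subseteq \bfM_\rA(\tk)$ of objects $Y$ for which $\Hom_{\bfM_\rA(\tk)}(Z, Y) \to \Hom_{\bfM_\rB(\tk)}(\Phi_\tk Z, \Phi_\tk Y)$ is a quasi-isomorphism for every $Z$ already known to enjoy the analogous property in the first variable. Condition (a) puts every $\rG\rA$ into $\D_\tk$; closure under shifts and cones follows from the five lemma applied to the long exact cohomology sequences coming from distinguished triangles in $\bfK\bfM_\rA(\tk)$ and $\bfK\bfM_\rB(\tk)$; and closure under dg direct summands follows from the formula $\Hom(X_e, Y_f) = f\Hom(X, Y)e$, since restricting by a pair of commuting dg idempotents preserves quasi-isomorphisms. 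As $\bfM_\rA(\tk)$ is the thick closure of $\{\rG\rA\}$ under precisely these operations, we conclude $\D_\tk = \bfM_\rA(\tk)$, whence $\K(\Phi_\tk)$ is fully faithful.

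It then remains to show $\K(\Phi_\tk)$ is essentially surjective. Its essential image is a strictly full triangulated subcategory of $\bfK\bfM_\rB(\tk)$ containing every $\rG\rX = \Phi_\tk(\rG\rA)$; by (b) these generate $\bfK\bfM_\rB(\tk)$ under the triangulated thick closure, so it suffices to show the essential image is itself closed under the direct summands it may acquire in $\bfK\bfM_\rB(\tk)$. A homotopy-level summand of some $\rG\rX$ corresponds to an idempotent in $H^0(\End(\rG\rX))$ which, via the now-established quasi-isomorphism, matches an idempotent in $H^0(\End(\rG\rA))$; my plan is to split it inside $\bfM_\rA(\tk)$ using that this category is closed under dg direct summands by construction, whence its image under $\K(\Phi_\tk)$ recovers the desired summand.

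I anticipate that this last idempotent-lifting step will be the main obstacle, since homotopy idempotents do not in general arise from dg idempotents, and one must ensure the splittings really exist inside $\bfM_\rA(\tk)$ rather than only in some larger Karoubi completion. The cleanest resolution is likely to invoke Proposition~\ref{compact-triang} together with the fact that $\bfM_\rA(\tk)$ is already closed under dg direct summands by its definition as a thick closure, so that the dg splitting data on either side of the quasi-isomorphism match up under (a). With this in place, the essential image is thick, equals $\bfK\bfM_\rB(\tk)$ by (b), and $\Phi$ is a quasi-equivalence.
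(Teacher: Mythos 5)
Your proposal follows essentially the same route as the paper's proof, which is very short: the forward direction is identical, and for the converse the paper obtains fully faithfulness of each $\bfK\Phi_\tk$ by citing \cite[Lemma~4.2(a)]{Ke1} together with the fact that every object of $\bfK\bfM_\rA(\tk)$ lies in the thick closure of the $\rF\rA$ --- your d\'evissage (cones via the five lemma, dg direct summands via $\Hom(X_e,Y_f)=f\Hom(X,Y)e$) is exactly the content of that citation, written out. Where you diverge is on essential surjectivity: the paper disposes of it in one line, ``$\bfK\Phi_\tk$ is dense by (b)'', i.e.\ it treats the essential image of a fully faithful triangle functor containing the $\rG\rX$ as automatically exhausting their thick closure, whereas you explicitly flag the direct-summand issue. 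Your worry is legitimate --- the essential image is closed under shifts and cones, but closure under summands taken in $\bfK\bfM_\rB(\tk)$ requires the transported homotopy idempotent on an object of $\bfM_\rA(\tk)$ to split there, and a homotopy idempotent need not be homotopic to a dg idempotent, so closure of $\bfM_\rA(\tk)$ under \emph{dg} direct summands does not settle it; nor does Proposition~\ref{compact-triang}, which only identifies the idempotent \emph{completion} $\K(\ov{\C})^\circ$ with compact objects rather than splitting idempotents inside $\bfK\bfM_\rA(\tk)$ itself. So your final paragraph does not actually close that step; but the paper supplies no argument for it either. Relative to the paper's own proof, your proposal matches it on the forward direction and on fully faithfulness, and on density it is no less complete than the source while being more candid about where the only delicate point sits.
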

 If we have generators $\rF_{\ti,\tk}$ of $\cC(\ti,\tk)$, for $\tk\in \cC$, we can simplify \eqref{triangequiv1} to requiring a quasi-isomorphism $\End_{\bfM_\rA(\tk)}(\rF_{\ti,\tk}\rA)\to \End_{\bfM_\rB(\tk)}(\rF_{\ti,\tk}\rX)$.
\begin{proof}

First, assume that $\Phi$ is a quasi-equivalence. As $\Phi_{\tk}$ sends $\rF\rA$ to $\rF\rA\circ_A \rX \cong\rF\rX$, condition \eqref{triangequiv1} follows. Condition \eqref{triangequiv2} follows since $\rA$ cyclically generates $\bfK\bfM_\rA$.

For the converse, first note that since $\Phi_\tk$ is a dg functor, it descends to a collection of triangle functors $\bfK\Phi_\tk \colon \bfK\bfM_\rA(\tk) \to \bfK\bfM_\rB(\tk)$. Then \eqref{triangequiv1} implies that each $\bfK\Phi_\tk$ is fully faithful, by  \cite[Lemma 4.2 (a)]{Ke1}, since every object in $\bfK\bfM_\rA(\tk)$ is in the thick closure of the the $\rF\rA$, for $\rF\in \cC(\ti,\tk)$. Moreover, $\bfK\Phi_\tk$ is dense by \eqref{triangequiv2}. 
\end{proof}

We also obtain the following analogue of Corollary \ref{cor:stronggenMAequiv} for $\cC$-quasi-generators.
\begin{corollary}\label{cor:quasi-gen}
Let $\bfM$ be a pretriangulated $2$-representation with a $\cC$-quasi-generator $X$. Then the morphism of dg $2$-representations $[X,-]\colon \bfM \to\vv{\bfM}_{\rA_X} $ induces an equivalence between 
$(\bfK\bfM)^\circ$ and $(\bfK {\bfM}_{\rA_X})^\circ$.
\end{corollary}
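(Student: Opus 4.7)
The plan is to reduce the quasi-generator statement to the strong-generator statement (Corollary~\ref{cor:stronggenMAequiv}) by restricting to the dg $2$-subrepresentation $\bfG_\bfM(X)$, for which $X$ is a genuine $\cC$-generator, and then propagate the resulting dg equivalence from $\bfG_\bfM(X)$ back to $\bfM$ using the hypothesis that the inclusion $\iota\colon\bfG_\bfM(X)\hookrightarrow \bfM$ is a quasi-equivalence.

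First, by Proposition~\ref{prop:GMequivMA} (and the identification $\bfG_{\bfM^\circ}(X)\simeq\bfG_\bfM(X)^\circ$, since thick closure inside $\bfM^\circ$ of $\{\bfM(\rG)X\}$ coincides with the dg idempotent completion of $\bfG_\bfM(X)$), the morphism $[X,-]$ induces a dg equivalence of pretriangulated $2$-rep\-re\-sentations
\begin{equation*}
\bfG_\bfM(X)^\circ \xrightarrow{\ \sim\ } \bfM_{\rA_X},
\end{equation*}
noting that the algebra $\rA_X$ computed with respect to $\bfG_\bfM(X)$ agrees with the one computed with respect to $\bfM$, as it depends only on the evaluation dg functor at $X$ and the $\cC$-action. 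Passing to homotopy categories $\ti$-component-wise, this yields a triangle equivalence $\bfK\bigl(\bfG_\bfM(X)^\circ\bigr)(\ti)\simeq \bfK\bfM_{\rA_X}(\ti)$, and hence an equivalence after triangulated idempotent completion.

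Next, since $X$ is a $\cC$-quasi-generator, the inclusion $\iota$ is a quasi-equivalence, so $\bfK\iota_\ti\colon\bfK\bfG_\bfM(X)(\ti)\to\bfK\bfM(\ti)$ is a triangle equivalence for every $\ti$, and therefore $(\bfK\iota_\ti)^\circ\colon(\bfK\bfG_\bfM(X))^\circ(\ti)\to(\bfK\bfM)^\circ(\ti)$ is an equivalence. Combining these two chains of equivalences, it remains to connect $\bfK(\bfG_\bfM(X)^\circ)$ with $\bfK(\bfG_\bfM(X))$ after idempotent completion. For any dg category $\C$, the embedding $\C\hookrightarrow\C^\circ$ displays every object of $\C^\circ$ as a dg summand of an object of $\C$, so the induced fully faithful triangle functor $\bfK(\C)\to\bfK(\C^\circ)$ has the property that every object on the right is a summand of an object from the left; consequently the two triangulated idempotent completions $(\bfK\C)^\circ$ and $(\bfK(\C^\circ))^\circ$ coincide. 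Applying this to $\C=\bfG_\bfM(X)(\ti)$ and concatenating, we obtain
\begin{equation*}
(\bfK\bfM)^\circ(\ti)\simeq (\bfK\bfG_\bfM(X))^\circ(\ti)\simeq (\bfK(\bfG_\bfM(X)^\circ))^\circ(\ti)\simeq (\bfK\bfM_{\rA_X})^\circ(\ti),
\end{equation*}
where each equivalence is induced by (or compatible with) the dg functor $[X,-]$.

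The main obstacle I anticipate is verifying carefully that the chain of equivalences is implemented by the single morphism of dg $2$-representations $[X,-]\colon\bfM\to\vv{\bfM}_{\rA_X}$, rather than by a zig-zag that only exists after inverting quasi-isomorphisms. Concretely, one must check that the triangle functor induced by $\bfK[X,-]$ restricted to $\bfK\bfG_\bfM(X)$ agrees with the composition of $\bfK$ applied to the dg equivalence from Proposition~\ref{prop:GMequivMA} with the natural embedding $\bfK(\bfG_\bfM(X))\hookrightarrow\bfK(\bfG_\bfM(X)^\circ)$; this is then coherent with the extension to all of $\bfM$ via $\bfK\iota$. The subtlety that $\bfK(\C^\circ)$ may be strictly smaller than $(\bfK\C)^\circ$ (homotopy idempotents that do not lift to strict dg idempotents) is precisely the reason why the statement is phrased in terms of the triangulated idempotent completion on both sides; as argued above, the two possible ``completions'' coalesce once one formally splits all idempotents.
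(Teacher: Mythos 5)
Your proof is correct and follows essentially the same strategy as the paper's: restrict to $\bfG_\bfM(X)$, where $X$ is a genuine $\cC$-generator, apply the strong-generator equivalence there, and transport the result along the equivalence $\bfK\bfG_\bfM(X)\simeq\bfK\bfM$ supplied by the quasi-generator hypothesis. The only cosmetic difference is that you route through Proposition~\ref{prop:GMequivMA} together with the (correct) identification $(\bfK\C)^\circ\simeq(\bfK(\C^\circ))^\circ$, whereas the paper invokes the fully faithfulness statement of Proposition~\ref{fullyfaithfulmor} directly on $\bfG_\bfM(X)$ and splits idempotents only at the triangulated level.
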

\begin{proof}
By assumption that $X$ is a $\cC$-quasi-generator for $\bfM$, it follows that the inclusion of $\bfK\bfG_{\bfM}(X)$ into $\bfK\bfM$ is an equivalence of homotopy $2$-representations.

We claim that $(\bfK\bfG_{\bfM}(X))^\circ$ is equivalent to 
$(\bfK\bfM_{\rA_X})^\circ$. Note that the functors
\begin{align*}\K[X,-]\colon \bfK\bfM(\ti)\to \bfK \vv{\bfM}_{\rA_X}(\ti)\end{align*} restrict to fully faithful functors 
$\bfK\bfG_{\bfM}(X)(\ti)\to \bfK\bfM_{\rA_X}(\ti)$ by Proposition \ref{fullyfaithfulmor}. The dg isomorphisms $[X,\rG X]\cong \rG \rA_X$ are natural in the second component and descend to the homotopy categories.
Clearly, all objects $\rG \rA_X$ are in the image of these functors. Using fully faithfulness, passing to the idempotent completions implies the claim. Thus, we conclude that the homotopy $2$-representations $(\bfK \bfM)^\circ$ and $(\bfK{\bfM}_{\rA_X})^\circ$ are equivalent.
\end{proof}

\begin{remark}
If the categories $\bfK\bfM(\ti)$ have a bounded t-structure, then they are idempotent complete by \cite{LC} and hence $\bfK\bfM\simeq (\bfK\bfM)^\circ$. In this case, assuming the setup of Corollary \ref{cor:quasi-gen}, consider the fully faithful functors $\K[X,-]\colon \bfK\bfG_{\bfM}(X)(\ti)\to \bfK\bfM_{\rA_X}(\ti)$. Given an idempotent $e$ in the image, $e=[X,f]$ for a morphism $f$ in $\bfK\bfM(\ti)$ which is also an idempotent and hence splits off a dg direct summand $Y$ in $\bfK\bfG_{\bfM}(X)(\ti)\simeq \bfK\bfM(\ti)$. Thus, $e$ splits off the object $[X,Y]$ as a dg direct summand and, hence, $\bfK\bfM_{\rA_X}(\ti)$ is idempotent complete. Hence, if $\bfM$ has a bounded t-structure, the morphism of dg $2$-representations $[X,-]\colon \bfM\to  {\bfM}_{\rA_X}$ is a quasi-equivalence provided that $X$ is a $\cC$-quasi-generator. 

Algebraic conditions on the categories $\bfM(\ti)$ which imply the existence of a bounded t-structure are provided in \cite[Theorem~1]{Sch}. Namely, if $A$ is a positively graded dg algebra with $A_0$ semisimple such that $\del(A_0)=0$, then $\K(A\csf)$ is closed under taking direct summands.
\end{remark}

\subsection{Restriction and pushforward}

Let $\cC, \cD$ be dg $2$-categories and $\bsfF\colon \cC \to \cD$ a dg $2$-functor.

We obtain a dg $2$-functor $\bsfR = \bsfR_{\bsfF}\colon \cD\intworep\to \cC\tworep$ by precomposition, i.e. $\bsfR (\bfM) = \bfM\circ\bsfF$.

On the other hand, we can consider a dg $2$-functor $\bsfP=\bsfP_{\bsfF}\colon \cC\intworep\to\cD\intworep$, by defining  $\bsfP(\bfM_\rA)= \bfM_{\bsfF(\rA)}$.

\begin{lemma}\label{lem:unit}
For $\bfM_\rA\in \cC\intworep$, there is a morphism of dg $2$-representations $\Phi\colon\bfM\to\bsfR\bsfP\bfM$ induced by $\bsfF$.
\end{lemma}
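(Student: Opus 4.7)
The plan is to use Lemma \ref{vvcC}, which extends the dg $2$-functor $\bsfF$ to $\vv{\bsfF}\colon \vv{\cC}\to\vv{\cD}$, and observe that this extension transports modules in the obvious way. Since $\bsfF$ is a strict dg $2$-functor, it sends the dg algebra $1$-morphism $\rA \in \vv{\cC}(\ti,\ti)$ to the dg algebra $1$-morphism $\bsfF(\rA) \in \vv{\cD}(\bsfF\ti,\bsfF\ti)$, with unit $\bsfF(u_\rA)$ and multiplication $\bsfF(m_\rA)$, using the equalities $\bsfF(\one_\ti) = \one_{\bsfF\ti}$ and $\bsfF(\rA\rA) = \bsfF(\rA)\bsfF(\rA)$. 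More generally, for any right $\rA$-module $\rX \in \prmod{\vv{\cC}(\ti,\tj)}\rA$, the image $\vv{\bsfF}(\rX) \in \vv{\cD}(\bsfF\ti,\bsfF\tj)$ acquires a right $\bsfF(\rA)$-module structure via $\bsfF(\rho_\rX)\colon \vv{\bsfF}(\rX)\bsfF(\rA) = \vv{\bsfF}(\rX\rA) \to \vv{\bsfF}(\rX)$.

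For each $\tj \in \cC$, I would then define the component $\Phi_\tj\colon \bfM_\rA(\tj) \to \bfM_{\bsfF(\rA)}(\bsfF\tj)$ as the restriction of the induced dg functor on module categories. This is well-defined on generators by $\Phi_\tj(\rG\rA) = \bsfF(\rG)\bsfF(\rA)$ with $\bsfF(\rG) \in \cD(\bsfF\ti,\bsfF\tj)$, and extends to the thick closure since $\vv{\bsfF}$ preserves shifts and cones component-wise (by its construction in Lemma \ref{vvcC}) and dg idempotents (via \eqref{functorcirc}). The dg functoriality of $\Phi_\tj$ follows from $\bsfF$ being a dg $2$-functor.

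Next I would define the natural isomorphism $\eta_\rG\colon \Phi_\tk \circ \bfM_\rA(\rG) \to (\bsfR\bsfP\bfM_\rA)(\rG) \circ \Phi_\tj$ for $\rG \in \cC(\tj,\tk)$. Since the action of $\rG$ on $\bfM_\rA(\tj)$ is given by left multiplication, and $\bsfF$ is strict on horizontal composition, for any $\rX \in \bfM_\rA(\tj)$ we have the equality $\Phi_\tk(\rG\rX) = \vv{\bsfF}(\rG\rX) = \vv{\bsfF}(\rG)\vv{\bsfF}(\rX) = \bsfF(\rG)\Phi_\tj(\rX)$, so $\eta_\rG$ may be taken to be the identity (or, if one drops strictness, the coherence isomorphism provided by $\bsfF$). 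The cocycle identity
\begin{equation*}
\eta_{\rG\rH} = (\id_{\bsfF(\rG)} \circ_0 \eta_\rH)\circ_1 (\eta_\rG \circ_0 \id_{\bfM_\rA(\rH)})
\end{equation*}
then follows directly from strict $2$-functoriality of $\bsfF$.

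The argument is essentially bookkeeping, and there is no serious obstacle: the only mild subtlety is verifying that $\Phi_\tj$ lands inside $\bfM_{\bsfF(\rA)}(\bsfF\tj)$ rather than merely in $\vv{\bfM}_{\bsfF(\rA)}(\bsfF\tj)$, but this is immediate because generators $\rG\rA$ of $\bfM_\rA(\tj)$ are sent to generators $\bsfF(\rG)\bsfF(\rA)$ of $\bfM_{\bsfF(\rA)}(\bsfF\tj)$, and thickness on both sides is preserved by $\vv{\bsfF}$.
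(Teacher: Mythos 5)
Your proposal is correct and follows exactly the paper's approach: the paper's own proof simply defines $\Phi_\tj$ by $\rX\mapsto \bsfF(\rX)$ (implicitly via the extension $\vv{\bsfF}$) and declares the verification straightforward. You have merely filled in the bookkeeping the paper omits, including the correct observation that generators $\rG\rA$ land on generators $\bsfF(\rG)\bsfF(\rA)$ and that strictness of $\bsfF$ lets $\eta_\rG$ be the identity.
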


\begin{proof}
Writing $\bsfR\bsfP\bfM = \bsfR\bfM_{\bsfF(\rA)}$, we define $\Phi_\tj$ by mapping an object $\rX\in \bfM(\tj)$ to $\bsfF(\rX)$ and similarly on morphisms. It is straightforward to check that this defines a morphism of dg $2$-representations.
\end{proof}

Let $\bfM_\rB\in \cD\intworep$. Assuming that $\rB \in\bfM(\ti)$ is a $\cC$-generator for $\bsfR\bfM_{\rB}$, we have a canonical choice of internal dg $2$-representation of $\cC$ dg equivalent to $\bsfR\bfM_{\rB}$ defined by $\bfM_{\rC'}$, for $\rC'={}_{\cC}[\rB,\rB]$, where we distinguish internal homs in $\vv\cC$ and $\vv\cD$ by denoting them by ${}_{\cC}[-.-]$ and ${}_{\cD}[-.-]$, respectively. We can then define $\bsfP\bsfR\bfM_\rB$ as $\bsfP\bfM_{\rC'}$.

\begin{lemma}\label{lem:counit}
Let $\bfM=\bfM_\rB\in \cD\intworep$ and assume that $\rB \in\bfM(\ti)$ is a $\cC$-generator for $\bsfR\bfM$. Then there is an essentially surjective morphism of dg $2$-representations $\Psi\colon\bsfP\bsfR\bfM_\rB \to\bfM_\rB$.
\end{lemma}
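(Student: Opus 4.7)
The plan is to construct a dg algebra morphism $\alpha \colon \bsfF(\rC') \to \rB$ in $\vv{\cD}(\ti,\ti)$ and then invoke Proposition \ref{repmorvsalgmor}\eqref{repmorvsalgmor2} to obtain the required essentially surjective morphism of dg $2$-representations $\Psi := \Phi^\alpha \colon \bfM_{\bsfF(\rC')} = \bsfP\bsfR\bfM_\rB \to \bfM_\rB$. To build $\alpha$, I would combine the universal property of the internal hom $\rC' = {}_\cC[\rB,\rB]$ inside the dg $2$-representation $\bsfR\bfM_\rB$ with the dg isomorphism $\rB \cong {}_\cD[\rB,\rB]$ arising from Lemma \ref{modhoms}. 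Concretely, for any $\rG \in \cC(\tk,\tk)$ (with $\bsfF(\tk) = \ti$), there is a chain of natural dg isomorphisms
\begin{equation*}
\Hom_{\vv{\cC}(\tk,\tk)}(\rG, \rC') \cong \Hom_{\vv{\bfM}_\rB(\ti)}(\bsfF(\rG)\rB, \rB) \cong \Hom_{\vv{\cD}(\ti,\ti)}(\bsfF(\rG), \rB),
\end{equation*}
and I define $\alpha$ as the image of $\id_{\rC'}$ under the composition specialised to $\rG = \rC'$.

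Next, I would show that $\alpha$ respects the algebra structures. The unit $u_{\rC'} \colon \one_\tk \to \rC'$ corresponds to $\id_\rB$ under the universal property, so applying $\bsfF$ (which is strict on identities) and chasing through the identifications above yields $\alpha \circ \bsfF(u_{\rC'}) = u_\rB$. Similarly, the multiplication $m_{\rC'}$ is characterised (via Lemma \ref{algebralemma}) as the adjoint of the composite $\vv{\bsfR\bfM_\rB}(\rC'\rC')\rB \to \vv{\bsfR\bfM_\rB}(\rC')\rB \to \rB$ formed from two evaluations. Applying $\bsfF$ to this characterisation and using naturality of the isomorphisms in both $\rG$ and $\rB$ identifies $m_\rB \circ (\alpha \circ_0 \alpha)$ with $\alpha \circ \bsfF(m_{\rC'})$ after adjoining. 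Both compatibilities are essentially formal consequences of $\bsfF$ being a dg $2$-functor and of the naturality of the universal-property isomorphisms.

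With the algebra morphism $\alpha$ in hand, I would apply Proposition \ref{repmorvsalgmor}\eqref{repmorvsalgmor2} to produce the morphism $\Psi = \Phi^\alpha$, which by that result is automatically essentially surjective since every $\rG \rB \in \bfM_\rB(\tk)$ is dg isomorphic to $\rG \bsfF(\rC') \circ_{\bsfF(\rC')} \rB = \Psi_\tk(\rG \bsfF(\rC'))$.

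The main technical obstacle will be the verification that $\alpha$ is compatible with multiplication. Unpacking the adjunction isomorphism twice (once for $\cC$, once for $\cD$) and keeping track of the order of evaluations under the application of $\bsfF$ requires care; in particular, one must use that $\bsfF$ strictly preserves horizontal composition (so that $\bsfF(\rC'\rC') = \bsfF(\rC')\bsfF(\rC')$) and that the dg isomorphism of Lemma \ref{modhoms} is natural in both arguments. Once this diagram chase is completed, the remainder of the argument is a direct application of results already established.
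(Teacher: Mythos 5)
Your proposal follows essentially the same route as the paper: both define the algebra morphism $\bsfF(\rC')\to\rB$ as the image of $\id_{\rC'}$ under the composite of the internal-hom adjunction for $\bsfR\bfM_\rB$ with the free-module isomorphism of Lemma \ref{modhoms}, verify multiplicativity by adjoining both sides back to $\Hom_{\vv{\bfM}(\ti)}(\rC\rC\rB,\rB)$ and comparing with the double evaluation, and then obtain the essentially surjective morphism as $-\circ_{\bsfF(\rC')}\rB$ (the paper re-derives this last step inline rather than citing Proposition \ref{repmorvsalgmor}\eqref{repmorvsalgmor2}, but the content is identical). The argument is correct.
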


\begin{proof}
Set $\rC'={}_{\cC}[\rB,\rB]$, the dg algebra $1$-morphism associated to $\rB$, and $\rC=\bsfF(\rC')$. By the assumption that $\rB$ is a $\cC$-generator for $\bsfR\bfM$, the latter is dg equivalent to $\bfM_{\rC'}$ by Corollary \ref{cor:stronggenMAequiv} as the restriction is dg idempotent complete. Then, by construction, we have an equivalence of dg $2$-representations $\bsfP\bsfR\bfM_\rB \to \bfM_\rC$. 
We define the map $\varphi \colon \rC\to \rB$ as the image of $\id_{\rC'}$ under the chain of isomorphisms
\begin{equation*}
\begin{split}
\Hom_{\vv{\cC}(\ti,\ti)}(\rC',\rC')&\cong \Hom_{\vv{\bsfR\bfM}(\ti)}(\bsfR\bfM(\rC')\rB,\rB)\\
&=\Hom_{\vv{\bfM}(\ti)}(\bsfF(\rC')\rB,\rB)=\Hom_{\vv{\bfM}(\ti)}(\rC \rB,\rB)\\
&\cong \Hom_{\vv{\cD}(\ti,\ti)}(\rC,\rB).
\end{split}
\end{equation*}
Recalling that the inverse dg isomorphisms \begin{align*}\Hom_{\vv{\bfM}(\ti)}(\rF \rB,\rB)\cong \Hom_{\vv{\cD}(\ti,\ti)}(\rF,\rB)\end{align*} for $\rF\in \cC(\ti,\ti)$ are given by

\begin{equation}\label{isoexplicit}
\begin{array}{rrcll}
\alpha \colon & g &\mapsto & g\circ_1(\id_\rF\circ_0 u_{\rB})&\\
&\mathrm{m}_\rB\circ_1(f \circ_0\id_\rB)& \mapsfrom & f & \colon \beta
\end{array},
\end{equation}
we see that, explicitly, $\varphi = \alpha(\eval^{\cC}_{\rB\rB})= \eval^{\cC}_{\rB\rB}\circ_1(\id_\rC\circ_0 u_{\rB})$.

We claim that this is an algebra homomorphism, i.e. $\varphi\circ_1 \mathrm{m}_\rC = \mathrm{m}_{\rB}\circ_1(\varphi\circ_0\varphi)$. In order to prove this, we consider the images of both sides of the claimed equality under the dg isomorphism 
\begin{equation*}
\Hom_{\vv{\cD}(\ti,\ti)}(\rC\rC,\rB) \cong \Hom_{\vv{\bfM}(\ti)}(\rC\rC \rB,\rB). 
\end{equation*}
The strategy is to show that the morphisms on both sides of the claimed equation correspond to $\eval_{\rB\rB}^{\cC}\circ_1(\id_{\rC}\circ_0 \eval_{\rB\rB}^{\cC})$.

First, observe that
\begin{equation}\label{claim1}
\begin{split}
\mathrm{m}_\rB\circ_1(\varphi\circ_0 \id_\rB)&= \beta(\varphi)
= \beta\alpha(\eval_{\rB\rB}^{\cC}) = \eval_{\rB\rB}^{\cC}.
\end{split}
\end{equation}

On the one hand, $\mathrm{m}_{\rB}\circ_1(\varphi\circ_0\varphi)$ corresponds to 
\begin{align*}
\mathrm{m}_{\rB}\circ_1((\mathrm{m}_{\rB}\circ_1(\varphi\circ_0\varphi))\circ_0 \id_\rB).\end{align*}
Consider the diagram
\begin{align*}
\xymatrix{
\rC\rC \rB \ar^{\varphi\circ_0 \id_{\rC \rB}}[rr]\ar_{\id_\rC\circ_0\eval^{\cC}_{\rB\rB}}[drr]&&\rB\rC \rB \ar^{\id_\rB\circ_0\varphi\circ_0\id_\rB}[rr]\ar[drr]^{\id_\rB\circ_0\eval^{\cC}_{\rB\rB}}&&\rB\rB \rB \ar^{\mathrm{m}_\rB\circ_0\id_\rB}[drr]\ar^{\id_\rB\circ_0\mathrm{m}_\rB}[d]&&\\
&& \rC \rB \ar^{\varphi\circ_0\id_\rB}[rr]\ar@/_1pc/_{\eval^{\cC}_{\rB\rB}}[drrrr]&&\rB \rB\ar^{\mathrm{m}_\rB}[drr] &&\rB \rB\ar^{\mathrm{m}_\rB}[d]\\
&&&& &&\rB,
}
\end{align*}
where the two triangles commute by \eqref{claim1}, the leftmost square commutes by the interchange law, and the rightmost square commutes by associativity of $\mathrm{m}_\rB$.
 This shows that $\mathrm{m}_{\rB}\circ_1(\varphi\circ_0\varphi)$ corresponds to $\eval_{\rB\rB}^{\cC}\circ_1(\id_\rC\circ_0 \eval_{\rB\rB}^{\cC})$ as claimed.

On the other hand, $\varphi\circ_1 \mathrm{m}_\rC$ corresponds to 
\begin{equation*}
\begin{split}
\mathrm{m}_\rB\circ_1((\varphi\circ_1 \mathrm{m}_\rC)\circ_0 \id_\rB) &= \mathrm{m}_\rB\circ_1(\varphi\circ_0 \id_\rB)\circ_1( \mathrm{m}_\rC\circ_0 \id_\rB)\\
&=\eval_{\rB\rB}^{\cC}\circ_1( \mathrm{m}_\rC\circ_0 \id_\rB),
\end{split}
\end{equation*}
where the second equality uses \eqref{claim1}.  Now consider the definition of $ \mathrm{m}_\rC = \bsfF(\mathrm{m}_{\rC'})$. Then the map $\mathrm{m}_\rC\circ_0 \id_\rB \colon \rC\rC \rB\to \rC \rB$ in $\vv{\bfM}(\ti)$ is really the map $\bsfF(\mathrm{m}_{\rC'}) \circ_0\id_\rB\colon \bsfF(\rC'\rC') \rB\to \bsfF(\rC')\rB$ in $\vv{\bfM}(\ti)$, which, by definition of $\bsfR$, describes the action of $\mathrm{m}_{\rC'}$ via $\bsfR\bfM_\rB$.
By associativity of the action of $\bsfR\bfM_\rB(\rC')$ via $\eval_{\rB\rB}^{\cC}$, we have 
\begin{align*}
\eval_{\rB\rB}^{\cC} \circ_1 (\bsfF(  \mathrm{m} _{\rC'})  ) \circ_0\id_\rB)  = \eval_{\rB\rB}^{\cC} \circ_1(\bsfF(\id_{\rC'})\circ_0\eval_{\rB\rB}^{\cC} ) = \eval_{\rB\rB}^{\cC} \circ_1(\id_{\rC}\circ_0\eval_{\rB\rB}^{\cC} ) 
\end{align*}
as claimed.

Since $\varphi$ defines a left dg $\rC$-module structure on $\rB$, we obtain a morphism of dg $2$-representations $\bfM_\rC\to \bfM_\rB$ given by $-\circ_{\rC}\rB$. 
 As this morphism sends $\rC$ to $\rB$, it is essentially surjective.
\end{proof}

\subsection{Local quasi-equivalences and dg \texorpdfstring{$2$}{2}-representations}\label{sec:local-quasi}

Throughout this subsection, we use the same setup as in the previous one and additionally assume that the dg $2$-functor $\bsfF\colon \cC\to \cD$ is a local quasi-equivalence and, moreover, essentially surjective on objects. That is, for any object $\tj$ in $\cD$, there exists an object $\ti$ in $\cC$ such that $\bfP_\tk=\cD(\tk,-)$ is dg equivalent to $\bfP_{\bsfF(\ti)}=\cD(\bsfF(\ti),-)$ through composing with $1$-morphisms in $\cD$.

\begin{proposition}\label{prop-unit-quasi}
Under the assumptions on $\bsfF$ as above, $\Phi\colon\bfM_\rA\to\bsfR\bsfP\bfM_\rA$ is a quasi-equivalence of dg $2$-representations.
\end{proposition}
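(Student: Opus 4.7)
The plan is to show that for each $\tj \in \cC$ the component dg functor
$\Phi_\tj\colon \bfM_\rA(\tj) \to \bsfR\bsfP\bfM_\rA(\tj) = \bfM_{\bsfF(\rA)}(\bsfF\tj)$,
sending $\rG\rA \mapsto \bsfF(\rG)\bsfF(\rA) = \bsfF(\rG\rA)$, is a quasi-equivalence of dg categories. The argument parallels the \emph{if} direction of Proposition \ref{triangequiv}, but comparing modules across the dg $2$-functor $\bsfF$ rather than within a single dg $2$-category.

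First, I would verify that $\Phi_\tj$ induces a quasi-isomorphism between the Hom complexes of the generating objects $\rG\rA$ and $\rG'\rA$. By Lemma \ref{modhoms} applied on both sides, this reduces to the claim that the map
\[
\Hom_{\vv{\cC}(\ti,\tj)}(\rG,\rG'\rA) \longrightarrow \Hom_{\vv{\cD}(\bsfF\ti,\bsfF\tj)}(\bsfF(\rG),\bsfF(\rG'\rA))
\]
induced by $\vv{\bsfF}$ is a quasi-isomorphism, which is immediate from the Corollary after Lemma \ref{vvcC}: $\vv{\bsfF}$ inherits the local quasi-equivalence property from $\bsfF$. I would then promote this to all of $\bfM_\rA(\tj)$ by observing that quasi-isomorphism on Hom complexes is stable under the operations that generate the thick closure: shifts (trivially), cones (by the five lemma on the long exact cohomology sequence of each Hom complex), biproducts, and dg idempotent summands, since a dg summand is split off by a degree-zero closed idempotent that yields compatible splittings of the Hom complexes on both sides.

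Second, I would establish essential surjectivity of $\bfK\Phi_\tj\colon \bfK\bfM_\rA(\tj) \to \bfK\bfM_{\bsfF(\rA)}(\bsfF\tj)$. Because $\bsfF$ is a local quasi-equivalence, $\K(\bsfF)\colon \K(\cC(\ti,\tj)) \to \K(\cD(\bsfF\ti,\bsfF\tj))$ is an equivalence of triangulated categories, so any $\rH \in \cD(\bsfF\ti,\bsfF\tj)$ is homotopy isomorphic to $\bsfF(\rG)$ for some $\rG \in \cC(\ti,\tj)$, and consequently $\rH\bsfF(\rA) \cong \bsfF(\rG)\bsfF(\rA) = \Phi_\tj(\rG\rA)$ in the target homotopy category. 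Taking the thick closure of the essential image in $\bfK\bfM_{\bsfF(\rA)}(\bsfF\tj)$ recovers every generator, hence the whole category; combined with fully faithfulness from the first step, this produces the desired equivalence on homotopy categories and thus the quasi-equivalence of $\Phi_\tj$. The main delicate point is the extension to dg summands in the first step, but this is routine once one recalls that dg idempotents split Hom complexes functorially; essential surjectivity of $\bsfF$ on objects is not needed for this statement (it will enter in the companion Propositions \ref{prop-counit-quasi} and \ref{prop:preserve-quasi-eq}).
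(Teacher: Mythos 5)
Your proposal is correct and follows essentially the same route as the paper: reduce fully faithfulness on the generators $\rG\rA$ to the quasi-isomorphism $\Hom_{\vv{\cC}(\ti,\tj)}(\rG,\rG'\rA)\to\Hom_{\vv{\cD}}(\bsfF(\rG),\bsfF(\rG'\rA))$ via the adjunction of Lemma \ref{modhoms} and the fact that $\vv{\bsfF}$ is a local quasi-equivalence, and deduce essential surjectivity of $\bfK\Phi$ from essential surjectivity of $\K\bsfF_{\ti,\tj}$ on $1$-morphisms. Your explicit propagation of the quasi-isomorphism across the thick closure (cones, shifts, summands) and your remark that essential surjectivity of $\bsfF$ on objects is not used here are both accurate refinements of what the paper leaves implicit.
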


\begin{proof}
Recall that, if $\rA\in \cC(\ti,\ti)$, $\bfM_\rA(\tj)$ is the  thick closure of the $\rG\rA$ with $\rG\in \cC(\ti,\tj)$ and $\bsfR\bsfP\bfM_\rA(\tj)$ is the thick closure of the $\rH\bsfF(\rA)$ for $\rH\in \cD(\bsfF(\ti),\bsfF(\tj))$. Since $\K\bsfF_{\ti,\tj}$ is essentially surjective, we can find  $\rG\in \cC(\ti,\tj)$ such that $\rH\in \cD(\bsfF(\ti),\bsfF(\tj))$ is quasi-isomorphic to $\bsfF(\rG)$ and hence $\rH\bsfF(\rA)$ is quasi-isomorphic to $\bsfF(\rG)\bsfF(\rA)= \bsfF(\rG\rA)$, so $\K\Phi(\ti)$ is also essentially surjective.
Moreover, we have a commutative diagram
\begin{align}\label{eq:commudiag-quasi}
\xymatrix{
\Hom_{\bfM_\rA(\tj)}(\rG\rA,\rG'\rA) \ar[d] \ar^{\cong}@{-}[rr]&&\Hom_{\vv{\cC}(\ti,\tj)}(\rG,\rG'\rA) \ar[d]\\
\Hom_{\bsfR\bsfP\bfM_\rA(\tj)}(\bsfF(\rG\rA),\bsfF(\rG'\rA)) \ar^{\cong}@{-}[rr]&& \Hom_{\vv{\cD}(\ti,\tj)}(\bsfF(\rG),\bsfF(\rG'\rA)).
}
\end{align}
Since the right vertical arrow is a quasi-isomorphism by assumption, so is the left vertical arrow. This proves the lemma.
\end{proof}

\begin{proposition}\label{prop-counit-quasi}
Assume, in addition, that $\rB$ is a $\cC$-generator for $\bsfR\bfM_\rB$. Then the dg functor $\Psi\colon\bsfP\bsfR\bfM_\rB\to\bfM_\rB$ is a quasi-equivalence.
\end{proposition}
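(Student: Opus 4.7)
The plan is to verify the conditions of Proposition \ref{triangequiv} for $\Psi$. Write $\rC'={}_\cC[\rB,\rB]$, $\rC=\bsfF(\rC')$, and recall from the proof of Lemma \ref{lem:counit} that $\Psi=-\circ_\rC\rB$ is induced by an algebra $1$-morphism homomorphism $\varphi\colon\rC\to\rB$. Condition~(b) of Proposition \ref{triangequiv} is immediate: the inducing bimodule $\Psi_\ti(\rC)=\rC\circ_\rC\rB\cong\rB$ cyclically generates $\bfK\bfM_\rB$ by construction of $\bfM_\rB$. For condition~(a), Lemma \ref{modhoms} identifies the natural map
\[
\Hom_{\bfM_\rC(\tk)}(\rF\rC,\rG\rC)\longrightarrow\Hom_{\bfM_\rB(\tk)}(\rF\rB,\rG\rB)
\]
with postcomposition by $\id_\rG\circ_0\varphi\colon\rG\rC\to\rG\rB$, so it suffices to show that $\id_\rG\circ_0\varphi$ is a quasi-isomorphism in $\vv\cD(\tj,\tk)$ for every $\tk\in\cD$ and $\rG\in\cD(\tj,\tk)$ (where $\rB\in\vv\cD(\tj,\tj)$).

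First I would treat the case $\tk=\bsfF(\tk')$ with $\tk'\in\cC$. The generator hypothesis together with Corollary \ref{cor:stronggenMAequiv} produces a dg equivalence $E=[\rB,-]\colon\bsfR\bfM_\rB\xrightarrow{\sim}\bfM_{\rC'}$, while Proposition \ref{prop-unit-quasi} supplies a quasi-equivalence $\Phi\colon\bfM_{\rC'}\to\bsfR\bfM_\rC$. On generators $\rG'\rC'$ with $\rG'\in\cC(\ti,\tk')$, the composite $\bsfR\Psi\circ\Phi$ sends $\rG'\rC'\mapsto\bsfF(\rG')\rC\mapsto\bsfF(\rG')\rB$, while Lemma \ref{Fout} gives $E(\bsfF(\rG')\rB)=[\rB,\bsfF(\rG')\rB]\cong\rG'\rC'$, so $E^{-1}$ also sends $\rG'\rC'\mapsto\bsfF(\rG')\rB$. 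Unwinding the construction of $\varphi$ from the adjunction isomorphism in Lemma \ref{lem:counit}, I expect to promote this object-level agreement to a dg isomorphism $\bsfR\Psi\circ\Phi\cong E^{-1}$ of morphisms of dg $2$-representations. Since $\Phi$ is a quasi-equivalence and $E^{-1}$ is a dg equivalence, $\bsfR\Psi$ is then a quasi-equivalence, which gives condition~(a) for all $\rF,\rG\in\cD(\tj,\bsfF(\tk'))$.

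For general $\tk\in\cD$, essential surjectivity of $\bsfF$ on objects up to dg equivalence of principal $2$-representations provides $\tk'\in\cC$ together with $1$-morphisms $\rH'\colon\tk\to\bsfF(\tk')$ and $\rH\colon\bsfF(\tk')\to\tk$ in $\cD$ satisfying $\rH\rH'\cong\one_\tk$ and $\rH'\rH\cong\one_{\bsfF(\tk')}$. Postcomposition with $\rH'$ is therefore a quasi-equivalence $\vv\cD(\tj,\tk)\to\vv\cD(\tj,\bsfF(\tk'))$ carrying $\id_\rG\circ_0\varphi$ to $\id_{\rH'\rG}\circ_0\varphi$, which is a quasi-isomorphism by the preceding step. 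Hence $\id_\rG\circ_0\varphi$ is itself a quasi-isomorphism, completing the verification of condition~(a) and thus the proof.

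The main obstacle is establishing the dg isomorphism $\bsfR\Psi\circ\Phi\cong E^{-1}$ as morphisms of dg $2$-representations, not merely on object assignments: the accompanying natural $2$-isomorphisms $\eta_\rF$ for both sides must also be shown to agree, which requires carefully tracing $\varphi$ through the chain of adjunction isomorphisms in the proof of Lemma \ref{lem:counit} together with the bimodule interpretation from Lemma \ref{bimodulelemma}. Once this central identification is secured, the remainder of the argument is a direct application of Proposition \ref{triangequiv} and the essential-surjectivity reduction sketched above.
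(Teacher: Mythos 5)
Your overall architecture is viable and in part parallels the paper's own argument: the paper likewise reduces the statement to showing that, under the isomorphisms of Lemma \ref{modhoms}, the map on morphism spaces induced by $-\circ_\rC\rB$ is postcomposition with $\id_\rH\circ_0\varphi$ (its diagram \eqref{phicomp} --- which, note, it establishes by an explicit computation using $\mathrm{m}_\rC\circ_\rC\id_\rB=\eval^{\cC}_{\rB\rB}$, whereas you only assert it), and then proves that $(\id_\rH\circ_0\varphi)\circ_1-$ is a quasi-isomorphism first for $1$-morphisms in the image of $\bsfF$ and afterwards for general ones by transporting along homotopy isomorphisms $\rG\simeq\bsfF(\rG')$ supplied by the local quasi-equivalence hypothesis. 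Your reduction of general targets $\tk$ via the dg equivalence of principal $2$-representations is a reasonable variant of that last step.

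The genuine gap is exactly where you flag it: the identification $\bsfR\Psi\circ\Phi\cong E^{-1}$ (equivalently $E\circ\bsfR\Psi\circ\Phi\cong\Id$) as morphisms of dg $2$-representations is checked only on objects, and promoting it to morphism spaces is the entire content of the proposition. Concretely, you must show that the composite
\begin{equation*}
\Hom_{\vv{\cC}(\ti,\tk')}(\rG',\rH'\rC')\xrightarrow{\;\bsfF\;}\Hom_{\vv{\cD}}(\bsfF(\rG'),\bsfF(\rH')\rC)\xrightarrow{(\id_{\bsfF(\rH')}\circ_0\varphi)\circ_1-}\Hom_{\vv{\cD}}(\bsfF(\rG'),\bsfF(\rH')\rB)
\end{equation*}
agrees with the canonical isomorphism coming from $[\rB,-]$, Lemma \ref{Fout} and Lemma \ref{modhoms}; unwinding this requires the explicit formula $\varphi=\eval^{\cC}_{\rB\rB}\circ_1(\id_\rC\circ_0 u_{\rB})$ together with the adjunction formulas \eqref{isoexplicit}, which is precisely the computation the paper performs directly in the final commutative diagram of its proof. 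So the detour through Corollary \ref{cor:stronggenMAequiv} and Proposition \ref{prop-unit-quasi} does not avoid the key verification --- it repackages it and then omits it. To complete your argument you should either carry out the naturality chase establishing $E\circ\bsfR\Psi\circ\Phi\cong\Id$ (including agreement of the structure isomorphisms $\eta_\rF$), or argue directly, as the paper does, that $(\id_{\bsfF(\rH')}\circ_0\varphi)\circ_1\bsfF(-)$ coincides with the comparison map whose quasi-isomorphism property is the local quasi-equivalence hypothesis on $\bsfF$.
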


\begin{proof}
It suffices to prove that $-\circ_\rC\rB$ is a quasi-equivalence. By Lemma \ref{lem:counit}, this morphism of dg $2$-representation is essentially surjective. Thus, it suffices to show that it induces quasi-isomorphisms of morphism spaces.
Recall that $\varphi = \eval^{\cC}_{\rB\rB}\circ_1(\id_\rC\circ_0 u_{\rB})$.

We first claim that the diagram 
\begin{equation}\label{phicomp}\vcenter{\hbox{
\xymatrix{
\Hom_{\bfM_\rC(\tj)}(\rG\rC, \rH\rC)\ar@{-}^{\cong}[rr] \ar_{-\circ_\rC\id_\rB}[d]&&\Hom_{\vv{\cD}(\tj)}(\rG, \rH\rC)\ar^{(\id_{\rH}\circ_0\varphi)\circ_1-}[d]\\
\Hom_{\bfM_\rB(\tj)}(\rG\rB, \rH\rB)\ar@{-}^{\cong}[rr] &&\Hom_{\vv{\cD}(\tj)}(\rG, \rH\rB)
}}}
\end{equation}
commutes. Indeed, using $\eval^{\cD}_{\rC\rC}= \mathrm{m}_\rC$, a morphism $f\in \Hom_{\vv{\cD}(\tj)}(\rG, \rH\rC)$ corresponds to 
\begin{align*}\rG\rC \xrightarrow{f\circ_0\id_\rC} \rH\rC\rC \xrightarrow{\id_\rH\circ_0\mathrm{m}_\rC} \rH\rC\end{align*}
in $\Hom_{\bfM_\rC(\tj)}(\rG\rC, \rH\rC)$, which, under $-\circ_\rC\id_\rB$ maps to 
\begin{align*}\rG\rB \xrightarrow{f\circ_0\id_\rB} \rH\rC\rB\xrightarrow{\id_\rH\circ_0\eval^{\cC}_{\rB\rB}} \rH\rB.\end{align*}

Here we have used that $\mathrm{m}_\rC\circ_\rC\id_\rB = \eval^{\cC}_{\rB\rB}$ since both, by construction, correspond to the action on $\rB$ when viewed as a left $\rC$-module.

The above morphism then corresponds to 
\begin{align*}\rG\xrightarrow{\id_\rG\circ_0u_\rB}\rG\rB \xrightarrow{f\circ_0\id_\rB} \rH\rC\rB\xrightarrow{\id_\rH\circ_0\eval^{\cC}_{\rB\rB}} \rH\rB\end{align*}
in $\Hom_{\vv{\cD}(\tj)}(\rG, \rH\rB)$. Given 
\begin{align*}(f\circ_0\id_\rB)\circ_1(\id_\rG\circ_0u_\rB) = (\id_{\rH\rC}\circ_0 u_\rB)\circ_1 f,\end{align*}
we see that 
\begin{equation*}\begin{split}(\id_\rH\circ_0\eval^{\cC}_{\rB\rB})\circ_1(f\circ_0\id_\rB)\circ_1(\id_\rG\circ_0u_\rB) &=  ( \id_\rH\circ_0 (\eval^{\cC}_{\rB\rB}\circ_1(\id_\rC\circ_0 u_{\rB})))\circ_1 f \\
&= (\id_{\rH}\circ_0\varphi)\circ_1 f,
\end{split}\end{equation*}
proving that the diagram in \eqref{phicomp} commutes.

It thus suffices to show that the right vertical map in \eqref{phicomp} is a quasi-isomorphism. To see this, first assume $\rG=\bsfF(\rG')$ and $\rH=\bsfF(\rH')$ and consider the diagram
\begin{align*}\xymatrix@R=0.5cm{
\Hom_{\vv{\cC}(\ti,\tj)}(\rG', \rH'\rC')\ar^{\cong}[d]\ar^{\bsfF(-)}[rr]&& \Hom_{\vv{\cD}(\tj)}(\rG, \rH\rC)\ar@/^2pc/[ddll]^{(\id_{\rH}\circ_0\varphi)\circ_1-}\\
\Hom_{\bfM_\rB(\tj)}(\rG \rB, \rH\rB)\ar^{\cong}[d]&&\\
\Hom_{\vv{\cD}(\tj)}(\rG, \rH\rB)&&.
}\end{align*}
Under the vertical isomorphisms,  $f\in \Hom_{\vv{\cC}(\ti,\tj)}(\rG', \rH'\rC)$ is first mapped to $(\id_\rH\circ_0\eval^{\cC}_{\rB \rB})\circ_1 (\bsfF(f)\circ_0\id_\rB)$ and subsequently, using the same arguments as before, to 
\begin{equation*}\begin{split}
(\id_\rH\circ_0\eval^{\cC}_{\rB\rB})\circ_1 &(\bsfF(f)\circ_0\id_\rB)\circ_1(\id_\rG\circ_0 u_\rB) 
= (\id_{\rH}\circ_0\varphi)\circ_1 \bsfF(f),
\end{split}\end{equation*}
and hence the diagram commutes. Given that the vertical maps are isomorphisms, and the horizontal map is a quasi-isomor\-phism, the map $(\id_{\rH}\circ_0\varphi)\circ_1-$ is also a quasi-isomor\-phism as desired in this case.

For general $\rG,\rH$, there exist $\rG', \rH'$ and zigzags of quasi-isomorphisms between $\rG$ and $\bsfF(\rG')$ as well as $\rH$ and $\bsfF(\rH')$. The quasi-isomorphism 
\begin{align*}(\id_{\bsfF(\rH')}\circ_0\varphi)\circ_1 -\colon \Hom_{\vv{\cD}(\tj)}(\bsfF(\rG'), \bsfF(\rH')\rC) \to \Hom_{\vv{\cD}(\tj)}(\bsfF(\rG'), \bsfF(\rH')\rB)\end{align*} propagates along these to imply that 
\begin{align*}(\id_{\rH}\circ_0\varphi)\circ_1 -\colon \Hom_{\vv{\cD}(\tj)}(\rG, \rH\rC) \to \Hom_{\vv{\cD}(\tj)}(\rG, \rH\rB)\end{align*} is also a quasi-isomorphism.
This completes the proof.
\end{proof}

Even if $\rB$ does not $\cC$-generate $\bsfR\bfM_\rB$ as assumed in Lemma~\ref{lem:counit} and Proposition~\ref{prop-counit-quasi}, $\bsfF$ being a local quasi-equivalence and essentially surjective on objects implies that it is equivalent to a $\cC$-\emph{quasi}-generator.

\begin{lemma}\label{lem:Bquasi-gen}
The dg algebra $1$-morphism $\rB$ corresponds to a $\cC$-quasi-generator for $\bsfR\bfM_\rB$.
\end{lemma}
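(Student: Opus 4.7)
The plan is to verify the definition directly: showing that $\rB$ is a $\cC$-quasi-generator for $\bsfR\bfM_\rB$ amounts to showing that the canonical inclusion $\iota\colon \bfG_{\bsfR\bfM_\rB}(\rB)\hookrightarrow \bsfR\bfM_\rB$ is a quasi-equivalence of pretriangulated $2$-representations, i.e.\ that each component $\iota_\tk\colon \bfG_{\bsfR\bfM_\rB}(\rB)(\tk) \hookrightarrow \bsfR\bfM_\rB(\tk) = \bfM_\rB(\bsfF(\tk))$ is a dg quasi-equivalence. Since $\iota_\tk$ is a full subcategory inclusion, the induced maps on hom complexes are already equalities, hence quasi-isomorphisms; the only nontrivial point is the essential surjectivity of the induced triangle functor $\K(\iota_\tk)$.

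To leverage the hypotheses on $\bsfF$, I will first use that each dg functor $\bsfF_{\ti,\tk}\colon \cC(\ti,\tk)\to\cD(\bsfF(\ti),\bsfF(\tk))$ is a quasi-equivalence (and $\vv{\bsfF_{\ti,\tk}}$ is too, by Lemma~\ref{vvFquasieq}), so the induced functor on homotopy categories is an equivalence. Given any $\rH\in \cD(\bsfF(\ti),\bsfF(\tk))$, this produces some $\rG\in \cC(\ti,\tk)$ together with a homotopy equivalence $\bsfF(\rG)\simeq \rH$. Horizontal composition with $\rB$ on the right is a dg functor, and dg functors automatically preserve homotopy equivalences, so $\bsfF(\rG)\rB\simeq \rH\rB$ in $\K(\bsfR\bfM_\rB(\tk))$. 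This establishes essential surjectivity on the generating objects of the thick closure.

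To extend this from generators to the entire thick closure, I will argue by induction on the operations (shifts, cones, direct sums, and dg direct summands) used to build an arbitrary $X\in\bsfR\bfM_\rB(\tk)$ from the $\rH\rB$'s. Shifts, cones, and direct sums transport along homotopy equivalences by standard dg arguments: a twisted complex of $\rH_i\rB$'s can be replaced by a homotopy equivalent twisted complex of $\bsfF(\rG_i)\rB$'s after transferring the twisting morphisms through the chosen homotopy equivalences componentwise. The main obstacle will be the dg direct summand step, since a dg idempotent $e$ on $Y$ transports along a homotopy equivalence $Y\simeq Y'$ only to a morphism $e'$ on $Y'$ which is idempotent up to homotopy rather than on the nose. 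To conclude, I would promote $e'$ to a genuine dg idempotent inside $\bfG_{\bsfR\bfM_\rB}(\rB)(\tk)$ via a homotopy-perturbation and idempotent-lifting argument in the dg category $\prmod{\vv{\cD}(\bsfF(\ti),\bsfF(\tk))}\rB$, which has cokernels by Corollary~\ref{cokerlemma}, so that the dg splitting of the lifted idempotent is homotopy equivalent to $Y_e$. Once this has been carried out, $\K(\iota_\tk)$ is essentially surjective for every $\tk\in\cC$, whence $\iota$ is a quasi-equivalence and $\rB$ is a $\cC$-quasi-generator for $\bsfR\bfM_\rB$.
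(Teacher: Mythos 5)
Your proposal takes essentially the same route as the paper's proof: full faithfulness of the inclusion $\bfG_{\bsfR\bfM_\rB}(\rB)\hookrightarrow\bsfR\bfM_\rB$ is automatic, so everything reduces to essential surjectivity on homotopy categories, which is obtained by using the local quasi-equivalence $\bsfF$ to replace each generator $\rH\rB$ by a homotopy-isomorphic $\bsfF(\rG)\rB$. The paper concludes at that point with ``the claim follows,'' so your explicit induction over the thick-closure operations --- in particular your (legitimate) worry that a dg idempotent transported along a homotopy equivalence is only a homotopy idempotent --- is added detail on a step the paper leaves implicit, not a different method.
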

\begin{proof}
By assumption that $\bsfF$ is essentially surjective on objects, we can assume w.l.o.g. that $\rB\in \cD(\bsfF(\tk),\bsfF(\tk))$, for an object $\tk$ of $\cC$. 
We need to show that the inclusion $\rI\colon \bfG_{\bsfR \bfM_{\rB}}(\rB)\hookrightarrow \bsfR\bfM_\rB$ is a quasi-equivalence. As $\rI$ is fully faithful, it remains to show that $\bfK\rI$ is essentially surjective. That is, we claim that every  object $X$ in $\bfK\bsfR\bfM_\rB(\tj)$ is isomorphic to an object in the thick closure of $\lbrace \bsfF(\rG) \rB|\rG\in \cC(\tj,\tk) \rbrace$ in the homotopy category $\bfK\bsfR\bfM_\rB(\tj)$. Note that $X\in \bsfR\bfM_\rB(\tj)=\bfM_\rB(\bsfF(\tj))$ is dg isomorphic to an object in the thick closure of $\lbrace\rH\rB\;|\;\rH\in \cD(\bsfF(\tj),\bsfF(\tk)\rbrace$. 
Since any $1$-morphism $\rH$ of $\cD$ is homotopy isomorphic to $\bsfF(\rG)$ for some $1$-morphism $\rG$ of $\cC$, the claim follows.
\end{proof}

Pullback and pushforward preserve quasi-equivalences of internal dg $2$-representations, see Section \ref{sec:quasi-Morita}.

\begin{proposition}\label{prop:preserve-quasi-eq}
Retain the assumptions on $\bsfF\colon \cC\to\cD$ of this section.
\begin{enumerate}[(a)]
\item \label{prop:preserve-quasi-eqa}
For any quasi-equivalence $\Phi\colon \bfM_{\rA_1}\to\bfM_{\rA_2}$ of internal dg $2$-representations of $\cC$, $\bsfP(\Phi)$ is a quasi-equivalence.
\item \label{prop:preserve-quasi-eqb}
For any quasi-equivalence $\Psi\colon \bfM_{\rB_1}\to\bfM_{\rB_2}$ of internal dg $2$-representations of $\cD$, $\bsfR(\Psi)$ is a quasi-equivalence.
\end{enumerate}
\end{proposition}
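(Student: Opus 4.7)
Part (b) is essentially tautological and I would dispatch it first. By construction of $\bsfR$ as precomposition with $\bsfF$, the components of $\bsfR(\Psi)$ are $\bsfR(\Psi)_\ti = \Psi_{\bsfF(\ti)}$. Since $\Psi$ being a quasi-equivalence means each $\Psi_\tj$ (for $\tj\in\cD$) is a dg quasi-equivalence of pretriangulated dg categories, the components $\Psi_{\bsfF(\ti)}$ are in particular dg quasi-equivalences, so $\bsfR(\Psi)$ is a quasi-equivalence. No assumptions on $\bsfF$ beyond being a dg $2$-functor are used.

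For part (a), I would invoke Lemma \ref{bimodulelemma} to represent $\Phi\cong (-)\circ_{\rA_1}\rX$ for some dg $\rA_1$-$\rA_2$-bimodule $\rX\in\vv{\cC}$, and then observe that $\bsfP(\Phi)\cong (-)\circ_{\bsfF(\rA_1)}\vv{\bsfF}(\rX)$, where $\vv{\bsfF}(\rX)$ inherits the structure of a $\bsfF(\rA_1)$-$\bsfF(\rA_2)$-bimodule via Lemma \ref{vvcC}. The plan is then to verify the two conditions of Proposition \ref{triangequiv} for $\bsfP(\Phi)$. For the generation condition, I would take any $\rH\bsfF(\rA_2)\in \bfM_{\bsfF(\rA_2)}(\tk)$; by essential surjectivity of $\bsfF$ on objects I reduce to $\tk=\bsfF(\tk')$ for some $\tk'\in\cC$ (via the dg equivalence $\bfP_\tk\simeq \bfP_{\bsfF(\tk')}$), and by local quasi-equivalence of $\bsfF$ I find $\rG\in\cC(\tj,\tk')$ with $\vv{\bsfF}(\rG)\simeq \rH$ in the homotopy category, so $\rH\bsfF(\rA_2)\simeq \vv{\bsfF}(\rG\rA_2)$ in $\bfK\bfM_{\bsfF(\rA_2)}(\tk)$. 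Then condition (b) of Proposition \ref{triangequiv} for $\Phi$ expresses $\rG\rA_2$ as an iterated cone/summand of objects $\rG'\rX$, and applying $\vv{\bsfF}$ (which commutes with shifts, cones, and summands) gives a corresponding expression for $\vv{\bsfF}(\rG\rA_2)$ in terms of $\vv{\bsfF}(\rG')\vv{\bsfF}(\rX)$.

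For the Hom-space condition, after the same reductions I would use the commutative diagram
\[
\xymatrix@C=0.5cm{
\Hom_{\bfM_{\rA_1}(\tk')}(\rG_1 \rA_1, \rG_2 \rA_1) \ar[r]^-{\Phi_*} \ar[d]_{\bsfF} & \Hom_{\bfM_{\rA_2}(\tk')}(\rG_1 \rX, \rG_2 \rX) \ar[d]^{\bsfF} \\
\Hom_{\bfM_{\bsfF(\rA_1)}}(\vv{\bsfF}(\rG_1 \rA_1), \vv{\bsfF}(\rG_2 \rA_1)) \ar[r]^-{\bsfP(\Phi)_*} & \Hom_{\bfM_{\bsfF(\rA_2)}}(\vv{\bsfF}(\rG_1 \rX), \vv{\bsfF}(\rG_2 \rX))
}
\]
The top row is a quasi-isomorphism by Proposition \ref{triangequiv} applied to $\Phi$. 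The left-hand column, by Lemma \ref{modhoms}, identifies with $\vv{\bsfF}\colon \Hom_{\vv\cC}(\rG_1, \rG_2\rA_1)\to \Hom_{\vv\cD}(\vv{\bsfF}(\rG_1), \vv{\bsfF}(\rG_2)\bsfF(\rA_1))$, which is a quasi-isomorphism by Lemma \ref{vvFquasieq}. By two-out-of-three, once the right column is a quasi-isomorphism, so is the bottom row, and the usual zigzag argument (as at the end of the proof of Proposition \ref{prop-counit-quasi}) extends the conclusion to arbitrary $\rF,\rF'\in\cD(\bsfF(\ti),\tk)$ up to quasi-isomorphism with $\vv{\bsfF}(\rG_i)$.

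\textbf{Main obstacle.} The subtle step is the right-hand vertical arrow: I must show that $\vv{\bsfF}$ induces a quasi-isomorphism on Hom spaces of right modules, i.e.\ on the dg subspace of $\rA_2$-equivariant morphisms cut out from $\Hom_{\vv\cC}(\rG_1\rX,\rG_2\rX)$ as the kernel of the commutator with the $\rA_2$-action. Here one cannot simply invoke local quasi-equivalence of $\vv{\bsfF}$ because taking strict kernels of chain maps does not, in general, preserve quasi-isomorphisms. I would handle this by expressing the module-Hom as an iterated adjunction Hom of the form $\Hom_{\vv\cC}(\rG_1, [\rX,\rG_2\rX]_{\rA_2})$ using the restriction adjunction of $(-)\circ_{\rA_1}\rX$ and Lemma \ref{modhoms}, and similarly on the $\cD$-side. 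Then comparing these via $\vv{\bsfF}$ reduces to a quasi-iso assertion about internal homs in $\vv\cC$ versus $\vv\cD$, which follows from local quasi-equivalence of $\vv{\bsfF}$ and compatibility with the bimodule structure of $\rX$. I expect this to be the technically hardest step, but it should yield to a direct verification once one sets up the adjunctions carefully.
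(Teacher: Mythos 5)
Your proposal is correct in substance, and for part (b) it is genuinely simpler than the paper's argument: since $\bsfR$ is precomposition with $\bsfF$, the component of $\bsfR(\Psi)$ at $\ti\in\cC$ is literally $\Psi_{\bsfF(\ti)}$, and the definition of quasi-equivalence of a morphism of dg $2$-representations only refers to the component dg functors, so the claim is immediate and needs no hypotheses on $\bsfF$. The paper instead re-verifies the two conditions of Proposition \ref{triangequiv} relative to the $\cC$-action (using Lemma \ref{lem:Bquasi-gen} to see that $\rB_2$, hence $\rY$, cyclically generates $\bfK\bsfR\bfM_{\rB_2}$, and restricting the Hom quasi-isomorphisms to $1$-morphisms of the form $\bsfF(\rF)$); that route is more informative about the $\cC$-module structure of $\bsfR\bfM_{\rB_i}$ but is not needed for the stated assertion. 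For part (a) your strategy coincides with the paper's: represent $\Phi$ by a bimodule $\rX$ via Lemma \ref{bimodulelemma}, push it forward to $\bsfF(\rX)$, and check the two conditions of Proposition \ref{triangequiv}; your generation argument and your use of the commutative square with Lemma \ref{modhoms}, Lemma \ref{vvFquasieq} and two-out-of-three are exactly the paper's diagram.

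The one place you diverge is the step you flag as the main obstacle, the right-hand vertical arrow $\Hom_{\bfM_{\rA_2}(\tk)}(\rF\rX,\rG\rX)\to \Hom_{\bfM_{\bsfF(\rA_2)}(\bsfF\tk)}(\bsfF(\rF)\bsfF(\rX),\bsfF(\rG)\bsfF(\rX))$. Your concern is legitimate (the paper simply asserts this is a quasi-isomorphism), but the internal-hom/adjunction machinery you propose --- constructing a right adjoint $[\rX,-]_{\rA_2}$ to $-\circ_{\rA_1}\rX$ and comparing it across $\bsfF$ --- is heavier than necessary and is itself unproven in your sketch. The simpler resolution is dévissage: $\rX$ lies in $\bfM_{\rA_2}(\ti)$, hence in the thick closure of free modules $\rH\rA_2$; by Lemma \ref{modhoms} the module-Hom out of a free module is the plain Hom in $\vv{\cC}$ (an isomorphism of complexes, not merely a quasi-isomorphism), so Hom complexes between objects of $\bfM_{\rA_2}$ are obtained from those between free modules by shifts, cones and summands in each variable separately, and quasi-isomorphisms propagate through these operations. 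This reduces the right column to the left column, which you have already handled. With that substitution your argument is complete; as written, the step is a plausible plan rather than a proof.
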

\begin{proof}
To prove Part \eqref{prop:preserve-quasi-eqa}, recall Proposition \ref{triangequiv} and denote by $\rX$ a dg bimodule giving the quasi-equivalence $\Phi\colon \bfM_{\rA_1}\to \bfM_{\rA_2}$. That is, we have a isomorphism $\Phi(\rA_1)\cong \rX$ in $\bfK\bfM_{\rA_2}$, which implies that $\bsfP(\Phi)(\bsfF(\rA_1))\cong \bsfF(\rX)$. By Proposition \ref{triangequiv}, $\rA_2$ appears in the thick closure of  $\lbrace \rG \rX | \rG \rbrace$ in $\bfK\bfM_{\rA_2}$, for $\rG$ va\-rying over $1$-morphisms of $\cC$. Thus, $\bsfF(\rA_2)$ appears in the thick closure of  $\lbrace \bsfF(\rG)\bsfF(\rX) | \rG \rbrace$ in $\bfK\bfM_{\bsfF(\rA_2)}$. As $\bsfF(\rA_2)$ generates $\bsfP\bfM_{\rA_2}$ by definition, we see that the second condition in Proposition \ref{triangequiv} holds for $\bsfF(\rX)$. 

As in the diagram \eqref{eq:commudiag-quasi}, we have quasi-isomorphisms
\begin{align*}\Hom_{\bfM_{\rA_1}(\tk)}(\rF\rA_1,\rG\rA_1)\longrightarrow \Hom_{\bfM_{\bsfF(\rA_1)}(\bsfF\tk)}(\bsfF(\rF)\bsfF(\rA_1),\bsfF(\rG)\bsfF(\rA_1)).\end{align*}
Thus, we have the following commutative diagram of dg morphisms
\begin{align*}
\xymatrix{
\Hom_{\bfM_{\rA_1}(\tk)}(\rF\rA_1,\rG\rA_1)\ar[r]\ar[d]&\End_{\bfM_{\rA_2}(\tk)}(\rF\rX,\rG\rX)\ar[d]\\
\Hom_{\bfM_{\bsfF\rA_1}(\bsfF\tk)}(\bsfF(\rF)\bsfF(\rA_1),\bsfF(\rG)\bsfF(\rA_1))\ar[r]&\End_{\bfM_{\bsfF\rA_2}(\bsfF\tk)}(\bsfF(\rF)\bsfF(\rX),\bsfF(\rG)\bsfF(\rX)),
}
\end{align*}
where the vertical morphisms are quasi-isomorphisms, and the top vertical morphism is a quasi-isomorphism as $\Phi\colon\bfM_{\rA_1}\to \bfM_{\rA_2}$ is a quasi-equivalence. Thus, the bottom vertical morphism is a quasi-isomorphism.  For general $1$-morphisms $\rH,\rK$ in $\cD$, we have homotopy isomorphisms $\bsfF(\rF)\cong \rH$ and $\bsfF(\rG)\cong \rK$.
Composing with these homotopy isomorphisms we obtain the commutative diagram
\begin{align*}
\xymatrix{
\Hom_{\bsfP\bfM_{\rA_1}(\tk)}(\bsfF(\rF)\bsfF(\rA_1),\bsfF(\rG)\bsfF(\rA_1))\ar[d]\ar[r]&\End_{\bsfP\bfM_{\rA_2}(\tk)}(\bsfF(\rF)\bsfF(\rX),\bsfF(\rG)\bsfF(\rX))\ar[d]\\
\Hom_{\bsfP\bfM_{\rA_1}(\tk)}(\rH\bsfF(\rA_1),\rK\bsfF(\rA_1))\ar[r]&\End_{\bsfP\bfM_{\rA_2}(\tk)}(\rH\bsfF(\rX),\rK\bsfF(\rX)),
}
\end{align*}
where the vertical morphisms are quasi-isomorphisms by the above. As the top horizontal morphism is a quasi-isomorphism, so is the bottom morphism.

Hence, both conditions of Proposition \ref{triangequiv} are satisfied for the dg bimodule $\bsfF(\rX)$, which implies that the morphism $\bsfP(\Phi)$ from $\bsfP\bfM_{\rA_1}\cong \bfM_{\bsfF(\rA_1)}$ to $\bsfP\bfM_{\rA_2}\cong \bfM_{\bsfF(\rA_2)}$ is a quasi-equivalence and Part \eqref{prop:preserve-quasi-eqa} follows. 

To prove Part \eqref{prop:preserve-quasi-eqb}, assume given a quasi-equivalence $\Psi\colon \bfM_{\rB_1}\to \bfM_{\rB_2}$ of internal dg $2$-representations of $\cD$. Since $\bsfF$ is essentially surjective on objects, without loss of generality, $\rB_1\in \cD(\bsfF(\ti),\bsfF(\ti))$ and $\rB_2\in \cD(\bsfF(\tj),\bsfF(\tj))$ for objects $\ti,\tj$ in $\cC$. By Proposition \ref{triangequiv}, $\Psi$ corresponds to composing with a dg $\rB_1$-$\rB_2$-bimodule $\rY$ in $\vv{\cD}$ which descends to a cyclic generator for  $\bfK\bfM_{\rB_2}$ and induces quasi-isomorphisms on morphism spaces. We observe that 
\begin{align*}\Psi(\rB_1)\cong \rY\in \big(\bfG_{\bfM_{\rB_2}}(\rB_2)\big)(\bsfF(\tj))=\big(\bfG_{\bsfR\bfM_{\rB_2}}(\rB_2)\big)(\tj).\end{align*}
By Lemma \ref{lem:Bquasi-gen}, $\rB_2$ cyclically generates $\bfK\bsfR\bfM_{\rB_2}$. 
Now, as $\bsfF$ is a local quasi-equivalence, $\rB_2$ is in the thick closure of $\lbrace \bsfF(\rG)\rY | \rG\rbrace$ in $\bfK\bsfR\bfM_{\rB_2}$. Thus, $\rY$ cyclically generates $\bfK\bsfR\bfM_{\rB_2}$. 
Moreover, the quasi-isomorphisms 
\begin{align*}\Hom_{\bfM_{\rB_1}(\tk)}(\rH \rB_1,\rK\rB_1)\xrightarrow{\Psi}\Hom_{\bfM_{\rB_2}(\tk)}(\rH \rY,\rK\rY),\end{align*}
for $1$-morphisms  $\rH,\rK$ in $\cD$, induced by $\Psi$ provide quasi-isomorphisms
\begin{align*}
\xymatrix@R=0.5pc{
\Hom_{\bsfR\bfM_{\rB_1}(\tth)}(\bsfF(\rF) \rB_1,\bsfF(\rG)\rB_1)\ar@{=}[d]\ar[r]^{\bsfR\Psi}&\Hom_{\bsfR\bfM_{\rB_2}(\tth)}(\bsfF(\rF)\rY,\bsfF(\rG)\rY)\ar@{=}[d]\\
\Hom_{\bfM_{\rB_1}(\bsfF\tth)}(\bsfF(\rF)\rB_1,\bsfF(\rG)\rB_1)\ar[r]^{\Psi}&\Hom_{\bfM_{\rB_2}(\bsfF\tth)}(\bsfF(\rF)\rY,\bsfF(\rG)\rY)\\
}
\end{align*}
when restricting to acting by $1$-morphisms of the form $\bsfF(\rF)$, $\bsfF(\rG)$, for $1$-morphisms $\rF,\rG$ in $\cC$, and $\tth$ an object of $\cC$. Thus, $\bsfR(\Psi)$ is a quasi-equivalence of dg $2$-representations of $\cC$.
\end{proof}

The results of this section imply the following corollary.

\begin{corollary}\label{cor:quasi-2-equiv}
Assume given a dg $2$-functor $\bsfF\colon \cC\to \cD$ that is a local quasi-equivalence and essentially surjective on objects. 
\begin{enumerate}[(a)]
\item\label{cor:quasi-2-equiva}
For every internal dg $2$-representation $\bfM_{\rB}$ of $\cD$ there exists an internal dg $2$-representation $\bfM_{\rA}$ of $\cC$ and a quasi-equivalence of dg $2$-representations of $\cD$ from $\bsfP\bfM_\rA$ to $\bfM_{\rB}$.
\item \label{cor:quasi-2-equivb}
For every internal dg $2$-representation $\bfM_{\rA}$ of $\cC$ there is a quasi-equivalence of dg $2$-representations of $\cC$ from $\bfM_{\rA}$ to $\bsfR\bfM_{\bsfF(\rA)}$.
\end{enumerate}
\end{corollary}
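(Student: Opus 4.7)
Part (b) is immediate from the results already established: for any $\bfM_\rA\in \cC\intworep$, the morphism $\Phi\colon \bfM_\rA\to \bsfR\bsfP\bfM_\rA$ of Lemma \ref{lem:unit} is a quasi-equivalence by Proposition \ref{prop-unit-quasi}, and by definition of $\bsfP$ we have $\bsfR\bsfP\bfM_\rA = \bsfR\bfM_{\bsfF(\rA)}$, so $\Phi$ is the desired quasi-equivalence.

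For part (a), my plan is to adapt the construction of Lemma \ref{lem:counit} together with the verification in Proposition \ref{prop-counit-quasi} to the situation where $\rB$ is merely a $\cC$-\emph{quasi}-generator of $\bsfR\bfM_\rB$ (rather than a genuine $\cC$-generator), this weaker property being supplied automatically by Lemma \ref{lem:Bquasi-gen}. By essential surjectivity of $\bsfF$ on objects, I may assume $\rB\in \cD(\bsfF(\ti),\bsfF(\ti))$ for some object $\ti$ of $\cC$. I then set $\rA := {}_\cC[\rB,\rB]\in \vv{\cC}(\ti,\ti)$, which is a dg algebra $1$-morphism by Lemma \ref{algebralemma}. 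The construction of the algebra homomorphism $\varphi\colon \bsfF(\rA)\to\rB$ given in the proof of Lemma \ref{lem:counit} never actually invokes the $\cC$-generator hypothesis -- it only requires that ${}_\cC[\rB,\rB]$ be defined -- so it carries over verbatim and induces a morphism of dg $2$-representations $\Psi\colon \bsfP\bfM_\rA = \bfM_{\bsfF(\rA)}\to \bfM_\rB$ via $(-)\circ_{\bsfF(\rA)}\rB$.

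To show $\Psi$ is a quasi-equivalence, I would invoke Proposition \ref{triangequiv} with $\rX = \Psi(\bsfF(\rA))=\rB$. Condition \eqref{triangequiv2} holds because $\rB$ cyclically generates $\bfK\bfM_\rB$ by construction, and condition \eqref{triangequiv1} is verified exactly as in the proof of Proposition \ref{prop-counit-quasi}: the commutative diagram \eqref{phicomp} reduces the claim to showing that $(\id_\rH\circ_0\varphi)\circ_1 -$ is a quasi-isomorphism on the appropriate Hom-spaces, which is first checked for $\rF=\bsfF(\rF'),\rG=\bsfF(\rG')$ using the defining adjunction of $\rA = {}_\cC[\rB,\rB]$ combined with local quasi-equivalence of $\bsfF$, and then extended to arbitrary $\rF,\rG\in \cD(\bsfF(\ti),\tk)$ via zigzags of homotopy isomorphisms that exist because $\bsfF$ is a local quasi-equivalence.

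The main obstacle I anticipate is verifying that $\rA = {}_\cC[\rB,\rB]$ is well-defined as a $1$-morphism in $\vv{\cC}(\ti,\ti)$ -- equivalently, that the evaluation $\Eval_\rB\colon \cC(\ti,-)\to \bsfR\bfM_\rB(-)$ has a compact right adjoint in the sense of Section \ref{sec:compadj}. Compactness of $\bfM_\rB$ as an internal $\cD$-representation supplies ${}_\cD[\rB,Y]\in \vv{\cD}$ for every $Y$, but the pullback along $\bsfF$ of a compact dg $\cD$-module need not be compact on the nose in the sense of Lemma \ref{vv=compact}. The resolution is that $\vv{\bsfF}$ is itself a local quasi-equivalence (by the unnumbered corollary following Lemma \ref{vvcC}), so every object of $\vv{\cD}(\bsfF(\ti),\bsfF(\tk))$ admits a compact representative in $\vv{\cC}(\ti,\tk)$ up to homotopy isomorphism; transferring the dg algebra structure along this quasi-equivalence, in the spirit of Keller's results \cite{Ke1}, yields the required $\rA$.
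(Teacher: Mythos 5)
Your proposal is correct and follows essentially the same route as the paper: part (b) is Proposition \ref{prop-unit-quasi} applied directly, and part (a) sets $\rA={}_{\cC}[\rB,\rB]$, observes that the construction of $\varphi$ in Lemma \ref{lem:counit} does not actually use the $\cC$-generator hypothesis, and then reruns the quasi-isomorphism argument of Proposition \ref{prop-counit-quasi}. Your extra remark about why ${}_{\cC}[\rB,\rB]$ is a well-defined object of $\vv{\cC}(\ti,\ti)$ flags a point the paper's own proof passes over silently, which is a reasonable (and slightly more careful) addition.
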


\begin{proof}
We first proof Part \eqref{cor:quasi-2-equiva}.
Assume given an internal dg $2$-representation $\bfM_\rB$ of $\cD$ and, w.l.o.g., $\rB\in \cD(\bsfF(\tk),\bsfF(\tk))$. Set $\rC'={}_{\cC}[\rB,\rB]$ and $\rC=\bsfF(\rC')$, then $\bsfP\bfM_{\rC'}=\bfM_\rC$.  The proof of Lemma \ref{lem:counit} shows that, even if $\rB$ is not a $\cC$-generator for $\bsfR\bfM_\rB$, there is a natural morphism of dg $2$-representations $\Psi\colon \bfM_\rC \to \bfM_\rB$ which, by the proof of Proposition \ref{prop-counit-quasi}, is quasi-equivalence. Setting $\rA=\rC'$, Part \eqref{cor:quasi-2-equiva} follows.

Part \eqref{cor:quasi-2-equivb} directly follows from Proposition \ref{prop-unit-quasi}.
\end{proof}


\section{Examples}
\label{sec:examples}

\subsection{The dg \texorpdfstring{$2$}{2}-category \texorpdfstring{$\cC_A$}{C A}}\label{CAsection}

Let $A=A_\tone\times\cdots \times A_\tn$ be a finite-dimensional dg algebra, where each $A_\ti$ is indecomposable as a $\Bbbk$-algebra. 

Let $\A_\ti$ be a small dg category dg equivalent to $\widehat{\{A_\ti\}}$ inside $A_\ti\dgmod$, and set $\A = \coprod_{\ti=\tone}^\tn \A_\ti$. Note that, by definition, $\A$ is a pretriangulated category.

\begin{definition}
We define $\cC_A$  as the dg $2$-category with 
\begin{itemize}
\item objects $\mathtt{1}, \dots, \mathtt{n}$ where we identify $\mathtt{i}$ with $\A_\ti$;
\item $1$-morphisms in $\cC_A(\ti,\tj)$ are all functors dg isomorphic to tensoring with dg $A_\tj$-$A_\ti$-bimodules in the thick closure of $A_\tj\otimes_\Bbbk A_\ti$, if $\ti\neq\tj$, and in the thick closure of  of $A_\ti\oplus A_\ti\otimes_\Bbbk A_\ti$, if $\ti=\tj$;
\item $2$-morphisms all natural transformations of such functors.
\end{itemize}
\end{definition}

We define the \textbf{natural $2$-representation} $\bfN$ of $\cC_A$ as its defining action on $\A$, that is,  $\bfN(\ti) = \A_\ti$, $\bfN(\rF) = \rF$ for a $1$-morphism $\rF$, and $\bfN(\alpha) = \alpha$ for a $2$-morphism $\alpha$. By construction, $\cC_A$ is a pretriangulated $2$-category, and $\bfN$ is in $\cC_A\tworep$. It is cyclic with any $A_\ti\in \bfN(\ti)=\A_\ti$ as a $\cC_A$-generator.

We further denote by $\rF_{\tj,\ti}$ the functor given by tensoring with $A_\tj\otimes_\Bbbk A_\ti$.

\begin{lemma}\label{lem:AforN}
Choosing $A_\ti$ as a $\cC_A$-generator for $\bfN$, the corresponding algebra dg $1$-morphism $\rA_{A_\ti}$ is given by (tensoring with) the dg bimodule $A_\ti^*\otimes_{\Bbbk}A_\ti$.
\end{lemma}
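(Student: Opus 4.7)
The plan is to use the defining adjunction of $[A_\ti, A_\ti]$: for every $1$-morphism $\rF$ in $\cC_A(\ti,\ti)$,
\begin{equation*}
\Hom_{\bfN(\ti)}(\bfN(\rF)A_\ti, A_\ti) \;\cong\; \Hom_{\vv{\cC_A(\ti,\ti)}}\!\bigl(\rF,\, [A_\ti, A_\ti]\bigr),
\end{equation*}
and to check that $A_\ti^*\otimes_\Bbbk A_\ti$ represents the left-hand side functorially. First I would compute the left-hand side: writing $\rF=\rF_M$ for $M$ a dg $A_\ti$-bimodule in the thick closure of $A_\ti\oplus A_\ti\otimes_\Bbbk A_\ti$, we have $\bfN(\rF_M)A_\ti = M\otimes_{A_\ti}A_\ti\cong M$ as a left dg $A_\ti$-module, so the left-hand side becomes $\Hom_{A_\ti}(M, A_\ti)$ (the dg Hom of \emph{left} modules, forgetting the right action on $M$).

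Next I would produce the candidate $[A_\ti, A_\ti]$ via tensor--Hom adjunction along the algebra inclusion $A_\ti\hookrightarrow A_\ti^e:=A_\ti\otimes_\Bbbk A_\ti^{\op}$, $a\mapsto a\otimes 1$. The right adjoint to the forgetful functor $A_\ti^e\text{-}\mathrm{mod}\to A_\ti\text{-}\mathrm{mod}$ is coinduction $N\mapsto \Hom_{A_\ti}(A_\ti^e, N)$. For $N=A_\ti$, since $A_\ti^e$ is free as a left $A_\ti$-module, this gives $\Hom_\Bbbk(A_\ti, A_\ti)\cong A_\ti^*\otimes_\Bbbk A_\ti$ as a $\Bbbk$-space. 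The induced bimodule structure (which I would compute explicitly) is $(b_1\cdot(\phi\otimes a)\cdot b_2) := (\phi\cdot b_2)\otimes b_1 a$, where $(\phi\cdot b_2)(x)=\phi(b_2 x)$. Thus the adjunction furnishes a natural isomorphism
\begin{equation*}
\Hom_{A_\ti}(M, A_\ti) \;\cong\; \Hom_{A_\ti^e}\!\bigl(M,\, A_\ti^*\otimes_\Bbbk A_\ti\bigr),
\end{equation*}
which is precisely the desired adjunction, identifying the representing bimodule with $A_\ti^*\otimes_\Bbbk A_\ti$.

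It remains to verify that $A_\ti^*\otimes_\Bbbk A_\ti$ actually lies in $\vv{\cC_A(\ti,\ti)}$. Since $A_\ti$ is finite-dimensional, $A_\ti^*$ is finitely presented as a right $A_\ti$-module, so it admits a presentation $A_\ti^m\to A_\ti^k\to A_\ti^*\to 0$. Tensoring on the left with $A_\ti$ over $\Bbbk$ exhibits $A_\ti^*\otimes_\Bbbk A_\ti$ as the cokernel of a bimodule map $(A_\ti\otimes_\Bbbk A_\ti)^m\to (A_\ti\otimes_\Bbbk A_\ti)^k$, which is an object of $\vv{\cC_A(\ti,\ti)}$ by construction (via Corollary~\ref{cokerlemma}). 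Finally, I would match the algebra structure predicted by Lemma~\ref{algebralemma}: the unit $u_{\rA_{A_\ti}}\colon \one_\ti\to [A_\ti,A_\ti]$ corresponds under the adjunction to $\id_{A_\ti}$, and the multiplication $m_{\rA_{A_\ti}}$ to the double evaluation $\eval\circ \bfN(\rA_{A_\ti})(\eval)$. Under the identification $A_\ti^*\otimes_\Bbbk A_\ti\cong \End_\Bbbk(A_\ti)$, these become $\id_{A_\ti}$ and composition of $\Bbbk$-linear endomorphisms, which is the standard associative algebra structure on $\End_\Bbbk(A_\ti)$.

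The main obstacle is purely bookkeeping: keeping the sidedness conventions (left vs.\ right actions, the role of $A_\ti^{\op}$, and the direction of the restriction/coinduction adjunction) consistent so that the bimodule structure produced by the abstract adjunction matches the explicit one on $A_\ti^*\otimes_\Bbbk A_\ti$. Once the conventions are fixed, every step is a routine application of tensor--Hom adjunction and the finite-dimensionality of $A_\ti$.
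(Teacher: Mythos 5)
Your proposal is correct and follows essentially the same route as the paper's proof: both verify the defining adjunction $\Hom_{\bfN(\ti)}(\bfN(\rF)A_\ti,A_\ti)\cong\Hom_{\vv{\cC_A}(\ti,\ti)}(\rF,A_\ti^*\otimes_\Bbbk A_\ti)$ by identifying $A_\ti^*\otimes_\Bbbk A_\ti\cong\Hom_\Bbbk(A_\ti,A_\ti)$ and invoking the tensor--Hom (coinduction) adjunction along $A_\ti\hookrightarrow A_\ti\otimes_\Bbbk A_\ti^{\op}$; the paper merely checks this on the generator $\one_\ti\oplus\rF_{\ti,\ti}$ rather than on an arbitrary $\rF_M$, which is equivalent. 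Your additional verifications (that $A_\ti^*\otimes_\Bbbk A_\ti$ lies in $\vv{\cC_A}(\ti,\ti)$, and the identification of the algebra structure) are consistent with remarks the paper makes outside the proof.
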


We remark that $\rA_{A_\ti}$ is indeed a dg $1$-morphism in $\vv{\cC_A}$ by our assumption of finite-dimensionality of $A_\ti$. In fact, $\cC_A$ has compact left adjoints and $\bfN$ is compact as required in Section \ref{mor2rep-sect}.

\begin{proof}
First, notice that since $A_\ti\in \bfN(\ti)$, the dg algebra $1$-morphism $\rA_{A_\ti}$ is thus in $\vv{\cC_A}(\ti,\ti)$ and we only need to apply $\Eval_{A_\ti}$ to the generator of this dg category, which is given by $\one_\ti\oplus \rF_{\ti,\ti}$. We thus verify that
\begin{equation*}
\begin{split}
\Hom_{\vv{\cC_A}(\ti,\ti)}(\one_\ti\oplus \rF_{\ti,\ti}, \rA_{A_\ti})&\cong
\Hom_{A_\ti\text{-}\mathrm{mod}^{\mathrm{dg}}\text{-}A_\ti}(A_\ti\oplus A_\ti\otimes_\Bbbk A_\ti, A_\ti^*\otimes_{\Bbbk}A_\ti)\\
&\cong\Hom_{A_\ti\text{-}\mathrm{mod}^{\mathrm{dg}}\text{-}A_\ti}(A_\ti\oplus A_\ti\otimes_\Bbbk A_\ti, \Hom_{\Bbbk}(A_\ti^\Bbbk)\otimes_\Bbbk A_\ti)\\
&\cong \Hom_{A_\ti\text{-}\mathrm{mod}^{\mathrm{dg}}\text{-}A_\ti}(A_\ti\oplus A_\ti\otimes_\Bbbk A_\ti, \Hom_{\Bbbk}(A_\ti,A_\ti))\\
&\cong \Hom_{A_\ti\text{-}\mathrm{mod}^{\mathrm{dg}}}(A_\ti\oplus A_\ti\otimes_\Bbbk A_\ti, A_\ti)\\
&\cong \Hom_{\vv{\bfN}(\ti)}(\Eval_{A_\ti}(\one_\ti\oplus \rF_{\ti,\ti}), A_\ti),
\end{split}
\end{equation*}
which proves the claim.
\end{proof}
Hence, as $\bfN(\tj)$ is dg idempotent complete for any object $\tj$, Corollary~\ref{cor:stronggenMAequiv} implies that $\bfN$ is dg equivalent to the internal dg $2$-representation $\bfM_{\rA_{A_{\ti}}}$, for any $\ti$.

As a first example, we can consider the dg $2$-category $\cC_\Bbbk$. In this case, $\vv{\cC}_\Bbbk$ and $\cC_\Bbbk$ are dg biequivalent and a dg algebra $1$-morphism in $\cC_\Bbbk$ corresponds to a finite-dimensional dg algebra over $\Bbbk$. Results of \cite{Or2} imply the following corollary.

\begin{proposition}\label{lem:Ck-simpletrans}
There is a unique non-acyclic quotient-simple pretriangulated $2$-repre\-sen\-tations of $\cC_\Bbbk$ up to dg equivalence.
\end{proposition}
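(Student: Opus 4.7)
The plan is to exhibit $\bfN\simeq\bfM_\Bbbk$ as a non-acyclic quotient-simple pretriangulated $2$-representation, and then to derive uniqueness by translating the problem into a classification of simple finite-dimensional dg $\Bbbk$-algebras via the machinery of Section~\ref{alg1morsec}, at which point the cited result of Orlov applies.

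For existence, I would verify that $\bfN\simeq\bfM_\Bbbk$ (which is the $A=\Bbbk$ case of Lemma~\ref{lem:AforN}) is non-acyclic, since $\Bbbk\in\bfN(\bullet)$ is evidently not contractible, and quotient-simple; the latter follows from Lemma~\ref{idealonoverline}, which reduces dg ideals of $\bfN(\bullet)\simeq\overline{\{\Bbbk\}}$ to dg ideals of the field $\End(\Bbbk)=\Bbbk$, of which the only ones are $0$ and $\Bbbk$.

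For uniqueness, let $\bfM$ be any non-acyclic quotient-simple pretriangulated $2$-representation of $\cC_\Bbbk$. Quotient-simplicity forces weak transitivity, so any nonzero object $X\in\bfM(\bullet)$ (which exists because $\bfM$ is non-acyclic) weakly $\cC_\Bbbk$-generates $\bfM$, and hence becomes a $\cC_\Bbbk$-generator for the idempotent completion $\bfM^\circ$. Because $\cC_\Bbbk$ has compact left adjoints and all its hom spaces are finite-dimensional over $\Bbbk$, Lemma~\ref{cyclicgen} places $\bfM^\circ$ in $\cC_\Bbbk\ctworep$, so that Corollary~\ref{cor:stronggenMAequiv} yields a dg equivalence $\bfM^\circ\simeq\bfM_{\rA_X}$. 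Using the dg biequivalence $\vv{\cC}_\Bbbk\simeq\cC_\Bbbk$ recorded just before the proposition, $\rA_X$ becomes an honest finite-dimensional dg algebra $A$ over $\Bbbk$. By Lemma~\ref{simplelemma}, $A$ is simple in the sense of having no proper dg (two-sided) ideals, and it is non-acyclic because $\bfM$ is.

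The problem is thereby reduced to the purely algebraic statement that every non-acyclic finite-dimensional dg $\Bbbk$-algebra with no proper dg ideals is dg Morita equivalent to $\Bbbk$. This algebraic classification is the hard part and the main obstacle: the results of Orlov~\cite{Or2} are needed to exclude simple finite-dimensional dg algebras with cohomology spread across several degrees or with a nontrivial $A_\infty$-structure, leaving $\Bbbk$ as the only Morita class. Once this is granted, Corollary~\ref{eq2reps} produces $\bfM^\circ\simeq\bfM_\Bbbk\simeq\bfN$, which yields uniqueness of $\bfM$ up to dg equivalence.
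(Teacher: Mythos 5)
Your reduction follows the paper's strategy (Lemma~\ref{simplelemma} plus the internal-algebra machinery to trade $\bfM$ for a finite-dimensional dg $\Bbbk$-algebra $A$ with no proper dg ideals), and your existence check for $\bfN\simeq\bfM_\Bbbk$ via Lemma~\ref{idealonoverline} is fine. But the proof stops exactly where the real work begins: you state that ``every non-acyclic finite-dimensional dg $\Bbbk$-algebra with no proper dg ideals is dg Morita equivalent to $\Bbbk$'' is the hard part, and then grant it. That statement \emph{is} the proposition, modulo formalities, and it is not a black box one can simply cite. The paper proves it in three concrete steps: (i) Orlov's external dg ideal $J_+$ must be $0$ or all of $A$; the latter forces acyclicity, so in the non-acyclic case \cite[Proposition~2.16]{Or2} shows the underlying ungraded algebra is a product of matrix rings, and since the central idempotents are annihilated by $\del$, simplicity leaves a single factor $M_n(\Bbbk)$; (ii) by \cite[Proposition~2.15]{Or2} every differential on $M_n(\Bbbk)$ is realized by $V^*\otimes V$ for some dg $\Bbbk$-module $V$; (iii) $V^*\otimes V$ is dg Morita equivalent to $\Bbbk$ via the bimodules $V$ and $V^*$, using Proposition~\ref{dgMoreq} and $V\otimes_{V^*\otimes V}V^*\cong\Bbbk$. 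None of this appears in your argument, and your gloss on what Orlov's results do (``exclude \dots nontrivial $A_\infty$-structure'') misdescribes the content: the issue is the structure theory of honest finite-dimensional dg algebras, not $A_\infty$-deformations.

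A secondary soft spot: you pass from ``$X$ weakly $\cC_\Bbbk$-generates $\bfM$'' to ``$X$ is a $\cC_\Bbbk$-generator of $\bfM^\circ$''. In the dg setting an object in the weak closure is split off by an idempotent that need not be a \emph{dg} idempotent (annihilated by $\del$), so it need not survive as a dg direct summand in $\bfM^\circ$; this is precisely why the paper only obtains a fully faithful comparison $\bfM^\Diamond\to\bfM_{\rA_X}^\Diamond$ for weak generators rather than an equivalence, and exhibits an example where the embedding is proper. To invoke Corollary~\ref{cor:stronggenMAequiv} you need an honest $\cC$-generator (or the quasi-generator variant, Corollary~\ref{cor:quasi-gen}), so this step needs a separate justification or a reformulation of which equivalence class of $2$-representations you are actually pinning down.
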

\begin{proof}
By Lemma \ref{simplelemma}, a quotient simple $2$-representation $\bfM$ corresponds to a dg $\Bbbk$-algebra $A$ that does not have any proper dg ideals. Using \cite{Or2}, this in particular implies that the dg ideal denoted by $J_+$ in loc.~cit. is zero, or $J_+=A$, in which case the dg algebra and hence the dg $2$-representation are acyclic. Thus, if $\bfM$ is not acyclic, the (ungraded) $\Bbbk$-algebra underlying $A$ is a product of matrix rings by \cite[Proposition 2.16]{Or2}.
It is remarked in the proof of loc.~cit.  that the central idempotents defining the individual factors are annihilated by $\del$, and hence simplicity of $A$ implies that the $\Bbbk$-algebra underlying $A$ is isomorphic to $M_n(\Bbbk)$ for some $n$.

Given a (non-zero) dg $\Bbbk$-module $V$, $V^*\otimes V$ is a dg $\Bbbk$-algebra with underlying $\Bbbk$-algebra isomorphic to $M_n(\Bbbk)$. As a dg algebra $1$-morphism in $\cC_\Bbbk$, $V^*\otimes V$ appears as the internal hom $[V,V]$ obtained from $\bfN$ with $V$ as a (weak) $\cC_\Bbbk$-generator similarly to Lemma \ref{lem:AforN}. In fact, all differentials on matrix rings are of this form by \cite[Proposition~2.15]{Or2}. Thus, $A$ is isomorphic to $V^*\otimes V$ as a dg algebra, for some dg $\Bbbk$-module $V$. 

It remains to show that the pretriangulated $2$-representations $\bfM_{V^*\otimes V}$  are dg equivalent for any choice of $V$. For this, we apply Proposition \ref{dgMoreq} to $V$ viewed as a $\Bbbk$-$V^*\otimes V$-bimodule and $V^*$ viewed as a $V^*\otimes V$-$\Bbbk$-bimodule. Since $V\otimes_{V^*\otimes V} V^*\cong \Bbbk$ is an isomorphism of dg $\Bbbk$-$\Bbbk$-bimodules, it follows that $V^*\otimes V$ and $\Bbbk$ are dg Morita equivalent and hence $\bfM_{V^*\otimes V}$ is dg equivalent to the natural $2$-representation $\bfN\cong \bfM_{\Bbbk}$ of $\cC_\Bbbk$. This proves the claim.
\end{proof}

\begin{example}
Consider the dg $\Bbbk$-algebra $D=\Bbbk[x]/(x^2)$ with $\del(x)=1$. Then $D$ has no proper dg ideals. The \emph{internal dg ideal} $I_-$ of $D$ is equal to zero, while the \emph{external dg ideal} $I_+=D$. This implies that $D$ is acyclic \cite[Section~2.1]{Or2}. In particular, $D$ has no proper dg ideals, but is not given by a matrix algebra.
\end{example}

\subsection{Pretriangulated hulls of finitary \texorpdfstring{$2$}{2}-categories and \texorpdfstring{$2$}{2}-representations}
\label{prehullsec}

Let $\cD$ be a $\Bbbk$-finitary $2$-category in the sense of \cite{MM1}. We can view $\cD$ a dg $2$-category with zero differential. Let $\cC=\ov{\cD}$ be the pretriangulated $2$-category associated to it. Any finitary $2$-representation $\bfM$ of $\cD$ extends naturally to a pretriangulated $2$-representation $\ov{\bfM}$ of $\cC$.
It is easy to see that given an object $X\in \bfM(\ti)$ and viewing it as an object in $\ov{\bfM}(\ti)$ by placing it in degree $0$, the associated dg algebra $1$-morphism $\rA_X$ in $\vv{\cC}(\ti, \ti)$ is simply the algebra $1$-morphism obtained from the finitary $2$-representation $\bfM$, again interpreted as an object in  $\vv{\cC}(\ti, \ti)$ by placing it in degree zero with zero differential.

For example, if $A=A_\tone\times\ldots\times A_\tn$, for $A_\ti$ finite-dimensional connected $\Bbbk$-algebras, one defines the $\Bbbk$-finitary $2$-category $\cC_A$ following \cite[Section~4.5]{MM3}. Then the pretriangulated hull of $\cC_A$ recovers the dg category $\cC_{A_0}$ defined in Section \ref{CAsection}, for the dg algebra $A_0$ given by $A$ with zero differential. The dg category $\cC_{A_0}(\ti,\tj)$ is dg equivalent to that of bounded complexes of $A_\tj$-$A_\ti$-bimodules that are projective (if $\ti\neq \tj$) or a direct sum of projective $A_\ti$-$A_\ti$-bimodules and direct sums of copies of $A_\ti$, if $\ti=\tj$.

\begin{question}
Are there quotient-simple dg $2$-representations of $\cC$ which are not dg equivalent to ones of the form $\ov{\bfM}$ for a simple transitive $2$-representation $\bfM$ of $\cD$?
\end{question}

We remark that this question has a negative answer in case $\cD=\cC_\Bbbk$ by Proposition~\ref{lem:Ck-simpletrans}.

\subsection{Pretriangulated hulls of finite multitensor categories}

We can consider a (strict) finite multitensor category $\cD$ as in \cite{EGNO}, viewed as a $2$-category with one object $\bullet$, and consider its pretriangulated hull $\cC=\ov{\cD}$ as in the preceding section. For any $2$-representation $\bfN$ of (in \cite{EGNO} called a (strict) module category over) $\cD$ gives rise to a pretriangulated $2$-representation $\bfM=\ov{\bfN}$ of $\cC$. 

Now assume that $\cD$ is a strict finite multitensor category. In particular, $\cD$ is abelian, locally finite, and every $1$-morphism has left and right adjoints. These structures extend to the pretriangulated hull $\cC$.
Therefore, the evaluation dg functor
$\Eval_X\colon \cC(\bullet,\bullet)\to \bfM(\bullet)$ from Section~\ref{inthomsec}
has a right adjoint $[X,-]\colon \bfM(\bullet)\to \cC(\bullet,\bullet)$ without extending to $\vv{\cC}$ (see \cite[Sections~4.2,~7.3]{EGNO}).
Our results in Section~\ref{alg1morsec} generalize aspects of \cite[Section~7]{EGNO}.
Similarly to Section \ref{prehullsec}, algebra $1$-morphisms associated to $\bfM=\ov{\bfN}$ can be chosen inside $\cD$ (i.e., concentrated in degree zero).

\subsection{Categorification of \texorpdfstring{$\mathbb{Z}[i]$}{Z[i]}}
\label{sec:Zi}

In \cite{Ti}, Tian constructs an explicit monoidal dg category $\I$ which decategorifies to $\mathbb{Z}[i]$, and its action on a dg category $DGP(R)$, which categorifies the natural action of $\mathbb{Z}[i]$ on $\mathbb{Z}^{\oplus 2}$. Here, we explicitly compute a dg algebra $1$-morphism associated to this dg $2$-representation. We adopt the notation in loc.~cit.

Let $X=P(x)$, the projective $R$-module at vertex $x$, which generates the dg $2$-representa\-tion. Using the fact that $\I = \ov{\{Q^0,Q^1=Q\}}$ (see \cite[Definition 2.2]{Ti} together with the isomorphism $Q^2\cong Q^0[1]$) it suffices to compute morphism spaces to $[X,X]$ from $Q^0$ and $Q^1$. We compute 
\begin{equation*}\begin{split}
\Hom_{\I}(Q^0, [X,X]) &\cong \Hom_{\I}(Q^{0}\otimes_R P(x),P(x))\cong \Hom_{\I}(P(x),P(x))\cong \Bbbk[d]/(d^2)\\
\Hom_{\I}(Q^1, [X,X]) &\cong \Hom_{\I}(Q^1\otimes_R P(x),P(x))\cong \Hom_{\I}(P(y),P(x))\cong \Bbbk[-1],
\end{split}\end{equation*}
where $d$ is the degree $1$-endomorphism of $P(x)$.
This implies that the dg algebra $1$-mor\-phism in $\vv{\I}$ is given by $Q^1[-2]\xrightarrow{\alpha_0\circ\overline{hf}}Q^0$.
The multiplication is given by 
\begin{align*}
\xymatrix{
Q^1[-2]\oplus Q^1[-2]\ar[d]^{(\id,\id)}\ar[rr]^-{(\alpha_0\circ \ov{hf},\alpha_0\circ \ov{hf})}&&Q^0\ar[d]^{\id}\\
Q^1[-2]\ar[rr]^{\alpha_0\circ \ov{hf}}&&Q^0.
}
\end{align*}
As multiplication is induced by the identity on $Q_0$, there is a dg algebra epimorphism given by
\begin{align*}\xymatrix{
Q^1[-2]\ar^{\alpha_0\circ\overline{hf}}[rr]\ar^{\alpha_1}[d]&&Q^0\ar^{\id}[d]\\
Q^1[-1]\ar^{\overline{hf}}[rr]&&Q^0
}\end{align*} 
(note that $\alpha_0\circ\overline{hf} = \overline{hf}\circ \alpha_1$). The given dg $2$-representation is thus not quotient-simple. Indeed, it has an ideal given by the radicals of $R$ and $M$.
Note that this dg ideal satisfies the conditions from Lemma \ref{quotientpretri}\eqref{quotientpretri3}.
The quotient is given by factoring out the radicals of $R$ and $M$, i.e. acting on $DGP(R')$ where $R'=\Bbbk e_x\times \Bbbk e_y$ and the action of $Q_1$ given by the $R'$-$R'$-bimodule $M' = \Bbbk e_y\otimes e_x\oplus \Bbbk e_x\otimes e_y[1]$.

\subsection{Braid group categorification}
\label{sec:braid}

The setup developed in this paper can be applied to existing categorified braid group actions from the literature. Firstly,
in \cite{KS}, the authors consider a categorification of the braid group of type $\mathtt{A}_n$ using complexes of bimodules over the symmetric zigzag algebra $Z=Z_n$ on $n$-vertices. We denote the resulting dg $2$-category by $\cB_n$ with generating $1$-morphisms $T_i= Ze_i\otimes e_iZ \to Z$, $T_i'= Z\to Ze_i\otimes e_iZ$, as well as the identity bimodule.

Note that any pretriangulated $2$-representation of $\cB_n$ is necessarily closed under cones, and hence extends to a dg $2$-representation of $\cC_Z$, see Section~\ref{CAsection}. In fact, $\cC_Z$ is the dg idempotent completion of the $\ov{\cB_n}$.

Given that any dg ideal in a dg $2$-representation of $\cB_n$ gives rise to a dg ideal in the extended dg $2$-representation of $\cC_Z$ and vice versa, there is a one-to-one correspondence between quotient-simple pretriangulated $2$-representations of $\cB_n$ and $\cC_Z$.  Thus, it follows from Lemma \ref{lem:AforN} that the dg algebra $1$-morphism of the defining action corresponds to the dg algebra $1$-morphism $Z\otimes_\Bbbk Z$ in $\vv{\cB_n}$.

More generally, Rouquier \cite{Ro0} considers a categorification of the braid group associated to an arbitrary finitely-generated Coxeter system $(W,S,V)$ in the homotopy category of Soergel bimodules. One could alternatively consider the dg $2$-category $\cB_W$ generated by the complexes denoted in loc.~cit. by $F_s$, which has a natural action on the category of bounded chain complexes of graded $\Bbbk[V]$-modules. The homotopy category of this dg $2$-category gives rise to Rouquier's categorification of the braid group, which he conjecturally strictifies in \cite{Ro1}.

As before, we can describe the dg idempotent completion of $\ov{\cB_W}$ as the monoidal category of chain complexes of Soergel bimodules $\cS_W$, and we again restrict or extend pretriangulated $2$-representations to pass from one to the other. Note that in both cases the dg ideal generated by the total invariants $\Bbbk[V]^W$ acts by zero on every quotient-simple dg $2$-representation, so we can pass to the quotient dg $2$-categories defined in the same way, but over the coinvariant algebra $C=\Bbbk[V]/(\Bbbk[V]^W_+)$ instead of $\Bbbk[V]$, which, provided $W$ is finite, is a finite-dimensional symmetric algebra, see e.g. \cite{MMMTZ}, and references therein, for details. Then, for finite $W$, the natural (or defining) dg $2$-representation of $\cB_W$ on chain complexes of $C$-modules simply corresponds to the pretriangulated hull of the cell $2$-representation of $\cS_W$ associated to the longest element $w_0$ of the (finite) Coxeter group whose Kazhdan Lusztig basis element is categorified by $C\otimes C$, see \cite[Sections 2.1--2.2]{E} for more explanations, where this bimodule is described as $C\otimes_{C^S} C$ with $C^S$ denoting the total invariants in the coinvariants and hence is isomorphic to $\Bbbk$ (more generally for $J\subseteq S$, $C\otimes_{C^J}C$ decategorifies to the Kazhdan--Lusztig basis element corresponding to the longest element of the parabolic subgroup determined by $J$ --- this uses the Decategorification Theorem \cite{EW}). The corresponding dg algebra $1$-morphism is the simply given by $C\otimes C$, which is indeed a $1$-morphism in $\vv{\cB_W} = \vv{\cS_W}$.

\end{document}